\def\a{\alpha} \def\b{\beta} \def\d{\delta} \def\e{\epsilon} \def\f{\varphi}
\def\l{\lambda} \def\s{\sigma} \def\R{\mathbb{R}} \def\C{\mathbb{C}}
\def\H{\mathbb{H}}
\def\O{\mathbb{O}} \def\I{\mathbb{I}}
\def\Z{\mathbb{Z}}
\def\({\left(} \def\){\right)} 
\def\<{\langle} \def\>{\rangle}
\def\inv{^{-1}}
\renewcommand\ge{\geqslant}
\renewcommand\le{\leqslant}
\newcommand\ie{i.e.}
\newcommand\cf{cf.}
\newcommand\eg{e.g.}
\newcommand\forget[1]{}
\newcommand\cc{\mathrm{C}}
\newcommand{\triv}{\mathrm{triv}}
\newcommand{\sgn}{\mathrm{sign}}
\newcommand{\nat}{\mathrm{nat}}
\DeclareMathOperator{\spann}{span}
\DeclareMathOperator{\rt}{R}
\DeclareMathOperator{\lt}{L}
\DeclareMathOperator{\sign}{sign}
\DeclareMathOperator{\End}{End}
\DeclareMathOperator{\Aut}{Aut}
\DeclareMathOperator{\og}{O}
\DeclareMathOperator{\so}{SO}
\DeclareMathOperator{\go}{GO}
\DeclareMathOperator{\Pds}{Pds}
\DeclareMathOperator{\spds}{SPds}
\DeclareMathOperator{\sym}{Sym}
\DeclareMathOperator{\gl}{GL}
\DeclareMathOperator{\Mor}{Mor}
\DeclareMathOperator{\im}{im}
\DeclareMathOperator{\SL}{SL}
\renewcommand{\Im}{\mathop{\rm Im}\nolimits}
\newcommand\Ii{\mathop{\mathscr{I}}\nolimits}
\DeclareMathOperator{\Atp}{Atp}
\DeclareMathOperator{\Tder}{Tder}
\DeclareMathOperator{\N}{N}
\DeclareMathOperator{\rC}{C}
\DeclareMathOperator{\su}{su}
\DeclareMathOperator{\Lsl}{sl}
\DeclareMathOperator{\tr}{trace}
\DeclareMathOperator{\supp}{s}
\DeclareMathOperator{\Hom}{Hom}
\newcommand{\op}{^\textrm{op}}
\newcommand{\cS}{\mathcal{S}}
\newcommand{\cZ}{\mathcal{Z}}
\newcommand\bi{{\boldsymbol{i}}}
\newcommand\bj{{\boldsymbol{j}}}
\newenvironment{smatrix}{\left(\begin{smallmatrix}}{\end{smallmatrix}\right)}
\newcommand\smatr[1]{\begin{smatrix}#1\end{smatrix}}
\newtheorem{thm}{Theorem}
\newtheorem{lma}[thm]{Lemma}
\newtheorem{prop}[thm]{Proposition}
\newtheorem{cor}[thm]{Corollary}
\theoremstyle{definition}
\theoremstyle{remark}
\newtheorem{rmk}[thm]{Remark}
\newtheorem{exa}[thm]{Example}
\title{Inversion and quasigroup identities in division algebras}
\author{Erik Darp\"{o}}
\author{Jos\'e Maria P\'erez Izquierdo}
\address{E.~Darp\"o: Mathematical Institute \\ 24-29 St Giles'\\
Oxford OX1~3LB\\ United Kingdom.}
\address{J.M.~P\'erez Izquierdo: Departamento de Matem\'aticas y Computaci\'on\\
Universidad de La Rioja\\ 26004 Logro\~no\\ Spain.}
\email{erik.darpo@maths.ox.ac.uk, jm.perez@unirioja.es}
\thanks{The first author was partly supported by the Swedish Research Council, Grant
  no.~623-2009709, the second author by the Spanish Ministerio de Ciencia e Innovaci\'on
  (MTM2010-18370-C04-03).}
\keywords{Division algebra, quasigroup, isotopy, inversion, Hurwitz algebra}
\begin{document}
\selectlanguage{UKenglish}
\date{}

\begin{abstract}
The present article is concerned with division algebras that are structurally close to
alternative algebras, in the sense that they satisfy some identity or other algebraic
property that holds for all alternative division algebras.

Motivated by Belousov's ideas on quasigroups, % in this article 
we explore a new approach to the classification of division algebras.
By a detailed study of the representations of the Lie group of autotopies of real division
algebras we show that, if the group of autotopies has a sufficiently rich structure then
the algebra is isotopic to an alternative division algebra.
On the other hand, it is straightforward to check that required conditions hold for large
classes of real division algebras, including many defined by identites expressable in
a quasigroup.

Some of the algebras that appear in our results are characterized
by the existence of a well-behaved inversion map.
We give an irredundant classification of these algebras in dimension 4, and partial
results in the 8-dimensional case. 
\end{abstract}

\maketitle

%%%%%%%%%%%%%%%%%%%%%%%%%%%%%%%%%%
\section{Introduction}
%%%%%%%%%%%%%%%%%%%%%%%%%%%%%%%%%%
\emph{All algebras discussed in this paper are assumed to be finite dimensional.}

\medskip

A not necessarily associative algebra $(A,xy)$ over a field $k$ is a \emph{division
  algebra} if $A\ne0$ and the $k$-linear maps $\lt_a:A\to A,\; x\mapsto ax$ and
$\rt_a:A\to A,\; x\mapsto xa$ are bijective for all non-zero $a\in A$.
Another way to express this is to say $A$ is an algebra such that $A\setminus\{0\}$ is a
\emph{quasigroup}: a non-empty set with a binary operation such that left and right
multiplication with any element are bijections.

An \emph{isotopy} between two $k$-algebras
$(A,xy)$ and $(B,x*y)$ is a triple $(\f_1,\f_2,\f_3)$ of bijective $k$-linear maps $A\to
B$ such that $\f_1(xy)=\f_2(x)*\f_3(y)$ for all $x,y\in A$.   The algebra $(A,xy)$ is an
\emph{isotope} of $(B,x*y)$ if there exits an isotopy between $(A,xy)$ and
$(B,x*y)$. Isotopes of division algebras are again division algebras. 

The most well-known examples of real division algebras are  $\R$,
$\C$, Hamilton's quaternions $\H$ and Graves' and Cayley's octonions $\O$. All of them are
composition algebras and possess an identity element. A \emph{composition algebra} is an algebra $(A,xy)$ over a field $k$ of characteristic
different from two equipped with a non-degenerate quadratic form $n:A\to k$, (the \emph{norm} of $A$) satisfying $n(xy)=n(x)n(y)$ for all $x,y\in A$. A composition algebra with an identity element is called a \emph{Hurwitz algebra}. Hurwitz algebras exist in dimension 1, 2, 4 and 8 only, and they are determined up to
isomorphism by the equivalence class of their norm. Over the real numbers, the Hurwitz division algebras are precisely $\R, \C$, $\H$ and $\O$. Isotopes of these algebras are a valuable source of real division algebras.

An algebra $(A,xy)$ is \emph{alternative} if the identities
$$x^2y=x(xy) \qquad\mbox{and}\qquad yx^2=(yx)x $$
hold in $A$, equivalently, if every subalgebra of $A$ generated by two elements is
associative.

Zorn \cite{zorn31} showed in 1931 that every central simple alternative algebra (in
particular every alternative division algebra) is either associative or a so-called
octonion algebra (\ie, an eight-dimensional Hurwitz algebra).
In particular, every alternative division algebra over $\R$ is isomorphic to either $\R, \C$, $\H$ or $\O$.
Later generalisations of Zorn's result includes the classification of all real
power-associative division algebras of dimension four \cite{osborn62,zur} and all real
flexible division algebras \cite{bbo82,cdkr99,coll,nform}.

In every alternative algebra, the \emph{Moufang} and \emph{Bol identities}
\begin{align*}
  ((xy)x)z&=x(y(xz)) && \mbox{(the left Moufang identity),} \\
  z(x(yx))&=((zx)y)x && \mbox{(the right Moufang identity),} \\
  (xy)(zx)&=(x(yz))x && \mbox{(the middle Moufang identity),}\\
  (x(yx))z&=x(y(xz)) && \mbox{(the left Bol identity),} \\
  z((xy)x)&=((zx)y)x && \mbox{(the right Bol identity)}
\end{align*}
hold.  In \cite{Ku96_2} Kunen proves that a quasigroup satisfying any of the Moufang identities has unit element. Quasigroups with (resp.\ left, right) unit element are called (resp.\ left, right) \emph{loops}. In particular, a division algebra satisfying any of the Moufang identities  is alternative. In \cite{cu}, Cuenca-Mira classifies all real division algebras satisfying any of the Bol identities.
He shows that every such algebra can be obtained from an alternative
algebra $(A,xy)$ by changing the product to either $x\circ y=\s(x)y$ or $x\circ y=x\s(y)$,
where $\s$ is an involutive automorphism of $(A,xy)$. Results of this kind, where certain hypotheses imply that the algebra can be described as an isotope of another well-known algebra were very much promoted in the theory of quasigroups  by V.D. Belousov. One of the most popular theorems in this direction is Belousov's theorem about balanced identities \cite{Be66}. A \emph{balanced identity} is a non-trivial identity of the form
\begin{equation} \label{eq:neat}
p(x_1,\dots, x_n)=q(x_1,\dots,x_n),
\end{equation}
where $p$ and $q$ are two distinct, non-associative words in the letters $x_1,\ldots,x_n$,
and the degree of each letter in both $p$ and $q$ is one.
Two variables $x_i, x_j$ are said to be \emph{separated} in $q$ if neither $x_ix_j$ nor
$x_jx_i$ occur in $q$. {A generalization of Belousov's theorem, given in \cite{Ta78},
  establishes that if a quasigroup satisfies a balanced identity $p(x_1,\dots,x_n) =
  q(x_1,\dots,x_n)$ with the property that $p$ contains a subword $x_ix_j$ and $x_i$ and
  $x_j$ are separated in $q$ then the quasigroup is an isotope of a group. See
  \cite{Fa71} for another generalization and \cite{KrTa91} for a discussion on the topic.
}

In this paper we will follow Belousov's ideas  by studing some general conditions under which a real division algebra is an isotope of $\R, \C, \H$ or $\O$. We will concentrate on the
following types of division algebras:
\begin{enumerate}
\item division algebras with inversion on the left,
\item division algebras satisfying a quasigroup identity.
\end{enumerate}
Usually two extra operations $(x,y) \mapsto
x\backslash y = \lt^{-1}_x(y)$ and $(x,y) \to x/y = \rt^{-1}_y(x)$ are considered on quasigroups. They are related to the product by
\begin{displaymath}
x\backslash (x y) = y = x (x\backslash y) \quad \text{and} \quad (x y)/y = x
= (x/y) y.
\end{displaymath}
We say that a division algebra $(A,xy)$ satisfies a \emph{quasigroup identity} if the
quasigroup $A\setminus \{ 0\}$ satisfies some identity expressible only in terms of the
operations $xy$, $x\backslash y$ and $x/y$ (\ie, they do not involve addition,
substraction or multiplication by scalars). To illustrate our results, let us consider for
instance identities in three variables $x,y,z$, in which $x$ appears three times on each
side while $y,z$ only appear once. We further assume that on each side, $x,y$ and $z$
appear from left to right in the following order: $x,x,y,x$ and $z$. There are 14 ways of
placing parentheses on $xxyxz$, so there are 91 possible identities. Some of them are
immediate consequences of identities of lower degree and they will be not considered. This
is the case, for example, with $(((xx)y)x)z=((x(xy))x)z$ and $((x(xy))x)z=(x(xy))(xz)$,
and many others. The remaining identities are collected in Table \ref{Tb:d5}. A direct
application of our Corollary \ref{cor:PI} shows that, with the exception of the identities
2, 7, 10, 13, 22, 25 and 40, a real division algebra satisfying any of these
identities is isotopic to either $\R, \C, \H$ or $\O$. This approach also works when the
order of the variables is $xyxxz$, $xxyzx$, $yxxzx$, $xyzxx$ or $yxzxx$, while it is less
successful when the order is $yzxxx$, $yxxxz$ or $xxxyz$. 

\begin{table}
\caption{Some examples of quasigroup identities}
\centering\begin{tabular}{|c|c|c|c|}
    \hline
  % after \\: \hline or \cline{col1-col2} \cline{col3-col4} ...
  1 & $(((xx)y)x)z=(x(xy))(xz) $ & 2 & $(((xx)y)x)z=(xx)((yx)z) $ \\
  3 & $(((xx)y)x)z=(xx)(y(xz)) $ & 4 & $ (((xx)y)x)z=x(((xy)x)z)$ \\
  5 & $(((xx)y)x)z=x((x(yx))z) $ & 6 & $ (((xx)y)x)z=x((xy)(xz))$ \\
  7 & $(((xx)y)x)z=x(x((yx)z))$ & 8 & $(((xx)y)x)z=x(x(y(xz)))$ \\
  9 & $((x(xy))x)z=((xx)y)(xz)$ & 10 & $((x(xy))x)z=(xx)((yx)z)$ \\
  11 & $((x(xy))x)z=(xx)(y(xz))$ & 12 & $((x(xy))x)z=x((x(yx))z)$ \\
  13 & $((x(xy))x)z=x(x((yx)z))$ & 14 & $((x(xy))x)z=x(x(y(xz)))$ \\
  15 & $((xx)(yx))z=((xx)y)(xz)$ & 16 & $((xx)(yx))z=(x(xy))(xz)$ \\
  17 & $((xx)(yx))z=(xx)(y(xz))$ & 18 & $((xx)(yx))z=x(((xy)x)z)$ \\
  19 & $((xx)(yx))z=x((xy)(xz))$ & 20 & $((xx)(yx))z=x(x(y(xz)))$ \\
  21 & $(x((xy)x))z=((xx)y)(xz)$ & 22 & $(x((xy)x))z=(xx)((yx)z)$ \\
  23 & $(x((xy)x))z=(xx)(y(xz))$ & 24 & $(x((xy)x))z=x((x(yx))z)$ \\
  25 & $(x((xy)x))z=x(x((yx)z))$ & 26 & $(x((xy)x))z=x(x(y(xz)))$ \\
  27 & $(x(x(yx)))z=((xx)y)(xz)$ & 28 & $(x(x(yx)))z=(x(xy))(xz)$ \\
  29 & $(x(x(yx)))z=(xx)(y(xz))$ & 30 & $(x(x(yx)))z=x(((xy)x)z)$ \\
  31 & $(x(x(yx)))z=x((xy)(xz))$ & 32 & $(x(x(yx)))z=x(x(y(xz)))$ \\
  33 & $((xx)y)(xz)=(xx)((yx)z)$ & 34 & $((xx)y)(xz)=x(((xy)x)z)$ \\
  35 & $((xx)y)(xz)=x((x(yx))z)$ & 36 & $((xx)y)(xz)=x(x((yx)z))$ \\
  37 & $(x(xy))(xz)=(xx)((yx)z)$ & 38 & $(x(xy))(xz)=x((x(yx))z)$ \\
  39 & $(x(xy))(xz)=x(x((yx)z))$ & 40 & $(xx)((yx)z)=x(((xy)x)z)$ \\
  41 & $(xx)((yx)z)=x((xy)(xz))$ & 42 & $(xx)((yx)z)=x(x(y(xz)))$ \\
  43 & $(xx)(y(xz))=x(((xy)x)z)$ & 44 & $(xx)(y(xz))=x((x(yx))z)$ \\
  45 & $(xx)(y(xz))=x(x((yx)z))$ &  &  \\
  \hline
\end{tabular}
\label{Tb:d5}
\end{table}

A division algebra $(A,xy)$ is said to have \emph{inversion on the left} (or, for short, have
\emph{inversion}) if for every non-zero
element $a\in A$ there exists an element $b\in A$ such that $\lt_a\inv = \lt_b$.
If $A$ has inversion then the element $b\in A$ is uniquely determined by $a$, and the map
$s:A\setminus\{0\}\to A\setminus\{0\}$ defined by $\lt_a\inv=\lt_{s(a)}$ is called the
\emph{inversion map} on $A$.
Division algebras with inversion are studied in Section~\ref{sec:inversion}. A complete
and irredundant classification, in 11 parameters, is given in dimension four over $\R$.
In dimension eight, the isomorphism classes of left-unital real division algebras with
inversion are shown to be parametrized by the orbits of an action of the Lie group
$\mathcal{G}_2$ on a certain, 56-dimensional manifold.
We also study division algebras $(A,xy)$ with inversion for which the inversion map $s$
satisfies $s(ab)=s(b)s(a)$ for all $a,b\in A$, and show that over the real numbers, there
are precisely ten isomorphism classes of such algebras.

In Section~\ref{sec:qgi} we state a general result (Theorem~\ref{thm:PI} and
Corollary~\ref{cor:PI}) with which one can prove that, for many types of identities, the
division algebras satisfying them are isotopes of $\R, \C, \H$ or $\O$. 
In Section~\ref{sec:examples} we use this criterion to classify certain families of
division algebras. Finally, the proof of Theorem~\ref{thm:PI} is given in
Section~\ref{sec:proof}.

Although our main focus is on real division algebras, results are stated for algebras over
general fields when possible.

A few words about notation. For any algebra $(A,xy)$, 
we write $\lt_A=\{\lt_a \mid a \in A\}$ and $\rt_A=\{\rt_a \mid a \in A\}$.
If some other algebra structure $x\circ y$ is defined on $A$, we use
$\lt^\circ_a(x)=a\circ x$ and $\rt_a^\circ(x)=x\circ a$ to denote the left and right
multiplication with respect to $\circ$, and define $\lt_A^\circ$ and $\rt_A^\circ$ analogously.
The $n$th power of an element $x$ in $(A,\circ)$ (when well defined) is written as
$x^{\circ n}$.
The inverse of a non-zero element $x$ in a Hurwitz division algebra $A$ is denoted by
$x\inv$, regardless of what symbol is used to denote the product in $A$. 

Recall that Hurwitz algebras $(A,xy)$ are \emph{quadratic algebras}: there exists a linear
form $t$ on $A$ such that
\begin{displaymath}
x^2 - t(x)x + n(x)1_A = 0
\end{displaymath}
for all $x\in A$. The linear form $t$ is called the \emph{trace} of $A$, and satisfies
$t(x) = 2(x,1_A)$, where $(x,y)=\frac{1}{2}(n(x+y)-n(x)-n(y))$ is the bilinear form associated with the quadratic form $n$. The kernel of $t$, denoted by $\Im A$, is the orthogonal complement of the identity
element with respect to $(\cdot,\cdot)$, also
$$\Im A=\{x\in A\setminus k1_A \mid x^2\in k1_A\}\cup\{0\} \,.$$
Every Hurwitz algebra $(A,xy)$ has a distinguished  anti-automorphism of
order two (or involution) $\kappa:A\to A,\:x \mapsto \bar{x} = t(x)1_A -x$ called the
\emph{standard involution}. An element $a\in A$ is invertible if and only if $n(a)\neq 0$,
in which case $a\inv=n(a)\inv\bar{a}$.
We refer to \cite{sv00} for a detailed account of the many properties of Hurwitz algebras.

%%%%%%%%%%%%%%%%%%%%%%%%%%%%%%%%%%%%%%%%%%%%%%%%%%%%%%
\section{Division algebras with inversion on the left} \label{sec:inversion}
%%%%%%%%%%%%%%%%%%%%%%%%%%%%%%%%%%%%%%%%%%%%%%%%%%%%%%

Given any Euclidean space $V$, denote by $\Pds(V)$ the set of positive definite symmetric
linear endomorphisms of $V$, and $\spds(V)=\Pds(V)\cap\SL(V)$.
If $(A,xy)$ is an algebra and $\a,\b\in\gl(A)$, then $A_{\a,\b}=(A,xy)_{\a,\b}$ denotes
the isotope $(A,\circ)$ of $A$ with multiplication defined by $x\circ y=\a(x)\b(y)$. 

\begin{lma} \label{hua}
  If $(A,xy)$ is a division algebra with inversion on the left, then $\lt_a\lt_b\lt_a\in\lt_A$
  for all $a,b\in A$.
\end{lma}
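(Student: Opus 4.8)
The plan is to use two structural features of the space $\lt_A=\{\lt_a\mid a\in A\}$ together with Hua's identity. The first feature is that $\lt_A$ is a linear subspace of $\End(A)$: the map $a\mapsto\lt_a$ is linear, and it is injective since $A$ is a division algebra (so $\lt_a=0$ forces $a=0$). The second, which is exactly what the hypothesis of inversion on the left provides, is that $\lt_A$ is closed under the inverses of its nonzero elements: every nonzero $T\in\lt_A$ equals $\lt_a$ for a unique $a\neq0$, and then $T\inv=\lt_a\inv=\lt_{s(a)}\in\lt_A$. Note also that each nonzero element of $\lt_A$ is automatically invertible in $\End(A)$, again because $A$ is a division algebra. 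Thus, although $\End(A)$ is very far from a division ring, the subset $\lt_A$ behaves like the units of one: sums, differences and inverses of its elements never leave $\lt_A$.

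With this in hand, I would invoke Hua's identity, which holds in any associative unital ring whenever the indicated inverses exist:
\[
uwu=u-\(u\inv+(w\inv-u)\inv\)\inv.
\]
Apply it with $u=\lt_a$ and $w=\lt_b$. If $a=0$ or $b=0$ then $\lt_a\lt_b\lt_a=0=\lt_0\in\lt_A$, and if $w\inv=u$ (\ie\ $\lt_{s(b)}=\lt_a$) then $\lt_a\lt_b\lt_a=uu\inv u=\lt_a\in\lt_A$; so I may assume $a,b\neq0$ and $w\inv\neq u$. Now I read the right-hand side off step by step, checking at each stage that I remain inside $\lt_A$: the inverses $u\inv=\lt_{s(a)}$ and $w\inv=\lt_{s(b)}$ lie in $\lt_A$; the difference $w\inv-u$ is a nonzero element of the subspace $\lt_A$, hence invertible with $(w\inv-u)\inv\in\lt_A$; the sum $u\inv+(w\inv-u)\inv$ again lies in $\lt_A$, and --- being nonzero, as verified below --- is invertible with inverse in $\lt_A$; subtracting this inverse from $u$ then keeps us inside $\lt_A$. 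Hua's identity identifies this element with $\lt_a\lt_b\lt_a$, which gives the claim.

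The only genuine obstacle is that $\End(A)$ has many non-invertible elements, so a priori the inverses appearing in Hua's identity need not exist. This is resolved precisely by the interplay of the two closure properties above: every intermediate expression is visibly an element of the subspace $\lt_A$, and a nonzero element of $\lt_A$ is invertible because $A$ is a division algebra. Two small nonvanishing checks remain. That $w\inv-u\neq0$ is exactly the assumption $w\inv\neq u$ that we arranged by treating the degenerate case separately. That $u\inv+(w\inv-u)\inv\neq0$ follows because its vanishing would give $(w\inv-u)\inv=-u\inv$, hence $w\inv-u=-u$ and $w\inv=0$, contradicting the invertibility of $w=\lt_b$. Once these are in place every inverse in the identity exists and lies in $\lt_A$, and the verification of Hua's identity itself is a routine direct manipulation valid as soon as the inverses are defined.
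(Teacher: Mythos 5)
Your proof is correct and follows essentially the same route as the paper: apply Hua's identity to $\lt_a$ and $\lt_b$, and observe that every intermediate expression stays in $\lt_A$ because $\lt_A$ is a linear subspace whose non-zero elements are invertible with inverses again in $\lt_A$ (the latter being exactly the inversion hypothesis). You are somewhat more careful than the paper's terse argument in checking that each inverse appearing in the identity actually exists, but the underlying idea is identical.
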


\begin{proof}
  The statement is obvious in case $\lt_b=\lt_a\inv$, or if either of $a,b$ is zero.
  Otherwise, Hua's identity (see \cite[p.~92]{basicI}) implies
  $$\lt_a\lt_b\lt_a =
  \lt_a+\left( \left(\lt_a -\lt_b\inv \right)\inv -\lt_a\inv \right)\inv \,.$$
  Since $A$ has inversion on the left, the right hand side of the equation is in $\lt_A$.
\end{proof}

\begin{prop} \label{general}
  \begin{enumerate}
  \item A division algebra is alternative if and only if it is unital and has inversion on
    the left.
  \item If $(A,xy)$ is a division algebra with inversion and $e\in A$ a non-zero element, then
    $A=B_{\a,\b}$ for an alternative division algebra $B=(A,\ast)$, with
    $\a=\rt_e$ and $\b=\lt_e$ being, respectively, the right and left
    multiplication operators in $A$ with the element $e$.
  \item Let $(B,xy)$ be a division algebra with inversion on the left, and $\a,\b\in\gl(B)$.
    Now $B_{\a,\b}$ has inversion on the left if and only if $\b\lt_B\b=\lt_B$ or,
    equivalently, if and only if there exists $\psi\in\gl(B)$ such that
    $\b(xy)=\psi(x)\b\inv(y)$ for all $x,y\in B$.
  \end{enumerate}
\end{prop}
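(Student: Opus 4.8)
The three statements are linked: (2) and (3) follow from (1) together with Lemma~\ref{hua} by direct computation, so the heart of the matter is (1), and there the non-trivial direction is ``unital ${}+{}$ inversion $\Rightarrow$ alternative''. The implication ``alternative $\Rightarrow$ unital with inversion'' is classical: an alternative division algebra is unital, and alternativity yields the inverse property $\lt_a\inv=\lt_{a\inv}$, so $\lt_a\inv\in\lt_A$. For the converse, suppose $(A,xy)$ is unital with inversion map $s$. Since $\lt_{s(a)}=\lt_a\inv$, evaluating $\lt_a\lt_{s(a)}$ at $1$ shows $s(a)=a\inv$ is a two-sided inverse, and $\lt_{s(a)}\lt_a=\mathrm{id}$ is exactly the left inverse property $a\inv(ax)=x$. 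Lemma~\ref{hua} then gives $\lt_a\lt_b\lt_a=\lt_c$ with $c=\lt_a\lt_b\lt_a(1)=a(ba)$, which is precisely the left Bol identity $x(y(xz))=(x(yx))z$; putting $y=1$ yields left alternativity $x(xz)=x^2z$.

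It remains to upgrade left alternativity to full alternativity, and this is the main obstacle. If flexibility $x(yx)=(xy)x$ were available, the left Bol identity would become the left Moufang identity $x(y(xz))=((xy)x)z$, and a division algebra satisfying a Moufang identity is alternative; so the whole converse reduces to flexibility. Substituting $z=x\inv$ into left Bol and using $a\inv(ax)=x$ gives $(x(yx))x\inv=xy$, which shows flexibility is equivalent to the right inverse property, something left-sided inversion does not supply on its own; hence flexibility cannot be obtained purely formally. Over $\R$ I would close the argument with Cuenca-Mira's classification \cite{cu}: a real division algebra satisfying a Bol identity is an isotope $x\circ y=\s(x)y$ or $x\circ y=x\s(y)$ of an alternative algebra with $\s$ an involutive automorphism, and such an isotope is unital only when $\s=\mathrm{id}$; this forces $A$ to be alternative. (For a field-independent variant one would prove flexibility directly and then observe that, in characteristic $\neq 2$, the associator is alternating in its first two arguments by left alternativity and in its outer arguments by flexibility, hence totally alternating and in particular right alternative.)

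For (2), the plan is to use the principal isotope attached to $e$. Define $u\ast v=(\rt_e\inv u)(\lt_e\inv v)$; then $\rt_e(x)\ast\lt_e(y)=xy$, so $A=B_{\rt_e,\lt_e}$ with $B=(A,\ast)$. Since $\rt_e\inv(e^2)=\lt_e\inv(e^2)=e$, the element $e^2$ is a two-sided identity for $\ast$. Moreover $\lt^\ast_a=\lt_{\rt_e\inv a}\lt_e\inv$, so $(\lt^\ast_a)\inv=\lt_e\lt_{s(\rt_e\inv a)}$; by Lemma~\ref{hua} we have $\lt_e\lt_{s(\rt_e\inv a)}\lt_e=\lt_{b_0}$ for some $b_0\in A$, whence $(\lt^\ast_a)\inv=\lt_{b_0}\lt_e\inv=\lt^\ast_{b_0e}\in\lt^\ast_A$. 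Thus $(A,\ast)$ is a unital division algebra with inversion, hence alternative by (1).

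For (3), write $C=B_{\a,\b}$ and compute $\lt^\circ_a=\lt_{\a(a)}\b$, so that $\lt^\circ_B=\lt_B\b$. Using that $B$ has inversion, $(\lt^\circ_a)\inv=\b\inv\lt_{s(\a(a))}$, and as $a$ ranges over the non-zero elements so does $s(\a(a))$; note that $\a$ thereby drops out, as left inversion only constrains left multiplications. Hence $C$ has inversion on the left iff $\b\inv\lt_d\in\lt_B\b$ for all $d$, i.e.\ iff $\lt_d\in\b\lt_B\b$ for all $d$, i.e.\ iff $\lt_B\subseteq\b\lt_B\b$. Because $T\mapsto\b T\b$ is a linear automorphism of $\End(B)$, the space $\b\lt_B\b$ has the same dimension as $\lt_B$, so this inclusion is in fact the equality $\b\lt_B\b=\lt_B$. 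Finally $\b\lt_B\b=\lt_B$ says $\b\lt_x\b=\lt_{\psi(x)}$ for a linear bijection $\psi$, and rewriting this on $\b\inv(y)$ gives the stated form $\b(xy)=\psi(x)\b\inv(y)$. The one step requiring care here is the passage from inclusion to equality via the dimension count; everything else is bookkeeping between operators and elements.
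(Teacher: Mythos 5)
Your parts (2) and (3), and the easy direction of (1), follow the paper's proof essentially verbatim (the inclusion-versus-equality point you make explicit in (3) via a dimension count is indeed the step the paper leaves implicit). The genuine gap is in the converse of (1), exactly where you place it. Like the paper, you use Lemma~\ref{hua} with $b=1$ to obtain $\lt_a^2=\lt_{a^2}$, \ie{} left alternativity. The paper then concludes in one line by citing the theorem that every left alternative division algebra is alternative \cite[p.~345]{zsss} (Skornyakov's theorem). You are missing this result, and neither of your substitutes repairs the argument as the proposition is stated. The appeal to Cuenca-Mira \cite{cu} works only over $\R$, while Proposition~\ref{general} is stated --- and later used, \eg{} in Corollary~\ref{real} --- over an arbitrary field; it also routes a one-citation structural fact through a full classification theorem. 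The parenthetical ``field-independent variant'' is not a proof: it opens with ``one would prove flexibility directly'', immediately after you have argued that flexibility is equivalent to the right inverse property and cannot be extracted formally from left inversion alone, so it presupposes precisely the missing step. The fix is simply to invoke (or prove) the theorem that left alternative division algebras are alternative.
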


\begin{proof}
1.
Every alternative division algebra $(A,xy)$ is unital \cite[Theorem~3.10]{schafer95}, and every
$a\in A\setminus\{0\}$ has a two-sided inverse $a\inv$ satisfying $\lt_a\inv=\lt_{a\inv}$.
Conversely, assume $(A,xy)$ is a unital division algebra with inversion on the left. By
Lemma~\ref{hua}, for all $a,b\in A\setminus\{0\}$ there is a $c\in A$ such that
$\lt_a\lt_b\lt_a=\lt_c$. Inserting $b=1$ and evalutating in $1$ gives
$\lt_a^2=\lt_{a^2}$ for all $a\in A$, \ie, $A$ is left alternative. But every left
alternative division algebra is alternative \cite[p.~345]{zsss}, so $A$ is alternative.

2.
Let $(A,xy)$ be a division algebra with inversion, and $e\in A$ any non-zero element. Now the
isotope $A_{\rt_e\inv,\lt_e\inv}=(A,\ast)$ satisfies
$\lt_a^\ast=\lt_{\rt_e\inv(a)}\lt_e\inv$, so $\lt_A^\ast = \lt_A\lt_e\inv$.
Hence, by Lemma~\ref{hua},
$$ \left(\lt_a^\ast\right)\inv = \lt_e \lt_{\rt_e\inv(a)}\inv =
\lt_e \lt_{s(\rt_e\inv(a))} \lt_e \lt_e\inv \in \lt_A\lt_e\inv = \lt_A^\ast$$
so $A_{\rt_e\inv,\lt_e\inv}$ has inversion. Since $e^2$ is an identity element with
respect to the product $\ast$, by (1) we conclude
that $B = A_{\rt_e\inv,\lt_e\inv}$ is alternative, and $A=B_{\rt_e,\lt_e}$.

3.
Let $(B,xy)$ be a division algebra with inversion, $\a,\b\in\gl(B)$, and
$A=(B,\circ)= B_{\a,\b}$. For $a\in A$, we have $\lt_a^\circ=\lt_{\a(a)}\b$, so
$\lt_A^\circ=\lt_B\b$, and
$$ \left(\lt_a^\circ\right)\inv = \b\inv\lt_{\a(a)}\inv = \b\inv\lt_{s(\a(a))}\b\inv \b \,.$$
Hence $A$ has inversion if and only if $\b\inv \lt_B \b\inv = \lt_B$, equivalently,
$\b\lt_B\b = \lt_B$.

Suppose that $\b\lt_B\b = \lt_B$. Then there exists a linear map $\psi\in\gl(B)$ such that
$\b\lt_a\b = \lt_{\psi(a)}$ for all $a\in B$. Now $\b(a\b(b))=\psi(a)b$ for all $a,b\in B$.
Substituting $c=\b(b)$ gives $\b(ac) = \psi(a)\b\inv(c)$ for all $a,c\in B$.
Conversely it is clear that the last equation implies $\b\lt_{a}\b=\lt_{\psi(a)}$ for all
$a\in B$.
\end{proof}

\begin{cor} \label{real}
  Every division algebra with inversion that contains a non-zero idempotent is an isotope
  $B_{\a,\s}$, where $B$ is an alternative division algebra, $\a(1_B)=1_B$ and $\s$ is
  an automorphism of $B$ with $\s^2=\I_B$.
  Conversely, every isotope of this kind has inversion.
\end{cor}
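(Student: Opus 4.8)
The plan is to derive both directions directly from Proposition~\ref{general}. For the forward direction, let $(A,xy)$ be a division algebra with inversion and let $e\in A$ be a non-zero idempotent, so $e^2=e$. Applying Proposition~\ref{general}(2) with this particular $e$ gives $A=B_{\a,\b}$ for an alternative division algebra $B=(A,\ast)$, with $\a=\rt_e$ and $\b=\lt_e$. As shown in the construction underlying that proposition, the element $e^2$ is the identity of $B$; by idempotency this means $1_B=e^2=e$. Consequently $\a(1_B)=\rt_e(e)=e^2=e=1_B$, which already establishes the first of the two required properties. It then remains to identify $\b=\lt_e$ with an involutive automorphism $\s$ of $B$.

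To do this I would feed the inversion of $A$ back through Proposition~\ref{general}(3). Since $A=B_{\a,\b}$ has inversion, that proposition yields $\psi\in\gl(B)$ with
\begin{equation*}
\b(x\ast y)=\psi(x)\ast\b\inv(y)\qquad\text{for all }x,y\in B.
\end{equation*}
The decisive observation is that $\b$ fixes the unit: $\b(1_B)=\lt_e(e)=e^2=1_B$, whence also $\b\inv(1_B)=1_B$. Specialising $y=1_B$ in the displayed equation forces $\psi=\b$, while specialising $x=1_B$ forces $\b\inv=\b$, that is $\b^2=\I_B$. With these two identifications the equation collapses to $\b(x\ast y)=\b(x)\ast\b(y)$, so $\b$ is an automorphism of $B$, involutive because $\b^2=\I_B$. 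Setting $\s=\b$ gives $A=B_{\a,\s}$ of the asserted form.

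The converse is the routine direction and again rests on Proposition~\ref{general}(3): since an alternative division algebra is, by Proposition~\ref{general}(1), a division algebra with inversion, it suffices to verify $\s\lt_B\s=\lt_B$ whenever $\s$ is an involutive automorphism of $B$. Automorphy gives $\s\lt_a^\ast\s\inv=\lt_{\s(a)}^\ast$ for every $a\in B$, and using $\s\inv=\s$ together with the bijectivity of $\s$ this yields $\s\lt_B\s=\{\lt_{\s(a)}^\ast\mid a\in B\}=\lt_B$. Proposition~\ref{general}(3) then shows that $B_{\a,\s}$ has inversion for any choice of $\a$, as claimed.

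The only genuinely substantive point is the middle step: extracting from the abstract criterion of Proposition~\ref{general}(3) the concrete fact that $\b$ is an \emph{involutive automorphism}. The mechanism that makes this work is exactly the idempotency of $e$, which forces $\b=\lt_e$ to fix the unit $1_B=e$; evaluating the functional equation at $1_B$ in each variable separately is what simultaneously pins down $\psi=\b$ and forces $\b^2=\I_B$. Without idempotency one recovers only the weaker relation $\b\lt_B\b=\lt_B$, which does not by itself make $\b$ an automorphism, so I expect this is where the argument must be handled with care.
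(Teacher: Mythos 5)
Your proof is correct and follows essentially the same route as the paper's: apply Proposition~\ref{general}(2) at the idempotent $e$ (which becomes the unit of $B$), then use the functional equation $\s(ab)=\psi(a)\s\inv(b)$ from Proposition~\ref{general}(3) and evaluate at $1_B=e$ in each variable to obtain $\psi=\s$ and $\s=\s\inv$, hence $\s\in\Aut(B)$ with $\s^2=\I_B$; the converse via $\s\lt_B\s=\lt_B$ is likewise the paper's argument.
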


\begin{proof}
Let $e\in A\setminus\{0\}$ be an idempotent.
By Proposition~\ref{general}(2), $(A,\circ)=B_{\a,\s}$, where $B$ is alternative with
identity element $e\circ e=e$, $\a=\rt_e^\circ$ and $\s=\lt_e^\circ$.
In particular, $\a(e)=\s(e)=e$.

Proposition~\ref{general}(3) now gives $\s(ab)=\psi(a)\s\inv(b)$ for all $a,b\in B$.
Inserting $b=e$ into this equation yields $\s(a)=\psi(a)$, that is, $\s=\psi$.
If instead $a=e$, then $\s(b)=\psi(1_B)\s\inv(b)=\s(e)\s\inv(b)=\s\inv(b)$, hence
$\s=\s\inv$.
This means that $\s(ab)=\s(a)\s(b)$ for all $a,b\in B$, so $\s\in\Aut(B)$.

The converse is clear, in view of Proposition~\ref{general}(3).
\end{proof}

\begin{rmk}
  Every  division algebra over $\R$ contains a non-zero idempotent
  \cite{segre}. Hence if such an algebra has inversion, it is isomorphic to an isotope of
  the type given in Corollary~\ref{real}.
  Any alternative real division algebra is isomorphic to either $\R$,
  $\C$, $\H$ or $\O$ \cite{zorn31}.
\end{rmk}

\begin{prop}  \label{1morph}
  Let $A_{\a,\s}=(A,\ast)$ and $B_{\b,\tau}=(B,\circ)$ be isotopes of alternative
  division algebras $(A,xy)$, $(B,x\cdot y)$ of the type given in Corollary~\ref{real}.
  Then
  $$ \{\f \in \Mor(A_{\a,\s},B_{\b,\tau}) \mid \f(1_A)=1_B \} =
  \{\f\in\Mor(A,B)\setminus\{0\} \mid \f\a=\b\f, \: \f\s=\tau\f \} $$
\end{prop}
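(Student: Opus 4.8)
The plan is to establish the two set inclusions separately; both are forced by evaluating the multiplicativity relation at the identity and exploiting that $\a,\s,\b,\tau$ are bijective and fix the relevant identity element. Note that $\s(1_A)=1_A$ and $\tau(1_B)=1_B$ because $\s,\tau$ are automorphisms, while $\a(1_A)=1_A$ and $\b(1_B)=1_B$ hold by hypothesis; interestingly, the involutivity $\s^2=\I_A$, $\tau^2=\I_B$ will not be needed. Throughout, recall that $x\ast y=\a(x)\s(y)$ and $x\circ y=\b(x)\tau(y)$, products formed with the original multiplications $xy$ of $A$ and $x\cdot y$ of $B$.

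For the inclusion $\supseteq$, I would take $\f\in\Mor(A,B)\setminus\{0\}$ with $\f\a=\b\f$ and $\f\s=\tau\f$ and first verify that $\f$ respects the isotope products by the direct computation
$$\f(x\ast y)=\f(\a(x)\s(y))=\f(\a(x))\cdot\f(\s(y))=\b(\f(x))\cdot\tau(\f(y))=\f(x)\circ\f(y),$$
using multiplicativity of $\f$ for the original products and then the two intertwining relations. It then remains to see that $\f(1_A)=1_B$, and this is the one genuinely structural point of the argument. Applying $\f$ to $1_A\cdot1_A=1_A$ gives $\f(1_A)\cdot\f(1_A)=\f(1_A)$, so $\f(1_A)$ is an idempotent of $B$; it is nonzero, for otherwise $\f(x)=\f(1_A\cdot x)=\f(1_A)\cdot\f(x)=0$ for all $x$, contradicting $\f\ne0$. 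The key fact I would invoke is that a division algebra has a unique nonzero idempotent, namely its identity: from $e\cdot e=e=e\cdot1_B$ and injectivity of $\lt_e$ (valid since $e\ne0$) one gets $e=1_B$. Taking $e=\f(1_A)$ yields $\f(1_A)=1_B$, placing $\f$ in the left-hand set.

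For the reverse inclusion $\subseteq$, I would start from a morphism $\f\colon A_{\a,\s}\to B_{\b,\tau}$ with $\f(1_A)=1_B$, which is in particular nonzero. Writing out multiplicativity gives, for all $x,y\in A$,
$$\f(\a(x)\s(y))=\b(\f(x))\cdot\tau(\f(y)).$$
Setting $x=1_A$ and using $\a(1_A)=1_A$, $\b(1_B)=1_B$ collapses this to $\f(\s(y))=\tau(\f(y))$, that is, $\f\s=\tau\f$; symmetrically, setting $y=1_A$ and using $\s(1_A)=1_A$, $\tau(1_B)=1_B$ yields $\f\a=\b\f$. Substituting these two relations back into the displayed equation turns it into $\f(\a(x)\s(y))=\f(\a(x))\cdot\f(\s(y))$, and since $\a$ and $\s$ are surjective this is exactly $\f(uv)=\f(u)\cdot\f(v)$ for all $u,v\in A$, so $\f\in\Mor(A,B)\setminus\{0\}$ with the required intertwining conditions. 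I do not anticipate any real obstacle here: everything is determined by the two evaluations at the identity together with the bijectivity of the isotopy data, and the only non-formal ingredient in the whole proof is the uniqueness of the nonzero idempotent used to pin down $\f(1_A)=1_B$.
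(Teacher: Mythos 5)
Your proof is correct and takes essentially the same route as the paper's: both inclusions are obtained by evaluating the multiplicativity relation at the identity element and using bijectivity of $\a$ and $\s$. The only point you expand on is the paper's unproved remark that a non-zero morphism of the underlying alternative algebras sends $1_A$ to $1_B$, which you justify correctly via the uniqueness of the non-zero idempotent in a unital division algebra.
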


\begin{proof}
It is readily verified that any morphism $\f:A\to B$ satisfying $\f\a=\b\f$ and
$\f\s=\tau\f$ is a morphism $A_{\a,\s} \to B_{\b,\tau}$.
If $\f$ is non-zero then $\f(1_A)=1_B$.

Suppose $\f\in\Mor(A_{\a,\s}, B_{\b,\tau})$ and $\f(1_A)=1_B$.
By definition,
\begin{equation}
\f(\a(x)\s(y))=\f(x\ast y)=\f(x)\circ\f(y)=\b\f(x)\cdot \tau\f(y)
\end{equation}
for all $x,y\in A$. Setting $y=1_A$ in this equation yields $\f\s=\tau\f$; setting $x=1_A$
yields $\f\a=\b\f$.
Consequently, $\f\a(x)\cdot\f\s(y) = \b\f(x)\cdot \tau\f(y) = \f(\a(x) \s(y))$ holds
for all $x,y\in A$. Since $\a$ and $\s$ are bijective, this means that $\f(x) \cdot \f(y)=\f(xy)$
for all $x,y\in A$, which proves that $\f$ is a morphism $A\to B$.
\end{proof}

We denote by $\mathscr{I}(k)$ the category of all division algebras over $k$
with inversion on the left.
Furthermore, $\mathscr{I}_n(k)$ denotes the full subcategory of $\mathscr{I}(k)$ formed by
all objects of dimension $n$,  $\mathscr{I}^1(k)$ the full subcategory formed by all
object with left unity, and $\mathscr{I}^1_n(k)=\mathscr{I}^1(k)\cap\mathscr{I}_n(k)$.
As it will turn out to be convenient in the sequel, we stipulate that morphisms in $\Ii$
are non-zero.

%%%%%%%%%%%%%%%%%%%%%%%%%%%%%%%%%%%%%%%%%%%%
\subsection{The real, four-dimensional case}
\label{4dim}
%%%%%%%%%%%%%%%%%%%%%%%%%%%%%%%%%%%%%%%%%%%%

In this section, we investigate real division algebras of dimension four that have
inversion on the left.
Over $\R$, the quaternion algebra $\H$ is the only alternative division algebra of
dimension four, so by Proposition~\ref{general}, the algebras of interest are of the form
$A=\H_{\a,\b}$ with $\a,\b\in\gl(\H)$ such that $\b\lt_{\H}\b=\lt_\H$.

In any associative algebra or ring $A$, the set of invertible elements is denoted by
$A^\times$.
We denote by $\hat{\H}$ the group of quaternions of norm one, and set
$\go(V,q)=\{\f\in\gl(V) \mid \exists_{\mu\in k^\times}:\: q\f=\mu q\}$.
The group $\{\I,\kappa\}\subseteq\og(\H)$ acts on $\gl(\H)$ by $\gamma^\l=\l\gamma\l$
($\gamma\in\gl(\H)$, $\l\in\{\I,\kappa\}$), and $\lt_a^\kappa=\rt_{a\inv}$,
$\rt_a^\kappa=\lt_{a\inv}$ for $a\in\hat{\H}$.

\begin{prop} \label{criterion}
  Let $\a,\b\in\gl(\H)$. The isotope $\H_{\a,\b}$ has inversion if and only if
  $\b=\lt_a\rt_u$ for some $a,u\in\H^\times$, with $u^2\in\R1_\H$.
\end{prop}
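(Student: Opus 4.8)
The plan is to first reduce the statement to a condition on $\b$ alone. Since $\H$ is alternative, it is unital and has inversion on the left by Proposition~\ref{general}(1), so Proposition~\ref{general}(3) applies with $B=\H$: the isotope $\H_{\a,\b}$ has inversion if and only if $\b\lt_\H\b=\lt_\H$, a condition that does not involve $\a$. Thus it suffices to show that $\b\lt_\H\b=\lt_\H$ holds precisely when $\b=\lt_a\rt_u$ with $a,u\in\H^\times$ and $u^2\in\R1_\H$. For the easy direction, if $\b=\lt_a\rt_u$ then associativity gives $\b(x)=axu$, whence $\b\lt_b\b(x)=abaxu^2$. Writing $u^2=\mu1_\H$ (note $\mu\neq0$ since $u$ is invertible), this equals $\mu(aba)x$, so $\b\lt_b\b=\lt_{\mu aba}$; as $b$ runs through $\H$ the element $\mu aba$ runs through all of $\H$, giving $\b\lt_\H\b=\lt_\H$.

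For the converse I would use the equivalent formulation from Proposition~\ref{general}(3): the hypothesis provides $\psi\in\gl(\H)$ with $\b(xy)=\psi(x)\b\inv(y)$ for all $x,y$. Specialising $y=1_\H$ yields $\b(x)=\psi(x)w$ with $w=\b\inv(1_\H)\in\H^\times$, hence $\psi(x)=\b(x)w\inv$; specialising $x=1_\H$ yields $\b(y)=v\,\b\inv(y)$ with $v=\psi(1_\H)\in\H^\times$, hence $\b\inv(y)=v\inv\b(y)$. Substituting both expressions back into the defining relation gives
\[ \b(xy)=\b(x)\,q\,\b(y), \qquad q=w\inv v\inv\in\H^\times. \]

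The key observation is then that $\d\colon x\mapsto\b(x)q$ satisfies $\d(xy)=\b(xy)q=\b(x)q\b(y)q=\d(x)\d(y)$, so $\d$ is a bijective linear multiplicative map, i.e.\ an automorphism of $\H$. Every automorphism of $\H$ is inner (Skolem--Noether), so $\d(x)=pxp\inv$ for some $p\in\H^\times$, and therefore $\b(x)=\d(x)q\inv=p\,x\,p\inv q\inv$, that is $\b=\lt_a\rt_u$ with $a=p$ and $u=p\inv q\inv\in\H^\times$. It remains to see that $u^2\in\R1_\H$: feeding $\b=\lt_a\rt_u$ back into the computation $\b\lt_b\b(x)=abaxu^2$ from the first paragraph, the hypothesis $\b\lt_b\b\in\lt_\H$ forces (after evaluating at $x=1_\H$ and cancelling the invertible factor $aba$) the relation $xu^2=u^2x$ for all $x$, so $u^2$ lies in the centre $\R1_\H$ of $\H$. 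I expect the main obstacle to be the passage to the automorphism: correctly combining the two specialisations into the single multiplicative relation $\b(xy)=\b(x)q\b(y)$, and thereby recognising that right translation by $q$ turns $\b$ into an automorphism, is where the real content lies; after that, innerness of the automorphisms of $\H$ and the centrality argument for $u^2$ are routine.
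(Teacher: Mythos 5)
Your proof is correct, but it takes a genuinely different route from the paper's. The paper, after the same reduction to the condition $\b\lt_\H\b=\lt_\H$, proceeds analytically: it applies polar decomposition to $\b$ (or $\b\kappa$), writes the special orthogonal factor as $\lt_a\rt_u$ with $a,u\in\hat{\H}$ using the structure of $\so(\H)$, and then exploits uniqueness of the polar decomposition to rule out the $\kappa$-case, force $u^2\in\R1_\H$, and show that the positive definite symmetric factor commutes with all of $\lt_\H$ and is therefore scalar. You instead work purely algebraically: from the functional equation $\b(xy)=\psi(x)\b\inv(y)$ of Proposition~\ref{general}(3) you extract, by the two specialisations $y=1_\H$ and $x=1_\H$, the single relation $\b(xy)=\b(x)q\b(y)$, recognise $\rt_q\b$ as an automorphism of $\H$, and invoke Skolem--Noether to land directly on $\b=\lt_a\rt_u$; the centrality of $u^2$ then drops out of the explicit computation $\b\lt_b\b=\lt_{abau^2}\rt_{\,\cdot\,}$ exactly as you describe (taking, say, $b=1_\H$ to make $aba$ invertible before cancelling). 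Both arguments are complete; the paper's has the advantage of fitting into the normal-form machinery (polar decomposition, the groups $\spds$ and $U_\H$) used for the classification in Section~\ref{4dim}, whereas yours is shorter, avoids the Euclidean structure of $\H$ entirely, and would transfer verbatim to quaternion division algebras over other fields where Skolem--Noether is available.
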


\begin{proof}
The ``if'' part of the proposition is easily verified.
Suppose instead that $\b\lt_\H\b=\lt_\H$.
Set $\tilde{\b}=\b$ if $\det\b>0$, and $\tilde{\b}=\b\kappa$ if $\det\b<0$.
Now, polar decomposition (see \eg{} \cite[\S14]{gantmacher59i}), gives rise to a unique
decomposition of $\tilde{\b}$ as $\tilde{\b}=\gamma\d$, where $\gamma\in\so(\H)$ and
$\d\in\Pds(\H)$. The map $\gamma$ can in turn be written as $\lt_a\rt_u$ for some
$a,u\in\hat{\H}$ \cite[Corollary~9]{hurwalg}.
Hence we have $\b=\lt_a\rt_u\d\l$, with $\l\in\{\I,\kappa\}$, and this decomposition is
unique up to simultaneous change of sign of the elements $a,u\in\H$.

For $x\in\H$, we have
\begin{equation*}
\b\lt_x\b = \lt_a\rt_u\d\l \lt_x \lt_a\rt_u\d\l = \lt_a\rt_u \lt_x^\l\lt_a^\l\rt_u^\l\e\d^\l
\end{equation*}
where
$\e=\left(\lt_x^\l\lt_a^\l\rt_u^\l\right)\inv\!\d\lt_x^\l\lt_a^\l\rt_u^\l \in\Pds(\H)$.
Since $\lt_y,\rt_y,\lt_y^\l,\rt_y^\l\in\go(\H)$ for all $y\in\H$ and
$\b\lt_x\b\in\lt_\H\subseteq \go(\H)$, we get
$\e\d^\l\in\go(\H)$. The uniqueness of polar decomposition now implies
$\e\d^\l=\mu\I$ for some $\mu>0$.
Thus $\b\lt_x\b = \mu\lt_a\rt_u \lt_x^\l\lt_a^\l\rt_u^\l$.

If $\l=\kappa$, then $\b\lt_x\b=\mu\lt_{au\inv}\rt_{a\inv x\inv u}$, which is in $\lt_\H$
if and only $a\inv x\inv u\in\R1_\H$. Conseqently, $\b\lt_\H\b\not\subseteq\lt_\H$ in this
case.
Hence $\l=\I$, and we have $\b\lt_x\b = \lt_{axa}\rt_{u^2}$, which is in $\lt_A$ if
and only if $u^2\in\R1_\H$.

Next, the identity
$\left(\lt_x\lt_a\rt_u\right)\inv\!\d\lt_x\lt_a\rt_u\d^\l=\e\d^\l=\mu\I$ implies
that $\lt_x\inv\d\lt_x=\mu\lt_a\rt_u\d\inv\rt_u\inv\lt_a\inv$. Since the right hand side
is independent of $x$, this is only possible if $\d$ commutes with every element in
$\lt_\H$, that is, $\d=\rho\I$ for some scalar $\rho>0$.
Hence, $\b=\rho\lt_a\rt_u=\lt_{\rho a}\rt_u$, as asserted.
\end{proof}

Set $U_\H=\{u\in\H^\times\mid u^2\in\R1\}/\R^\times$.
The group $\H^\times/\R^\times$ acts on the set
$\mathscr{C} = \H^\times/\R^\times\times\spds(\H)\times U_\H$
by
\begin{equation}\label{Caction}
s\cdot(a,\d,u)=(c_s(a),c_s\d c_s\inv,c_s(u))
\end{equation}
where $c_s=\lt_s\rt_s\inv$.
The results in \cite[Section~3]{hurwalg} now give the following corollary.

\begin{cor}
  The category $\mathscr{I}_4(\R)$ decomposes as a coproduct
$$\mathscr{I}_4(\R)=\mathscr{I}_4(\R)_1\amalg\mathscr{I}_4(\R)_{-1}$$
where, for $i=\pm1$, $\mathscr{I}_4(\R)_i\subseteq\mathscr{I}_4(\R)$ is the full subcategory
formed by all objects $A$ satisfying $\sign(\det(\rt_x))=i$ for all $x\in A\setminus\{0\}$.
For each $i=\pm1$, the functor
$\mathscr{M}_i:{}_{\H^\times\!/\R^\times}\mathscr{C}\to\mathscr{I}_4(\R)_i$
defined for morphisms by $\mathscr{M}_i(s)=c_s$, and
$$\mathscr{M}_1(a,\d,u)=\H_{\lt_a\!\d,\rt_u} \,,\qquad
\mathscr{M}_{-1}(a,\d,u)=\H_{\rt_a\!\d\kappa,\rt_u}$$
for objects, is an equivalence.
\end{cor}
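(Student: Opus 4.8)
The plan is to realise both sides as groupoids and to check that $\mathscr{M}_i$ is a well-defined functor which is essentially surjective and fully faithful. Two preliminary observations reduce everything to a groupoid computation. First, a non-zero morphism $\f\colon A\to B$ of four-dimensional division algebras is an isomorphism: its kernel is a proper ideal, hence zero (if $\f(x)=0$ with $x\ne0$ then $\f$ vanishes on $xA=A$ by surjectivity of $\lt_x$), and equality of dimensions gives bijectivity. Second, from $\rt^B_{\f(x)}=\f\rt^A_x\f\inv$ one reads off that $\sign(\det\rt_x)$ is invariant under isomorphism; and it is independent of $x\in A\setminus\{0\}$ because $x\mapsto\det\rt_x$ is a continuous, nowhere-vanishing function on the connected set $A\setminus\{0\}$. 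Writing $\sigma(A)\in\{\pm1\}$ for this common sign, the subcategories $\mathscr{I}_4(\R)_i$ are full, partition the objects, and admit no morphisms between different signs; this is the asserted coproduct decomposition.

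Next I would verify that $\mathscr{M}_i$ is a functor into $\mathscr{I}_4(\R)_i$. On objects, taking $\b=\rt_u$ with $u^2\in\R1$ makes $\H_{\lt_a\d,\rt_u}$ and $\H_{\rt_a\d\kappa,\rt_u}$ division algebras with inversion by Proposition~\ref{criterion}; and since $\det\lt_a=\det\rt_a=n(a)^2>0$, $\det\d=1$ and $\det\kappa=-1$, the sign of the determinant of the first factor is $+1$ and $-1$ respectively, so the images lie in $\mathscr{I}_4(\R)_1$ and $\mathscr{I}_4(\R)_{-1}$. On morphisms, $c_s=\lt_s\rt_s\inv$ is the inner automorphism $x\mapsto sxs\inv$ of $\H$; because $\H$ is associative, $s\mapsto c_s$ is a homomorphism $\H^\times/\R^\times\to\Aut(\H)\subseteq\so(\H)$, injective because $\H$ has centre $\R1$. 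For any $\a\in\gl(\H)$, the relation $c_s(\a(x)\cdot\rt_u(y))=(c_s\a c_s\inv)(c_s x)\cdot\rt_{c_s(u)}(c_s y)$ shows that $c_s$ is an isomorphism $\H_{\a,\rt_u}\to\H_{c_s\a c_s\inv,\rt_{c_s(u)}}$; specialising $\a=\lt_a\d$ (resp.\ $\rt_a\d\kappa$) and using $c_s\lt_a c_s\inv=\lt_{c_s(a)}$, $c_s\rt_a c_s\inv=\rt_{c_s(a)}$, $c_s\kappa=\kappa c_s$ and $c_s\d c_s\inv\in\spds(\H)$ identifies the target with $\mathscr{M}_i(s\cdot(a,\d,u))$ for the action \eqref{Caction}. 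Functoriality is then immediate.

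For essential surjectivity, let $A\in\mathscr{I}_4(\R)_i$. Choosing any non-zero $e\in A$ and applying Proposition~\ref{general}(2) (with $\H$ the unique four-dimensional alternative real division algebra) together with Proposition~\ref{criterion} gives $A\cong\H_{\a,\lt_b\rt_u}$ with $\a\in\gl(\H)$ and $b,u\in\H^\times$, $u^2\in\R1$. The left factor of $\b$ is removed by a single isomorphism: $(\lt_b,\I,\lt_b)$ is an autotopy of $\H$ (this is just associativity), so $\lt_b\colon\H_{\a,\lt_b\rt_u}\to\H_{\lt_b\a\lt_b\inv,\rt_u}$ is an isomorphism and I may assume $\b=\rt_u$. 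It then remains to bring $\a$, whose determinant has sign $i$, into the form $\lt_a\d$ or $\rt_a\d\kappa$ with $\d\in\spds(\H)$. This is exactly the polar-decomposition analysis used in the proof of Proposition~\ref{criterion}: one sets $\tilde\a=\a$ or $\a\kappa$ according to $\sign\det\a$, decomposes $\tilde\a=\gamma\d_0$ with $\gamma\in\so(\H)$ and $\d_0\in\Pds(\H)$, factors $\gamma$ as $\lt_{a_1}\rt_{c_1}$, and absorbs scalars together with the norm-one part of $u$. I expect this normal-form reduction, which is the content of \cite[Section~3]{hurwalg}, to be the main obstacle, chiefly because of the bookkeeping in separating the two sign cases and in identifying the residual symmetry that survives.

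That residual symmetry is precisely the groupoid action, and it also delivers full faithfulness. Once both algebras are in normal form, an isomorphism $\f\colon\H_{\a,\rt_u}\to\H_{\a',\rt_{u'}}$ is the same datum as an autotopy $(\a'\f\a\inv,\rt_{u'}\f\rt_u\inv,\f)$ of $\H$; analysing this constraint with the uniqueness of polar decomposition, again via \cite[Section~3]{hurwalg}, forces $\f$ to be an automorphism of $\H$, necessarily inner, say $\f=c_s$, whereupon the intertwining relations above give $(a',\d',u')=s\cdot(a,\d,u)$. Conversely every such $s$ yields a morphism, by the functoriality already established; and since $s\mapsto c_s$ is injective, distinct group elements give distinct morphisms. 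Hence $\mathscr{M}_i$ restricts to a bijection between the morphisms $(a,\d,u)\to(a',\d',u')$ of the action groupoid and the isomorphisms $\mathscr{M}_i(a,\d,u)\to\mathscr{M}_i(a',\d',u')$, so $\mathscr{M}_i$ is fully faithful. Together with essential surjectivity this shows $\mathscr{M}_i$ is an equivalence.
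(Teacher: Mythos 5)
Your argument is correct and follows essentially the same route as the paper: both reduce the statement to the isotope-classification machinery of \cite{hurwalg} (polar decomposition of the isotopy components and the decomposition $\so(\H)=\lt_{\hat{\H}}\rt_{\hat{\H}}$) combined with Proposition~\ref{criterion} --- the paper by directly citing Theorem~6 and Propositions~10--12 of \cite{hurwalg}, you by sketching the same reduction and deferring to that reference at the two hard steps (the normal-form reduction and the fact that isomorphisms between normal forms are inner automorphisms of $\H$). The only blemishes are two transposed autotopy components: in the paper's convention $\f_1(xy)=\f_2(x)\f_3(y)$, associativity gives the autotopy $(\lt_b,\lt_b,\I)$ rather than $(\lt_b,\I,\lt_b)$, and an isomorphism $\f\colon\H_{\a,\rt_u}\to\H_{\a',\rt_{u'}}$ corresponds to the autotopy $(\f,\a'\f\a\inv,\rt_{u'}\f\rt_u\inv)$; the conclusions you draw from these are nevertheless the correct ones.
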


\begin{proof}
This follows from Theorem~6 and Propositions~10, 11 and 12 in \cite{hurwalg}.

Observe that $\H_{\a,\b}\in\mathscr{I}_4(\R)$ belongs to $\mathscr{I}_4(\R)_i$ if and only
if $\sign(\det(\a))=i$, $i=\pm1$. Thus this decomposition coincides the one given by
\cite[Proposition~10]{hurwalg}.
The groupoid ${}_{\H^\times/\R^\times}\mathscr{C}$ may be viewed as a full subcategory of
$\mathscr{Z}$ defined in \cite[Proposition~12]{hurwalg} via the embedding
$(a,\d,u)\mapsto ((a,u),(\d,\I))$.
Now our Proposition~\ref{criterion}, together with Propostion~11, 12 and Theorem~6 of
\cite{hurwalg} gives the result.
\end{proof}

Our aim is to give a set of representatives for the $\H^\times\slash\R^\times$-orbits of
$\mathscr{C}$, thereby completing the classification of
the four-dimensional real division algebras with inversion.
Remember that $s\mapsto c_s$ defines an epimorphism $\H^\times\to\Aut(\H)$ with
kernel $\R^\times$, inducing an isomorphism $\H^\times/\R^\times\to\Aut(\H)$.
Below, we shall view $\mathscr{C}$ as an $\Aut(\H)$-set with action defined through
this isomorphism.
Observe also that $\so(\Im\H)\to\Aut(\H),\; f\mapsto \I_\R\oplus f$ is an isomorphism.
Choosing an orthonormal basis, we may identify $\Im\H$ with the Euclidean space
$\R^3$, and hence $\H$ with $\R\oplus\R^3$.
For $\d\in\spds(\H)$, we have
$\d=\begin{pmatrix} \b & b^\ast \\ b & B \end{pmatrix}$ for some $\b\in\R$,
$b\in\R^3$ and $B\in\R^{3\times3}$.
Now conjugation with $\f=\I_\R\oplus f\in \Aut(\H)$ is given by
$$\f\d\f\inv = \begin{pmatrix}  \b & f(b)^\ast \\ f(b) & fBf\inv \end{pmatrix}\,.$$
The matrix $B$ is symmetric, and hence is diagonalisable under conjugation with
$\so(\R^3)$. 

Set $\hat{\mathscr{C}}= \H^\times/\R^\times \times \sym(\H) \times U_\H$, with an
$\Aut(\H)$-action given by $\f\cdot(a,\d,u)=(\f(a),\f\d\f\inv,\f(u))$.
Clearly, $\mathscr{C}\subseteq\hat{\mathscr{C}}$ as an $\Aut(\H)$-set.
The point of introducing this larger set is that we can find normal forms for
$\hat{\mathscr{C}}$ and restrict back to $\mathscr{C}$, which turns out to have certain
technical advantages over considering $\mathscr{C}$ directly.

Define $\so(\R^3)$-sets $\mathscr{B}_{ij}$, $i,j\in\{0,1\}$ by
\begin{align*}
  \mathscr{B}_{00}&= \mathbb{P}(\R^3)\times \R^3\times\sym(\R^3)\times\R\,, &
  \mathscr{B}_{10}&=
  \mathbb{P}(\R^3)\times \mathbb{P}(\R^3)\times \R^3\times \sym(\R^3)\times\R\,, \\
  \mathscr{B}_{01}&= \R^3\times \R^3\times\sym(\R^3)\times\R\,, &
  \mathscr{B}_{11}&= \mathbb{P}(\R^3)\times \R^3\times \R^3\times \sym(\R^3)\times\R\,. \\
\end{align*}
and $\so(\R^3)$-actions
\begin{align}
  f\cdot(c,b,B,\b) &= (f(c),f(b),fBf\inv,\b) &
  &\mbox{on $\mathscr{B}_{00}$ and $\mathscr{B}_{01}$}, \label{so3act1} \\
%  f\cdot(c,b,d,\b) &= (f(c),f(b),d,\b) & &\mbox{on } \mathscr{B}_{10}, \\
  f\cdot(u,c,b,B,\b) &= (f(u),f(c),f(b),fBf\inv,\b) &
  &\mbox{on $\mathscr{B}_{10}$ and $\mathscr{B}_{11}$}. \label{so3act2}
%  f\cdot(c,b,u,d,\b) &= (f(c),f(b),f(u),d,\b) & &\mbox{on } \mathscr{B}_{11}.
\end{align}
Now define functors
$\mathscr{N}_{ij}:{}_{\so(\R^3)}\mathscr{B}_{ij}\to{}_{\Aut(\H)}\hat{\mathscr{C}}$ by
\begin{align*}
  \mathscr{N}_{00}(c,b,B,\b)&=\left(\smatr{0\\c},\smatr{\b&b^\ast\\b&B},1 \right), &
  \mathscr{N}_{10}(u,c,b,B,\b)&=\left(\smatr{0\\c},\smatr{\b&b^\ast\\b&B},u \right), \\
  \mathscr{N}_{01}(c,b,B,\b)&=\left(\smatr{1\\c},\smatr{\b&b^\ast\\b&B},1 \right), &
  \mathscr{N}_{11}(u,c,b,B,\b)&=\left(\smatr{1\\c},\smatr{\b&b^\ast\\b&B},u \right).
\end{align*}
and $\mathscr{N}_{ij}(f)=\I_\R\oplus f$ for $f\in\so(\R^3)$.
It is easy to see that each $\mathscr{N}_{ij}$ is full and faithful, and that
$\hat{\mathscr{C}}= \coprod_{i,j\in\{0,1\}}\mathscr{N}_{ij}(\mathscr{B}_{ij})$.

We proceed to find normal forms for the $\so(\R^3)$-sets $\mathscr{B}_{ij}$.
Let $\hat{\mathcal{T}}=\{d\in\R^3 \mid d_1\le d_2 \le d_3\}$,
$\mathcal{T}=\{d\in\hat{\mathcal{T}} \mid 0<d_1 \}$ and
$D_d=\smatr{d_1\\&d_2\\&&d_3}$ for any $d\in\R^3$.
From the real spectral theorem follows that every orbit in $\mathscr{B}_{ij}$,
$i,j\in\{0,1\}$ contains an element for which the matrix $B\in\sym(\R^3)$ takes the form
$D_d$ for some $d\in\hat{\mathcal{T}}$ ($d_1,d_2,d_3$ being the eigenvalues of $B$), and
$d$ is an invariant for the orbit.
The $\so(\R^3)$-action on $\sym(\R^3)$ induced by either of (\ref{so3act1}) and
(\ref{so3act2}) is given by conjugation, so its isotropy subgroup
$\so_d(\R^3)\subseteq\so(\R^3)$ at $D_d$ consists of all $f\in\so(\R^3)$ that leave the
eigenspaces of $D_d$ invariant.
Thus
\begin{equation*}
  \so_d(\R^3)=
  \begin{cases}
    \so(\R^3) & \mbox{for all} \quad
    d\in\hat{\mathcal{T}}_1=\{d\in\hat{\mathcal{T}} \mid d_1=d_2=d_3\}, \\
    \langle\iota(\so(\R^2)),\bar{\iota}(-\I_2)\rangle & \mbox{for all} \quad
    d\in\hat{\mathcal{T}}_2=\{d\in\hat{\mathcal{T}} \mid d_1=d_2<d_3\}, \\
    \langle\bar{\iota}(\so(\R^2)),\iota(-\I_2)\rangle & \mbox{for all} \quad
    d\in\hat{\mathcal{T}}_3=\{d\in\hat{\mathcal{T}} \mid d_1<d_2=d_3\}, \\
    \langle\iota(-\I_2),\bar{\iota}(-\I_2)\rangle & \mbox{for all} \quad
    d\in\hat{\mathcal{T}}_4=\{d\in\hat{\mathcal{T}} \mid d_1<d_2<d_3\},
  \end{cases}
\end{equation*}
where $\iota$ and $\bar{\iota}$ are the embeddings $\so(\R^2)\to\so(\R^3)$ given by
$$ \iota(\f)= \left(
\begin{array}{c|c}
  \f&\mbox{$\begin{array}{c}0\\0\end{array}$} \\ \hline
  \rule{0pt}{10pt}\mbox{$\begin{array}{cc}0&0\end{array}$}&1\rule{0pt}{10pt}
\end{array}
\right) \quad \mbox{and}\quad
\bar{\iota}(\f)= \left(
\begin{array}{c|c}
  1&\mbox{$\begin{array}{cc}0&0\end{array}$}    \\ \hline
  \mbox{$\begin{array}{c}\rule{0pt}{10pt}0\\0\end{array}$} & \f
\end{array}
\right)$$
respectively.

To obtain normal forms for the $\so(\R^3)$-sets $\mathscr{B}_{ij}$, it suffices to
consider, for each $d\in\hat{\mathcal{T}}$, the action of $\so_d(\R^3)$ on elements of the
form $(c,b, D_d,\b)$ in $\mathscr{B}_{0i}$ and $(u,c,b,D_d,\b)$ in $\mathscr{B}_{1i}$.
As $D_d$ and $\b$ are fixed by $\so_d(\R^3)$, the essential problem is to find normal
forms for the pairs $(c,b)$ and $(u,c,b)$ under the actions induced from (\ref{so3act1})
and (\ref{so3act2}).
This is an elementary, though rather technial, task.
Normal forms  are listed in Appendix \ref{normalforms}. On this basis, we may state the
following result.
For $s\in\{1,2,3,4\}$, set
$\hat{\mathcal{D}}_s=\{D_d\mid d\in\hat{\mathcal{T}}_s\}\subseteq\sym(\R^3)$ and
$\mathcal{D}_s=\{D_d\mid d\in\mathcal{T}_s\}\subseteq\Pds(\R^3)$, and let
$\mathcal{N}_{ij}^s$ ($i,j\in\{0,1\}$, $s\in\{1,2,3,4\}$) be defined as in Appendix~\ref{normalforms}.

\begin{prop} \label{hatclassification}
  For all $i,j\in\{0,1\}$, the set
  $$\bigcup_{s=1}^4\left(\mathcal{N}_{ij}^s\times \hat{\mathcal{D}}_s\times\R\right) \,,$$
  is a cross-section for the orbit set of the $\so(\R^3)$-set
  $\mathscr{B}_{ij}$.
\end{prop}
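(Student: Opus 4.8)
The plan is to assemble the statement from the spectral-theorem reduction carried out above together with the normal-form computations deferred to Appendix~\ref{normalforms}, handling the four values of $i,j\in\{0,1\}$ uniformly. A subset of an $\so(\R^3)$-set is a cross-section for the orbit set precisely when it meets every orbit in exactly one point, so I shall establish \emph{exhaustiveness} and \emph{irredundancy} separately.

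For exhaustiveness I would begin with an arbitrary point of $\mathscr{B}_{ij}$ and first act on its $\sym(\R^3)$-component $B$. The real spectral theorem makes $B$ orthogonally diagonalisable; permuting the eigenvalues arranges them in non-decreasing order, and if the diagonalising matrix has determinant $-1$ one corrects it by the reflection $\smatr{-1\\&1\\&&1}$, which commutes with every diagonal matrix. Hence some $f\in\so(\R^3)$ carries $B$ to $D_d$ with $d\in\hat{\mathcal{T}}$, and $d$---being the sorted eigenvalue sequence---is an invariant of the orbit that places it in exactly one stratum $\hat{\mathcal{T}}_s$; the final coordinate $\b$ is fixed throughout. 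Once $B=D_d$, the residual freedom is the stabiliser $\so_d(\R^3)$ of $D_d$, which depends only on $s$, and applying the normal forms of Appendix~\ref{normalforms} to the induced $\so_d(\R^3)$-action on the remaining data---$(c,b)$ when $i=0$ and $(u,c,b)$ when $i=1$---brings that data into $\mathcal{N}_{ij}^s$. The resulting point then lies in $\mathcal{N}_{ij}^s\times\hat{\mathcal{D}}_s\times\R$.

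For irredundancy I would suppose two points of the union lie in a common $\so(\R^3)$-orbit. Their $\b$-coordinates agree since $\b$ is invariant, and their diagonal $B$-coordinates $D_d,D_{d'}$ satisfy $d=d'$ because the sorted eigenvalue sequence is an orbit invariant; so both points lie in the same stratum $s$. Any $f\in\so(\R^3)$ relating them fixes $D_d$, hence lies in $\so_d(\R^3)$, and so relates their remaining data through the $\so_d(\R^3)$-action. As Appendix~\ref{normalforms} shows $\mathcal{N}_{ij}^s$ to contain exactly one representative per such orbit, the two points coincide.

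The genuinely substantive work is confined to Appendix~\ref{normalforms}, where the explicit cross-sections $\mathcal{N}_{ij}^s$ for the $\so_d(\R^3)$-action on the pairs $(c,b)$ and triples $(u,c,b)$ are produced stratum by stratum; this is where the interplay of $\so(\R^3)$, the embedded $\so(\R^2)$ and the sign subgroups $\iota(-\I_2),\bar{\iota}(-\I_2)$ really enters, and I expect it to be the main obstacle. The only delicate point in the proposition itself is ensuring the diagonalisation of $B$ stays inside $\so(\R^3)$ rather than $\og(\R^3)$, so that the residual group is the orientation-preserving stabiliser $\so_d(\R^3)$ used in the appendix; the reflection trick above takes care of this.
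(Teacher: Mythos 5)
Your proposal is correct and follows essentially the same route as the paper, which proves this proposition implicitly in the paragraphs preceding its statement: diagonalise the $\sym(\R^3)$-component via the spectral theorem (the sorted eigenvalue triple $d$ and the scalar $\b$ being orbit invariants), reduce to the isotropy group $\so_d(\R^3)$, and delegate the cross-sections for the residual data $(c,b)$ or $(u,c,b)$ to Appendix~\ref{normalforms}. Your explicit separation of exhaustiveness from irredundancy and the determinant correction keeping the diagonalising map inside $\so(\R^3)$ are points the paper leaves tacit, but they do not change the argument.
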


By Proposition~\ref{hatclassification}, the category $\hat{\mathscr{C}}$ is classified up to
isomorphism. To classify $\mathscr{C}$, and thereby $\mathscr{I}_4(\R)$, it now suffices
to determine which elements in the classifying list of $\hat{\mathscr{C}}$ belong to
$\mathscr{C}$.

\begin{cor} \label{classification}
  For all $j\in\{0,1\}$, the sets
  \begin{align*}
    \mathcal{S}_{0j}&=\left\{(c,b,D_d,\b)\in\bigcup_{s=1}^4\left(\mathcal{N}_{0j}^s\times
        \mathcal{D}_s\times\R_{>0}\right) \mid
      \det \begin{pmatrix} \b & b^\ast \\ b & D_d \end{pmatrix} =1 \right\} \\
    \intertext{and}
    \mathcal{S}_{1j}&=\left\{(u,c,b,D_d,\b)\in\bigcup_{s=1}^4\left(\mathcal{N}_{1j}^s\times
        \mathcal{D}_s\times\R_{>0}\right) \mid
      \det \begin{pmatrix} \b & b^\ast \\ b & D_d \end{pmatrix} =1 \right\}
  \end{align*}
  are cross-sections for the orbit sets of ${}_{\so(\R^3)}\mathscr{B}_{0j}$ and
  ${}_{\so(\R^3)}\mathscr{B}_{1j}$ respectively. Hence
  $$\bigcup_{\genfrac{}{}{0pt}{} {r=\pm1}{i,j\in\{0,1\}} }
      \mathscr{M}_r\left(\mathscr{N}_{ij}(\mathcal{S}_{ij})\right)$$
  is a cross-section for the isomorphism classes in $\mathscr{I}_4(\R)$.
\end{cor}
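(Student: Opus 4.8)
The plan is to obtain each $\mathcal{S}_{ij}$ by intersecting the cross-section of Proposition~\ref{hatclassification} with the $\so(\R^3)$-invariant preimage $\mathscr{N}_{ij}\inv(\mathscr{C})\subseteq\mathscr{B}_{ij}$, and then to transport the resulting cross-sections through the functors $\mathscr{N}_{ij}$ and $\mathscr{M}_r$. The underlying general principle is that if $X$ is a $G$-set, $Y\subseteq X$ a $G$-invariant subset, and $S\subseteq X$ a cross-section for the $G$-orbits of $X$, then $S\cap Y$ is a cross-section for the $G$-orbits of $Y$, since $Y$ is a union of orbits. Applied with $G=\so(\R^3)$, $X=\mathscr{B}_{ij}$, $S=\bigcup_{s}\left(\mathcal{N}_{ij}^s\times\hat{\mathcal{D}}_s\times\R\right)$ and $Y=\mathscr{N}_{ij}\inv(\mathscr{C})$, this reduces the whole problem to identifying $S\cap Y$. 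Since $\mathscr{C}$ differs from $\hat{\mathscr{C}}$ only in its middle factor ($\spds(\H)$ in place of $\sym(\H)$) and $\mathscr{N}_{ij}$ leaves this factor as $\smatr{\b & b^\ast \\ b & B}$, an element of $S$ lies in $Y$ precisely when $\smatr{\b & b^\ast \\ b & D_d}\in\spds(\H)=\Pds(\H)\cap\SL(\H)$.

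The key step is the linear-algebra characterisation of this membership. Because every principal submatrix of a positive definite matrix is again positive definite, lying in $\Pds(\H)$ forces $D_d$ to be positive definite, i.e.\ $d\in\mathcal{T}_s$ and $D_d\in\mathcal{D}_s$. Conversely, a Schur-complement computation with respect to the $D_d$-block gives
\begin{equation*}
  \det\smatr{\b & b^\ast \\ b & D_d} = \det(D_d)\left(\b - b^\ast D_d\inv b\right) \,,
\end{equation*}
and $\smatr{\b & b^\ast \\ b & D_d}$ is positive definite if and only if $D_d$ is positive definite and the Schur complement $\b - b^\ast D_d\inv b$ is positive. Hence, assuming $D_d\in\mathcal{D}_s$, the single condition $\det\smatr{\b & b^\ast \\ b & D_d}=1$ forces $\b - b^\ast D_d\inv b = \det(D_d)\inv>0$, which simultaneously places the matrix in $\Pds(\H)$ and, as its determinant equals $1$, in $\SL(\H)$; in particular $\b>0$, being a diagonal entry of a positive definite matrix. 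Thus $S\cap Y$ is exactly the set $\mathcal{S}_{ij}$ as defined, the extra requirement $\b\in\R_{>0}$ being automatically satisfied. This establishes that $\mathcal{S}_{0j}$ and $\mathcal{S}_{1j}$ are cross-sections for the $\so(\R^3)$-orbits of $\mathscr{N}_{0j}\inv(\mathscr{C})$ and $\mathscr{N}_{1j}\inv(\mathscr{C})$.

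It remains to pass from orbits to isomorphism classes. Since each $\mathscr{N}_{ij}$ is full, faithful and $\so(\R^3)$-equivariant with respect to $f\mapsto\I_\R\oplus f$, and $\hat{\mathscr{C}}=\coprod_{i,j}\mathscr{N}_{ij}(\mathscr{B}_{ij})$, we have $\mathscr{C}=\coprod_{i,j}\mathscr{N}_{ij}\bigl(\mathscr{N}_{ij}\inv(\mathscr{C})\bigr)$, so the images $\mathscr{N}_{ij}(\mathcal{S}_{ij})$ assemble to a cross-section for the $\Aut(\H)$-orbits in $\mathscr{C}$, equivalently for the isomorphism classes of objects of the groupoid ${}_{\H^\times/\R^\times}\mathscr{C}$ (using $\Aut(\H)\cong\so(\Im\H)\cong\so(\R^3)$ and $\Aut(\H)\cong\H^\times/\R^\times$). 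Finally, applying the equivalences $\mathscr{M}_r$ of the preceding corollary and the decomposition $\mathscr{I}_4(\R)=\mathscr{I}_4(\R)_1\amalg\mathscr{I}_4(\R)_{-1}$, the union $\bigcup_{r,i,j}\mathscr{M}_r\bigl(\mathscr{N}_{ij}(\mathcal{S}_{ij})\bigr)$ becomes a cross-section for the isomorphism classes in $\mathscr{I}_4(\R)$.

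The main obstacle is the second paragraph: one must check that, after diagonalising the $B$-block, the combination ``$D_d$ positive definite and $\det=1$'' is genuinely \emph{equivalent} to lying in $\spds(\H)$, not merely necessary. The non-obvious point is that no further inequality on $b$ is required, which is exactly what the Schur-complement identity supplies; everything else is bookkeeping with the already-established equivalences and the general cross-section principle.
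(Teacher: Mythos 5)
Your proposal is correct and follows essentially the same route as the paper: the paper likewise reduces the problem to deciding which elements of the cross-section of Proposition~\ref{hatclassification} land in $\mathscr{C}$, characterising positive definiteness of $\smatr{\b & b^\ast \\ b & D_d}$ via Sylvester's criterion and the observation that, for matrices of this block form, positivity of the determinant and the diagonal entries suffices. Your Schur-complement identity $\det\smatr{\b & b^\ast \\ b & D_d}=\det(D_d)(\b-b^\ast D_d\inv b)$ is precisely the computation that justifies the paper's ``it is easy to see'' step, so the two arguments differ only in the amount of detail supplied, both there and in the final transport through $\mathscr{N}_{ij}$ and $\mathscr{M}_r$.
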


We remark that $\det \begin{pmatrix} \b & b^\ast \\ b & D_d \end{pmatrix} =
\b d_1d_2d_3 -b_1^2d_2d_3 -d_1b_2^2d_3 -d_1d_2b_3^2$.

\begin{proof}
An element $(a,\d,u)\in\hat{\mathscr{C}}$ is in $\mathscr{C}$ if and only if $\d$ is
positive definite and $\det(\d)=1$. By Sylvester's criterion,
$\d=\begin{pmatrix} \b & b^\ast \\ b & D_d \end{pmatrix}$ is positive definite if and only
if all its principal minors are positive.
It is easy to see that this is equivalent to the determinant and the diagonal elements of
$\d$ being positive. Hence $d\in\mathcal{T}$, so $D_d\in\mathcal{D}_s$ for some $s$,
and $\b>0$. Also, by assumption, $\det\d=1$.
\end{proof}

It is not difficult to see that
$\mathscr{M}_r\left(\mathscr{N}_{ij}(\mathcal{S}_{ij})\right)$ is left unital if and only
if $i=0$. Hence it follows from Corollary~\ref{classification} that
$$\bigcup_{\genfrac{}{}{0pt}{}{r=\pm1}{j\in\{0,1\}}} \mathscr{M}_r\left(\mathscr{N}_{0j}(\mathcal{S}_{0j})\right)$$
classifies $\mathscr{I}_4^1(\R)$.
In the next section, an alternative approach to the left unital case is taken,
aiming for an understanding of the eight-dimensional case.

%%%%%%%%%%%%%%%%%%%%%%%%%%%%%%%%%%%
\subsection{The left unital case} \label{leftunital}
%%%%%%%%%%%%%%%%%%%%%%%%%%%%%%%%%%%

\begin{lma} \label{lunitylemma}
  Let $(A,xy)$ be a unital $k$-algebra, $\a\in\gl(A)$, and $\s\in\Aut(A)$ an automorphism such
  that $\s^2=\I$.
  Then $A_{\a,\s}$ has left unity if and only if $\s=\I$.
\end{lma}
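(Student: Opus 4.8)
The plan is to unwind the definition of a left unity in the isotope and reduce it to a single operator equation. In $A_{\a,\s}=(A,\circ)$ the product is $x\circ y=\a(x)\s(y)$, so an element $e\in A$ is a left unity exactly when $\a(e)\s(y)=y$ for all $y\in A$. Writing $a=\a(e)$, this condition reads $a\s(y)=y$ for all $y$, i.e. $\lt_a\s=\I$. The whole problem thus becomes: analyse when such an $a$ exists, given that $\a$ is bijective and $\s$ is an involutive automorphism.

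For the ``if'' direction I would simply exhibit the left unity: if $\s=\I$, then since $\a\in\gl(A)$ the element $e=\a\inv(1_A)$ is well defined, and $e\circ y=\a(e)y=1_A\cdot y=y$ for every $y$, so $e$ is a left unity. For the ``only if'' direction I would assume a left unity $e$ exists and set $a=\a(e)$, so that $a\s(y)=y$ for all $y\in A$. The decisive step is to evaluate this at $y=1_A$, using that an algebra automorphism fixes the identity: from $\s(y)=\s(1_A\cdot y)=\s(1_A)\s(y)$ together with surjectivity of $\s$ one sees that $\s(1_A)$ is a left identity of $A$, hence $\s(1_A)=1_A$ by uniqueness of the identity in a unital algebra. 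Then $a\s(1_A)=1_A$ becomes $a\cdot 1_A=1_A$, so $a=1_A$, and substituting back into $a\s(y)=y$ gives $\s(y)=y$ for all $y$, that is $\s=\I$.

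I do not expect a genuine obstacle here; the argument is essentially a direct computation. The only point that needs a moment's care is the observation that $\s$ fixes $1_A$, which is the standard fact that a bijective algebra homomorphism preserves the identity element. It is worth remarking that the hypothesis $\s^2=\I$ plays no role in this particular statement, although it is of course essential in the surrounding results (Corollary~\ref{real} and Proposition~\ref{1morph}) where this lemma will be applied.
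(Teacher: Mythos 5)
Your proof is correct and follows essentially the same route as the paper's: both reduce the existence of a left unity to the operator identity $\lt_{\a(e)}\s=\I$ and then use the fact that an automorphism fixes $1_A$ to force $\a(e)=1_A$ and hence $\s=\I$. Your side remark is also accurate: the paper's proof nominally invokes $\s\inv=\s$ before concluding $\lt_{\a(e)}\in\Aut(A)$, but since $\s\inv$ is an automorphism in any case, the hypothesis $\s^2=\I$ is indeed not needed for this lemma.
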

\begin{proof}
Write $(A,x\circ y)=A_{\a,\s}$. For any $x\in A$, we have $\lt_x^\circ=\lt_{\a(x)}\s$, thus
$\lt^\circ_x=\I$ if and only if $\lt_{\a(x)}=\s\inv=\s$. Then $\lt_{\a(x)}\in\Aut(A)$,
implying $\a(x)=\lt_{\a(x)}(1_A)=1_A$, hence $\s=\lt_{1_A}=\I$.
Conversely, if $\s=\I$ then $\a\inv(1_A)$ is a left unity in $A_{\a,\s}$.
\end{proof}

By Corollary~\ref{real} and Lemma~\ref{lunitylemma}, every object in $\Ii^1(\R)$ is
isomorphic to some $A_{\a,\I}$, where $A$ is alternative and $\a(1_A)=1_A$. Since the left
unity is preserved by any algebra morphism, Proposition~\ref{1morph} gives a full
characterisation of the morphisms between objects in $\Ii^1(\R)$.

A \emph{vector product algebra} is a Euclidean space $V=(V,\langle\,\rangle)$ endowed with an
anti-commutative algebra structure $\pi$ satisfying $\langle\pi(u,v),v\rangle=0$ and
$\langle\pi(u,v),\pi(u,v)\rangle=\langle u,u\rangle\langle v,v\rangle-\langle u,v\rangle^2$.
The category of vector product algebras $(V,\pi)$ (morphisms in which are orthogonal
algebra morphisms)
 is equivalent to the category of real
alternative division algebras; an equivalence is given by the construction $(V,\pi)\mapsto
A{(V,\pi)}$, where $A{(V,\pi)}=\R\times V$ with multiplication
$$(\l,v)(\mu,w) = (\l\mu - \langle v,w\rangle \,,\, \l w + \mu v + \pi(v,w))$$
for objects, and $\f\mapsto\I_\R\times\f$ for morphisms.

For any Euclidean space $V$, denote by $V^\ast$ its dual space.
Let $\mathscr{V}$ be the class of objects $(V,\b,\d,\s)$, where $V=(V,\pi_V)$ is a vector
product algebra, and $(\b,\d,\s)\in\og(V)\times\Pds(V)\times V^\ast$.
A morphism $\f:(V,\b,\d,\s)\to(W,\gamma,\e,\tau)$ between objects in $\mathscr{V}$ is
defined to be a morphism $\f:(V,\pi_V)\to(W,\pi_W)$ satisfying $\f\b=\gamma\f$,
$\f\d=\e\f$ and $\s=\tau\f$. This gives $\mathscr{V}$ the structure of a category.

\begin{prop}
  The categories $\mathscr{V}$ and $\Ii^1(\R)$ are equivalent. An equivalence $\mathscr{F}$
  is given by $\mathscr{F}(V,\b,\d,\s)=A(V,\pi_V)_{\a,\I}$, where the linear map
  $\a\in\gl(A(V,\pi_V))$ is given as
  $$\a=\begin{pmatrix}
    1 & \s \\
    0 & \d\b
  \end{pmatrix}
  :\R\times V \to \R\times V \,,$$
  and $\mathscr{F}(f)=\I_\R\times f$ for morphisms.
\end{prop}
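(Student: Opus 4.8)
The plan is to show that $\mathscr{F}$ is a well-defined functor that is faithful, full and essentially surjective; by the usual criterion this makes it an equivalence. First I would check that $\mathscr{F}$ lands in $\Ii^1(\R)$ on objects. For $(V,\b,\d,\s)\in\mathscr{V}$ the composite $\d\b$ is invertible, since $\d\in\Pds(V)$ and $\b\in\og(V)$ are, so the block matrix $\a=\smatr{1&\s\\0&\d\b}$ is invertible and satisfies $\a(1,0)=(1,0)=1_{A(V,\pi_V)}$. Hence $A(V,\pi_V)_{\a,\I}$ is an isotope of the alternative division algebra $A(V,\pi_V)$ by an invertible map, so it is again a division algebra; the converse part of Corollary~\ref{real} (with involution $\I$) shows it has inversion on the left, and Lemma~\ref{lunitylemma} shows it has a left unity. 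Thus $\mathscr{F}(V,\b,\d,\s)\in\Ii^1(\R)$.

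Next I would treat morphisms and faithfulness. Writing $\a$ and $\a'$ for the twists attached to the source $(V,\b,\d,\s)$ and target $(W,\gamma,\e,\tau)$, I would unpack the identity $(\I_\R\times\f)\a=\a'(\I_\R\times\f)$ into its two non-trivial blocks, namely $\s=\tau\f$ and $\f\d\b=\e\gamma\f$. For $\f$ a morphism in $\mathscr{V}$ these follow from the defining relations $\f\b=\gamma\f$, $\f\d=\e\f$, $\s=\tau\f$ (the first two giving $\f\d\b=\e\gamma\f$). Since $\I_\R\times\f$ is a non-zero algebra morphism $A(V,\pi_V)\to A(W,\pi_W)$ by the equivalence between vector product algebras and alternative division algebras, Proposition~\ref{1morph} then identifies it with a (left-unit preserving) morphism in $\Ii^1(\R)$, so $\mathscr{F}$ is well defined on morphisms; functoriality is immediate from $(\I_\R\times\f)(\I_\R\times\psi)=\I_\R\times\f\psi$, and faithfulness from the injectivity of $\f\mapsto\I_\R\times\f$.

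For essential surjectivity I would take $C\in\Ii^1(\R)$; since every real division algebra has an idempotent, Corollary~\ref{real} and Lemma~\ref{lunitylemma} give $C\cong A_{\a,\I}$ with $A$ alternative and $\a(1_A)=1_A$. Writing $A=A(V,\pi_V)=\R\times V$, the condition $\a(1,0)=(1,0)$ forces $\a=\smatr{1&\s\\0&M}$ with $M\in\gl(V)$, and polar decomposition writes $M=\d\b$ uniquely with $\d\in\Pds(V)$, $\b\in\og(V)$; hence $C\cong\mathscr{F}(V,\b,\d,\s)$. Fullness is the last and most delicate point, and I expect it to be the main obstacle. A morphism $\mathscr{F}(V,\b,\d,\s)\to\mathscr{F}(W,\gamma,\e,\tau)$ in $\Ii^1(\R)$ is, via Proposition~\ref{1morph} and the vector-product-algebra equivalence, of the form $\I_\R\times\f$ for an orthogonal algebra morphism $\f$, and satisfies $(\I_\R\times\f)\a=\a'(\I_\R\times\f)$, i.e.\ $\s=\tau\f$ together with the single relation $\f\d\b=\e\gamma\f$. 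The crux is to split this single relation into the two separate conditions $\f\b=\gamma\f$ and $\f\d=\e\f$ demanded in $\mathscr{V}$; here I would compare the polar decompositions of the two sides of $\f\d\b=\e\gamma\f$ and invoke their uniqueness, exactly as polar uniqueness was exploited in Proposition~\ref{criterion}, so that the positive-definite parts and the orthogonal parts are matched up separately. Carrying out this separation rigorously, using that $\f$ is an isometry, is the heart of the argument and the step I expect to be the main obstacle.
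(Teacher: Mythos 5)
Your proposal is correct and follows essentially the same route as the paper: objects via Corollary~\ref{real} and Lemma~\ref{lunitylemma}, morphisms and fullness via Proposition~\ref{1morph}, density via polar decomposition of the restriction of $\a$ to $\Im B$, and the splitting of $\f\d\b=\e\gamma\f$ by uniqueness of polar decomposition. The step you flag as the main obstacle is resolved exactly as you suggest: since $\f$ is an isometry, $\f\inv\e\f\in\Pds(V)$ and $\f\inv\gamma\f\in\og(V)$, so $\d\b=(\f\inv\e\f)(\f\inv\gamma\f)$ and polar uniqueness gives $\d=\f\inv\e\f$ and $\b=\f\inv\gamma\f$ separately.
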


\begin{proof}
Clearly, $\mathscr{F}(X)\in\Ii^1(\R)$ for all $X\in\mathscr{V}$.
Let $X=(V,\b,\d,\s)$ and $Y=(W,\gamma,\e,\tau)$ be elements in $\mathscr{V}$, and let
$f:X\to Y$ be a morphism in $\mathscr{V}$. We have $\mathscr{F}(X)=A(V,\pi_V)_{\psi,\I}$ and
$\mathscr{F}(Y)=A(W,\pi_W)_{\chi,\I}$ with
$$\psi = \begin{pmatrix} 1&\s\\ 0&\d\b \end{pmatrix} \in\gl(\R\times V)\:,\quad
\chi = \begin{pmatrix} 1&\tau\\ 0&\e\gamma \end{pmatrix}
\in\gl(\R\times W)\,.$$
Clearly, $\mathscr{F}(f)(1_{\mathscr{F}(X)}) = 1_{\mathscr{F}(Y)}$, and it is straightforward
to verify that $\mathscr{F}(f)\psi=\chi\mathscr{F}(f)$. Since $\mathscr{F}(f)$ is a
morphism $A(V,\pi_V)\to A(W,\pi_W)$ of alternative division algebras,
Proposition~\ref{1morph} now implies that
$\mathscr{F}(f):\mathscr{F}(X)\to\mathscr{F}(Y)$ is a morphism in $\Ii^1(\R)$.
This shows that $\mathscr{F}$ is functorial.

Every $A\in\Ii^1(\R)$ is isomorphic to an isotope $B_{\a,\I}$ of an alternative
division algebra $B$, with $\a(1_B)=1_B$. Let $V=\Im B$, and $\pi_V$ correspondingly
defined by $\pi_V(v,w)=\frac{1}{2}[v,w]$. This construction is quasi-inverse to
$(V,\pi)\mapsto A(V,\pi)$, thus $A(V,\pi_V)\simeq B$ (see \eg{} \cite{bg67}).
Now $\a$ gives rise to a linear form $\s:V\to\R$ and a linear endomorphism
$\tilde{\a}:V\to V$ such that $\a(v)=\s(v)1_B+\tilde{\a}(v)$ for all $v\in V=\Im B$. Since
$\det(\a)=\det(\tilde{\a})$, the map $\tilde{\a}$ is invertible.
Polar decomposition yields a unique pair $(\b,\d)\in\og(V)\times\Pds(V)$ such that
$\tilde{\a}=\d\b$.
It is now easy to verify that $\mathscr{F}(V,\b,\d,\s)\simeq B_{\a,\I}\simeq A$. Thus the
functor $\mathscr{F}:\mathscr{V}\to\Ii^1(\R)$ is dense.

Assume $\f:\mathscr{F}(X)\to\mathscr{F}(Y)$ is a morphism in $\Ii^1(\R)$. The left unity of
$\mathscr{F}(X)=A(V,\pi_V)_{\psi,\I}$ is the element $1_{A(V,\pi_V)}$, and it is mapped by
$\f$ to the left unity $1_{A(W,\pi_w)}$ in $\mathscr{F}(Y)=A(W,\pi_W)_{\chi,\I}$. Thus
Proposition~\ref{1morph} applies, giving that $\f$ is a morphism $A(V,\pi_V)\to A(W,\pi_W)$
and $\f\psi=\chi \f$. This means that the restricted and corestricted map
$f=\f|_V^W:V\to W$ is a morphism $(V,\pi_V)\to(W,\pi_W)$ of vector product
algebras.
Moreover, $\f\psi=\chi \f$ implies $\s=\tau f$ and $f\d\b=\e\gamma f$.
In particular, $\im(\e\gamma f)\subseteq\im(f)$. Since non-zero morphisms of division
algebras are always injective, we may apply $f\inv$ from the left to obtain
$\d\b=f\inv\e\gamma f=(f\inv\e f)(f\inv\gamma f)$. As $f\inv\e f\in\Pds(V)$ and
$f\inv\gamma f\in\og(V)$, uniqueness of the polar decomposition implies
$\d=f\inv\e f$ and $\b=f\inv\gamma f$, that is, $f\d=\e f$ and $f\b=\gamma f$.
Hence $f\in\Mor_{\mathscr{V}}(X,Y)$, and $\f=\mathscr{F}(f)$, so $\mathscr{F}$ is full.

Faithfulness is immediate from the definition of $\mathscr{F}$.
\end{proof}

\begin{cor} \label{lunital}
  Let $n\in\{0,1,3,7\}$.
  The category $\Ii^1_{n+1}(\R)$ is equivalent to the groupoid $\mathscr{X}_n$ of the action
  of $\Aut(\R^n,\pi_n)$ on $\og(\R^n)\times\Pds(\R^n)\times \R^n$ given by
  \begin{equation} \label{gruppverkan}
    f\cdot(\b,\d,s)= (f\b f\inv,f\d f\inv,f(s)) \,.
  \end{equation}
\end{cor}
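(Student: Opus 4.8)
The plan is to specialise the equivalence $\mathscr{F}\colon\mathscr{V}\to\Ii^1(\R)$ established in the preceding proposition to a fixed dimension, and then to replace the underlying vector product algebra by a single fixed model. Since $\mathscr{F}(V,\b,\d,\s)=A(V,\pi_V)_{\a,\I}$ and $\dim A(V,\pi_V)=1+\dim V$, the functor $\mathscr{F}$ sends objects with $\dim V=n$ to objects of dimension $n+1$. Because dimension is an isomorphism invariant and $\mathscr{F}$ is dense, every object of $\Ii^1_{n+1}(\R)$ is isomorphic to $\mathscr{F}(X)$ for some $X$ with $\dim V_X=n$; hence $\mathscr{F}$ restricts to an equivalence between the full subcategory $\mathscr{V}_n\subseteq\mathscr{V}$ of objects $(V,\b,\d,\s)$ with $\dim V=n$ and the category $\Ii^1_{n+1}(\R)$. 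It therefore suffices to show that $\mathscr{V}_n$ is equivalent to $\mathscr{X}_n$.

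The next step is to fix a model. By the equivalence between vector product algebras and real alternative division algebras, together with Zorn's theorem (every real alternative division algebra is $\R,\C,\H$ or $\O$), vector product algebras exist precisely when $\dim V\in\{0,1,3,7\}$, and in each such dimension all of them are isomorphic; write $(\R^n,\pi_n)$ for the standard model. Morphisms in $\mathscr{V}$ are orthogonal algebra morphisms, hence injective, and an injective endomorphism of a finite-dimensional space is bijective, so $\Mor\bigl((\R^n,\pi_n),(\R^n,\pi_n)\bigr)=\Aut(\R^n,\pi_n)$. Consequently the inclusion $\mathscr{V}_n^0\hookrightarrow\mathscr{V}_n$ of the full subcategory $\mathscr{V}_n^0$ of objects with $V=(\R^n,\pi_n)$ is full, faithful and dense, hence an equivalence.

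It remains to identify $\mathscr{V}_n^0$ with the action groupoid $\mathscr{X}_n$. Its objects are triples $(\b,\d,\s)\in\og(\R^n)\times\Pds(\R^n)\times(\R^n)^\ast$; using the inner product to identify $(\R^n)^\ast$ with $\R^n$ via $\s=\langle s,-\rangle\leftrightarrow s$, these become the elements of $\og(\R^n)\times\Pds(\R^n)\times\R^n$. A morphism $(\b,\d,\s)\to(\gamma,\e,\tau)$ in $\mathscr{V}_n^0$ is an $f\in\Aut(\R^n,\pi_n)$ with $f\b=\gamma f$, $f\d=\e f$ and $\s=\tau f$; equivalently $\gamma=f\b f\inv$, $\e=f\d f\inv$, and, since $f\inv=f^\ast$ for orthogonal $f$, the last condition reads $\langle s,v\rangle=\langle t,f(v)\rangle=\langle f\inv(t),v\rangle$ for all $v$, i.e. $t=f(s)$. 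This is exactly $f\cdot(\b,\d,s)=(\gamma,\e,t)$ for the action (\ref{gruppverkan}), so $\mathscr{V}_n^0$ is isomorphic to $\mathscr{X}_n$. Composing, $\Ii^1_{n+1}(\R)\simeq\mathscr{V}_n\simeq\mathscr{V}_n^0\cong\mathscr{X}_n$.

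The point requiring real care is the uniqueness up to isomorphism of the vector product algebra in each admissible dimension: this is what permits the passage to the single fixed model $(\R^n,\pi_n)$ and thereby collapses the morphisms of $\mathscr{V}_n$ to the automorphism group $\Aut(\R^n,\pi_n)$. Once this is in place, the translation of the three $\mathscr{V}$-morphism conditions into the action (\ref{gruppverkan}) is routine, the only subtlety being the correct handling of the dual identification for the linear form $\s$.
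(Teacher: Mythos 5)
Your proof is correct and follows essentially the same route as the paper's: restrict the equivalence $\mathscr{F}$ to dimension $n$, reduce to the single model $(\R^n,\pi_n)$, and translate the three morphism conditions (identifying $(\R^n)^\ast$ with $\R^n$ via the inner product) into the action (\ref{gruppverkan}). You are merely more explicit than the paper about the uniqueness of the vector product algebra in each admissible dimension and about the orthogonality argument forcing morphisms to be automorphisms, both of which the paper's one-line construction of the equivalence $\mathscr{X}_n\to\mathscr{V}_n$ leaves implicit.
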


\begin{proof}
Since morphisms in $\Ii(\R)$ are injective, all morphisms in $\Ii^1_{n+1}(\R)$ are
isomorphisms, and so are all morphisms in the full subcategory
$\mathscr{V}_n=\mathscr{F}\inv(\Ii_{n+1}^1(\R))\subseteq\mathscr{V}$.
Thus a morphism $(\R^n,\b,\d,\s)\to(\R^n,\gamma,\e,\tau)$ in $\mathscr{V}$ is an element
$f\in\Aut(\R^n,\pi_n)$ satisfying $f\b=\gamma f$, $f\d=\e f$ and $\s=f\tau$.
An equivalence $\mathscr{X}_n\to\mathscr{V}_n$ is given by
$(\b,\d,s)\mapsto(\R^n,\b,\d,\s_s)$, where $\s_s(x)=\langle s,x\rangle$ for objects and $f\mapsto f$
for morphims $f\in\Aut(\R^n,\pi_n)$.
\end{proof}

The groupoid $\mathscr{X}_7$ is a $56$-dimensional manifold, acted upon
by the $14$-dimensional Lie group $\mathscr{G}_2=\Aut(\R^7,\pi_7)$.
Finding a cross-section for the orbit set of this group action would, although
theoretically possible, be a very difficult task.
Normal forms for elements in $\mathscr{X}_7$ of the form $(\b,\I,0)$ or $(\I,\d,0)$
can be obtained from the normal forms given in \cite{ava} and
\cite{nform} for the $\mathscr{G}_2$-action by conjugation on the sets $\og(\R^7)$
and $\Pds(\R^7)$ respectively.

%%%%%%%%%%%%%%%%%%%%%%%%%%%%%%%%%%
\section{Division algebras with involutive inversion}  \label{sec:invinv}
%%%%%%%%%%%%%%%%%%%%%%%%%%%%%%%%%%

A division algebra $A$ is said to have \emph{involutive inversion} if it has inversion on the left and the inversion map $s\colon A\setminus \{0\} \to A \setminus\{0 \}$ satisfies
\begin{displaymath}
 s(ab) = s(b)s(a)
\end{displaymath}
for all $a,b \in A\setminus\{0\}$.  In this case $y = s(x(s(x)s(y))) = (yx)s(x)$, so
\begin{displaymath}
 \rt^{-1}_{a} = \rt_{s(a)}
\end{displaymath}
for any $a \neq 0$. In particular, the opposite algebra $A\op$ of a division algebra $A$
with involutive inversion is also a division algebra with involutive inversion.

\begin{prop}
\label{prop:algebras_with_inversion}
 An algebra $(A,xy)$ is a division algebra with involutive inversion and a non-zero idempotent
 if and only if $xy = \tau(x) * \sigma(y)$ for some alternative division algebra $(A,*)$
 and automorphisms $\sigma,\tau$ of $(A,*)$ with $\sigma^2 = \tau^2 = \I_A$ and
 $\sigma \tau \sigma = \tau \sigma \tau$.
\end{prop}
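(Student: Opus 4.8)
The plan is to obtain both directions from Corollary~\ref{real}, reading the extra hypothesis $s(ab)=s(b)s(a)$ as two constraints: that the first isotopy factor is also an involutive automorphism, and that the two factors satisfy the braid relation. Throughout I write $*$ for the alternative product, $\lt^*_c,\rt^*_c$ for its multiplication operators, $\lt^*_A,\rt^*_A$ for the operator sets, and $c\inv$ for the inverse in $(A,*)$, using freely that there $\lt^*_c\inv=\lt^*_{c\inv}$, $\rt^*_c\inv=\rt^*_{c\inv}$ and that inversion reverses products, $(cd)\inv=d\inv c\inv$. For the forward implication, let $A$ have involutive inversion and a nonzero idempotent $e$. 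Corollary~\ref{real} gives $xy=\tau(x)*\sigma(y)$ for an alternative division algebra $(A,*)$ with identity $e$, where $\tau=\rt_e$, $\sigma=\lt_e$ and $\sigma\in\Aut(A,*)$ with $\sigma^2=\I_A$. To promote $\tau$ to an involutive automorphism I would use that involutive inversion yields $\rt_a\inv=\rt_{s(a)}$ and run the right-handed analogue of Proposition~\ref{general}(3): from $\rt_a=\rt^*_{\sigma(a)}\tau$ one deduces $\tau\rt^*_A\tau=\rt^*_A$, hence $\tau(\tau(x)*c)=x*\tau(c)$; setting $c=e$ gives $\tau^2=\I_A$, after which the identity reads $\tau(x*c)=\tau(x)*\tau(c)$. (Alternatively, apply Corollary~\ref{real} to $A\op$, which also has involutive inversion.)

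I would then compute the inversion map. From $\lt_a=\lt^*_{\tau(a)}\sigma$ and $\lt_a\inv=\lt_{s(a)}$ one solves $s(a)=(\tau\sigma\tau(a))\inv$. Substituting this into $s(ab)=s(b)s(a)$, expanding $ab=\tau(a)*\sigma(b)$, and using that $\tau,\sigma$ are involutive automorphisms and that inversion reverses $*$-products, both sides split into a $*$-product of a factor depending only on $b$ and one depending only on $a$:
\begin{equation*}
s(ab)=\bigl(\tau\sigma\tau\sigma(b)\bigr)\inv*\bigl(\tau\sigma(a)\bigr)\inv,
\qquad
s(b)s(a)=\bigl(\sigma\tau(b)\bigr)\inv*\bigl(\sigma\tau\sigma\tau(a)\bigr)\inv.
\end{equation*}
Evaluating at $a=e$ and then at $b=e$ shows that these agree for all $a,b$ precisely when $\tau\sigma\tau=\sigma\tau\sigma$, the desired braid relation; this completes the forward direction.

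For the converse, suppose $xy=\tau(x)*\sigma(y)$ with $\tau,\sigma\in\Aut(A,*)$, $\tau^2=\sigma^2=\I_A$ and $\sigma\tau\sigma=\tau\sigma\tau$. Then $(A,xy)=(A,*)_{\tau,\sigma}$ is an isotope of the division algebra $(A,*)$, hence a division algebra; its identity $e$ is a nonzero idempotent since $ee=\tau(e)*\sigma(e)=e$; and the converse clause of Corollary~\ref{real}, applied with first factor $\tau$ (which fixes $e$), shows that it has inversion on the left. The computation above again gives $s(a)=(\tau\sigma\tau(a))\inv$, and since the braid relation yields $\tau\sigma\tau\sigma=\sigma\tau$ and $\sigma\tau\sigma\tau=\tau\sigma$, the two displayed factorisations coincide. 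Hence $s(ab)=s(b)s(a)$ and $A$ has involutive inversion.

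The main work is the bookkeeping in the second paragraph: one must apply $(cd)\inv=d\inv c\inv$ at exactly the right step so that each side of $s(ab)=s(b)s(a)$ separates cleanly into an $a$-part and a $b$-part, as it is this separation that collapses the condition to the single relation $\tau\sigma\tau=\sigma\tau\sigma$. Showing that $\tau$ is an involutive automorphism is routine once the right-handed form of Proposition~\ref{general}(3) is in hand, the only subtlety being the passage from $\tau\rt^*_A\tau\subseteq\rt^*_A$ to equality by a dimension count.
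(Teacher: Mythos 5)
Your proof is correct and follows essentially the same route as the paper: invoke Corollary~\ref{real}, pass to $A\op$ (equivalently, the right-handed analogue of Proposition~\ref{general}(3)) to make the first factor an involutive automorphism, and reduce $s(ab)=s(b)s(a)$ to the braid relation. The only difference is that you explicitly derive $s(a)=(\tau\sigma\tau(a))\inv$ and carry out the factorisation showing the equivalence with $\sigma\tau\sigma=\tau\sigma\tau$, a computation the paper merely asserts; your verification of it is accurate.
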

\begin{proof}
 By Corollary \ref{real} we can write the product on $A$ as $xy = \alpha(x)* \tau(y)$ for
 some alternative division algebra $(A,*)$ and an automorphism $\tau = \lt_e$ of
 $(A,*)$ with $\tau^2 = \I_A$. Working with $A\op$ we obtain that $xy = \sigma(x)*\tau(y)$
 where $\sigma = \rt_e$ is another automorphism of $(A,*)$ satisfying
$\sigma^2=\I_A$. The condition $s(ab) = s(b)s(a)$ for all $a,b \ne 0$ is equivalent to
$\sigma \tau \sigma = \tau \sigma \tau$.
\end{proof}

Below, we shall often encounter isotopes $A_{\s,\tau}$ for which $\s^2=\I_A$ and
$\s\tau\s=\tau\s\tau$. We record the following observation.

\begin{lma}
\label{lem:group}
Let $G$ be a non-trivial group with unit element $e$, generated by $x$ and $y$ with  $x^2 = e$ and $xyx= yxy$.  Then {$y^2 =e$ and} $G$ is a quotient of $\mathrm{D}_6$, the dihedral group with 6 elements, and hence
isomorphic either to $\mathrm{D}_6$ itself or to the cyclic group $\cc_2$.
\end{lma}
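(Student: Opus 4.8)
The plan is to prove this as a purely group-theoretic statement about a group $G$ generated by two elements $x,y$ subject to $x^2=e$ and $xyx=yxy$. First I would establish that $y^2=e$, which is the key auxiliary fact the lemma itself flags with ``$\{y^2=e\}$''. The natural approach is to extract consequences of the braid-type relation $xyx=yxy$ together with $x^2=e$. Multiplying the relation on the left and right by $x$ (using $x^2=e$, so $x\inv=x$) gives $yx=xyxy$ and similar rearrangements; combining these should force a relation on $y$ alone. Concretely, from $xyx=yxy$ I rewrite it as $x(yx)=(yx)y$, so conjugation by $yx$ sends $x$ to $y$; iterating such manipulations, or computing $(xy)^3$, should collapse to show $y^2=e$. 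I expect the cleanest route is to show $(xy)^2 = yx\cdot y$ type identities and deduce that $y$ is conjugate to $x$, hence also an involution.

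Once $y^2=e$ is in hand, both generators are involutions and they satisfy the braid relation $xyx=yxy$, which is precisely the Coxeter presentation of the symmetric group $\mathrm{S}_3\cong\mathrm{D}_6$ (the dihedral group of order $6$). Thus $G$ is a quotient of the group $\langle x,y \mid x^2=y^2=e,\ xyx=yxy\rangle \cong \mathrm{D}_6$. Since $G$ is non-trivial by hypothesis, and the normal subgroups of $\mathrm{D}_6$ are the trivial group, the cyclic subgroup $\cc_3$ of order $3$, and the whole group, the proper non-trivial quotients of $\mathrm{D}_6$ are exactly $\mathrm{D}_6/\cc_3\cong\cc_2$. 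Therefore $G$ is isomorphic either to $\mathrm{D}_6$ itself or to $\cc_2$, as claimed.

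The step I expect to be the main obstacle is deriving $y^2=e$ from the two given relations, since unlike the finished Coxeter presentation we are \emph{not} told a priori that $y$ is an involution; everything hinges on this. The subtlety is that $x^2=e$ and $xyx=yxy$ must conspire to pin down the order of $y$, and one must be careful that the manipulation genuinely uses both relations rather than silently assuming what is to be proved. Once $y^2=e$ is secured, the identification with $\mathrm{D}_6$ and the enumeration of quotients via its normal subgroup lattice is routine.

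I would organise the write-up as: (i) a short computation establishing $y^2=e$; (ii) the observation that $\langle x,y\mid x^2=y^2=e,\ xyx=yxy\rangle$ is the standard presentation of $\mathrm{D}_6$, so $G$ is a quotient of $\mathrm{D}_6$; (iii) a one-line classification of the non-trivial quotients of $\mathrm{D}_6$ to conclude $G\cong\mathrm{D}_6$ or $G\cong\cc_2$.
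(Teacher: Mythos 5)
Your proposal is correct: from $x(yx)=(yx)y$ one gets $y=(yx)^{-1}x(yx)$, so $y$ is conjugate to the involution $x$ and hence $y^2=e$, after which $x^2=y^2=e$ together with $xyx=yxy$ (equivalently $(xy)^3=e$) is the Coxeter presentation of $\mathrm{S}_3\cong\mathrm{D}_6$, whose only non-trivial quotients are $\mathrm{D}_6$ and $\cc_2$. The paper states this lemma without proof, so there is nothing to compare against; your argument is the standard one and supplies exactly the missing details.
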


Division algebras with involutive inversion and a non-zero idempotent are quite close to
alternative algebras. The following proposition characterizes these algebras
in terms of a quasigroup identity. 

\begin{prop} \label{prop:identity}
  Let $(A,xy)$ be a division algebra with a non-zero idempotent over a field of characteristic
  different from $2$. Then $A$ has involutive inversion if and only if it satisfies the
  identity
  \begin{equation}
    \label{eq:identity}
    x((yz)(xt)) = ((xy)(zx))t \,.
  \end{equation}
\end{prop}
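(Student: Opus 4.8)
The plan is to prove both directions through Proposition~\ref{prop:algebras_with_inversion}, which characterises the division algebras with a non-zero idempotent and involutive inversion as those whose product can be written $xy=\tau(x)\ast\s(y)$ for an alternative division algebra $(A,\ast)$ and automorphisms $\tau,\s$ of $(A,\ast)$ with $\tau^2=\s^2=\I$ and $\s\tau\s=\tau\s\tau$. For the ``only if'' direction I would substitute this expression into both sides of \eqref{eq:identity} and simplify, using $\tau^2=\s^2=\I$ and that $\tau,\s$ are $\ast$-automorphisms. Writing $a=\tau(x)$, $p=\s(y)$, $c=\s(t)$ and $w=\s\tau\s(z)$ — the braid relation $\s\tau\s=\tau\s\tau$ being used precisely to merge the two contributions $\s\tau\s(z)$ and $\tau\s\tau(z)$ of the variable $z$ into the single element $w$ — the identity collapses to the four-variable alternative-algebra identity
\[
  a\ast\bigl((p\ast w)\ast(a\ast c)\bigr)=\bigl((a\ast p)\ast(w\ast a)\bigr)\ast c ,
\]
with $a,p,w,c$ arbitrary. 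This I would verify inside $(A,\ast)$ by two applications of the Moufang laws: the left Moufang identity (with $x=a$, $y=p\ast w$, $z=c$) gives $a\ast\bigl((p\ast w)\ast(a\ast c)\bigr)=\bigl((a\ast(p\ast w))\ast a\bigr)\ast c$, while the middle Moufang identity (with $x=a$, $y=p$, $z=w$) gives $(a\ast p)\ast(w\ast a)=(a\ast(p\ast w))\ast a$; combining the two gives the equality.

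The ``if'' direction is the substantial half, and its first task is to extract inversion from \eqref{eq:identity} so that the structure theory becomes available. Fix a non-zero idempotent $e$ and set $y=e$ in \eqref{eq:identity}, obtaining $x((ez)(xt))=((xe)(zx))t$. Read as an identity of operators in $t$, this says $\lt_x\lt_{ez}\lt_x=\lt_{(xe)(zx)}$ for all $x,z$; as $\lt_e$ is bijective, $ez$ runs over all of $A$, so $\lt_x\lt_b\lt_x\in\lt_A$ for every $x,b$ — a Hua-type conclusion in the spirit of Lemma~\ref{hua}. To turn this into an actual inverse I would, for fixed $x\neq0$, solve $(xe)(zx)=x$ for $z$ (possible since $\lt_{xe}$ and $\rt_x$ are bijective); with $b=ez$ the relation becomes $\lt_x\lt_b\lt_x=\lt_x$, whence $\lt_b=\lt_x\inv$. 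Thus $A$ has inversion on the left, and Corollary~\ref{real} lets us write $xy=\a(x)\ast\s(y)$ for an alternative division algebra $(A,\ast)$ with identity $e$, where $\a=\rt_e$, $\s=\lt_e\in\Aut(A,\ast)$, $\s^2=\I$ and $\a(e)=e$.

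It remains to promote $\a$ to an involutive $\ast$-automorphism compatible with $\s$, after which Proposition~\ref{prop:algebras_with_inversion} applies. Substituting $x=e$ into \eqref{eq:identity} and cancelling the common right factor $\s(t)$ gives $\s\a(\a(y)\ast\s(z))=\a(\a\s(y)\ast\s\a(z))$, which under $u=\a(y)$, $v=\s(z)$ becomes $\a\inv\s\a(u\ast v)=\a\s\a\inv(u)\ast\s\a\s(v)$. Setting $v=e$ and then $u=e$ (and using $\a(e)=\s(e)=e$) forces $\a\inv\s\a$, $\a\s\a\inv$ and $\s\a\s$ to coincide; calling their common value $\b$, the relation becomes $\b(u\ast v)=\b(u)\ast\b(v)$, so $\b=\s\a\s$ is a $\ast$-automorphism. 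Then $\a=\s\b\s$ is a composite of $\ast$-automorphisms, hence $\a\in\Aut(A,\ast)$, while $\a\inv\s\a=\s\a\s$ rearranges to the braid relation $\s\a\s=\a\s\a$. Now $\s^2=\I$ together with this braid relation places us in the situation of Lemma~\ref{lem:group} (with $x=\s$, $y=\a$), giving $\a^2=\I$; Proposition~\ref{prop:algebras_with_inversion} then yields involutive inversion.

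The crux is the ``if'' direction, at two points. The first is manufacturing inversion from a single specialisation of the identity — converting $\lt_x\lt_b\lt_x\in\lt_A$ into an honest left inverse by solving $(xe)(zx)=x$. The second, which I expect to be the least obvious, is upgrading $\a$ from an arbitrary bijection to a $\ast$-automorphism: this is invisible from the relation $\a=\rt_e$ itself, but falls out once one notices that the substitution $x=e$ forces $\s\a\s$ to be an automorphism, which then conjugates back to $\a$.
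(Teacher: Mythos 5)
Your proof is correct, and your forward direction (substitute $xy=\tau(x)*\s(y)$, merge the two images of $z$ via the braid relation, and finish with the left and middle Moufang identities) is exactly the verification the paper leaves to the reader. The converse, however, takes a genuinely different route. The paper never passes through inversion on the left explicitly: it specialises at $x=y=z=e$ and $x=y=t=e$ to get $\lt_e^2=\I_A$ and (via Lemma~\ref{lem:group}) $\rt_e^2=\I_A$, defines $x*y=(xe)(ey)$ directly, uses the operator form $\lt_x\lt_{yz}\lt_x=\lt_{(xy)(zx)}$ of the identity to show that $(\lt^*_x)^2$ is again a left multiplication operator --- so $(A,*)$ is left alternative, hence alternative --- and then proves that $\rt_e$ respects $*$ by composing three explicit autotopies. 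You instead use the same operator identity to manufacture an honest left inverse (solving $(xe)(zx)=x$ for $z$), which lets you invoke Corollary~\ref{real} and obtain the alternative structure together with the involutive automorphism $\s=\lt_e$ for free; the remaining work is concentrated in the $x=e$ specialisation, where you show $\s\a\s$ is multiplicative and conjugate back to get $\a=\rt_e\in\Aut(A,*)$, with Lemma~\ref{lem:group} supplying $\a^2=\I_A$ at the end rather than at the beginning. Both arguments rest on the same external input (that left alternative division algebras are alternative, reached through Proposition~\ref{general} in your case and invoked directly in the paper's), so the characteristic assumption enters identically; yours reuses more of Section~\ref{sec:inversion}, while the paper's is more self-contained at the level of this proposition. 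Either is a complete proof.
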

\begin{proof}
From the description of division algebras with involutive inversion and non-zero
idempotents in Proposition \ref{prop:algebras_with_inversion}, it is easily verified that
these algebras satisfy the identity  $x((yz)(xt)) = ((xy)(zx))t$.

Conversely, let $e$ be a non-zero idempotent of a division algebra $A$ that satisfies the identity   $x((yz)(xt)) = ((xy)(zx))t$. Evaluating this identity at $x = y = z = e$ we obtain that $e(et) = t$ so $\lt^2_{e} =\I_A$. The evaluation of the same identity at $x = y = t = e$ gives $\lt_e \rt_e \lt_e = \rt_e \lt_e \rt_e$. By Lemma \ref{lem:group} this implies that $\rt^2_e = \I_A$.

Consider the new product $x*y = (xe)(ey)$ on $A$. The element $e$ is the unit element of
the new algebra $(A,*)$ and the new left multiplication operator by $x$ is
$\lt^*_x = \lt_{xe}\lt_e$. The identity $x((yz)(xt)) = ((xy)(zx))t$ implies that
$\lt_x\lt_{yz}\lt_x = \lt_{(xy)(zx)}$, so
$(\lt^*_x)^2= (\lt_{xe}\lt_e\lt_{xe})\lt_e= \lt_{x(e(xe))}\lt_e$ is again a left
multiplication operator on $(A,x*y)$. Hence, the existence of unit element in $(A,x*y)$
implies $(\lt^*_x)^2 = \lt_{x*x}$, and since $(A,x*y)$ is a division algebra we can conclude
that it is alternative.

Observe that $e(x*y) = e((xe)(ey)) = ((ex)e)y = (ex) *(ey)$, thus $\lt_e$ is an automorphism of $(A,x*y)$. The identity $e((xe)(ey)) = ((ex)e)y$ implies that $(\lt_e,\rt_e\lt_e\rt_e,\lt_e) \in \Atp(A)$ {(see (\ref{eq:Atp}) below)}. Let $\alpha = \rt_e\lt_e\rt_e = \lt_e \rt_e \lt_e$. Evaluating (\ref{eq:identity}) at $x = t = e$ we get that $e((yz)e) =((ey)(ze))e$ so $\alpha(yz) = (ey)(ze)$. Therefore $(\alpha, \lt_e,\rt_e)\in\Atp(A)$ and
\begin{align*}
 (\rt_e,\rt_e,\alpha) &= (\lt_e\alpha \lt_e, \alpha \lt_e \alpha, \lt_e \rt_e \lt_e) \\
 &=  (\lt_e,\alpha,\lt_e)(\alpha,\lt_e,\rt_e)(\lt_e,\alpha,\lt_e)\in\Atp(A) \,.
\end{align*}
Hence $(x*y)e = ((xe)(ey))e = ((xe)e)\alpha(ey) = x(e(ye)) = (xe) * (ye)$.
\end{proof}

\begin{cor}
 A real division algebra $(A,xy)$ has involutive inversion if and only if it satisfies the identity
\[
  x((yz)(xt)) = ((xy)(zx))t \,.
\]
\end{cor}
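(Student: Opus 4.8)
The plan is to deduce this statement directly from Proposition~\ref{prop:identity}, by checking that its two standing hypotheses---that the characteristic be different from $2$, and that the algebra possess a non-zero idempotent---hold automatically for every real division algebra. Once both are verified, the proposition applies verbatim and gives the equivalence.

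The characteristic hypothesis is immediate: the base field $\R$ has characteristic $0\ne 2$, so nothing further is required on that count. For the idempotent, I would invoke Segre's theorem \cite{segre} (recorded in the Remark following Corollary~\ref{real}), which guarantees that every division algebra over $\R$ contains a non-zero idempotent $e$. This supplies the second hypothesis. With both conditions in place, Proposition~\ref{prop:identity} applied to $A$ yields exactly the asserted equivalence: $A$ has involutive inversion if and only if it satisfies the identity $x((yz)(xt)) = ((xy)(zx))t$.

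I do not anticipate any genuine obstacle, since the corollary is the specialisation of Proposition~\ref{prop:identity} to the ground field $\R$. The only subtlety worth recording is that the idempotent furnished by Segre's theorem is not canonical; however, neither the statement nor the proof of Proposition~\ref{prop:identity} depends on the choice of idempotent, so this causes no difficulty.
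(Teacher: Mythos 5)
Your proposal is correct and is exactly the paper's (implicit) argument: the corollary follows from Proposition~\ref{prop:identity} because $\R$ has characteristic $0$ and every real division algebra contains a non-zero idempotent by Segre's theorem, as recorded in the remark after Corollary~\ref{real}. Nothing further is needed.
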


We now proceed to classify all real division algebras with involutive inversion. 
Proposition~\ref{invinv} gives the classification result, and its proof occupies the
remainder of this section.
In stark contrast with the more general situation treated in Section~\ref{sec:inversion},
the real division algebras with involutive inversion comprise only finitely many
isomorphism classes in each dimension. 

A \emph{Cayley triple} in $\O$ is a triple $(u,v,z)$ of unit vectors in $\Im(\O)$
such that $u,v,uv,z$ are mutually orthogonal. The automorphism group of $\O$ acts simply
transitively on the set of Cayley triples in $\O$, by
$\f\cdot(u,v,z)=(\f(u),\f(v),\f(z))$, \ie, fixing a Cayley triple $(i,j,l)$, there is a
bijection between $\Aut(\O)$ and the set of Cayley triples in $\O$, given by 
$\f\mapsto\f\cdot(i,j,l)$.
Similarly for the quaternions, $\Aut(\H)$ acts simply transitively on the set of
orthonormal pairs in $\Im(\H)$, so every automorphism of $\H$ is uniquely determined by
the images of the standard basis vectors $i$ and $j$. 

For ease of notation, we set 
$(c,s)=\left(\cos(2\pi/3)\,,\,\sin(2\pi/3)\right)=\left(-1/2\,,\,\sqrt{3}/2\right)$.

\begin{prop} \label{invinv}
  There exist precisely ten isomorphism classes of real division algebras with involutive
  inversion, each isomorphic to an isotope $B_{\s,\tau}$ of an alternative division algebra
  $B$, with $\s,\tau\in\Aut(B)$ as follows:
  \begin{align}
    \intertext{If $B=\R$:}
    &\s=\tau=\I_\R. \label{1} \\
    \intertext{If $B=\C$:} 
    &\s=\tau=\I_\C \quad \mbox{or} \\
    &\s=\tau=\kappa.\\
    \intertext{If $B=\H$:}
    &\s=\tau=\I_\H, \label{4} \\
    &\s\cdot(i,j)=\tau\cdot(i,j)=(i,-j), \quad\mbox{or} \label{22} \\
    &\s\cdot(i,j)=(i,-j),\:\tau\cdot(i,j)=(ci+sj,si-cj). \label{111}\\
    \intertext{If $B=\O$:}
      &\s=\tau=\I_\O, 
      \label{8} \\
      &\s\cdot(i,j,l)=\tau\cdot(i,j,l)=(i,j,-l), 
      \label{44} \\
      &\begin{cases}
        \s\cdot(i,j,l) = (-i,-j,l),\\
        \tau\cdot(i,j,l) = (-i,-j,cl+sil), \quad\mbox{or} \\
      \end{cases}
      \label{222}\\
      &
      \begin{cases}
        \s\cdot(i,j,l) = (i,j,-l), \\
        \tau\cdot(i,j,l) = (ci+sil,cj+sjl,-l).
      \end{cases}
      \label{113}
  \end{align}
\end{prop}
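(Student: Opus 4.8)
The plan is to feed the structure theorem of Proposition~\ref{prop:algebras_with_inversion} into an enumeration of pairs of involutive automorphisms, with Lemma~\ref{lem:group} controlling the subgroup they generate. Every real division algebra contains a non-zero idempotent \cite{segre}, so by Proposition~\ref{prop:algebras_with_inversion} each real division algebra $A$ with involutive inversion is isomorphic to an isotope $B_{\s,\tau}$, where $B$ is a real alternative division algebra (hence $B\in\{\R,\C,\H,\O\}$) and $\s,\tau\in\Aut(B)$ satisfy $\s^2=\tau^2=\I_B$ and $\s\tau\s=\tau\s\tau$. Since automorphisms fix $1_B$, these isotopes are of the type in Corollary~\ref{real}, so Proposition~\ref{1morph} applies and shows that the identity-preserving isomorphisms $B_{\s,\tau}\to B_{\s',\tau'}$ are exactly the $\f\in\Aut(B)$ with $\f\s\f\inv=\s'$ and $\f\tau\f\inv=\tau'$. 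Thus I first classify pairs $(\s,\tau)$ up to simultaneous $\Aut(B)$-conjugacy, deferring irredundancy to the end.

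Next the subgroup dichotomy. Writing $G=\langle\s,\tau\rangle$, Lemma~\ref{lem:group} gives $G\in\{1,\cc_2,\mathrm{D}_6\}$; equivalently, given $\s^2=\tau^2=\I_B$, the braid relation just says $\s\tau$ has order dividing $3$. The trivial case $\s=\tau=\I_B$ yields $B$ itself, one algebra for each $B$ (\ref{1}, the first $\C$-case, \ref{4}, \ref{8}). The case $G\cong\cc_2$ forces $\s=\tau$ equal to a single non-identity involution, so here I enumerate conjugacy classes of involutions in $\Aut(B)$: none for $\R$, one for $\C$ (namely $\kappa$), one for $\H$ (a half-turn in $\so(3)$, \ref{22}), and one for $\O$ (all involutions of $\O$ are $\mathscr{G}_2$-conjugate, \ref{44}). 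The substantial case is $G\cong\mathrm{D}_6$, where $\s\ne\tau$ are involutions with $\s\tau$ of order exactly $3$.

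For the $\mathrm{D}_6$ case: when $B=\R,\C$ the automorphism group is too small to contain $\mathrm{D}_6$. For $B=\H$ one has $\Aut(\H)=\so(3)$, the involutions are half-turns, and $\s\tau$ of order $3$ forces the two rotation axes to meet at $60^\circ$; all such pairs are $\so(3)$-conjugate, giving the single class \ref{111}. The heart of the proof is $B=\O$, with $\Aut(\O)=\mathscr{G}_2$. Here each non-identity involution $\s$ fixes a quaternion subalgebra $B_\s\subseteq\O$ and acts as $-\I$ on $B_\s^{\perp}$, and the pair is governed by $B_\s\cap B_\tau$: being a unital, hence composition, subalgebra strictly contained in each of $B_\s,B_\tau$, it must be $\R$ or $\C$. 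I would show both cases occur and that each is a single $\mathscr{G}_2$-orbit, yielding precisely \ref{113} (with $B_\s\cap B_\tau=\R$) and \ref{222} (with $B_\s\cap B_\tau=\C$). The tool is that $\mathscr{G}_2$ acts simply transitively on Cayley triples: fixing a half-turn $\s$, the residual freedom is conjugation by the centraliser of $\s$ in $\mathscr{G}_2$, under which one classifies the admissible $\tau$ by solving the order-three condition on a suitably adapted Cayley triple and reading off the two normal forms. This centraliser-plus-Cayley-triple computation is the main technical obstacle; it also explains the contrast with $\H$, where the fixed subalgebras are only $2$-dimensional, so two distinct ones always meet in $\R$, leaving no further invariant and hence a single class.

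Finally, count and irredundancy. I would confirm the ten algebras are pairwise non-isomorphic: $\dim A$ recovers $B$, and within a fixed $B$ the pair $(\s,\tau)$ attached to any non-zero idempotent is determined up to $\Aut(B)$-conjugacy, so the isomorphism type of $G=\langle\s,\tau\rangle$, together with the isomorphism type of $B_\s\cap B_\tau$ in the octonion case, is an isomorphism invariant separating all listed algebras. The swap $(\s,\tau)\mapsto(\tau,\s)$ creates no new class, since in $\mathrm{D}_6$ the two generating involutions are interchanged by conjugation with the third involution $\s\tau\s\in G\subseteq\Aut(B)$. Summing gives $1+2+3+4=10$ isomorphism classes, as asserted.
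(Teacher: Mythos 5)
Your reduction and case division coincide with the paper's: Proposition~\ref{prop:algebras_with_inversion} plus Lemma~\ref{lem:group} yield pairs of involutions with $(\s\tau)^3=\I_B$, and your invariant $B_\s\cap B_\tau=\ker(\s-\I)\cap\ker(\tau-\I)$ is precisely the trivial isotypic component $T_{\s,\tau}$ of the $\mathrm{D}_6$-module $B$ that the paper works with, so organising the octonion case by $\dim(B_\s\cap B_\tau)\in\{1,2\}$ is equivalent to the paper's list of module decompositions (your remark that this intersection is a unital, hence composition, subalgebra even disposes of the excluded case $\dim T_{\s,\tau}=3$ more cleanly than the paper's explicit computation). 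However, the step you leave as a plan --- that each of the two admissible configurations in $\O$ is a single $\mathscr{G}_2$-orbit, and likewise the uniqueness of the class (\ref{111}) --- is exactly where the paper does its real work (Lemma~\ref{squareidp} and the three lemmas that build adapted Cayley triples); your centraliser-of-$\s$ strategy is viable but unexecuted, so the existence and uniqueness of the normal forms (\ref{222}) and (\ref{113}) still has to be carried out.

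The genuine gap is in the irredundancy argument. You assert that ``the pair $(\s,\tau)$ attached to any non-zero idempotent is determined up to $\Aut(B)$-conjugacy'' and then treat $G=\langle\s,\tau\rangle$ and $B_\s\cap B_\tau$ as isomorphism invariants. But an isomorphism $B_{\s,\tau}\to B_{\s',\tau'}$ need not send $1_B$ to $1_B$: these algebras generally have several idempotents (for instance $\C_{\kappa,\kappa}$ has three, and $\H_{\s,\s}$ in case (\ref{22}) has non-central idempotents $-\frac{1}{2}+v$ with $\s(v)=-v$, $\|v\|^2=\frac{3}{4}$), so Proposition~\ref{1morph} does not apply directly, and the independence of the attached conjugacy class from the chosen idempotent is essentially equivalent to the irredundancy you are trying to establish --- if some idempotent of a listed algebra produced a non-conjugate pair, that algebra would be isomorphic to another item on the list. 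You must either verify idempotent-independence algebra by algebra, or replace your invariants by manifestly intrinsic ones; the paper takes the second route, distinguishing the algebras by existence of an identity element, existence of a non-zero central idempotent, and (to separate (\ref{222}) from (\ref{113})) the property of Lemma~\ref{noniso} concerning a six-dimensional subspace all of whose elements square to multiples of a single fixed idempotent.
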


It is easy to check that all the pairs of automorphisms $\s$ and $\tau$ defined in
Proposition~\ref{invinv} satisfy the conditions 
\begin{equation} \label{d6}
\s^2=\tau^2=(\tau\s)^3=\I_B,
\end{equation}
hence the resulting algebras have involutive inversion. 
Below, we shall prove that every real division algebra with involutive inversion is
isomorphic to one of these, and that they are mutually non-isomorphic.

Every pair $\s,\tau$ of automorphisms of a real alternative division algebra $B$
satisfying (\ref{d6}) define a representation of 
$\mathrm{D}_6=\<a,b\mid a^2=b^2=(ab)^3=\I_B\>$ on $B$ via $a\mapsto\s$, $b\mapsto\tau$.
The irreducible real representations of $\mathrm{D}_6$ are the trivial representation, the
sign representation, and the natural representation (viewing $\mathrm{D}_6$ as a subgroup of
$\gl_2(\R)$ in the natural way). We denote these representations by $\triv$, $\sign$ and
$\nat$, respectively.

Setting $T_{\s,\tau}=\ker(\s-\I_B)\cap\ker(\tau-\I_B)$,  
$S_{\s,\tau}=\ker(\s+\I_B)\cap\ker(\tau+\I_B)$, and 
$N_{\s,\tau}=T_{\s,\tau}^\perp\cap S_{\s,\tau}^\perp\subset B$, we have 
$B=T_{\s,\tau}\oplus S_{\s,\tau} \oplus N_{\s,\tau}$ as a $\mathrm{D}_6$-module, and
$T_{\s,\tau}\simeq\dim(T_{\s,\tau})\,\triv$, $S_{\s,\tau}\simeq\dim(S_{\s,\tau})\,\sgn$,
$N_{\s,\tau}\simeq(\dim(N_{\s,\tau})/2)\,\nat$. 
Observe that since $\s,\tau\in\Aut(B)\subset\og(B)$, the above direct sum decomposition is
orthogonal, and $1_B\in T_{\s,\tau}$.

Define $\rho=\tau\s\in\Aut(B)$.
Now $\rho$ acts as the identity on $T_{\s,\tau}\oplus S_{\s,\tau}$, while
$\rho|_{N_{\s,\tau}}$ is a rotation with angle $2\pi/3$:
$\<\rho(x),x\>=\|x\|^2\cos(2\pi/3)$ for every $x\in N_{\s,\tau}$.

\begin{lma} \label{squareidp}
  For all $x\in N_{\s,\tau}$, there exists a unit vector $y\in S_{\s,\tau}$ such that
  $\s(x)\tau(x)=-\|x\|^2(c -sy)$. Moreover, $\s(c-sy)\tau(c-sy)=c-sy$.
\end{lma}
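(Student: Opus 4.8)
The plan is to separate $\s(x)\tau(x)$ into its scalar and imaginary parts and to handle each in turn. Since $x\in N_{\s,\tau}\subseteq\Im B$, both $\s(x)$ and $\tau(x)$ lie in $\Im B$, so writing $u\times v=\tfrac12(uv-vu)\in\Im B$ for the vector product on $\Im B$ we have $\s(x)\tau(x)=-\<\s(x),\tau(x)\>1_B+\s(x)\times\tau(x)$. For the scalar part, orthogonality of $\s$ together with $\s\tau=\rho\inv$ gives $\<\s(x),\tau(x)\>=\<x,\s\tau(x)\>=\<x,\rho\inv(x)\>=\|x\|^2\cos(2\pi/3)=c\|x\|^2$, because $\rho$ restricts to a rotation by $2\pi/3$ on $N_{\s,\tau}$. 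Thus the scalar part of $\s(x)\tau(x)$ is $-c\|x\|^2$, which already matches the $-\|x\|^2c$ term.

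It remains to analyse $w:=\s(x)\times\tau(x)$ and to show $w=s\|x\|^2y$ for a unit vector $y\in S_{\s,\tau}$. Its norm is immediate from the vector product axiom: $\|w\|^2=\|\s(x)\|^2\|\tau(x)\|^2-\<\s(x),\tau(x)\>^2=\|x\|^4(1-c^2)=(s\|x\|^2)^2$. For the direction, note that $\s,\tau,\rho$ are automorphisms and hence preserve $\times$. Using $\rho\s=\tau$ and $\I_B+\rho+\rho^2=0$ on $N_{\s,\tau}$,
$$\rho(w)=\rho\s(x)\times\rho\tau(x)=\tau(x)\times\rho^2\s(x)=\tau(x)\times(-\s(x)-\tau(x))=\s(x)\times\tau(x)=w,$$
so $w\in\ker(\rho-\I_B)=T_{\s,\tau}\oplus S_{\s,\tau}$.

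The main obstacle is to promote this to $w\in S_{\s,\tau}$, i.e.\ to exclude a $T$-component. Since $1_B\in T_{\s,\tau}$ and $w\in\Im B$, in the frequent case $T_{\s,\tau}=\R1_B$ this is automatic, as then $\Im B\cap(T_{\s,\tau}\oplus S_{\s,\tau})=S_{\s,\tau}$. In general I would reduce to the case where $x$ spans, together with $\rho(x)$, a single irreducible $\nat$-summand $P\subseteq N_{\s,\tau}$, and pick an orthonormal basis $e_1,e_2$ of $P$ with $\s(e_1)=e_1$, $\s(e_2)=-e_2$ and $\rho|_P$ the rotation by $2\pi/3$, so that $\tau(e_1)=ce_1+se_2$ and $\tau(e_2)=se_1-ce_2$. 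Setting $y=e_1\times e_2$ one has $\|y\|=1$, $\s(y)=e_1\times(-e_2)=-y$ and $\tau(y)=(ce_1+se_2)\times(se_1-ce_2)=-(c^2+s^2)(e_1\times e_2)=-y$, whence $y\in S_{\s,\tau}$; and expanding $w=\s(x)\times\tau(x)$ bilinearly in $x=x_1e_1+x_2e_2$ collapses to $w=s(x_1^2+x_2^2)\,y=s\|x\|^2y$. Combining with the first paragraph yields $\s(x)\tau(x)=-c\|x\|^2+s\|x\|^2y=-\|x\|^2(c-sy)$. I expect this vector-part step to be the genuinely delicate one: $\times$ is only equivariant through the multiplicative structure of $B$, and when $N_{\s,\tau}$ splits into several copies of $\nat$ one must be sure that products between distinct summands do not feed into $T_{\s,\tau}$; confining the computation to one irreducible plane is exactly what avoids this.

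Finally, the moreover-clause is a direct computation. As $y\in S_{\s,\tau}$ we have $\s(y)=\tau(y)=-y$, so $\s(c-sy)=\tau(c-sy)=c+sy$, and using $y^2=-\|y\|^21_B=-1_B$ together with $c^2-s^2=\cos(4\pi/3)=c$ and $2cs=\sin(4\pi/3)=-s$,
$$\s(c-sy)\,\tau(c-sy)=(c+sy)^2=(c^2-s^2)+2cs\,y=c-sy,$$
as claimed.
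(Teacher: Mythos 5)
The bulk of your argument is correct, and parts of it are cleaner and more general than the paper's own proof: the identification of the scalar part via $\<\s(x),\tau(x)\>=\<x,\s\tau(x)\>=c\|x\|^2$, the norm computation for $w=\s(x)\times\tau(x)$, the identity $\rho(w)=w$ placing $w$ in $T_{\s,\tau}\oplus S_{\s,\tau}$, and the ``moreover'' computation all hold for every $x\in N_{\s,\tau}$. The genuine gap is exactly where you suspect it: the announced reduction to the case where $\spann\{x,\rho(x)\}$ is a single irreducible $\nat$-summand cannot be carried out. Writing the isotypic component as $N_{\s,\tau}\cong\nat\otimes\R^k$ with $\mathrm{D}_6$ acting on the left factor, an element $x=e_1\otimes a+e_2\otimes b$ satisfies $\s(x)\in\spann\{x,\rho(x)\}$ only when $a$ and $b$ are proportional; for $k\ge2$ a generic $x$ lies in no irreducible summand, the plane $\spann\{x,\rho(x)\}$ is not $\s$-invariant, and your adapted-basis computation does not apply to it. Worse, the conclusion itself fails for such $x$: in the algebra given by (\ref{222}), where $T_{\s,\tau}=\spann\{1,k\}$ with $k=ij$, $S_{\s,\tau}=\spann\{i,j\}$ and $N_{\s,\tau}=\H l$, taking $x=l-jl$ gives $\s(x)=l+jl$, $\tau(x)=cl+s(il)+c(jl)+s(kl)$ and $\s(x)\tau(x)=-2c+2sk$, whose imaginary part lies in $T_{\s,\tau}$ rather than $S_{\s,\tau}$. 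So no argument can close this gap for arbitrary $x\in N_{\s,\tau}$.

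For what it is worth, the paper's proof rests on the same unjustified claim, namely that $\spann\{x,\rho(x)\}$ is a $\mathrm{D}_6$-submodule isomorphic to $\nat$; this holds precisely in the situations where the lemma is later invoked. In every application either $\dim T_{\s,\tau}=1$, in which case your observation that $w\in(T_{\s,\tau}\oplus S_{\s,\tau})\cap\Im B=S_{\s,\tau}$ already finishes the proof for all $x$, or $x$ is taken in $N_{\s,\tau}\cap\ker(\s\mp\I_B)$, in which case $\s$, and hence $\tau=\rho\s$, does preserve $\spann\{x,\rho(x)\}$ and your explicit computation (like the paper's eigenvector argument) goes through. Your proof therefore establishes the lemma in exactly the generality in which it is used, but the step you flagged as delicate is not merely delicate --- for the statement as printed it is unbridgeable.
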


\begin{proof}
We may assume that $\|x\|=1$.
Now $\rho(x)x=-\<\rho(x),x\> + \rho(x)\!\times\! x = -c-sy$ for some unit vector
$y\in\Im(B)$, where '$\times$' denotes the vector product on $\Im(B)$ (\cf{} Section~\ref{leftunital}).
The subspace $\spann\{x,\rho(x)\}\subset B$ is a submodule, isomorphic to the natural
representation of $\mathrm{D}_6$. Hence there exist vectors $u,u',v,v'\in\spann\{x,\rho(x)\}$
such that $\s(u)=u$, $\s(u')=-u'$, $\tau(v)=v$ and $\tau(v')=-v'$, hence 
$\s(u\!\times\! u')=-u\!\times\! u'$ and $\tau(v\!\times\! v')=-v\!\times\! v'$.
But $y= \frac{1}{s}\left(\rho(x)-\<\rho(x),x\>\right)\!\times\! x =\pm u\!\times\! u'=
 \pm v\!\times\! v'$, so
$y\in\ker(\sigma+\I)\cap\ker(\tau+\I)=S_{\s,\tau}$. 

It follows that $\s(x)\tau(x)=\tau\left(\rho(x)x\right)=-c-\tau(y)= -(c-sy)$.
A direct computation gives $\s(c-sy)\tau(c-sy)=c-sy$. 
\end{proof}

\begin{lma} \label{noniso}
  Let $A=\O_{\a,\b}$ and $B=\O_{\s,\tau}$, where $\a,\b,\s,\tau\in\Aut(\O)$.
  Assume that the representations of $\mathrm{D}_6$ given by $\a,\b$ and $\s,\tau$ are isomorphic
  to $2\,\triv\oplus2\,\sgn\oplus2\,\nat$ respectively
  $\triv\oplus\sgn\oplus3\,\nat$.
  Then the algebras $A$ and $B$ are not isomorphic.
\end{lma}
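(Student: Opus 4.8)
The plan is to show that $A=\O_{\a,\b}$ and $B=\O_{\s,\tau}$ cannot be isomorphic by exhibiting an invariant that distinguishes them, using the $\mathrm{D}_6$-module structure on $\O$. The decisive observation is that any isomorphism of isotopes of the type in Corollary~\ref{real} is, by Proposition~\ref{1morph}, an automorphism $\f$ of $\O$ intertwining the two pairs of automorphisms: $\f\a=\s\f$ and $\f\b=\tau\f$. Such a $\f$ would therefore be an isomorphism of $\mathrm{D}_6$-modules from $(\O,\a,\b)$ to $(\O,\s,\tau)$. But the two modules have different isotypic decompositions, namely $2\,\triv\oplus2\,\sgn\oplus2\,\nat$ versus $\triv\oplus\sgn\oplus3\,\nat$, and these are non-isomorphic as $\mathrm{D}_6$-representations because the multiplicities of the irreducibles differ. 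Hence no such intertwining $\f$ exists.

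First I would make precise that an isomorphism $A\to B$ in the relevant category restricts, via Proposition~\ref{1morph}, to an automorphism of the underlying alternative algebra $\O$ that commutes appropriately with the structure maps. The subtlety is that Proposition~\ref{1morph} is stated for \emph{unital} morphisms, i.e.\ those sending $1_A$ to $1_B$; so I must first argue that any isomorphism between these particular isotopes can be taken to fix the identity, or else reduce to that case. Since both $A$ and $B$ are isotopes with the \emph{same} underlying algebra $\O$ and the structure maps $\a,\b,\s,\tau$ are all automorphisms (in particular fixing $1_\O$), the idempotent $1_\O$ is a distinguished left-and-right unit for the $\ast$- and $\circ$-products, and any morphism must preserve it; this places us squarely in the setting of Proposition~\ref{1morph}.

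Next I would invoke the representation-theoretic invariant. Given $\f\a=\s\f$ and $\f\b=\tau\f$, the map $\f$ intertwines the $\mathrm{D}_6$-actions defined by $a\mapsto\a,\ b\mapsto\b$ and by $a\mapsto\s,\ b\mapsto\tau$. An isomorphism of representations must preserve the multiplicity of each irreducible constituent. Since $\triv$, $\sgn$ and $\nat$ are pairwise non-isomorphic (they have distinct characters), the two decompositions $2\,\triv\oplus2\,\sgn\oplus2\,\nat$ and $\triv\oplus\sgn\oplus3\,\nat$ are non-isomorphic $\mathrm{D}_6$-modules. Therefore no $\mathrm{D}_6$-isomorphism $\f$ exists, so there is no algebra isomorphism $A\to B$.

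**Main obstacle.** The genuinely delicate point is the reduction to unital morphisms: I must be certain that \emph{every} isomorphism $A\to B$ (not merely those a priori fixing the identity) is accounted for by Proposition~\ref{1morph}, so that the representation-theoretic obstruction really blocks all isomorphisms. Once the identity-preserving property is secured, the remaining argument is the elementary and robust fact that non-isomorphic representations force non-isomorphic algebras; the numerical invariant (the multiplicities of $\triv,\sgn,\nat$) does the rest.
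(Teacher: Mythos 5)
Your overall strategy --- reduce to identity-preserving morphisms via Proposition~\ref{1morph} and then compare $\mathrm{D}_6$-module types --- founders on exactly the point you flag as delicate, and the fix you offer does not work. The element $1_\O$ is \emph{not} a two-sided unit of the isotope $\O_{\s,\tau}$: with the product $x\circ y=\s(x)\tau(y)$ one has $1_\O\circ y=\tau(y)$ and $x\circ 1_\O=\s(x)$, so $1_\O$ is merely a non-zero idempotent, and it is not the only one. Indeed, Lemma~\ref{squareidp} produces further idempotents of the form $c-sy$ with $y$ a unit vector in $S_{\s,\tau}$ (the ``moreover'' clause states precisely that $\s(c-sy)\tau(c-sy)=c-sy$). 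An algebra isomorphism $A\to B$ need only carry idempotents to idempotents, so it may send $1_\O$ to such an idempotent rather than to $1_\O$; your argument rules out only those isomorphisms fixing $1_\O$ and is silent about the rest. This is not a removable technicality: the reason the paper proves Lemma~\ref{noniso} by a separate argument, instead of simply observing that the two $\mathrm{D}_6$-modules have different multiplicities, is that non-isomorphy of the modules does not by itself imply non-isomorphy of the algebras.

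The paper's proof sidesteps the base-point problem by exhibiting an invariant that is manifestly preserved by \emph{every} algebra isomorphism: in $B=\O_{\s,\tau}$ (type $\triv\oplus\sgn\oplus3\,\nat$) the $6$-dimensional subspace $N_{\s,\tau}$ has the property that the square of each of its elements is a scalar multiple of one fixed idempotent (this uses $\dim S_{\s,\tau}=1$ together with Lemma~\ref{squareidp}), whereas in $A=\O_{\a,\b}$ (type $2\,\triv\oplus2\,\sgn\oplus2\,\nat$) no $6$-dimensional subspace can have this property: any such subspace would meet $V=(T_{\a,\b}\oplus S_{\a,\b})\cap\Im\O$ nontrivially, forcing the fixed idempotent to be $1_\O$, while the locus $\{x\in A\mid x^{\circ2}\in\spann\{1_\O\}\}$ is only $\spann\{1_\O\}\cup V$ and so contains no $6$-dimensional subspace. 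To repair your proof you would need either an intrinsic invariant of this kind, or a supplementary argument showing that re-basing $B$ at each of its other non-zero idempotents never produces the $\mathrm{D}_6$-type $2\,\triv\oplus2\,\sgn\oplus2\,\nat$.
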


\begin{proof}
Write $A=(\O,\circ)$ and $B=(\O,*)$.
By Lemma~\ref{squareidp}, the square of every element in the 6-dimensional subspace
$N_{\s,\tau}\subset B$ is proportional to an idempotent, and since $\dim S_{\s,\tau}=1$ it
follows that the idempotent is the same for all elements in $N_{\sigma,\tau}$.
Suppose that $U\subset A$ is a subspace of dimension 6 such that the square of every
element is proportional to a fixed idempotent $e$. Then $U$ intersects the subspace
$V=(T_{\a,\b}\oplus S_{\a,\b})\cap\Im\O$ non-trivially, and since
$x^{\circ2}\in\spann\{1_\O\}$ for every $x\in V$, it follows that $e=1_\O$.
On the other hand, it is easy to see that 
$\{x\in A\mid x^{\circ2}\in\spann\{1_\O\}\}=\spann\{1_\O\}\cup V$, so no 6-dimensional
subspace $U$ with the desired property can exist in $A$. 
This means that $A$ and $B$ are not isomorphic. 
\end{proof}

Lemma~\ref{noniso} implies that the algebras specified by (\ref{222}) and (\ref{113}) are
not isomorphic.
In addition, the algebras defined by (\ref{8}) and (\ref{44}) are distinguished from the
ones defined by (\ref{222}) and (\ref{113}) by the existence of a non-zero central
idempotent, while the one defined by (\ref{8}) is unique among the four above-mentioned in
having an identity element. Thus these algebras are mutually non-isomorphic.
Similarly, in the 4-dimensional case, the algebra defined by (\ref{4}) is unital, while
the one defined by (\ref{22}) has a non-zero central idempotent which is not an identity
element.
This shows that the algebras defined in Proposition~\ref{invinv} belong to different
isomorphism classes.

To prove Proposition~\ref{invinv}, it remains to show that every real division algebra
with involutive inversion is isomorphic to one of those specified by
(\ref{1})--(\ref{113}).
First, if $B$ is a real alternative division algebra and $\s\in\Aut(B)$ an automorphism
of order two,
then $\dim(\ker(\s-\I))=\dim(\ker(\s+\I))=\dim(B)/2$.
Consequently, if $\s,\tau\in\Aut(B)$ satisfy (\ref{d6}) then either $\s=\tau=\I_B$ or
$\dim(S_{\s,\tau})=\dim(T_{\s,\tau})\ne0$.
Hence, as a $\mathrm{D}_6$-module, $B$ must be isomorphic to either
\begin{align*}
  &\triv &&&&&\mbox{if }& B\simeq\R, \\
  &2\,\triv, &&&\mbox{or }& \triv\oplus\sgn &\mbox{if }& B\simeq\C, \\
  &4\,\triv,&& 2\,\triv\oplus2\,\sgn, &\mbox{or }& \triv\oplus\sgn\oplus\nat 
    &\mbox{if }& B\simeq\H, \\
  &8\,\triv,&& 4\,\triv\oplus 4\,\sgn,&& 3\,\triv\oplus3\,\sgn\oplus\nat, \\
  &&&2\,\triv\oplus2\,\sgn\oplus2\,\nat,& \mbox{or } & \triv\oplus\sgn\oplus3\,\nat 
  &\mbox{if }& B\simeq\O.
\end{align*}
This immediately gives the result for $\dim B\le2$.

If $B_{\a,\b}$ and $B_{\s,\tau}$ have involutive inversion and $\f\in\Aut(B)$ satisfies 
$(\s,\tau)=(\f\a\f\inv,\f\b\f\inv)$, then $\f:B_{\a,\b}\to B_{\s,\tau}$ is an isomorphism
of algebras as well as of $\mathrm{D}_6$-modules.
Assume that $\s\in\Aut(\O)\setminus\{\I_\O\}$ and $\s^2=\I_\O$. Then there exists a Cayley triple
$(u,v,z)$ in $\O$ such that $u,v\in\ker(\s-\I_\O)$, $z\in\ker(\s+I_\O)$, \ie, there is an
automorphism $\f$ of $\O$ such that $\f\s\f\inv\cdot(i,j,l)=(i,j,-l)$. 
This proves that $\O_{\s,\s}$ is isomorphic to the algebra given by (\ref{44}).
Similarly, one proves that if $\s$ is an automorphism of $\H$ of order two then
$\H_{\s,\s}$ is isomorphic to the algebra specified by (\ref{22}).

From here on, assume that $\s,\tau\in\Aut(B)$ satisfy Equation~(\ref{d6}), and
$\s\ne\tau$. 

\begin{lma}
  If $B\simeq\H$ then $\H_{\s,\tau}$ is isomorphic to the algebra defined by (\ref{111}).
\end{lma}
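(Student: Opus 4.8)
The plan is to show that when $B \simeq \H$ and $\s \ne \tau$ satisfy (\ref{d6}), the $\mathrm{D}_6$-module structure on $B$ is forced to be $\triv \oplus \sgn \oplus \nat$, and then to produce an explicit automorphism conjugating the pair $(\s,\tau)$ into the normal form (\ref{111}). The decomposition of possibilities listed just above shows that, for $B \simeq \H$ with $\s = \tau = \I$ excluded, either $B \simeq 2\,\triv \oplus 2\,\sgn$ (which forces $\s = \tau$, contradicting our hypothesis) or $B \simeq \triv \oplus \sgn \oplus \nat$. So the first step is simply to record that $\s \ne \tau$ rules out the $2\,\triv \oplus 2\,\sgn$ case and pins down the module type as $\triv \oplus \sgn \oplus \nat$, meaning $\dim T_{\s,\tau} = \dim S_{\s,\tau} = 1$ and $\dim N_{\s,\tau} = 2$.

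Next I would locate the three summands concretely inside $\Im\H$. Since $1_B \in T_{\s,\tau}$ and $\dim T_{\s,\tau} = 1$, we have $T_{\s,\tau} = \spann\{1_\H\}$, so $T \cap \Im\H = 0$. Thus $S_{\s,\tau}$ and $N_{\s,\tau}$ together span the 3-dimensional space $\Im\H$, with $\dim S_{\s,\tau} = 1$ and $\dim N_{\s,\tau} = 2$. I would pick a unit vector spanning $S_{\s,\tau}$ and use the freedom to conjugate by an automorphism of $\H$ (which acts simply transitively on orthonormal pairs in $\Im\H$) to arrange that this vector is $i$. Then $\s(i) = -i$ and $\tau(i) = -i$. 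On the orthogonal complement $N_{\s,\tau} = \spann\{j, ij\} = \spann\{j,k\}$, both $\s$ and $\tau$ act as reflections (each being an involutive automorphism restricting to $-\I$ on a 1-dimensional subspace and $+\I$ on another), and the composite $\rho = \tau\s$ restricts to a rotation by $2\pi/3$, as recorded before Lemma~\ref{squareidp}. A further conjugation by an automorphism fixing $i$ (i.e.\ a rotation in the $j$-$k$ plane) lets me normalise $\s$ on $N_{\s,\tau}$ so that $\s \cdot (i,j) = (i,-j)$; comparing signs this is exactly $\s(i)=i$, $\s(j)=-j$ after possibly relabelling which unit vector of $\Im\H$ is sent to $-i$.

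At this stage $\s$ is fixed in the normal form $\s\cdot(i,j) = (i,-j)$, and the remaining data is the position of $\tau$ relative to $\s$. Because $\rho = \tau\s$ is a rotation by $2\pi/3$ in the plane $N_{\s,\tau} = \spann\{j,k\}$ fixing $i$, and $\s$ is the known reflection there, $\tau = \rho\s$ is completely determined once the rotation axis and sense are fixed; carrying out the matrix multiplication in the basis $(i,j,k)$ with $(c,s) = (\cos(2\pi/3), \sin(2\pi/3))$ yields precisely $\tau \cdot (i,j) = (ci + sj,\, si - cj)$, which is the stated form (\ref{111}). The main obstacle I anticipate is not any single hard computation but the bookkeeping of the residual conjugation freedom: after fixing $\s$, the stabiliser of $\s$ in $\Aut(\H)$ is a small group, and I must check that the two possible senses of the $2\pi/3$ rotation defining $\tau$ are related by this stabiliser (so that both give isomorphic algebras and a single normal form suffices). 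Verifying this---essentially that conjugating by the reflection $\s$ itself, or by $\rho$, inverts the rotation sense without changing $\s$---is the step that requires care, and it is what guarantees that (\ref{111}) is a genuine normal form rather than one of two distinct classes.
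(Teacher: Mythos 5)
Your proposal is correct in substance and follows the same route as the paper: the hypothesis $\s\ne\tau$ forces the $\mathrm{D}_6$-module type $\triv\oplus\sgn\oplus\nat$, and the simply transitive action of $\Aut(\H)$ on orthonormal pairs in $\Im\H$ then conjugates $(\s,\tau)$ into the form (\ref{111}); the only blemish is that your intermediate normalisation $S_{\s,\tau}=\spann\{i\}$ forces $\s(i)=-i$ and must be undone by the ``relabelling'' you mention, whereas the paper sends eigenvectors $u\in N_{\s,\tau}\cap\ker(\s-\I_\H)$ and $v\in N_{\s,\tau}\cap\ker(\s+\I_\H)$ directly to $(i,j)$, so that $S_{\s,\tau}$ lands on $\spann\{k\}$. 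Your explicit check that the two senses of the rotation $\rho=\tau\s$ are conjugate (e.g.\ by $\s$ itself) correctly resolves the residual ambiguity, which the paper absorbs silently into the choice of sign of $v$.
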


\begin{proof}
From the assumptions follows that $\H_{\s,\tau}\simeq\triv\oplus\sgn\oplus\nat$ as a
$\mathrm{D}_6$-module. Let $u\in N_{\s,\tau}\cap\ker(\s-\I_\H)$ and 
$v\in N_{\s,\tau}\cap\ker(\s+\I_\H)$.  
Then $\s(u)=u$ and $\s(v)=-v$, while $\tau(u)=cv+sv$, $\tau(v)=-cv+su$. Hence there exists
an automorphism $\f$ of $\H$ such that $\f\s\f\inv\cdot(i,j)=(i,-j)$ and
$\f\tau\f\inv\cdot(i,j)=(ci+sj,si-cj)$, that is, $\H_{\s,\tau}$ is isomorphic to the
algebra given by (\ref{111}).
\end{proof}

\begin{lma}
  If $B\simeq\O$ and the $\mathrm{D}_6$-module $B_{\s,\tau}$ decomposes as
  $\triv\oplus\sgn\oplus3\,\nat$ then $B_{\s,\tau}$ is isomorphic to the algebra given by
  (\ref{113}). 
\end{lma}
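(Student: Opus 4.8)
The plan is to exhibit, inside $\O$ equipped with the given $\s,\tau$-action, a Cayley triple $(u,v,z)$ on which $\s$ and $\tau$ act exactly as in (\ref{113}); the unique automorphism $\f\in\Aut(\O)$ carrying $(u,v,z)$ to $(i,j,l)$ then satisfies $\f\s\f\inv\cdot(i,j,l)=(i,j,-l)$ and $\f\tau\f\inv\cdot(i,j,l)=(ci+s(il),cj+s(jl),-l)$, and hence, since conjugating the pair $(\s,\tau)$ by an automorphism of $\O$ yields an isomorphic isotope, gives an isomorphism from $\O_{\s,\tau}$ onto the algebra defined by (\ref{113}). Since $\dim S_{\s,\tau}=1$, I first fix a unit vector $z$ spanning $S_{\s,\tau}$, so that $\s(z)=\tau(z)=-z$; this is the third leg of the triple. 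As $u,v$ must lie in $\ker(\s-\I)$ and be orthogonal to $1_\O$ and $z$, they will be sought in the $3$-dimensional space $W=\ker(\s-\I)\cap N_{\s,\tau}$.

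The crux is to control $\rho=\tau\s$ on $N_{\s,\tau}$. Since $\rho$ fixes $1_\O$ and $z$ and is an automorphism, $\rho(xz)=\rho(x)z$, so $\rho$ commutes with $\rt_z$; moreover $\rho$ lies in the stabiliser of $z$ in $\Aut(\O)$, which is isomorphic to $\mathrm{SU}(3)$ and acts $\C$-linearly, with complex determinant one, on $N_{\s,\tau}\cong\C^3$ for the complex structure $\rt_z$ (note $\rt_z^2=-\I$ on $N_{\s,\tau}$ by alternativity). Writing $\rho=c\,\I+sJ$ with $J=\tfrac1s(\rho-c\,\I)$ a complex structure commuting with $\rt_z$, the common eigenspace decomposition splits $N_{\s,\tau}=N_-\oplus N_+$ with $J=\rt_z$ on $N_-$ and $J=-\rt_z$ on $N_+$; there $\rho$ acts as the scalar $\omega=e^{2\pi\mathrm{i}/3}$, respectively $\bar\omega$. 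As $\rho$ has no nonzero fixed vector in $N_{\s,\tau}$ and $\det_\C\rho=1$, the relation $\omega^{\dim_\C N_-}\bar\omega^{\dim_\C N_+}=1$ together with $\dim_\C N_-+\dim_\C N_+=3$ forces $\dim_\C N_-\in\{0,3\}$, that is $N_{\s,\tau}=N_+$ or $N_{\s,\tau}=N_-$. Replacing $z$ by $-z$ interchanges $N_+$ and $N_-$ (it replaces $\rt_z$ by $-\rt_z$) while preserving $\s(z)=\tau(z)=-z$, so I may assume $N_{\s,\tau}=N_-$, i.e.
\begin{equation*}
\rho(x)=cx+s(xz)\qquad\text{for all }x\in N_{\s,\tau}.
\end{equation*}
This paragraph, resting on the description of the stabiliser of $z$ in $\Aut(\O)$, is the main obstacle.

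It remains to produce the triple. For $u\in W$ one has $\tau(u)=\rho\s(u)=\rho(u)=cu+s(uz)$, since $\tau=\rho\s$ and $\s(u)=u$; thus every orthonormal pair $u,v\in W$ automatically receives the correct $\s$- and $\tau$-action, and only the Cayley condition $uv\perp z$ remains to be arranged. Because $uv=u\times v$ for orthonormal imaginary $u,v$, the form $B(u,v)=\langle u\times v,z\rangle$ is alternating on the odd-dimensional space $W$ and hence degenerate; choosing a unit vector $u$ in its kernel and any unit vector $v\in W$ with $v\perp u$ gives $\langle uv,z\rangle=0$, so that $(u,v,z)$ is a Cayley triple. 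By construction $\s$ and $\tau$ act on $(u,v,z)$ precisely as $\s,\tau$ act on $(i,j,l)$ in (\ref{113}), and applying the automorphism carrying $(u,v,z)$ to $(i,j,l)$ completes the proof.
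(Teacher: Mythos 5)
Your proof is correct, but the decisive step is established by a genuinely different mechanism than in the paper. Both arguments come down to showing that one fixed unit vector $z$ spanning $S_{\s,\tau}$ satisfies $\tau(x)=cx+s(xz)$ for every $x$ in $X=\ker(\s-\I)\cap N_{\s,\tau}$. The paper gets this from its Lemma~\ref{squareidp}: for each unit $x\in X$ the element $xy_x$ lies in the one-dimensional space $S_{\s,\tau}$, and the continuous map $x\mapsto xy_x$ from the connected sphere of $X$ to the two-point sphere of $S_{\s,\tau}$ must be constant. You instead invoke the structure of the stabiliser of $z$ in $\Aut(\O)$ --- that it is $\mathrm{SU}(3)$ acting with complex determinant one on $(N_{\s,\tau},\rt_z)\cong\C^3$ --- and deduce from $\det_{\C}\rho=1$, $\rho^3=\I$ and the absence of fixed vectors that the eigenvalues $\omega$ and $\bar\omega$ cannot mix on a complex space of odd dimension. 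The imported fact is standard and correctly applied (and the determinant argument visibly uses $\dim_\C N_{\s,\tau}=3$), so the proof goes through; what you lose is self-containedness, since the paper's route needs only its own Lemma~\ref{squareidp} plus an elementary connectedness argument, while what you gain is an explicit algebraic explanation of the dichotomy $N_+$ versus $N_-$, resolved by replacing $z$ with $-z$ exactly as the paper's sign choice $z=-xy_x$ does. One smaller point: your treatment of the Cayley condition $uv\perp z$ via degeneracy of an alternating form on the odd-dimensional $W$ is correct but unnecessary --- since $\s$ is an isometry fixing $uv$ and negating $z$, one has $\langle uv,z\rangle=-\langle uv,z\rangle=0$ for \emph{every} orthonormal pair $u,v\in W$, which is why the paper can take an arbitrary such pair.
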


\begin{proof}
Let $X=N_{\s,\tau}\cap\ker(\s-\I_\O)$. For all $x\in X$ we have $\s(x)=x$ and
$\tau(x)=cx+sy_x$ for some $y_x\in N_{\s,\tau}\cap\ker(\s+\I_\O)$, and from
Lemma~\ref{squareidp} follows that $xy_x\in S_{\s,\tau}$. 
Now $x\mapsto xy_x$ defines a continuous map from the unit sphere of $X$ to the unit
sphere of $S_{\s,\tau}$, and since $\dim S_{\s,\tau}=1$, this map is constant.
Take $z=-xy_x$, where $x\in X$ is any unit vector. Then $xz=x(-xy_x)=y_x$, hence $\tau(x)=cx+sxz$.

Let $u,v$ be an orthonormal pair in $X$, then $(u,v,z)$ is a Cayley triple satisfying
$\s\cdot(u,v,z)=(u,v,-z)$ and $\tau\cdot(u,v,z)=(cu+suz,cv+svz,-z)$. This concludes the
proof of the lemma. 
\end{proof}

\begin{lma}
  If $B\simeq\O$ and $B_{\s,\tau}\simeq 2\,\triv\oplus2\,\sgn\oplus2\,\nat$ as a
  $\mathrm{D}_6$-module, then the algebra $B_{\s,\tau}$ is isomorphic to the algebra given by (\ref{222}). 
\end{lma}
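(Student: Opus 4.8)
The plan is to follow the same strategy used in the two preceding lemmas (the $\triv\oplus\sgn\oplus\nat$ case for $\H$ and the $\triv\oplus\sgn\oplus3\,\nat$ case for $\O$): exhibit an explicit Cayley triple $(u,v,z)$ adapted to the $\mathrm{D}_6$-module structure, and then construct an automorphism $\f\in\Aut(\O)$ sending it to the standard Cayley triple $(i,j,l)$, so that $\f\s\f\inv$ and $\f\tau\f\inv$ act by the formulas in~(\ref{222}). Under the decomposition $B_{\s,\tau}\simeq 2\,\triv\oplus2\,\sgn\oplus2\,\nat$, I record the dimensions: $\dim T_{\s,\tau}=2$ (so $\Im\O\cap T_{\s,\tau}$ is $1$-dimensional, as $1_\O\in T_{\s,\tau}$), $\dim S_{\s,\tau}=2$, and $\dim N_{\s,\tau}=4$, with $\rho=\tau\s$ acting as a $2\pi/3$-rotation on each of the two natural summands in $N_{\s,\tau}$.

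First I would analyse the squares of elements of $N_{\s,\tau}$ using Lemma~\ref{squareidp}: for each $x\in N_{\s,\tau}$ there is a unit vector $y\in S_{\s,\tau}$ with $\s(x)\tau(x)=-\|x\|^2(c-sy)$. Unlike the $3\,\nat$ case, here $\dim S_{\s,\tau}=2$, so the map $x\mapsto y$ need not be constant; understanding its image is the crux. I expect that the target form~(\ref{222}), in which $\tau\cdot(i,j,l)=(-i,-j,cl+sil)$ and $\s\cdot(i,j,l)=(-i,-j,l)$, forces the relevant idempotent direction to be $l$ (the $S_{\s,\tau}$-generator picked out by the $\nat$-summands). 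Concretely I would pick a unit vector $x\in N_{\s,\tau}\cap\ker(\s-\I_\O)$, set $z$ proportional to the $S_{\s,\tau}$-component produced by Lemma~\ref{squareidp}, and then locate within $T_{\s,\tau}\cap\Im\O$ and $S_{\s,\tau}$ the remaining basis vectors realising the pattern $\s\cdot(i,j,l)=(-i,-j,l)$. The decomposition $\O=T_{\s,\tau}\oplus S_{\s,\tau}\oplus N_{\s,\tau}$ is orthogonal, which lets me check orthogonality of the candidate Cayley triple, while the compatibility of the vector product with the $\mathrm{D}_6$-action (as exploited in Lemma~\ref{squareidp}) governs how $\tau$ twists $z$.

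The main obstacle will be the second natural summand. With two copies of $\nat$ inside $N_{\s,\tau}$, I must verify that a single pair of orthonormal vectors $i,j$ can be chosen so that both $\s$ and $\tau$ act by the prescribed signs on $i,j$ \emph{and} the induced action on $z=il$ (equivalently the multiplication constraints among $i,j,l$ in $\O$) matches~(\ref{222}); the two $\nat$-blocks are tied together by the octonion multiplication, so I cannot treat them independently. Concretely, after fixing $x$ and the idempotent direction $y\in S_{\s,\tau}$, I would argue that the orthogonal complement structure together with $\s(x)\tau(x)=-(c-sy)$ pins down the second $\nat$-summand as $\spann\{xy,\rho(x)y\}$ or similar, and then verify the $\s,\tau$-eigenvalue pattern there. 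Once the Cayley triple is identified, producing the automorphism $\f$ is routine by simple transitivity of $\Aut(\O)$ on Cayley triples, and a direct computation of $\f\s\f\inv$ and $\f\tau\f\inv$ on $(i,j,l)$ completes the proof.
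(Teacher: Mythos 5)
Your overall strategy coincides with the paper's: start from a unit vector in $N_{\s,\tau}\cap\ker(\s-\I_\O)$, use Lemma~\ref{squareidp} to produce a distinguished unit vector in $S_{\s,\tau}$, complete to a Cayley triple, and finish by simple transitivity of $\Aut(\O)$ on Cayley triples. However, the role assignment in your sketch is off in a way that would derail a literal execution. In~(\ref{222}) one has $\s(l)=l$ and $\tau(l)=cl+sil$, so $l$ lies in $N_{\s,\tau}$, not in $S_{\s,\tau}$: it is your starting vector $x\in N_{\s,\tau}\cap\ker(\s-\I_\O)$ that must become $l$, while the vector in $S_{\s,\tau}$ supplied by Lemma~\ref{squareidp} becomes $i$, and any unit vector of $S_{\s,\tau}$ orthogonal to it becomes $j$. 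Saying that the ``idempotent direction'' is $l$, or setting the third slot of the Cayley triple proportional to the $S_{\s,\tau}$-component, would force $\s$ and $\tau$ to negate $l$, contradicting the target form.

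The obstacle you single out --- reconciling the two $\nat$-summands --- is not where the work lies. Once you produce a Cayley triple $(u,v,z)$ with $u,v\in S_{\s,\tau}$ orthonormal and $z\in N_{\s,\tau}\cap\ker(\s-\I_\O)$ satisfying $\tau(z)=cz+s\,uz$, the unique $\f\in\Aut(\O)$ carrying $(u,v,z)$ to $(i,j,l)$ conjugates $(\s,\tau)$ exactly to the pair in~(\ref{222}), because an automorphism of $\O$ is determined by its values on a Cayley triple; the second $\nat$-summand takes care of itself. The step your sketch leaves to ``compatibility of the vector product with the $\mathrm{D}_6$-action'' is the actual crux: writing $\tau(z)=cz+sy$ with $y\in N_{\s,\tau}\cap\ker(\s+\I_\O)$ a unit vector, Lemma~\ref{squareidp} gives $yz\in S_{\s,\tau}$, and setting $u=-yz$ one computes $uz=-(yz)z=-yz^2=y$, whence $\tau(z)=cz+s\,uz$; orthogonality of $u,v,uv,z$ then follows from the orthogonality of $T_{\s,\tau}\oplus S_{\s,\tau}\oplus N_{\s,\tau}$ together with $uv\in T_{\s,\tau}$. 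With these corrections your argument is exactly the paper's.
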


\begin{proof}
Let $z\in N_{\s,\tau}\cap\ker(\s-\I_\O)$, $\|z\|=1$, then $\tau(z)=cz+sy$ for a unit vector
$y\in N_{\s,\tau}\cap\ker(\s+\I_\O)$. It follows from Lemma~\ref{squareidp} that $yz\in S_{\s,\tau}$, and setting
$u=-yz$ we have $uz=(-yz)z=y$, so $\tau(z)=cz+suz$. Let $v\in S_{\s,\tau}$ be a unit
vector orthogonal to $u$. Then $uv$ is in $T_{\s,\tau}$ and hence orthogonal to $z$, so
$(u,v,z)$ is a Cayley triple, which satisfies $\s\cdot(u,v,z)=(-u,-v,z)$ and
$\tau\cdot(u,v,z)=(-u,-v,cz+suz)$.
\end{proof}

To conclude the proof of Proposition~\ref{invinv}, it remains only to show that the module
$\O_{\s,\tau}$ cannot decompose as $3\,\triv\oplus3\,\sgn\oplus\nat$.
Assume $\dim S_{\s,\tau}\ge3$, and let $u,v,z\in S_{\s,\tau}$ be orthonormal. Then $uv\in
T_{\s,\tau}$, so $(u,v,z)$ is a Cayley triple, and $\{1,u,v,uv,z,uz,vz,(uv)z\}$ is a basis
of $\O$, with $u,v,z,(uv)z\in S_{\s,\tau}$ and $1,uv,uz,vz\in T_{\s,\tau}$, so
$N_{\s,\tau}=0$ and thus $\O_{\s,\tau}\simeq4\,\triv\oplus4\,\sgn$ as a
$\mathrm{D}_6$-module.

%%%%%%%%%%%%%%%%%%%%%%%%%%%%%%%%%%
\section{Quasigroup identites} \label{sec:qgi}
%%%%%%%%%%%%%%%%%%%%%%%%%%%%%%%%%

One basic result in group theory is that for any group $G$ the maps $y \mapsto xyx^{-1}$,
$x \in G$ are automorphisms. This result is a consequence of the fact that the associative law
$(xy)z =x(yz)$ holds in groups.  {Since partial classifications of real division algebras with
large groups of automorphisms have been obtained \cite{BO81a,BO81b,DbZh}, any method for translating
quasigroup identities to results about the existence of automorphisms on the underlying
real division algebra will produce classification results for division algebras.} Unfortunately these methods
are scarce, so at the present time this approach seems unfruitful. A closer look at some
particular varieties of 
quasigroups makes clear that sometimes it is easy to translate quasigroup identities to
results about the existence of \emph{autotopies}, \ie. isotopies from the quasigroup to
itself. For instance, the associative law holds in a quasigroup if and only if
$(\rt_z,\I,\rt_z)$ and $(\lt_x,\lt_x,\I)$ are autotopies; the left Moufang identity
$x(y(xz)) = ((xy)x)z$ is equivalent to $(\lt_x,\rt_x\lt_x,\lt^{-1}_x)$ being an autotopy, etc.
The set of autotopies of an algebra $(A,xy)$ forms a group under componentwise composition, the
\emph{autotopy group} of $A$, denoted by $\Atp(A)$, \ie,
\begin{equation}\label{eq:Atp}
  \Atp(A) =  \{(\varphi_1,\varphi_2,\varphi_3) \in \End(A)^3 \mid \varphi_1(xy) = \varphi_2(x)\varphi_3(y)\: \forall_{x,y \in A}\}.
\end{equation}
The vector space
\begin{displaymath}
    \Tder(A) = \{(d_1,d_2,d_3) \in \End_k(A)^3 \mid d_1(xy) = d_2(x) y + x d_3(y) \: \forall_{x,y \in A}\}
\end{displaymath}
is a Lie algebra with the componentwise commutator of linear maps. The elements of $\Tder(A)$ are called \emph{ternary derivations}. In case that $(A,xy)$ is a real algebra, $\Atp(A)$ is a Lie group and $\Tder(A)$ is the Lie algebra of $\Atp(A)$ at the identity.

\begin{lma}
\label{lem:tangent}
Let $(A,xy)$ be an algebra over the real numbers, $\epsilon > 0$ and
$(-\epsilon, \epsilon) \to \Atp(A)$ $t \mapsto (\alpha_t,\beta_t,\gamma_t)$ a
differentiable curve with $\alpha_0 = \beta_0 = \gamma_0 = \I_{\Atp(A)}$. Then
$(\dot{\alpha}_0,\dot{\beta}_0,\dot{\gamma}_0) \in \Tder(A)$.
\end{lma}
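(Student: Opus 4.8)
The plan is to differentiate the autotopy relation at $t=0$ and recognise the result as the defining relation of $\Tder(A)$. Since each $(\alpha_t,\beta_t,\gamma_t)\in\Atp(A)$, we have by definition
\begin{equation*}
\alpha_t(xy)=\beta_t(x)\gamma_t(y) \qquad \forall\, x,y\in A,\ t\in(-\epsilon,\epsilon).
\end{equation*}
First I would fix arbitrary $x,y\in A$ and regard both sides as maps $(-\epsilon,\epsilon)\to A$ into the finite-dimensional real vector space $A$. The left-hand side $t\mapsto\alpha_t(xy)$ is differentiable because the curve $t\mapsto\alpha_t$ is; its derivative at $0$ is $\dot\alpha_0(xy)$, since evaluation at the fixed vector $xy$ is a (constant) linear map and hence commutes with differentiation.

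Next I would differentiate the right-hand side $t\mapsto\beta_t(x)\gamma_t(y)$. The multiplication of $A$ is a fixed bilinear map $A\times A\to A$, and $t\mapsto\beta_t(x)$ and $t\mapsto\gamma_t(y)$ are differentiable curves in $A$. Applying the Leibniz (product) rule for the derivative of a bilinear pairing of two differentiable curves, and using $\beta_0=\gamma_0=\I_A$, I obtain
\begin{equation*}
\left.\frac{d}{dt}\right|_{t=0}\bigl(\beta_t(x)\gamma_t(y)\bigr)
=\dot\beta_0(x)\,\bigl(\gamma_0(y)\bigr)+\bigl(\beta_0(x)\bigr)\,\dot\gamma_0(y)
=\dot\beta_0(x)\,y+x\,\dot\gamma_0(y).
\end{equation*}
Equating the two derivatives gives $\dot\alpha_0(xy)=\dot\beta_0(x)\,y+x\,\dot\gamma_0(y)$ for all $x,y\in A$, which is exactly the condition for $(\dot\alpha_0,\dot\beta_0,\dot\gamma_0)$ to lie in $\Tder(A)$.

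There is no serious obstacle here; the whole content is the interchange of differentiation with evaluation and the bilinear product rule, both valid because everything takes place in the finite-dimensional space $A=\End(A)$-action is smooth. The one point to state carefully is the product rule for a bilinear map applied to two curves: writing $m\colon A\times A\to A$ for multiplication, one checks directly from the definition of derivative (or by expanding in coordinates with respect to a basis of $A$) that $\frac{d}{dt}m(f(t),g(t))=m(\dot f,g)+m(f,\dot g)$, so no appeal to the Lie-group structure of $\Atp(A)$ beyond differentiability of the coordinate curves $\alpha_t,\beta_t,\gamma_t$ is needed. It is worth remarking that the hypothesis $\alpha_0=\beta_0=\gamma_0=\I_A$ is used only to simplify $\gamma_0(y)=y$ and $\beta_0(x)=x$; without it one would obtain a more general twisted relation, but this normalisation is precisely what makes $(\dot\alpha_0,\dot\beta_0,\dot\gamma_0)$ a ternary derivation, confirming that $\Tder(A)$ is the tangent space to $\Atp(A)$ at the identity.
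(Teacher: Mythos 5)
Your proof is correct and follows exactly the paper's argument: the paper's entire proof is the single sentence that it suffices to differentiate $\alpha_t(xy)=\beta_t(x)\gamma_t(y)$ at $t=0$, and you have simply carried out that computation (linearity of evaluation on the left, the bilinear product rule on the right, and the normalisation $\beta_0=\gamma_0=\I_A$).
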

\begin{proof}
  It suffices to compute the derivative of  $\alpha_t(xy) = \beta_t(x)\gamma_t(y)$ at $t = 0$.
\end{proof}

In \cite{JP08}, a construction was given of all real division algebras whose Lie algebra
of ternary derivations has a simple subalgebra of toral rank 2.
Quasigroup identities in division algebras tend to imply existence of many autotopies,
potentially enough for the autotopy group to contain a Lie subgroup of toral rank 2.
This leads us to approach real division algebras satisfying quasigroup identities through
their autotopy group, seeking to obtain general results about these algebras.

The \emph{left, middle and right associative nuclei} of a $k$-algebra $(A,xy)$ are
\begin{eqnarray*}
  \N_l(A) &=& \{ a \in A \mid (ay)z = a(yz) \: \forall_{y,z \in A}\} \,,\\
  \N_m(A)  &=& \{ a \in A \mid (xa)z = x(az) \: \forall_{x,z \in A}\} \,,\\
  \N_r(A) &=& \{ a \in A \mid (xy)a = x(ya) \: \forall_{x,y \in A}\}
\end{eqnarray*}
respectively. These nuclei are associative subalgebras of $A$, so for 
real division algebras they are isomorphic to either $0, \R, \C$ or $\H$.
If $A$ is an algebra with unit element, then
\begin{eqnarray*}
\{(d_1,d_2,d_3) \in \Tder(A) \mid d_1 = 0\} &=& \{(0,\rt_a,-\lt_a) \mid a \in \N_m(A)\} \quad \text{and}\\
\{(d_1,d_2,d_3) \in \Tder(A) \mid d_2 = 0\} &=& \{(\rt_a,0,\rt_a) \mid a \in \N_r(A)\},\\
 \{(d_1,d_2,d_3) \in \Tder(A) \mid d_3 = 0\} &=& \{(\lt_a,\lt_a,0) \mid a \in \N_l(A)\}.
\end{eqnarray*}

The maps
\begin{eqnarray*}
 \pi_i \colon \Tder(A) & \to & \End_k(A)\\
 (d_1,d_2,d_3) & \mapsto & d_i
\end{eqnarray*}
provide representations of the Lie algebra $\Tder(A)$ on $A$. So, depending on the
representation $\pi_1$, $\pi_2$ or $\pi_3$ that we choose, $A$ inherits three structures of
$\Tder(A)$-module, which we denote by $A_1, A_2$ and $A_3$ respectively. The product on
$A$ is a homomorphism 
\begin{displaymath}
A_2 \otimes A_3 \to A_1
\end{displaymath}
of $\Tder(A)$-modules. The image of $\Tder(A)$ under $\pi_i$  will be denoted by
$\Tder(A)_i$, and $\Atp(A)_i=p_i(\Atp(A))$ for
$p_i:\Atp(A)\to\End_k(A),\:(\f_1,\f_2,\f_3)\mapsto\f_i$.

\begin{prop}
\label{prop:PI}
 Let $(A,xy)$ be a real division algebra, $e \in A$ a non-zero
 idempotent and $x*y = (x/e)(e\backslash y)$. Let $f, g \in \gl(A)$ and let $\alpha,
 \beta, \gamma \colon U \to \gl(A)$ $x \mapsto \alpha_x, \beta_x, \gamma_x$ be maps from a
 neighbourhood $U$ of $e$ in $A$ to $\gl(A)$ that are differentiable at $e$.
 Given $S \in \{\lt,\rt\}$ and $\epsilon \in \{1, -1\}$:
\begin{enumerate}
\item if $f(S^\epsilon_x g(yz)) = \beta_x(y)\gamma_x(z)$ for all $x\in U$ and  $y,z \in A$ then there exists $h \in \gl(A)$ such that $h S^*_{x} h^{-1} \in \Tder(A,*)_1$ for all $x \in A$;
\item if $\alpha_x(yz) = f(S^\epsilon_xg(y))\gamma_x(z)$ for all $x\in U$ and  $y,z \in A$ then there exists $h \in \gl(A)$ such that $h S^*_{x} h^{-1} \in \Tder(A,*)_2$ for all $x \in A$;
\item if $\alpha_x(yz) = \beta_x(y)f(S^\epsilon_xg(z))$ for all $x\in U$ and  $y,z \in A$ then there exists $h \in \gl(A)$ such that $h S^*_{x} h^{-1} \in \Tder(A,*)_3$ for all $x \in A$.
\end{enumerate}
\end{prop}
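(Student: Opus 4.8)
The three parts are symmetric, so I would prove part (1) in detail and indicate that (2) and (3) follow by the same argument with the roles of the three components permuted. The underlying idea is to differentiate the given identity at $x=e$ to produce a ternary derivation, and then transport it along the one-parameter family $S^\ast_x$ using a conjugation that linearises the dependence on $x$.

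First I would analyse the hypothesis $f(S^\epsilon_x g(yz)) = \beta_x(y)\gamma_x(z)$ at the base point $x=e$. Since $e$ is idempotent and $x*y=(x/e)(e\backslash y)$, the operator $S^\ast_x$ (for $S\in\{\lt,\rt\}$) is built from $S_x$ together with the fixed maps $\rt_e^{-1},\lt_e^{-1}$, so at $x=e$ one reads off that the triple $(f S^\epsilon_e g,\ \beta_e,\ \gamma_e)$ is an \emph{autotopy} of $(A,xy)$: the equation says precisely $\f_1(yz)=\f_2(y)\f_3(z)$ with these three operators. This gives the anchor $\Phi_0\in\Atp(A)$. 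Next, I would consider the curve $t\mapsto x(t)$ through $e$ realising a tangent direction, feed it into the identity, and differentiate at $t=0$; by Lemma~\ref{lem:tangent} (applied after pre- and post-composing with $\Phi_0^{-1}$ to normalise to the identity) the resulting triple of derivatives lies in $\Tder(A)$. The differentiability of $\alpha,\beta,\gamma$ at $e$ is exactly what makes this step legitimate. Projecting via $\pi_1$ then shows that the derivative of $x\mapsto f S^\epsilon_x g$ at $e$, suitably conjugated, lands in $\Tder(A)_1$.

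The passage from the single tangent vector at $e$ to the assertion ``for all $x\in A$'' is where the idempotent and the isotoped product $\ast$ do the real work. The key observation is that $S^\ast_x$ depends \emph{linearly} on $x$, so $\Tder(A,\ast)_1$ — being the image of a linear space of triples under a linear projection — is a linear subspace of $\End(A)$, and the map $x\mapsto h S^\ast_x h^{-1}$ is linear in $x$. Thus it suffices to check membership on a spanning set, and by the division-algebra structure the tangent directions at $e$ sweep out all of $A$. Concretely, I would choose the conjugating map $h$ so that the derivative computation at $e$ identifies $h S^\ast_x h^{-1}$ with the first component of a ternary derivation for every tangent direction $x$; linearity in $x$ then upgrades this from ``at $e$'' to ``for all $x\in A$''. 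I expect the choice of $h$ to absorb the fixed operators $f$, $g$, $\rt_e^{-1}$, $\lt_e^{-1}$ and the anchor autotopy $\Phi_0$.

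The main obstacle, and the step I would be most careful with, is the bookkeeping that converts the \emph{autotopy} at $x=e$ together with the \emph{derivative} in the $x$-direction into a single clean statement about $\Tder(A,\ast)_1$. One must verify that conjugating by a fixed $h$ simultaneously (i) turns $S_x$ into the $\ast$-multiplication operator $S^\ast_x$, and (ii) matches the first component of the differentiated triple with $h S^\ast_x h^{-1}$, uniformly in $x$. The two signs $\epsilon=\pm1$ and the two cases $S\in\{\lt,\rt\}$ must be tracked through the inversion map $s$ (using $\lt_a^{-1}=\lt_{s(a)}$ from the definition of inversion, and the analogous right-hand statement), but these are routine sign-chases once the linear structure is in place. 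I would therefore present the linearity-plus-differentiation mechanism as the heart of the proof and treat the four $(S,\epsilon)$ cases by symmetry rather than writing each out.
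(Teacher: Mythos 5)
Your proposal follows essentially the same route as the paper's proof: the hypothesis exhibits a family of autotopies of $(A,xy)$ parametrised by $x\in U$, one normalises by the autotopy at $x=e$, differentiates at $e$ via Lemma~\ref{lem:tangent} to land in the appropriate $\Tder(A)_\iota$, extends from a neighbourhood of $0$ to all of $A$ by linearity of $x\mapsto S^*_x$, and finally conjugates by a fixed $h$ (built from $f^{-1}$ or $g^{-1}$ together with $\rt_e$ or $\lt_e$ as appropriate) to pass to $\Tder(A,*)_\iota$. One small correction: no inversion map $s$ with $\lt_a^{-1}=\lt_{s(a)}$ is available or needed here, since $A$ is an arbitrary real division algebra; the $\epsilon=-1$ bookkeeping is handled simply by the fact that $\Atp(A)_\iota$ is a group and hence closed under inverses, which is exactly what the paper uses.
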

\begin{proof}
We will only prove the case (2) with $S=\lt$, $\e=-1$, that is,
$\alpha_x(yz) = f(\lt^{-1}_x g(y))\gamma_x(z)$. The proofs of the other statements are
similar.

First observe that $(\alpha_x, f \lt^{-1}_x g, \gamma_x) \in \Atp(A)$ implies that
\[
(\alpha^{-1}_e\alpha_x, g^{-1}\lt_e \lt^{-1}_x g, \gamma^{-1}_e\gamma_x)
\]
also belongs to $\Atp(A)$. The multiplication operator $\lt^*_x$ is $\lt_{x/e}\lt^{-1}_e$
so $g^{-1}\lt^{*}_x g = ( g^{-1}\lt_e \lt^{-1}_{x/e} g)^{-1} \in \Atp(A)_2$ for any $x$ in
a certain neighborhood of $e$. The derivative of  the curve $g^{-1}\lt^{*}_{e + t y} g$ at
$t = 0$ is $g^{-1}\lt^*_{y}g$. Thus, for any $y$ in a neighborhood of $0$ we have that
$g^{-1}\lt^*_{y}g \in \Tder(A)_2$. Since $\Tder(A,*)_2 = \rt_e \Tder(A)_2 \rt^{-1}_e$, the
result follows.
\end{proof}

Since $(A,*)$ is a division algebra, there are no proper non-zero invariant subspaces of
$A$ under the action of $\{ h S^*_{x} h^{-1} \mid x \in A \}$ ($S \in \{ \lt,\rt\}$).
Therefore, Proposition \ref{prop:PI} establishes that under certain generic
hypotheses $\Tder(A,*)_i$ is not too small and it has a rich structure as a Lie
algebra. In \cite{JP08} it was proved that the largest possibilities for $\Tder(A)$ only occur
for isotopes of Hurwitz algebras.

\begin{thm}
\label{thm:PI}
Let $(A,*)$ be a unital real division algebra. If there exist $h \in
\gl(A)$ and $\iota \in \{1,2,3\}$ such that $\{ h \lt^*_x h^{-1} \mid x \in A \}$ or $\{ h
\rt^*_x h^{-1} \mid x \in A\}$ is contained in $\Tder(A,*)_\iota$ then $(A,*)$ is a
Hurwitz algebra.
\end{thm}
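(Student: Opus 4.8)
The plan is to deduce from the hypothesis that $\Tder(A,*)$ contains a simple subalgebra of toral rank $2$, and then to invoke the classification of \cite{JP08}, by which such a large Lie algebra of ternary derivations occurs only for isotopes of Hurwitz algebras. Since $(A,*)$ is unital and a unital isotope of a Hurwitz algebra is again a Hurwitz algebra, this yields the conclusion. Two preliminary reductions simplify the setup. Passing to the opposite algebra $(A,*)\op$ interchanges $\lt$ and $\rt$ and permutes the projections $\pi_2,\pi_3$ while preserving $\Tder$ and the property of being Hurwitz; this reduces the $\rt$-case to the $\lt$-case. So assume $W:=\{h\lt^*_xh\inv\mid x\in A\}\subseteq\Tder(A,*)_\iota=:L$. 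As $(A,*)$ is unital, $x\mapsto\lt^*_x$ is injective, so $\dim W=\dim A=:n$; moreover $\I=h\lt^*_{1}h\inv\in W$, and every nonzero element of $W$ is invertible because $(A,*)$ is a division algebra.

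For the structural setup, recall that each $\pi_i$ is a homomorphism of Lie algebras, so $L=\pi_\iota(\Tder(A,*))$ is a Lie subalgebra of $\End(A)$. As observed after Proposition~\ref{prop:PI}, $W$ has no proper non-zero invariant subspace, hence $L\supseteq W$ acts irreducibly on $A$; an irreducible linear Lie algebra is reductive, so $L=Z(L)\oplus[L,L]$ with $[L,L]$ semisimple. The kernel of $\pi_\iota$ is isomorphic to the relevant associative nucleus $\N_l(A,*)$, $\N_m(A,*)$ or $\N_r(A,*)$ (as recorded before Proposition~\ref{prop:PI}), an ideal of $\Tder(A,*)$ of dimension at most $4$; by the Levi--Malcev theorem, any semisimple subalgebra of $L$ therefore lifts to an isomorphic subalgebra of $\Tder(A,*)$. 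It thus suffices to exhibit a simple subalgebra of toral rank $2$ inside $L$ (in the relevant dimension).

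The heart of the argument is to convert the presence of the $n$-dimensional, irreducible subspace $W$ of invertible operators, with $\I\in W$, into a lower bound on the structure of $[L,L]$. Here I would use the full triple of $\Tder(A,*)$-modules $A_1,A_2,A_3$ together with the invariant multiplication $\mu\colon A_2\otimes A_3\to A_1$: the subspace $W$ occupying the $\iota$-th slot, together with the partner derivations of its elements, rigidly links the three representations. Carrying this out dimension by dimension for $n\in\{1,2,4,8\}$, the cases $n\le4$ are handled by direct inspection of the reductive irreducible $L$ compatible with an invertible $W$ of full dimension and with $\mu$ (and by the corresponding low-dimensional part of \cite{JP08}), forcing $A\cong\R,\C$ or $\H$; for $n=8$ the invertibility of $W$ and the module homomorphism $\mu$ exclude every reductive irreducible $L$ acting on $\R^8$ whose semisimple part has toral rank at most $1$, leaving only those containing a simple subalgebra of toral rank $2$, exactly as happens for $\O$, where the Lie algebra generated by $W$ contains a copy of $G_2\subset\mathfrak{so}(7)$.

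I expect this final conversion to be the main obstacle. The difficulty is that $W$ is only a subspace of $L$, not a subalgebra, so one must control the Lie algebra it generates---equivalently, the linked triple $(A_1,A_2,A_3,\mu)$---precisely enough to rule out the low--toral--rank configurations; this is the technical analysis deferred to Section~\ref{sec:proof}. Once a simple subalgebra of toral rank $2$ has been produced in $\Tder(A,*)$, the remaining steps are routine: \cite{JP08} gives that $(A,*)$ is an isotope of a Hurwitz algebra, and unitality upgrades this to $(A,*)$ being a Hurwitz algebra, completing the proof.
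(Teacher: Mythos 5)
Your setup is sound and matches the paper's: reduce to the left-multiplication case via the opposite algebra, note that $W=\{h\lt^*_xh\inv\}$ forces $\Tder(A,*)_\iota$ to act irreducibly, use the decomposition $\Tder(A,*)=\cS\oplus\cZ$ into a compact semisimple part and the centre, and identify the kernels of the $\pi_i$ with the associative nuclei. But the core of your argument is missing, and the route you sketch for supplying it would not work as stated. You propose to produce a simple subalgebra of toral rank $2$ in $\Tder(A,*)$ and then cite \cite{JP08} for ``toral rank $2$ implies isotope of a Hurwitz algebra''. That is not what \cite{JP08} provides (it gives a \emph{construction} of the algebras with such a subalgebra, not a one-line implication), and it is not how the actual proof proceeds. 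In dimension $8$ the paper must separately \emph{rule out} ideals of types $G_2$ and $\su(3)$ --- both of toral rank $2$, so their presence alone decides nothing --- must handle the configuration $\cS_\iota\cong\su(2)\oplus\su(2)\oplus\su(2)$, which contains no simple subalgebra of toral rank $2$ at all yet must be shown to force $A\cong\O$, and in the remaining $B_2$ case still has to build explicit models of the three modules $A_1,A_2,A_3$ over $\mathfrak{B}_2\oplus\su_2\subset\Tder(\O)$ and compute $\Hom_{I\oplus J}(A_2\otimes A_3,A_1)$ to pin down the product as an isotope of the octonion product. Your sentence ``the invertibility of $W$ and the module homomorphism $\mu$ exclude every reductive irreducible $L$ \ldots{} whose semisimple part has toral rank at most $1$'' is precisely the unproved content, and you acknowledge as much (``I expect this final conversion to be the main obstacle \ldots{} deferred to Section~\ref{sec:proof}''). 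A proposal that defers the decisive step is not a proof.

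Two smaller points. First, your appeal to Levi--Malcev to lift semisimple subalgebras of $L=\Tder(A,*)_\iota$ back to $\Tder(A,*)$ does not apply directly: the kernel of $\pi_\iota$ is isomorphic to a nucleus, which can be $\H$, hence is reductive rather than solvable, so it need not lie in the radical. (The paper instead works with ideals of $\cS$ and uses that these kernels have dimension at most $4$, so the projections are injective on ideals of types $G_2$, $\su(3)$ and $B_2$.) Second, even in dimension $4$ the paper needs a genuine argument in the subcase $\cS\cong\su(2)$: there $\Tder(A)_\iota=h\lt_Ah\inv$, so $\lt_A$ is a Lie algebra isomorphic to $\su(2)$, $A\cong V(1)$, and $\End_{\lt_A}(A)\cong\H$ forces $A=\N_r(A)\cong\H$; ``direct inspection'' glosses over this. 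To repair your proposal you would need to carry out the case analysis of the possible simple ideals and their module structures --- this is exactly the content of Lemmas~\ref{lem:isotypic}--\ref{lem:types} and the three-case model construction at the end of Section~\ref{sec:proof}.
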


\begin{cor}
\label{cor:PI}
Under the hypotheses of Proposition \ref{prop:PI} the algebra $(A,*)$ with product $x*y = (x/e)(e \backslash y)$ is a Hurwitz algebra. If in addition $(\lt_e, \rt^{-1}_e\lt_e\rt_e,\lt_e)$ (resp.\ $(\rt_e, \rt_e, \lt_e^{-1}\rt_e\lt_e)$) belongs to $\Atp(A,xy)$ then $\lt_e$ (resp.\ $\rt_e$) is an automorphism of $(A, x*y)$.
\end{cor}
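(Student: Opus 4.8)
The plan is to obtain the first assertion by chaining Proposition~\ref{prop:PI} with Theorem~\ref{thm:PI}, after a short verification that $*$ makes $A$ a unital division algebra. First I would record that, since $e$ is idempotent, $\rt_e(e)=\lt_e(e)=e^2=e$, whence $\rt_e\inv(e)=\lt_e\inv(e)=e$. Writing the product as $x*y=(x/e)(e\backslash y)=\rt_e\inv(x)\,\lt_e\inv(y)$, this gives $e*y=e\cdot\lt_e\inv(y)=y$ and $x*e=\rt_e\inv(x)\cdot e=x$ for all $x,y\in A$, so $e$ is a two-sided unit. As $(A,*)$ is the isotope $A_{\rt_e\inv,\lt_e\inv}$ of a division algebra, it is itself a real division algebra, and it is now seen to be unital, so Theorem~\ref{thm:PI} is applicable to it.

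Next I would invoke Proposition~\ref{prop:PI}: in each of its three cases it produces $h\in\gl(A)$ and $\iota\in\{1,2,3\}$ with $hS^*_xh\inv\in\Tder(A,*)_\iota$ for all $x\in A$, where $S\in\{\lt,\rt\}$ is the operator figuring in the relevant hypothesis. This is exactly the containment $\{hS^*_xh\inv\mid x\in A\}\subseteq\Tder(A,*)_\iota$ demanded by Theorem~\ref{thm:PI}. Applying that theorem to the unital real division algebra $(A,*)$ yields the first assertion, that $(A,*)$ is a Hurwitz algebra.

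For the second assertion I would argue by direct computation from the autotopy relation. Assuming $(\lt_e,\rt_e\inv\lt_e\rt_e,\lt_e)\in\Atp(A,xy)$, \ie{} $e(ab)=\bigl(\rt_e\inv\lt_e\rt_e(a)\bigr)(eb)$ for all $a,b$, I substitute $a=\rt_e\inv(x)$, $b=\lt_e\inv(y)$ (so that $eb=y$) to get
\[
\lt_e(x*y)=e\bigl(\rt_e\inv(x)\,\lt_e\inv(y)\bigr)=\bigl(\rt_e\inv\lt_e(x)\bigr)\,y\,.
\]
On the other hand $\lt_e(x)*\lt_e(y)=(ex)*(ey)=\rt_e\inv(ex)\,\lt_e\inv(ey)=\bigl(\rt_e\inv\lt_e(x)\bigr)\,y$, using $\lt_e\inv(ey)=y$. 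The two agree, so $\lt_e\in\Aut(A,*)$. The parenthetical case is symmetric: from $(\rt_e,\rt_e,\lt_e\inv\rt_e\lt_e)\in\Atp(A,xy)$, \ie{} $(ab)e=(ae)\bigl(\lt_e\inv\rt_e\lt_e(b)\bigr)$, the same substitution shows that both $\rt_e(x*y)$ and $\rt_e(x)*\rt_e(y)$ equal $x\,\bigl(\lt_e\inv\rt_e(y)\bigr)$, giving $\rt_e\in\Aut(A,*)$.

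The weight of the corollary rests entirely on the two quoted results; the genuinely new content is only the identification of $e$ as the unit of $(A,*)$ and the two brief automorphism checks, neither of which poses a real difficulty. The single point warranting a little care is confirming that the operator $S$ and module index $\iota$ delivered by Proposition~\ref{prop:PI} match the hypothesis of Theorem~\ref{thm:PI} in each of the three cases, which they do verbatim.
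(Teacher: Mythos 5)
Your proof is correct and follows exactly the route the paper intends for this corollary (which it leaves implicit): verify that $e$ is the two-sided unit of $(A,*)$, chain Proposition~\ref{prop:PI} with Theorem~\ref{thm:PI} to get the Hurwitz conclusion, and check the automorphism property by substituting $a=\rt_e\inv(x)$, $b=\lt_e\inv(y)$ into the autotopy relation. The computations, including the identification of both sides with $\bigl(\rt_e\inv\lt_e(x)\bigr)y$ in the left case and with $x\,\bigl(\lt_e\inv\rt_e(y)\bigr)$ in the right case, are accurate.
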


A proof of Theorem~\ref{thm:PI} is given in Section~\ref{sec:proof}.
Unfortunately it is quite technical, using most of the results in \cite{JP08}.
It would be desirable to find a more straightforward proof of this result.

%%%%%%%%%%%%%%%%%%%%%%%%%%%%%%%%%%
\section{Examples} \label{sec:examples}
%%%%%%%%%%%%%%%%%%%%%%%%%%%%%%%%%

In this section we present several examples illustrating how Corollary \ref{cor:PI} can be
used to classify real division algebras satisfying certain quasigroup
identities. While we can quickly determine whether the algebras of certain varieties are isotopic
to Hurwitz algebras, the classification of the isotopy maps is often quite laborious.

%%%%%%%%%%%%%%%%%%%%%%%%%%%%%%%%%%%%%%%%%%%%%%%
\subsection{The identity $x((xy)(xz)) = (x^2(yx))z$}
%%%%%%%%%%%%%%%%%%%%%%%%%%%%%%%%%%%%%%%%%%%%%%%

As a first example we will classify all real division algebras that
satisfy the identity $x((xy)(xz)) = (x^2(yx))z$, labelled with the number 19 in Table
\ref{Tb:d5}. 
We have chosen this identity since it is one of the most difficult ones in
Table~\ref{Tb:d5} to deal with.
\begin{prop}
\label{prop:I1}
A real division algebra $(A,xy)$ satisfies the identity
\begin{equation}   \label{eq:example_main_theorem_2}
 x((xy)(xz)) = (x^2(yx))z
\end{equation}
if and only if it has involutive inversion.
\end{prop}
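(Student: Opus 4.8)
The plan is to prove both directions using the machinery developed earlier in the paper, exploiting the fact that any real division algebra contains a non-zero idempotent. The forward direction (identity $\Rightarrow$ involutive inversion) is where Corollary~\ref{cor:PI} does the heavy lifting, while the converse should be a direct verification from the structural description in Proposition~\ref{prop:algebras_with_inversion}.

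For the \emph{converse}, suppose $(A,xy)$ has involutive inversion. By Proposition~\ref{prop:algebras_with_inversion}, we may write $xy=\tau(x)*\s(y)$ for an alternative division algebra $(A,*)$ and automorphisms $\s,\tau$ with $\s^2=\tau^2=\I_A$ and $\s\tau\s=\tau\s\tau$ (equivalently $(\tau\s)^3=\I_A$, so the group $\<\s,\tau\>$ is a quotient of $\mathrm{D}_6$). I would then substitute this product into both sides of \eqref{eq:example_main_theorem_2} and reduce everything to an identity in the alternative algebra $(A,*)$, using that $\s,\tau$ are $*$-automorphisms to move them past products, and using the Moufang/alternative identities available in $(A,*)$. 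The point is that after applying the automorphisms, both sides should collapse to the same expression precisely because of the $\mathrm{D}_6$-relations; this is a routine but careful computation, and I expect it to go through since the identity \eqref{eq:example_main_theorem_2} is structurally a Moufang-type identity twisted by $\s$ and $\tau$.

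For the \emph{forward} direction, I would fix a non-zero idempotent $e\in A$ (which exists by \cite{segre}) and aim to apply Corollary~\ref{cor:PI}. The strategy is to rewrite the identity \eqref{eq:example_main_theorem_2} as the statement that a suitable triple of maps, depending differentiably on $x$ near $e$, is an autotopy, thereby putting the hypothesis into the form required by Proposition~\ref{prop:PI}. Concretely, fixing $x$ and viewing \eqref{eq:example_main_theorem_2} as relating the bilinear map $(y,z)\mapsto x((xy)(xz))$ to $(y,z)\mapsto(x^2(yx))z$, I would read off that $(\a_x,\b_x,\gamma_x)$ with $\a_x=\lt_x\lt_{x^2}$-type operators and $\gamma_x$ a right multiplication fits one of the three templates $S^\e_x$ in Proposition~\ref{prop:PI}; matching the right-hand factor $z$ (on which only $\rt$-type operators act) suggests case~(1) or the appropriate $S=\rt$ case. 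Corollary~\ref{cor:PI} then immediately yields that $(A,*)$ with $x*y=(x/e)(e\backslash y)$ is a Hurwitz algebra, and moreover that $\lt_e$ (or $\rt_e$) is an automorphism of $(A,*)$ provided the relevant triple lies in $\Atp(A,xy)$.

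The \textbf{main obstacle} is the forward direction's bookkeeping: once Corollary~\ref{cor:PI} gives that $(A,*)$ is Hurwitz and that the idempotent-translation operators are automorphisms, I must deduce that the resulting description $xy=\tau(x)*\s(y)$ actually satisfies the involutive-inversion conditions $\s^2=\tau^2=(\tau\s)^3=\I$. This requires extracting the precise relations among $\lt_e$ and $\rt_e$ from the original identity \eqref{eq:example_main_theorem_2} evaluated at special arguments (setting $y$, $z$, and $x$ equal to $e$ in various combinations), exactly in the spirit of the proof of Proposition~\ref{prop:identity}. I expect the hardest step to be verifying the braid relation $\s\tau\s=\tau\s\tau$ (equivalently $(\tau\s)^3=\I$): one must substitute $x=e$, $t=e$, etc., to force $\lt_e^2=\I$ and $\rt_e^2=\I$, then combine the resulting autotopies in $\Atp(A)$ — possibly using Lemma~\ref{lem:group} to conclude the $\mathrm{D}_6$-structure — to obtain the cubic relation. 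Once these relations are in hand, Proposition~\ref{prop:algebras_with_inversion} (or its corollary characterising involutive inversion via identity \eqref{eq:identity}) identifies $A$ as having involutive inversion, completing the proof.
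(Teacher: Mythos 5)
Your overall architecture matches the paper's: the converse is a direct verification from the structural description (indeed, identity~(\ref{eq:example_main_theorem_2}) is just identity~(\ref{eq:identity}) with $y$, $z$, $t$ specialised to $x$, $y$, $z$, so it also follows at once from Proposition~\ref{prop:identity}), and the forward direction rewrites the identity as $x(yz)=(x^2((x\backslash y)x))(x\backslash z)$ so that Corollary~\ref{cor:PI} gives that $(A,*)$ with $x*y=(x/e)(e\backslash y)$ is a Hurwitz algebra. The evaluations you propose do yield $\lt_e^2=\I_A$ (from $x=y=e$) and $\lt_e\rt_e\lt_e\inv=\rt_e\inv\lt_e\rt_e$ (from $x=z=e$), whence $\rt_e^2=\I_A$ by Lemma~\ref{lem:group}; and since $(\lt_e,\lt_e\rt_e\lt_e\inv,\lt_e\inv)\in\Atp(A)$, Corollary~\ref{cor:PI} makes $\tau=\lt_e$ an automorphism of $(A,*)$. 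Up to here you are on the paper's track.

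The genuine gap is what comes next. To invoke Proposition~\ref{prop:algebras_with_inversion} you need \emph{both} $\tau=\lt_e$ and $\s=\rt_e$ to be automorphisms of $(A,*)$, and Corollary~\ref{cor:PI} only delivers $\rt_e\in\Aut(A,*)$ if the triple $(\rt_e,\rt_e,\lt_e\inv\rt_e\lt_e)$ lies in $\Atp(A,xy)$ --- which does not follow from identity~(\ref{eq:example_main_theorem_2}). You single out the braid relation $\s\tau\s=\tau\s\tau$ as the hardest step, but that relation falls out almost immediately from the evaluation at $x=z=e$ together with $\lt_e^2=\rt_e^2=\I_A$. The actual difficulty, which your proposal does not address, is proving $\rt_e\in\Aut(A,*)$. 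In the paper this occupies most of the proof: one rewrites~(\ref{eq:example_main_theorem_2}) in terms of $*$, applies the left Moufang identity, linearises at $e$, exploits the quadratic-algebra relation $x*y+y*x=t(x)y+t(y)x-2(x,y)e$, and then runs a case analysis (vanishing of $2t(\s(w))-t(w)-t(\s\tau(w))$, minimal-polynomial constraints from $(\s\tau)^3=\I_A$, Zariski-density arguments, the middle Moufang identity, and an isometry criterion) before concluding that $\s$ is an automorphism. Without this, or some substitute for it, the appeal to Proposition~\ref{prop:algebras_with_inversion} is unjustified and the forward direction is incomplete. A minor additional point: in your matching of the identity to Proposition~\ref{prop:PI}, the third component $\gamma_x$ is the left division $\lt_x\inv$, not a right multiplication, so the relevant template is case~(1) with $S=\lt$, $\e=1$.
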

\begin{proof}
Remember that a real division algebra $A$ has involutive inversion if and only if
the product $xy$ of $A$ can be expressed as $xy = \sigma(x)*\tau(y)$, where
$(A,*)$ is isomorphic to $\R$, $\C$, $\H$ or $\O$, and $\sigma, \tau$ are automorphisms of
$(A,*)$ satisfying
$\sigma^2 = \I_A = \tau^2$ and $\sigma \tau \sigma = \tau \sigma \tau$.

Assume that $A$ is a real division algebra satisfying
Equation~(\ref{eq:example_main_theorem_2}), and consider the algebra $(A,*)$ with 
$x*y = (x/e)(e\backslash y)$, where $e$ is a non-zero idempotent of $A$. 
Since $x(yz) = (x^2((x\backslash y)x))(x\backslash z)$, Corollary~\ref{cor:PI}
tells us that  $(A,*)$ is a Hurwitz algebra. Equation (\ref{eq:example_main_theorem_2})
evaluated at $x = y = e$ implies that $\lt^2_e = \I_A$. The same equation evaluated at $x
= z = e$ gives $\lt_e\rt_e\lt^{-1}_e = \rt^{-1}_e\lt_e\rt_e$ and, by Lemma
\ref{lem:group}, $\rt^2_e = \I_A$. Since $(\lt_e, \lt_e\rt_e\lt^{-1}_e,\lt^{-1}_e) \in
\Atp(A)$, Corollary \ref{cor:PI} implies that $\tau = \lt_e$ is an automorphism of
$(A,*)$. Set $\sigma = \rt_e$. To conclude the proof of Proposition~\ref{prop:I1} we
need to prove that $\sigma$ is also an automorphism of $(A,*)$.

Equation (\ref{eq:example_main_theorem_2}) is equivalent to
\[
\sigma(x)*(\tau\sigma(\sigma(x)*\tau(y))*(\sigma(x)*\tau(z)))= \sigma(\sigma(\sigma(x)*\tau(x))*(\tau\sigma(y)*x))*\tau(z).
\]
Using the left Moufang identity on $(A,*)$ and replacing $x$ with $\sigma(x)$, this latter equation is equivalent to
\begin{equation}
 \label{eq:example_main_theorem_2_1}
x*\tau\sigma(x*\tau(y))*x = \sigma(\sigma(x*\tau\sigma(x)) *(\tau\sigma(y)*\sigma(x))).
\end{equation}
Linearizing this equation and evaluating at $e$ gives
\begin{eqnarray*}
&& w*\tau\sigma\tau(y) + \tau\sigma(w*\tau(y)) + \tau\sigma\tau(y)*w =\\
 && \quad \sigma\left(\rule{0pt}{2ex}\sigma(w)*\tau\sigma(y) + \sigma\tau\sigma(w)*\tau\sigma(y) + \tau\sigma(y)*\sigma(w)\right).
\end{eqnarray*}
Since $(A,*)$ is a Hurwitz algebra, we have $x*y + y*x = t(x)y + t(y)x -2(x,y)e$, hence
\begin{eqnarray*}
&& t(\tau\sigma\tau(y))w + t(w)\tau\sigma\tau(y) - 2 (w,\tau\sigma\tau(y))e + \tau\sigma(w*\tau(y)) =\\
&& \quad  t(\tau\sigma(y))w + t(\sigma(w))\sigma\tau\sigma(y) - 2 (\tau\sigma(y),\sigma(w))e + \sigma(\sigma\tau\sigma(w)*\tau\sigma(y))
\end{eqnarray*}
which, substituting $y$ for $\tau(y)$ and applying $\sigma$ to each side gives
\begin{eqnarray}
\label{eq:example_main_theorem_2_2}
&& t(\tau\sigma(y))\sigma(w) + t(w)\sigma\tau\sigma(y) - 2 (w,\tau\sigma(y))e + \sigma\tau\sigma(w*y) =\\
&& \quad t(\tau\sigma\tau(y))\sigma(w) + t(\sigma(w))\tau\sigma\tau(y) - 2 (\tau\sigma\tau(y),\sigma(w))e + \sigma\tau\sigma(w)*\tau\sigma\tau(y).\nonumber
\end{eqnarray}
If $\sigma$ is an isometry with respect to $(\,,\,)$ then the previous equality implies that $\tau\sigma\tau$ is an automorphism of $(A,*)$ and $\sigma$ is an automorphism of $(A,*)$ as desired.

The identity (\ref{eq:example_main_theorem_2_2}) with $y = w$ gives, after replacing $w$
with $\sigma(w)$, 
\begin{eqnarray}
\label{eq:example_main_theorem_2_3}
 && \left(\rule{0pt}{2ex}2t(\sigma(w)) - t(w) - t(\sigma\tau(w))\right)\sigma\tau(w) + \left(\rule{0pt}{2ex}t(w) -t(\sigma\tau(w))\right)w +\\
&& \quad \left(\rule{0pt}{2ex} 2(w,\sigma\tau(w)) + n(\sigma\tau(w)) - 2 (\sigma(w),\tau(w)) - n(\sigma(w))\right)e = 0. \nonumber
\end{eqnarray}
If $2t(\sigma(w)) - t(w) - t(\sigma\tau(w)) \neq 0$ then $\sigma\tau(w) \in \R e + \R
w$. {Since $\sigma\tau(e) = e$, the minimal polynomial of the restriction of
  $\sigma\tau$ to $\R e + \R w$ is of the form $(x-1)(x-\lambda)^\epsilon$ for some
  $\epsilon \in \{0,1\}$ and $\lambda \in \R$. This polynomial also divides $x^3 -1 =
  (x-1)(x^2 +x +1)$ because $(\sigma\tau)^3 = \I_A$, so $\epsilon=0$ and $\sigma\tau(w) = w$ and  $\sigma(w) = \tau(w)$}. The set $\{ w \in A \mid 2t(\sigma(w)) - t(w) -
t(\sigma\tau(w)) \neq 0\}$ is either empty or an open dense set in the Zariski topology of
$A$, so $2t(\sigma(w)) - t(w) - t(\sigma\tau(w)) \neq 0$ implies that $\sigma =
\tau$. Therefore we may assume that $2t(\sigma(w)) - t(w) - t(\sigma\tau(w)) = 0$ for all
$w\in A$. A similar argument with the coefficient of $w$ in
(\ref{eq:example_main_theorem_2_3}) shows that $t(w) -t(\sigma\tau(w)) = 0$. In particular
$t(\sigma(w)) = t(w)$. With this new information (\ref{eq:example_main_theorem_2_2}) can
be written as
\[
 \tau\sigma\tau(w*y) = \tau\sigma\tau(w)* \tau\sigma\tau(y) + 2(w,\tau\sigma(y)) e - 2 (\tau\sigma\tau(y),\sigma(w)) e
\]
which, after substituting $w$ and $y$ for $\tau(w)$ and $\tau(y)$ respectively and
applying $\tau$, becomes
\[
 \sigma(w*y) = \sigma(w)*\sigma(y) + 2(w,\sigma\tau(y))e - 2(\tau\sigma\tau(w),\sigma(y))e.
\]
Using this formula, Equation~(\ref{eq:example_main_theorem_2_1}) and the middle Moufang identity imply that either $(\sigma\tau(x),\sigma\tau(y))= (x,y)$ or $x*\tau\sigma(x) \in \R e + \R x$. If $(\sigma\tau(x),\sigma\tau(y))= (x,y)$ then $\sigma$ is an isometry and we are done. Otherwise, $\tau\sigma(x) \in \R e + \R x$ for $x$ in a Zariski dense set of $A$. {As above, this implies that $\sigma(x) = \tau(x)$ for $x$ in a dense set. Therefore, $\sigma = \tau$ is  an automorphism of $(A,*)$.}
\end{proof}

%%%%%%%%%%%%%%%%%%%%%%%%%%%%%%%%%%%%%%%%%
\subsection{Bol-Moufang type identities}
%%%%%%%%%%%%%%%%%%%%%%%%%%%%%%%%%%%%%%%%%
A quasigroup identity is of \emph{Bol-Moufang type} if two of its three variables occur once on each side, the third variable occurs twice on each side, the order in which the variables appear on both sides is the same and the only binary operation used is the multiplication. In \cite{PV05} J. D. Phillips and P. Vojt{\v{e}}chovsk{\'y} proved that there are exactly 26 varieties of quasigroups of Bol-Moufang type and they determined all inclusions between the varieties. In Figure \ref{Fg:QTree} varieties in an upper level connected with varieties in a lower level indicate that the former are included in the latter.  The superscript immediately following the abbreviation of the name of the variety $\mathcal{A}$ indicates if:
\begin{itemize}
  \item[(2)] every quasigroup in $\mathcal{A}$  is a loop,
  \item[(L)] every quasigroup in $\mathcal{A}$ is a left loop, and there is $Q \in \mathcal{A}$ that is not a loop.
  \item[(R)] every quasigroup in $\mathcal{A}$ is a right loop, and there is $Q \in \mathcal{A}$ that is not a loop,
  \item[(0)] there is $Q_L \in \mathcal{A}$ that is not a left loop, and there is $Q_R \in \mathcal{A}$ that is not a right loop.
\end{itemize}
For instance, quasigroups in the variety MNQ are loops. 
Each of the varieties can be defined by a single identity among a set of equivalent identities. In
Table~\ref{Tb:varieties}, one defining identity has been chosen for each variety. The reader
will find detailed information in \cite{PV05}. 

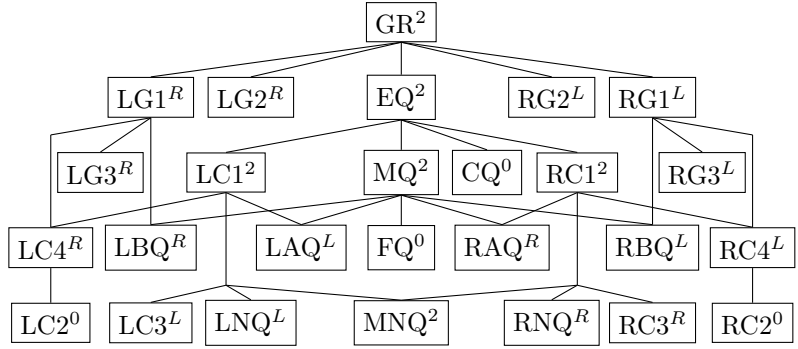
\begin{figure}[ht]
\centering
\begin{tikzpicture}
\path   node (raiz)at ( 4.66,0) [shape=rectangle,draw] {GR$^2$}
        node (11) at ( 1.33,-1) [shape=rectangle,draw] {LG1$^R$}
        node (12) at ( 2.66,-1) [shape=rectangle,draw] {LG2$^R$}
        node (13) at ( 4.66,-1) [shape=rectangle,draw] {EQ$^2$}
        node (14) at ( 6.66,-1) [shape=rectangle,draw] {RG2$^L$}
        node (15) at ( 8,-1) [shape=rectangle,draw] {RG1$^L$}
        node (21) at ( 0.66,-2) [shape=rectangle,draw] {LG3$^R$}
        node (22) at ( 2.33,-2) [shape=rectangle,draw] {LC1$^2$}
        node (23) at ( 4.66,-2) [shape=rectangle,draw] {MQ$^2$}
        node (24) at ( 5.8,-2) [shape=rectangle,draw] {CQ$^0$}
        node (25) at ( 7,-2) [shape=rectangle,draw] {RC1$^2$}
        node (26) at ( 8.66,-2) [shape=rectangle,draw] {RG3$^L$}
        node (31) at ( 0,-3) [shape=rectangle,draw] {LC4$^R$}
        node (32) at ( 1.33,-3) [shape=rectangle,draw] {LBQ$^R$}
        node (33) at ( 3.33,-3) [shape=rectangle,draw] {LAQ$^L$}
        node (34) at ( 4.66,-3) [shape=rectangle,draw] {FQ$^0$}
        node (35) at ( 6,-3) [shape=rectangle,draw] {RAQ$^R$}
        node (36) at ( 8,-3) [shape=rectangle,draw] {RBQ$^L$}
        node (37) at ( 9.33,-3) [shape=rectangle,draw] {RC4$^L$}
        node (41) at ( 0,-4) [shape=rectangle,draw] {LC2$^0$}
        node (42) at ( 1.33,-4) [shape=rectangle,draw] {LC3$^L$}
        node (43) at ( 2.66,-4) [shape=rectangle,draw] {LNQ$^L$}
        node (44) at ( 4.66,-4) [shape=rectangle,draw] {MNQ$^2$}
        node (45) at ( 6.66,-4) [shape=rectangle,draw] {RNQ$^R$}
        node (46) at ( 8,-4) [shape=rectangle,draw] {RC3$^R$}
        node (47) at ( 9.33,-4) [shape=rectangle,draw] {RC2$^0$};
\draw (raiz.south)--(11.north);
\draw (raiz.south)--(12.north);
\draw (raiz.south)--(13.north);
\draw (raiz.south)--(14.north);
\draw (raiz.south)--(15.north);
\draw (11.south)--(0,-1.5);
\draw (0,-1.5)--(31.north);
\draw (11.south)--(21.north);
\draw (11.south)--(32.north);
\draw (13.south)--(22.north);
\draw (13.south)--(23.north);
\draw (13.south)--(24.north);
\draw (13.south)--(25.north);
\draw (15.south)--(36.north);
\draw (15.south)--(26.north);
\draw (15.south)--(9.33,-1.5);
\draw (9.33,-1.5)--(37.north);
\draw (22.south)--(31.north);
\draw (22.south)--(2.33,-3.5);
\draw (22.south)--(33.north);
\draw (23.south)--(32.north);
\draw (23.south)--(33.north);
\draw (23.south)--(34.north);
\draw (23.south)--(35.north);
\draw (23.south)--(36.north);
\draw (25.south)--(35.north);
\draw (25.south)--(7,-3.5);
\draw (25.south)--(37.north);
\draw (31.south)--(41.north);
\draw (2.33,-3.5)--(42.north);
\draw (2.33,-3.5)--(43.north);
\draw (2.33,-3.5)--(44.north);
\draw (7,-3.5)--(44.north);
\draw (7,-3.5)--(45.north);
\draw (7,-3.5)--(46.north);
\draw (37.south)--(47.north);
\end{tikzpicture}
\caption{Varieties of quasigroups of Bol-Moufang type}
\label{Fg:QTree}
\end{figure}

\begin{table}
\caption{Defining identities.}
\centering\begin{tabular}{|c|c|c|c|c|c|}
  \hline
  % after \\: \hline or \cline{col1-col2} \cline{col3-col4} ...
    \scriptsize CQ &  \scriptsize  $x(y(yz)) = ((xy)y)z$ &  
    \scriptsize EQ &  \scriptsize  $x((yx)z) = (xy)(xz)$ &
    \scriptsize FQ &  \scriptsize $(x(yx))z = ((xy)x)z$ \\  
    \scriptsize GR &  \scriptsize $(xy)z=x(yz)$ & 
    \scriptsize LAQ &  \scriptsize $x(x(yz)) = (xx)(yz)$ &
    \scriptsize LBQ &  \scriptsize $x(y(xz)) = (x(yx))z$ \\  \scriptsize LC1 &  \scriptsize $(xx)(yz) = (x(xy))z$ &
    \scriptsize LC2 &  \scriptsize $x(x(yz)) = (x(xy))z$ &  \scriptsize LC3 &  \scriptsize $x(x(yz)) = ((xx)y)z$ \\
    \scriptsize LC4 &  \scriptsize $x(y(yz)) = (x(yy))z$ & 
    \scriptsize LG1 & \scriptsize $x(y(zx))=(x(yz))x$ &  
    \scriptsize LG2 & \scriptsize $(xy)(zz)=((x(yz))z$ \\
    \scriptsize LG3 &  \scriptsize  $x(y(zy)) = (x(yz))y$ &  
    \scriptsize LNQ &  \scriptsize $(xx)(yz) = ((xx)y)z$ &  \scriptsize MNQ &  \scriptsize $x((yy)z) = (x(yy))z$ \\  \scriptsize MQ &  \scriptsize $x(y(xz)) = ((xy)x)z$ &
    \scriptsize RAQ &  \scriptsize $(x(yy))z = ((xy)y)z$ &  \scriptsize RBQ &  \scriptsize $x((yz)y) = ((xy)z)y$ \\
    \scriptsize RC1 &  \scriptsize $x(y(zz)) = (xy)(zz)$ &
    \scriptsize RC2 &  \scriptsize $x((yz)z) = ((xy)z)z$ &
    \scriptsize RC3 &  \scriptsize $x(y(zz)) = ((xy)z)z$ \\  
    \scriptsize RC4 &  \scriptsize $x((yy)z) = ((xy)y)z$ &  
    \scriptsize RG1 &  \scriptsize $x((xy)z) = ((xx)y)z$ &
    \scriptsize RG2 &  \scriptsize  $x((xy)z) = (xx)(yz)$ \\ 
    \scriptsize RG3 &  \scriptsize $x((yx)z) = ((xy)x)z$ & \scriptsize RNQ &  \scriptsize $x(y(zz)) = (xy)(zz)$ & 
                    &                                     \\
  \hline
\end{tabular}
\label{Tb:varieties}
\end{table}

We will say that a division algebra $(A,xy)$ is of a certain type X if the quasigroup
$A\setminus \{0\}$ belongs to the variety X. A quick inspection using Corollary \ref{cor:PI}
shows that real division algebras of type LG1, LG2, EQ, RG2, RG1, LG3,
MQ, RG3, LBQ and RBQ are isotopic to Hurwitz algebras. We can strengthen this observation
by a deeper analysis of all the varieties. As we will see most of the examples are
isotopic to Hurwitz algebras.

\begin{prop}
  Any real division algebra of type LC3, LC1, EQ, MNQ, RC3 or RC1 is associative.
\end{prop}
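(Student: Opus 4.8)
The plan is to prove associativity by showing that, in each case, one of the associative nuclei $\N_l(A)$, $\N_m(A)$, $\N_r(A)$ is all of $A$; indeed $\N_m(A)=A$ unwinds to $(xa)z=x(az)$ for all $x,a,z$, and similarly for the other two nuclei, so any of these equalities is exactly associativity. Throughout I would use that a real division algebra contains a non-zero idempotent $e$ \cite{segre}, that each nucleus is a linear subspace (being a subalgebra), and that $\lt_a,\rt_a\in\gl(A)$ for $a\neq0$, so that $\lt_a=\lt_b$ (or $\rt_a=\rt_b$) forces $a=b$.

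The cleanest case is MNQ, $x((yy)z)=(x(yy))z$, which says precisely that $y^2\in\N_m(A)$ for every $y$. In particular $e=e^2\in\N_m(A)$, so $(xe)z=x(ez)$ for all $x,z$; taking $z=e$ gives $\rt_e^2=\rt_e$, hence $\rt_e=\I$, and taking $x=e$ gives $\lt_e^2=\lt_e$, hence $\lt_e=\I$. Thus $e$ is a two-sided unit. Linearising $y^2\in\N_m(A)$ gives $yw+wy\in\N_m(A)$ for all $y,w$, and setting $w=e$ yields $2y\in\N_m(A)$, i.e.\ $\N_m(A)=A$. Next I would dispose of LC1, RC1 and EQ using Figure~\ref{Fg:QTree}: there LC1, RC1 and EQ all sit above MNQ in the inclusion diagram (\cite{PV05}), so each of these identities implies the MNQ identity in the quasigroup $A\setminus\{0\}$, and the previous argument applies verbatim.

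This leaves the two corner identities LC3 and RC3, which are not above MNQ in the lattice and must be treated directly; since RC3 for $A$ is LC3 for $A\op$, it suffices to handle LC3, $x(x(yz))=((xx)y)z$. Read as an identity of operators in the free variables, this is equivalent to $\lt_x^2\lt_y=\lt_{x^2y}$ for all $x,y$. Putting $x=y=e$ gives $\lt_e^3=\lt_e$, so $\lt_e=\I$ and $e$ is a left unit, and then $y=e$ gives $\lt_x^2=\lt_{x^2e}$. Linearising $\lt_x^2\lt_y=\lt_{x^2y}$ in $x$ and specialising the increment to $e$ produces $2\lt_x\lt_y=\lt_{(xe+x)y}$, whence $y=e$ forces $\rt_e^2+\rt_e=2\I$, so $\rt_e$ is diagonalisable with eigenvalues in $\{1,-2\}$. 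I would then kill the $(-2)$-eigenspace $W$: for $w\in W$ the same linearised identity gives $\lt_{wy}=-2\lt_w\lt_y$, so $\lt_{w^2}=-2\lt_w^2=-2\lt_{w^2e}$ and hence $w^2e=-\tfrac12w^2$; as $-\tfrac12$ is not an eigenvalue of $\rt_e$ this gives $w^2=0$, impossible for $w\neq0$ in a division algebra. Therefore $\rt_e=\I$, so $e$ is a two-sided unit, $\lt_x^2=\lt_{x^2e}=\lt_{x^2}$ ($A$ is left alternative), and $x^2\in\N_l(A)$; polarising and setting the free variable equal to $e$ gives $2x\in\N_l(A)$, so $\N_l(A)=A$ and $A$ is associative.

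I expect the main obstacle to be precisely this last case. Unlike MNQ, the identities LC3 and RC3 do not immediately present the idempotent as a two-sided unit, and the delicate step is upgrading $e$ from a one-sided to a two-sided unit, i.e.\ excluding the spurious $(-2)$-eigenvalue of $\rt_e$. Once the idempotent is known to be a genuine unit, every case closes by the same elementary mechanism—linearise a "squares-in-a-nucleus" statement and evaluate at the unit—so the real content is concentrated in establishing the two-sided unit.
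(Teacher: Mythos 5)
Your overall strategy coincides with the paper's: reduce everything to the two varieties MNQ and LC3 (via the inclusion lattice of \cite{PV05} and passage to the opposite algebra for RC3), show the idempotent is a two-sided unit, and then linearise a ``squares lie in a nucleus'' statement and evaluate at the unit. The MNQ argument and the exclusion of the $(-2)$-eigenvalue of $\rt_e$ in the LC3 case are, up to cosmetic differences, the paper's own (the paper contradicts $\rt_e^2(x^2)=-2x^2$ against the spectrum $\{1,4\}$ of $\rt_e^2$, you contradict $\rt_e(w^2)=-\tfrac12 w^2$ against the spectrum $\{1,-2\}$ of $\rt_e$; the paper quotes from \cite{PV05} that MNQ quasigroups are loops and LC3 quasigroups are left loops, whereas you derive the units directly, which is slightly more self-contained).

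One step in your LC3 case is a non sequitur as written: from $\lt_e^3=\lt_e$ and the invertibility of $\lt_e$ you may only conclude $\lt_e^2=\I_A$, not $\lt_e=\I_A$; an involution is not excluded at this point. The conclusion is nevertheless correct and recoverable from your own operator identity: putting only $x=e$ in $\lt_x^2\lt_y=\lt_{x^2y}$ gives $\lt_e^2\lt_y=\lt_{ey}$ for all $y$, which combined with $\lt_e^2=\I_A$ yields $\lt_y=\lt_{ey}$, hence $ey=y$ and $\lt_e=\I_A$. (Alternatively, quote, as the paper implicitly does via the superscript L in Figure~\ref{Fg:QTree}, that every LC3 quasigroup is a left loop, and take $e$ to be that left unit.) With this one-line repair the proof is complete and matches the paper's.
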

\begin{proof}
Since the opposite algebra of any algebra of type RC3 is an algebra of type LC3, it is enough to prove the statement for algebras of type LC3 and MNQ.

If $(A,xy)$ is of type MNQ then $A$ has a two-sided unit element $e$. Linearizing the identity $x((yy)z) = (x(yy))z$ we obtain that $x((wy+yw)z) = (x(yw+wy))z$. Setting $w = e$ we get the associativity.

Let $(A,xy)$ be of type LC3 and $e$ its left unit element. Linearizing $x(x(yz)) = ((xx)y)z$ we obtain that $x(w(yz)) + w(x(yz)) = ((xw + wx)y)z$. Setting $x = y = e$ we get that $(\rt_e-\I_A)(\rt_e + 2\I_A)=0$. If $\rt_e \neq \I_A$ then there exists an eigenvector $x$ of $\rt_e$ with corresponding eigenvalue $-2$. The identity $x(x(yz)) = ((xx)y)z$ with $y = z = e$ gives that $x^2$ is an eigenvector of eigenvalue $-2$ of $\rt^2_e$. Since the possible eigenvalues of $\rt^2_e$ are $1$ and $4$ we reach a contradiction. This proves that $\rt_e = \I_A$, so $e$ is a two-sided unit element. Setting $w = e$ in $x(w(yz)) + w(x(yz)) = ((xw + wx)y)z$ we get the associativity.
\end{proof}

Real division algebras of type LAQ, RAQ or MQ are alternative, so they are isomorphic to
$\R, \C, \H$ or $\O$. Those of type FQ are flexible and they have been classified
\cite{bbo82,cdkr99,coll,nform}. The classification of the algebras of type LBQ or RBQ
appears in \cite{cu}.

\begin{prop}
  A real division algebra $(A,xy)$ has type LNQ (resp.\ RNQ) if and only if it is either associative or there exist a product $x\circ y$ on $A$ such that $(A,x\circ y)$ is a quadratic division algebra and $xy = (t(x)e-x)\circ y$ (resp.\ $xy = x\circ (t(y)e-y)$) with $x\circ x-t(x)x+n(x)e = 0$ the quadratic equation satisfied by $(A,x\circ y)$.
\end{prop}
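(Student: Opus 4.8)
The plan is to first translate the quasigroup identity into algebraic structure, then isolate the only two possible shapes and recognise them as the associative case and the ``twisted quadratic'' case. Since LNQ quasigroups are left loops \cite{PV05}, a real division algebra $(A,xy)$ of type LNQ has a left unit $e$, i.e. $\lt_e=\I$. The identity $(xx)(yz)=((xx)y)z$ says precisely that $x^2\in\N_l(A)$ for every $x$, and linearising in $x$ gives $xy+yx\in\N_l(A)$ for all $x,y$. First I would record two consequences. Taking $y=e$ in the defining condition of $a\in\N_l(A)$ gives $ae=a$, so $\N_l(A)\subseteq A_+:=\ker(\rt_e-\I)$; conversely $xe+x=xe+ex\in\N_l(A)$ shows $\operatorname{im}(\rt_e+\I)=A_+\subseteq\N_l(A)$, whence $\N_l(A)=A_+$. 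Feeding $xe+ex\in\N_l(A)$ back (evaluate $(xe+ex)e=xe+ex$) yields $(xe)e=x$, i.e. $\rt_e^2=\I$. Thus $\rt_e$ is an involution, $A=A_+\oplus A_-$ with $A_\pm=\ker(\rt_e\mp\I)$, and $\N_l(A)=A_+$ is an associative real division subalgebra, so $A_+\cong\R,\C$ or $\H$.

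Next I would dispose of the two good cases and the converse. If $A_+=A$ then $\N_l(A)=A$ and $A$ is associative. If $A_+=\R e$, I would build $\circ$ explicitly: since $\operatorname{im}(\rt_e+\I)=\R e$ there is a linear form $t$ with $xe+x=t(x)e$ and $t(e)=2$, so $\rt_e=\kappa$ where $\kappa(x)=t(x)e-x$; and $x^2\in\N_l(A)=\R e$ defines an anisotropic quadratic form $n$ by $x^2=n(x)e$. Put $x\circ y:=(xe)\,y$. Then $e\circ y=y$ and $y\circ e=\rt_e^2 y=y$, so $e$ is a two-sided unit; $x\circ x=(xe)x=(t(x)e-x)x=t(x)x-n(x)e$, so $(A,\circ)$ is quadratic; $\lt^\circ_x=\lt_{xe}$ and $\rt^\circ_y=\rt_y\rt_e$ are bijective for nonzero argument, so $(A,\circ)$ is a division algebra; finally $(t(x)e-x)\circ y=(\kappa(x)e)y=\rt_e^2(x)\,y=xy$, i.e. $xy=(t(x)e-x)\circ y$. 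The converse is immediate: in any quadratic division algebra $(t(x)e-x)\circ x=n(x)e$, so the twisted product $xy=(t(x)e-x)\circ y$ has $x^2=n(x)e$ and hence $(xx)(yz)=n(x)(yz)=((xx)y)z$, while associative algebras trivially satisfy LNQ.

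What remains, and is the crux, is to exclude $\R e\subsetneq\N_l(A)\subsetneq A$, i.e. $A_+\cong\C$ or $\H$ with $A_-\neq0$; here the division hypothesis must be used in an essential way. I would exploit the $\N_l$-linearity coming from the nucleus: for $a\in\N_l(A)$ one has $(ax)y=a(xy)$, so every $\rt_y$ commutes with each $\lt_a$ ($a\in\N_l(A)$), is $\N_l(A)$-linear, and is invertible for $y\neq0$. Choosing $a\in\N_l(A)$ with $a^2=-e$ makes $\lt_a$ a complex structure commuting with $\rt_e$ and preserving $A_-$. The plan is then to show that the $\N_l(A)$-valued quadratic map $A_-\to\N_l(A)$, $v\mapsto v^2$ (nonvanishing off $0$ by division), together with this involutive complex (or quaternionic) structure, forces some $\rt_y$ to be singular, contradicting division. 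Concretely, for $A_+\cong\C$ a direct computation shows that $\rt_{e+av}$ becomes singular exactly when $v(av)$ is a scalar multiple of $a$, and one argues that such a $v\in A_-$ must exist (a degree/Radon--Hurwitz obstruction on the associated map of spheres), with an analogous analysis when $A_+\cong\H$. I expect this exclusion of an intermediate nucleus to be the hard, technical part; everything else is bookkeeping, and indeed a naive attempt to build such an algebra produces zero divisors, which is exactly the phenomenon this step must turn into a proof.

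Finally, RNQ is the statement $z^2\in\N_r(A)$, and since $\N_l(A^{\mathrm{op}})=\N_r(A)$, a real division algebra is of type RNQ if and only if $A^{\mathrm{op}}$ is of type LNQ. Applying the LNQ result to $A^{\mathrm{op}}$ and reversing products turns $x\cdot^{\mathrm{op}}y=(t(x)e-x)\circ^{\mathrm{op}}y$ into $xy=x\circ(t(y)e-y)$ for the (again quadratic, division) algebra $(A,\circ)$, which is the asserted RNQ form.
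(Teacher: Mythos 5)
Your setup is correct and matches the paper's: the left unit $e$, the observation that $x^2\in\N_l(A)$ and (by linearization) $xe+x\in\N_l(A)$, the conclusions $\rt_e^2=\I$ and $\N_l(A)=\ker(\rt_e-\I)$, the construction of the quadratic product $x\circ y=(xe)y$ when $\N_l(A)=\R e$, the converse, and the passage to $A^{\mathrm{op}}$ for RNQ are all sound. But the step you yourself identify as the crux --- excluding $\R e\subsetneq\N_l(A)\subsetneq A$ --- is left as a plan rather than a proof, and the concrete claims in that plan are neither verified nor obviously true. The assertion that ``$\rt_{e+av}$ becomes singular exactly when $v(av)$ is a scalar multiple of $a$'' is unproved, the existence of such a $v$ is waved at via ``a degree/Radon--Hurwitz obstruction'', and the promised ``analogous analysis when $A_+\cong\H$'' would have to cover a case that is not analogous at all. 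So the hard half of the proposition is missing.

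For comparison, the paper disposes of the three intermediate cases by three separate arguments, only one of which is topological. For $\dim A=8$, $\dim\N_l(A)=2$: since $xa+ax\in\N_l(A)$ for all $x\in A$ and $a\in\N_l(A)$, the map $x\mapsto(\lt_x+\rt_x)|_{\N_l(A)}$ lands in the four-dimensional space $\End(\N_l(A))$ and hence kills some $x\neq0$; choosing $b\in\N_l(A)\cong\C$ with $b^2=-x^2$ gives $(b+x)^2=0$, forcing $x=-b\in\N_l(A)$, which contradicts $xe+ex=0$. For $\dim A=8$, $\dim\N_l(A)=4$: writing $A=\N_l(A)\oplus\N_l(A)x$ as a left $\N_l(A)$-module, the linearized identity forces $(ba-ab)x\in\N_l(A)$ and hence $ab=ba$ for all $a,b\in\N_l(A)\cong\H$, a contradiction. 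Only for $\dim A=4$, $\dim\N_l(A)=2$ does one argue topologically: $x\mapsto x^2/\sqrt{n(x^2)}$ is a continuous map $S^3\to S^1$ whose restriction to the unit circle of $\N_l(A)$ induces multiplication by $2$ on $\pi_1(S^1)$, impossible since it factors through $\pi_1(S^3)=0$. Your sphere-map idea is in the spirit of this last case, but that is the only one of the three it could plausibly cover, and even there it is not carried out.
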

\begin{proof}
Let $(A,xy)$ be of type LNQ and let $e$ be its left unit element. The identity $x^2(yz) = (x^2y)z$ implies that $x^2$ belongs to $\N_l(A)$. Thus, $xe + x$ also belongs to $\N_l(A)$. Since $\N_l(A)$ is an associative subalgebra and $e \in \N_l(A)$ then $e$ is the unit element of $\N_l(A)$. It follows that $(xe + x) = (xe + x)e$ so $\rt^2_e = \I_A$.

We distinguish two cases. First we assume that $\N_l(A) = \R e$. In this case $x^2 = n(x)e$ and $xe +x = t(x)e$ for some quadratic form $n(\,)$ and a linear form $t(\,)$. Define the new product $x\circ y = (xe)y$. We have that $x\circ x = (xe)x = (t(x)e-x)x = t(x)x -n(x)e$, so $(A,x\circ y)$ is a quadratic division algebra with unit element $e$ and the products $x\circ y$ and $xy$ are related as required.

The second case deals with the possibility $\dim \N_l(A) \geq 2$. If $\dim A = 8$ and $\dim \N_l(A) = 2$ then the map $A \to \End_k(\N_l(A))$, $x \mapsto (\lt_x + \rt_x)\vert_{\N_l(A)}$ is not injective, so there exists $0 \neq x \in A$ that satisfies $ax + xa = 0$ for all $a \in \N_l(A)$. Choosing $b\in \N_l(A)$ with $b^2 = - x^2$ we obtain that $(b+x)^2 = 0$. Since $A$ is a division algebra, this implies that $x = -b \in \N_l(A)$. However, the only element $x \in \N_l(A)$ such that $xa + ax = 0$ for all $a \in \N_l(A)$ is $x = 0$ (take $a = e$ the unit element), a contradiction. If $\dim A = 8$ and $\dim \N_l(A) = 4$ then $A$ is a left $\N_l(A)$-module, the action given by left multiplication. As a module $A$ decomposes as $A = \N_l(A) \oplus \N_l(A)x$ for some $x \in A$. However, for any $a, b \in \N_l(A)$, the element $(ax)b + b(ax) = a(xb + bx) - (ab)x + (ba)x$ belongs to $\N_l(A)$ so $ab-ba = 0$, a contradiction. We are left with the case in which $\dim A = 4$ and $\dim \N_l(A) = 2$. While an algebraic proof is possible, we prefer a topological approach. We extend the quadratic form on $\N_l(A)$ to a definite positive quadratic form $n(\,)$ on $A$. Thus we have the inclusion of spheres $\iota \colon S^1 = \{x \in \N_l(A) \mid n(x) = 1\} \to S^3 = \{x \in A \mid n(x) = 1\}$. The identity $x^2(yz) = (x^2y)z$ implies the existence of a continuous map $\varphi \colon S^3 \to S^1$ given by $x \mapsto x^2 / \sqrt{n(x^2)}$. The composition $\varphi \circ \iota$ induces the map $(\varphi\circ \iota)_*  = 2\I_{\pi_1(S^1)}$ on the fundamental group $\pi_1(S^1) \cong \Z$ (base point at $e$). However, $(\varphi\circ \iota)_*  = \varphi_* \circ \iota_*$ and $\pi_1(S^3) = 0$, a contradiction.
\end{proof}

\begin{prop}
Any  real division algebra $(A,xy)$ of type LG2 (resp.\ RG2) is either associative, or
there exists a product $x*y$ on $A$ such that $(A,*)$ is isomorphic to the complex numbers,
and $xy=x*\bar{y}$ (resp.{} $xy=\bar{x}*y$), where $x\mapsto\bar{x}$ is the standard
involution on $(A,*)$.
\end{prop}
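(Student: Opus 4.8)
The plan is to realise $(A,xy)$ as a one-sided twist of a Hurwitz algebra and then to determine the twisting map. By Segre's theorem \cite{segre}, $A$ has a non-zero idempotent $e$. Putting $z=e$ in the identity gives $(xy)e=(x(ye))e$; cancelling $\rt_e$ leaves $xy=x(ye)$, so $\rt_y=\rt_{ye}$ for every $y$, and injectivity of $y\mapsto\rt_y$ forces $ye=y$, i.e.\ $\rt_e=\I$. Thus $e$ is a right identity, and for $x*y=(x/e)(e\backslash y)=x\cdot\lt_e^{-1}(y)$ we obtain $xy=x*\tau(y)$ with $\tau=\lt_e$, $\tau(e)=e$, and $e$ a two-sided unit for $*$. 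Since the identity may be rewritten as $\rt_z^{-1}\rt_{z^2}(xy)=x\,(yz)$, it is an instance of the hypothesis of Proposition~\ref{prop:PI}(3) (take $S=\rt$, $\epsilon=1$, $f=g=\I$), so Corollary~\ref{cor:PI} applies and $(A,*)$ is a Hurwitz algebra; hence $(A,*)\in\{\R,\C,\H,\O\}$.

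Next I would transport the identity through $xy=x*\tau(y)$, obtaining
$$(x*\tau y)*\tau(z*\tau z)=(x*\tau(y*\tau z))*\tau z$$
for all $x,y,z$. Specialising $x=y=e$ yields $\tau(z*\tau z)=\tau^2z*\tau z$; specialising $y=e$ and substituting this relation gives the associator identity $x*(\tau^2z*\tau z)=(x*\tau^2z)*\tau z$; and specialising $x=e$ gives $\tau y*(\tau^2z*\tau z)=\tau(y*\tau z)*\tau z$. The decisive move is to apply the associator identity (with $x=\tau y$) to regroup $\tau y*(\tau^2z*\tau z)=(\tau y*\tau^2z)*\tau z$, and then to right-cancel $\tau z$, which is legitimate in a division algebra; this gives $\tau(y*\tau z)=\tau y*\tau^2z$, and replacing $z$ by $\tau\inv z$ turns it into $\tau(yz)=\tau y*\tau z$. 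Hence $\tau\in\Aut(A,*)$. Crucially this runs uniformly over all four Hurwitz algebras: associativity is never used, the associator identity supplied by the middle specialisation being exactly what is needed to regroup the product before cancelling.

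With $\tau\in\Aut(A,*)$ secured, the relation $\tau(z*\tau z)=\tau^2z*\tau z$ becomes $\tau z*\tau^2z=\tau^2z*\tau z$, i.e.\ $w$ and $\tau w$ commute for every $w$. As an automorphism, $\tau$ fixes $1$ and acts orthogonally on the imaginary subspace of $(A,*)$, so commutativity forces $\tau(w')$ parallel to $w'$ for every imaginary $w'$; hence $\tau$ acts as $\mu\I$ on the imaginary part with $\mu=\pm1$. Inspecting the automorphism groups shows $\mu=-1$ is possible only for $\C$ (for $\H$ one needs $\tau$ to act by an element of $\so(3)$, for $\O$ to preserve the vector product, both excluding $-\I$). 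Thus $\tau=\I$ except possibly when $(A,*)\cong\C$ and $\tau=\kappa$ is conjugation. When $\tau=\I$ the product is the Hurwitz one, which satisfies the identity precisely when $(A,*)$ is associative; the one-line check $(ij)l^2\neq(i(jl))l$ then rules out $(A,*)=\O$, leaving $\R,\C,\H$. The surviving non-associative possibility is $(A,*)\cong\C$ with $xy=x*\bar y$, as claimed. The statement for RG2 follows by passing to the opposite algebra, under which LG2 and RG2 are interchanged and $x*\bar y$ becomes $\bar x*y$.

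The step I expect to be the main obstacle is exactly this uniform proof that $\tau$ is an automorphism: the naive cancellation argument is valid only in the associative algebras, and one must locate the associator identity hidden in the octonionic case and use it to regroup before cancelling. Once $\tau\in\Aut(A,*)$ is in hand, the remaining group-theoretic classification of $\tau$ and the exclusion of $\O$ are routine.
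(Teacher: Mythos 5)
Your proof is correct, and its skeleton matches the paper's: produce a right unit $e$ (you derive $\rt_e=\I$ from the specialisation $z=e$, which is fine; the paper takes the right-loop property of the variety LG2 as given), invoke Proposition~\ref{prop:PI}(3) and Corollary~\ref{cor:PI} to make $(A,*)$ a Hurwitz algebra with $xy=x*\tau(y)$ and $\tau=\lt_e$, show that $\tau$ is an automorphism, deduce that $w$ and $\tau(w)$ commute for all $w$, and conclude that $\tau$ acts as $\pm\I$ on $\Im(A,*)$, with $-\I$ admissible only for $\C$. Two steps are handled differently from the paper, both legitimately. For the automorphism property of $\lt_e$, the paper sets $x=e$ in the linearised identity $(xy)(ez)=x(yz)$ to exhibit the autotopy $(\lt_e,\lt_e,\lt_e)$ and then quotes the second clause of Corollary~\ref{cor:PI}; your derivation via the transported identity and the auxiliary associator relation $x*(\tau^2z*\tau z)=(x*\tau^2 z)*\tau z$ is more elementary and self-contained, and you correctly identify that this relation is exactly what makes the cancellation valid in the non-associative case. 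For the exclusion of $\O$, the paper does it early, by showing that $(xy)(ez)=x(yz)$ forces $(A,*)$ to be associative; you do it at the end, by observing that with $\tau=\I$ the identity reduces (via right alternativity, $(xy)z^2=((xy)z)z$) to associativity, which fails in $\O$. One cosmetic point: in the line ``$\tau(yz)=\tau y*\tau z$'' the two products are momentarily conflated; since $yz=y*\tau(z)$ the two readings agree, but the clean statement you want is $\tau(y*z)=\tau(y)*\tau(z)$.
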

\begin{proof}
Although we could easily avoid the use of Corollary \ref{cor:PI} in this proof, we will
not. First we linearize $(xy)z^2 =(x(yz))z$ to obtain 
$(xy)(wz+zw) = (x(yw))z + (x(yz))w$. With $w = e$, the right unit element, we get that
$(xy)(ez) = x(yz)$. Corollary~\ref{cor:PI} implies that $xy = x*\sigma(y)$ where $(A,*)$ is
a Hurwitz algebra and $\sigma = \lt_e$ is an automorphism with $\sigma^2 = \I_A$.

The identity $(xy)(ez) = x(yz)$ implies that $(A,*)$ is associative. 
The identity $(xy)z^2 = (x(yz))z$ implies that $\sigma(z)*z = z * \sigma(z)$ for all 
$z\in A$. For a non-zero $z\in\Im(A,*)$, the only elements in $\Im A$ that commute with
$z$ are the ones in $\R z$. Thus, for any $z \in \Im A$ there exists $\lambda_z \in \R$
such that $\sigma(z) = 
\lambda_z z$. It follows that $\sigma$ is either the identity map or $(A,*)$ is isomorphic
to the complex numbers and $\sigma$ is the complex conjugation. 
\end{proof}

\begin{prop}
  A real division algebra is of type CQ if and only if it is either associative, or the
  product is expressible as $xy=\bar{x}*\bar{y}$, where $(A,*)$ is isomorphic to $\C$.
\end{prop}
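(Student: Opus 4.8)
The plan is to verify the two stated families by direct substitution and then to treat the converse by reducing to a unital isotope and analysing the resulting pair of maps. For the forward implication, associativity makes both sides of the CQ identity $x(y(yz))=((xy)y)z$ equal to the unbracketed product, while for $(A,*)\cong\C$ with $xy=\bar x*\bar y$ a short computation (using $\bar{\bar u}=u$ and the commutativity of $\C$) shows that both sides collapse to $n(y)\,(\bar x*\bar z)$; so both families satisfy CQ. For the converse, fix a non-zero idempotent $e$ (which exists by \cite{segre}) and pass to the unital isotope $(A,*)$, $x*y=(x/e)(e\backslash y)$, for which $e$ is a two-sided unit and $xy=\sigma(x)*\tau(y)$ with $\sigma=\rt_e$ and $\tau=\lt_e$ (the relation $uv=(ue)*(ev)$ is formal, as in Corollary~\ref{real}).

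Next I would extract the first consequences of CQ. Evaluating it at $x=y=e$ and at $y=z=e$ gives $\lt_e^2=\rt_e^2=\I$, i.e.\ $\sigma^2=\tau^2=\I$. Rewriting CQ as the operator identity $\lt_x\lt_y^2=\lt_{\rt_y^2(x)}$ exhibits $(\I,\rt_y^{-2},\lt_y^2)\in\Atp(A)$ for every $y\neq0$, an autotopy which at $y=e$ is the identity. Differentiating the curve $y=e+tv$ at $t=0$ (Lemma~\ref{lem:tangent}) and transporting along the isotopy produces, for each $v$, a ternary derivation of $(A,*)$ with vanishing first component. Since $(A,*)$ is unital, such derivations are exactly $(0,\rt^*_a,-\lt^*_a)$ with $a\in\N_m(A,*)$; comparing the two components against the computed derivatives, I expect to read off that $\sigma,\tau\in\Aut(A,*)$, that the common $+1$-eigenspace $T=\ker(\sigma-\I)=\ker(\tau-\I)$ lies in $\N_m(A,*)$, and that $\sigma$ and $\tau$ obey the operator relation $\sigma\tau+\tau\sigma=2\I$, equivalently $(\sigma\tau-\I)^2=0$.

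The crux — and the step I expect to be the main obstacle — is to prove that $(A,*)$ is a Hurwitz algebra. Note that $(\sigma\tau-\I)^2=0$ together with the invertibility of $\sigma\tau$ forces $\sigma\tau=\I$, hence $\sigma=\tau$, \emph{as soon as} $\sigma\tau$ is known to be semisimple; and $\sigma\tau$ is semisimple once $\sigma,\tau$ are orthogonal, which is automatic for automorphisms of a Hurwitz algebra (they are isometries of the norm). The difficulty is that the autotopies furnished by CQ all have first component $\I$, so they yield only nuclear ($d_1=0$) ternary derivations and do not, by themselves, supply the full family $\{h\lt^*_xh^{-1}\mid x\in A\}$ required to apply Theorem~\ref{thm:PI} directly. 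My plan is to exploit the rich $\Atp(A)$-structure implied by the parameter family $(\I,\rt_y^{-2},\lt_y^2)$ to recover enough of $\Tder(A,*)$ to invoke either Theorem~\ref{thm:PI} or the classification of large ternary-derivation algebras in \cite{JP08}, thereby concluding that $(A,*)$ is Hurwitz (indeed associative); failing a clean application, it would suffice to argue directly that the involution $\sigma\tau=\rt_e\lt_e$ is semisimple.

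Granting that $(A,*)$ is Hurwitz, $\sigma$ and $\tau$ are orthogonal involutions with the same $+1$-eigenspace $T$, whence $\sigma=\tau$. Substituting $\sigma=\tau$ into CQ and evaluating the free variables at the unit reduces it to $y*\sigma(y)=\sigma(y)*y$ for all $y$. Arguing exactly as in the type-LG2 case treated above — for $y\in\Im(A,*)$ the centraliser of $y$ meets $\Im(A,*)$ in $\R y$, so $\sigma(y)\in\R y$ — yields that either $\sigma=\I$ or $(A,*)\cong\C$ with $\sigma$ complex conjugation. In the first case $xy=x*y$, and a Hurwitz algebra satisfies CQ only when every square $y^2$ lies in the nucleus, which rules out $\O$; thus $(A,*)\in\{\R,\C,\H\}$ and $A$ is associative. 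In the second case $xy=\bar x*\bar y$ with $(A,*)\cong\C$. Finally, the two families are visibly non-isomorphic, one being associative and the other not, which completes the classification.
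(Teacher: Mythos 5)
Your forward verification and the final reduction (once $(A,*)$ is known to be Hurwitz with $\sigma=\tau$ an involutive automorphism) are fine, but the proof has a genuine gap exactly where you flag it: you never actually establish that $(A,*)$ is a Hurwitz algebra, nor even that $\sigma=\rt_e$ and $\tau=\lt_e$ are automorphisms of $(A,*)$. As you observe yourself, every autotopy you extract from CQ has first component $\I$, so differentiation only produces ternary derivations with $d_1=0$, i.e.\ elements $(0,\rt^*_a,-\lt^*_a)$ coming from the middle nucleus. This can never supply a family of the form $\{h\lt^*_xh^{-1}\mid x\in A\}$ inside some $\Tder(A,*)_\iota$, so Theorem~\ref{thm:PI} is not applicable, and the auxiliary claims you say you ``expect to read off'' ($\sigma,\tau\in\Aut(A,*)$, $\ker(\sigma-\I)=\ker(\tau-\I)$, $\sigma\tau+\tau\sigma=2\I$) are neither verified nor plausible consequences of nuclear derivations alone. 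The fallback of proving directly that $\rt_e\lt_e$ is semisimple is likewise left unexecuted. So the crux of the converse is missing.

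The paper closes this gap by an entirely different and much cheaper device, which you may want to compare with. Writing CQ in operator form as $\lt_x\lt_y\lt_y=\lt_{(xy)y}$ and $\rt_z\rt_y\rt_y=\rt_{y(yz)}$, one substitutes $x=(y/y)/y$ in the first identity: then $(xy)y=y$, so $\lt_x\lt_y^2=\lt_y$ and hence $\lt_y\inv=\lt_x\in\lt_A$, i.e.\ $A$ has inversion on the left. The substitution $z=y\backslash(y\backslash y)$ in the second identity gives inversion on the right. Now Proposition~\ref{prop:algebras_with_inversion} (equivalently Corollary~\ref{real} applied to $A$ and to $A\op$) immediately yields $xy=\tau(x)*\sigma(y)$ with $(A,*)$ an \emph{alternative}, hence Hurwitz, division algebra and $\sigma^2=\tau^2=\I_A$ --- no appeal to $\Tder$ or to Theorem~\ref{thm:PI} is needed. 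From there the identity becomes $x*(\sigma\tau(y)*(\tau(y)*z))=((x*\sigma(y))*\tau\sigma(y))*z$; linearizing at $y=e$ rules out $\dim A=8$ and forces associativity, after which the identity reduces to $\sigma\tau(y)*\tau(y)=\sigma(y)*\tau\sigma(y)$, giving $\sigma=\tau$ and $y*\sigma(y)=\sigma(y)*y$, and the endgame proceeds essentially as in your last paragraph. If you want to salvage your argument, the missing ingredient is precisely this inversion-on-both-sides observation (or some other proof that $(A,*)$ is alternative); without it the chain of deductions does not go through.
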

\begin{proof}
Let $(A,xy)$ be a real division algebra of type CQ.
The defining identity $x(y(yz)) = ((xy)y)z$ implies that $\lt_x\lt_y\lt_y = \lt_{(xy)y}$
and $\rt_z\rt_y\rt_y = \rt_{y(yz)}$. Substituting $(y/y)/y$ for $x$ in 
$\lt_x\lt_y\lt_y = \lt_{(xy)y}$  gives that $A$ has inversion on the left. Substituting
$y\backslash(y\backslash y))$ for $z$  in $\rt_z\rt_y\rt_y = \rt_{y(yz)}$ gives that $A$ has
inversion on the right. Thus, the product $xy$ can be expressed as $xy =
\tau(x)*\sigma(y)$ for some automorphisms $\sigma,\tau$ of the Hurwitz algebra $(A,*)$
with $\sigma^2 = \tau^2 = \I_A$. The identity $x(y(yz)) = ((xy)y)z$ is now equivalent to 
\begin{equation}
\label{eq:CQaux1}
  x*(\sigma\tau(y)*(\tau(y)*z)) = ((x*\sigma(y))*\tau\sigma(y))*z.
\end{equation} 
Linearizing this identity at $y = e$, the unit element of $(A,*)$, gives
$x*((\sigma\tau(y) + \tau(y))*z) = (x*(\sigma(y) + \tau\sigma(y)))*z$. In case that $\dim
A = 8$ this would imply that $\sigma\tau(y) + \tau(y) \in \R e$ for any $y \in
A$. Substituting $y$ for $\tau(y)$ we get that the eigenspace of $\sigma$ corresponding to
the eigenvalue $-1$ has dimension $\geq 7$, a contradiction (the product by an eigenvector
of eigenvalue $-1$ defines a linear isomorphism between the eigenspaces of $\sigma$ of
eigenvalues $1$ and $-1$). Thus $(A,*)$ is associative and (\ref{eq:CQaux1}) is equivalent
to $\sigma\tau(y)*\tau(y) = \sigma(y) * \tau\sigma(y)$. This identity implies that
$\sigma(y) = y$ if and only if $\tau(y) = y$ so $\sigma = \tau$ and $y*\sigma(y) =
\sigma(y)*y$ for any $y \in A$. In particular, $(y + \sigma(y))*(y-\sigma(y)) = y^{*2} -
\sigma(y^{*2}) = 0$ for any $y\in \Im A$. So $\sigma$ acts as $\I$ or $-\I$ on $\Im A$ and
therefore it is either the identity or the standard involution. Since $\sigma$ is an
automorphism this proves that either $\sigma = \I$ or $(A,*)$ is isomorphic to the complex
numbers and $\sigma$ is the conjugation. 
\end{proof}

\begin{prop}
A  real division algebra $(A,xy)$ is of type LG3 (resp.\ RG3) if and only if there exits an associative product $x*y$ on $A$ and either an involutive automorphism or an involution $\sigma$ of $(A,*)$ such that $xy = x*\sigma(y)$ (resp.\  $xy = \sigma(x)*y$).
\end{prop}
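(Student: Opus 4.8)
The plan is to prove the statement for LG3 and deduce RG3 by duality. Reversing all products sends the LG3 identity $x(y(zy))=(x(yz))y$ to the RG3 identity (after renaming the variables), so $(A,xy)$ is of type RG3 exactly when $A\op$ is of type LG3; a decomposition $x\bullet y=x*\sigma(y)$ for the product $\bullet$ of $A\op$ then unwinds to $xy=\sigma(x)*y$ on $A$, with the opposite (still associative) product and the same $\sigma$ (an automorphism of $(A,*')$ stays an automorphism of its opposite, an anti-automorphism stays an anti-automorphism). Hence it suffices to treat LG3.

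Reading the LG3 identity as an identity in $z$ shows it is equivalent to $\lt_x\lt_y\rt_y=\rt_y\lt_x\lt_y$ for all $x,y$, that is, to $(\rt_y,\I,\lt_y\rt_y\lt_y\inv)\in\Atp(A)$ for every $y$. Since LG3-quasigroups are right loops \cite{PV05}, $A$ has a right unit $e$ (so $\rt_e=\I$), which is a non-zero idempotent. Written out, the autotopy relation reads $\rt_y(xz)=x\,(\lt_y\rt_y\lt_y\inv(z))$, which is precisely the shape of Proposition~\ref{prop:PI}(1) (parameter $y$, product $xz$ of the two remaining variables) with $S=\rt$, $\epsilon=1$, $f=g=\I$, the family $\beta_y=\I$, and $\gamma_y=\lt_y\rt_y\lt_y\inv$; the latter is differentiable at $e$, where it equals $\I$. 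Corollary~\ref{cor:PI} then gives that the isotope $(A,*)$ with $x*y=(x/e)(e\backslash y)$ is a Hurwitz algebra. Because $e$ is a right unit, $x*y=x\,\lt_e\inv(y)$, and the original product decomposes as $xy=x*\sigma(y)$ with $\sigma=\lt_e\in\gl(A)$; moreover $e$ is the unit of $(A,*)$ and $\sigma(e)=e$.

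The next step is associativity of $(A,*)$. From $xy=x*\sigma(y)$ one reads off $\rt_y=\rt^*_{\sigma(y)}$ and $\lt_y=\lt^*_y\sigma$, so the autotopy $(\rt_y,\I,\lt_y\rt_y\lt_y\inv)$ becomes $(a*\sigma(b))*\sigma(y)=a*\sigma(\lt_y\rt_y\lt_y\inv(b))$. Putting $a=e$ yields $\sigma(\lt_y\rt_y\lt_y\inv(b))=\sigma(b)*\sigma(y)$, and feeding this back gives $(a*u)*v=a*(u*v)$ for all $a,u,v$. Thus $(A,*)$ is associative, hence isomorphic to $\R$, $\C$ or $\H$; in particular the octonions are excluded.

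Finally I would pin down $\sigma$. With $*$ associative the identity $xy=x*\sigma(y)$ turns LG3 into $x*\sigma(y*\sigma(z*\sigma(y)))=(x*\sigma(y*\sigma(z)))*\sigma(y)$; the factor $x$ cancels, leaving the single relation $\sigma(y*\sigma(z*\sigma(y)))=\sigma(y*\sigma(z))*\sigma(y)$. The task is to deduce from this, using that $(A,*)$ is quadratic and that $\sigma$ fixes $e$, first that $\sigma$ is an isometry of the norm with $\sigma^2=\I$, and then that $\sigma$ is either multiplicative or anti-multiplicative, i.e. an involutive automorphism or an involution. I expect this last case analysis---carried out for $\H$ by a linearisation and Zariski-density argument of the kind used in the proof of Proposition~\ref{prop:I1}---to be the main obstacle. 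The converse direction is a direct verification: when $*$ is associative and $\sigma$ is an involutive automorphism or an involution with $\sigma^2=\I$, the flexibility $a*(b*a)=(a*b)*a$ valid in $(A,*)$ makes $xy=x*\sigma(y)$ satisfy the LG3 identity, while the bijectivity of $\lt^*_a\sigma$ and $\rt^*_{\sigma(b)}$ shows that $(A,xy)$ is a division algebra.
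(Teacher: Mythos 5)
Your reduction to LG3, the identification of the autotopy $(\rt_y,\I,\lt_y\rt_y\lt_y\inv)$, the application of Corollary~\ref{cor:PI}, and the substitution argument giving associativity of $(A,*)$ are all correct; the associativity step is in fact a nice alternative to the paper, which never invokes Corollary~\ref{cor:PI} here but instead linearizes $x(y(zy))=(x(yz))y$ at $y=e$ to get $\lt_x\lt_e\lt_z=\lt_{x(ez)}$, whence $x\star z=x(ez)$ is directly seen to be associative with identity $e$ (and your Hurwitz detour via Corollary~\ref{cor:PI} is then redundant, since an associative division algebra over $\R$ is $\R$, $\C$ or $\H$ by Frobenius).

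The genuine gap is the final step, which you explicitly leave open: you establish neither $\sigma^2=\I_A$ nor that $\sigma$ is multiplicative or anti-multiplicative, and this is precisely the content that distinguishes LG3 algebras from arbitrary isotopes $x*\sigma(y)$ of associative algebras. Note that $\sigma^2=\I_A$ is not free in your setup: your $*$ is $x*y=x\lt_e\inv(y)$, for which $e$ is automatically the unit, so no constraint on $\lt_e^2$ falls out (in the paper it does, because the linearized identity makes $x(ez)=x*\sigma^2(z)$ associative with right unit $e$, and a right unit of an associative division algebra is two-sided, forcing $\lt_e^2=\I_A$). Moreover, your anticipated ``linearisation and Zariski-density argument'' in the style of Proposition~\ref{prop:I1} is not what is needed and overestimates the difficulty: once $\sigma^2=\I_A$ is known, the paper simply evaluates the operator form $\sigma\lt^*_y\sigma\rt^*_{\sigma(y)}=\rt^*_{\sigma(y)}\sigma\lt^*_y\sigma$ at $e$ to obtain $\sigma(y*y)=\sigma(y)*\sigma(y)$; since $(A,*)$ is quadratic, a linear bijection fixing $e$ and preserving squares maps $\Im(A,*)$ to itself preserving the norm, hence is an isometry fixing the unit, and isometries of $\H$ fixing $1$ are exactly the automorphisms and anti-automorphisms. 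Without supplying $\sigma^2=\I_A$ and this (or an equivalent) argument, the proof is incomplete at its decisive point.
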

\begin{proof}
Either of the claims follows from the other by taking the opposite algebra. Assume that
$(A,xy)$ is of type LG3 and denote by $e$ the right unit element. Linearizing $x(y(zy)) =
(x(yz))y$ at $y = e$ we get that $\lt_x \lt_e \lt_z = \lt_{x(ez)}$. It follows that $A$
with the product $x*y = x(ez)$ is an associative algebra with $e$ as its identity element,
and that the square of $\sigma = \lt_e$ is $\I_A$. It remains to prove that $\sigma$ is
either an automorphism or an involution of $(A,*)$. 

The identity $x(y(zy)) = (x(yz))y$ can be written as 
\begin{align*}
  \lt^*_x \sigma \lt^*_y \sigma \rt^*_{\sigma(y)} &= 
  \rt^*_{\sigma(y)} \lt^*_x \sigma \lt^*_y \sigma \,, \\
  \intertext{which is equivalent to}
  \sigma \lt^*_y \sigma \rt^*_{\sigma(y)} &= \rt^*_{\sigma(y)} \sigma \lt^*_y \sigma \,.
\end{align*}
Evaluating at $e$ we get that $\sigma(y*y) = \sigma(y)*\sigma(y)$. This proves the result in case that $\dim A = 1$ or $2$. So we will assume that $\dim A = 4$, \ie, $\H = (A,*)$. The equality $\sigma(y*y) = \sigma(y)*\sigma(y)$ also implies that $\sigma$ is an isometry of the multiplicative quadratic form of $\H$. Since isometries of $\H$ that fix the unit element are automorphisms or anti-automorphisms, the result follows.
\end{proof}

\begin{prop}
\label{prop:LC2}
  A  real division algebra $(A,xy)$ is of type LC2 (resp.\ RC2) if and only if there exists an associative product $x*y$ on $A$ such that either
  \begin{enumerate}
    \item $xy = x*\sigma(y)$ (resp.\ $xy = \sigma(x)*y$) for some involutive automorphism $\sigma$ of $(A,*)$ or
    \item $(A,x*y) \cong \C$ and $xy=\alpha(x)*\bar{y}$ (resp.\ $xy = \bar{x}*\alpha(y)$) for some bijective linear map $\alpha$ that fixes the unit element of $(A,*)$ ($x\mapsto \bar{x}$ denotes the complex conjugation).
  \end{enumerate}
\end{prop}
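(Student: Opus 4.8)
The plan is to reduce, as in the preceding propositions, to an isotope with a two‑sided unit and then to exploit the identity as an autotopy condition. Fix a non‑zero idempotent $e$ of $A$ (which exists by \cite{segre}) and put $x*y=(x/e)(e\backslash y)=\rt_e\inv(x)\lt_e\inv(y)$, so that $e$ is a two‑sided unit of $(A,*)$ and $xy=\sigma(x)*\tau(y)$ with $\sigma=\rt_e$ and $\tau=\lt_e$; in particular $\lt_x=\lt^*_{\sigma(x)}\tau$. Writing the defining identity of type LC2 as $x(x(yz))=(x(xy))z$, it is equivalent to $(\lt_x^2,\lt_x^2,\I)\in\Atp(A)$ for every $x\in A$. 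Note that, in contrast with most of the varieties treated above, the factor depending on $x$ here is $\lt_x^2$ rather than a single $\lt_x^{\pm1}$, so Corollary~\ref{cor:PI} does not apply directly and a separate argument is required.

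Transporting the autotopies $(\lt_x^2,\lt_x^2,\I)$ along the isomorphism $\Atp(A)\to\Atp(A,*)$, $(\f_1,\f_2,\f_3)\mapsto(\f_1,\sigma\f_2\sigma\inv,\tau\f_3\tau\inv)$ induced by $xy=\sigma(x)*\tau(y)$, and evaluating at the unit $e$, I would first show that $\lt_x^2=\lt^*_{c(x)}$ with $c(x)=\lt_x^2(e)=\sigma(x)*\tau\sigma(x)\in\N_l(A,*)$ and that $\sigma$ commutes with $\lt^*_{c(x)}$; in particular $\lt_e^2=\lt^*_e=\I$, so $\tau^2=\I$. Polarising the relation $\lt_x^2=\lt^*_{c(x)}$ and setting the second variable equal to $e$ gives $\lt_x\lt_e+\lt_e\lt_x=\lt^*_{(\I+\tau)\sigma(x)}$; rewriting the left‑hand side as $\lt^*_{\sigma(x)}+\tau\lt^*_{\sigma(x)}\tau$ and comparing yields $\tau\lt^*_a\tau=\lt^*_{\tau(a)}$ for all $a$, that is, $\tau$ is an involutive automorphism of $(A,*)$. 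The same identity shows $(\I+\tau)(A)\subseteq\N_l(A,*)$, so the fixed space $A_+=\ker(\tau-\I)$ is contained in $\N_l(A,*)$.

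The main obstacle is to prove that $(A,*)$ is associative. Since $\tau$ is an automorphism, $\lt_x=\lt^*_{\sigma(x)}\tau$ gives $\lt_x^2=\lt^*_{\sigma(x)}\lt^*_{\tau\sigma(x)}$, so the relation $\lt_x^2=\lt^*_{c(x)}$ forces the associator identity $[a,\tau(a),y]_*=0$ for all $a,y$, where $a=\sigma(x)$ ranges over $A$. Writing $A=A_+\oplus A_-$ for the eigenspace decomposition of $\tau$ and using $A_+\subseteq\N_l(A,*)$, I would deduce successively that $[m,m,y]_*=0$ for $m\in A_-$, then $[m,p,y]_*=0$ for $m\in A_-,\,p\in A_+$, and hence that $[a,a,y]_*=0$ for every $a\in A$. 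Thus $(A,*)$ is left alternative, and therefore alternative \cite[p.~345]{zsss}, so by \cite{zorn31} it is a Hurwitz algebra $\R,\C,\H$ or $\O$. Finally $\O$ is excluded: an involutive automorphism of $\O$ fixes a subspace of dimension $4$ or $8$, whereas $A_+\subseteq\N_l(\O)=\R1_\O$ has dimension at most $1$. Hence $(A,*)\in\{\R,\C,\H\}$ is associative.

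It remains to identify $\sigma$ and $\tau$. The relation $\sigma\lt^*_{c(x)}\sigma\inv=\lt^*_{c(x)}$ shows that $\sigma$ commutes with $\lt^*_w$ for every $w$ in $W=\spann\{\,a*\tau(a)\mid a\in A\,\}$. If $W=A$, then, $(A,*)$ being associative, $\sigma$ lies in the commutant of the left regular representation, i.e.\ $\sigma=\rt^*_v$ for some $v$; since $\sigma(e)=e$ this forces $v=e$ and $\sigma=\I$, whence $xy=x*\tau(y)$ with $\tau$ an involutive automorphism of $(A,*)$, which is case~(1). A short inspection of $\R,\C$ and $\H$ shows that $W=A$ in every case except $(A,*)\cong\C$ with $\tau$ the conjugation, for which $a*\tau(a)=n(a)e$ and $W=\R e$. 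In that remaining case $\sigma=\rt_e$ may be any linear bijection fixing the unit, giving $xy=\sigma(x)*\bar{y}$, which is case~(2). The statement for type RC2 then follows by applying the result to the opposite algebra $A\op$, which is of type LC2 and interchanges the roles of left and right multiplication.
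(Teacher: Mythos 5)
Your proof is correct, but it reaches the key structural facts by a genuinely different route from the paper. The paper's proof opens with a short trick: substituting $x\backslash(x\backslash x)$ for $y$ in $x(x(yz))=(x(xy))z$ shows that $A$ has inversion on the left, whereupon Corollary~\ref{real} delivers at once that $(A,*)$ is alternative and that $\lt_e$ is an involutive automorphism. You never invoke inversion on the left; instead you extract everything directly from the autotopy $(\lt_x^2,\lt_x^2,\I)\in\Atp(A)$ -- obtaining $\tau^2=\I$ and $\tau\in\Aut(A,*)$ by evaluating and polarising the relation $\lt_x^2=\lt^*_{c(x)}$, and left alternativity from $[a,\tau(a),y]_*=0$ together with $\ker(\tau-\I)\subseteq\N_l(A,*)$ and the eigenspace decomposition of $\tau$ (I checked these computations; they hold, e.g.\ $\sigma(c(x))=c(x)$ follows since $\sigma$ fixes $e$ and commutes with $\lt_x^2$). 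The exclusion of the octonions also differs: the paper shows the associator identity forces $\lt_e=\I$ in dimension $8$ and then derives associativity, a contradiction, whereas you compare $\dim\ker(\tau-\I)\in\{4,8\}$ with $\N_l(\O)=\R 1_\O$. The closing step -- that $\rt_e$ commutes with $\lt^*_w$ for $w$ in the span of $\{a*\tau(a)\}$, which is all of $A$ except for $\C$ with conjugation, hence $\rt_e=\I$ -- is essentially identical to the paper's. Your version is longer and more computational but self-contained, avoiding Hua's identity and the machinery of Section~\ref{sec:inversion}; the paper's is shorter because it recycles Corollary~\ref{real}. (Like the paper, you leave the easy converse direction of the ``if and only if'' unverified, which is acceptable here.)
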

\begin{proof}
Substituting $x\backslash (x\backslash x)$ in $x(x(yz)) = (x(xy))z$ for $y$ we get that $A$ has inversion on the left. Thus, given an idempotent $e$, the product $x*y = (x/e)(e\backslash y)$ defines an alternative algebra, $\alpha = \rt_e$ fixes the unit element of $(A,*)$ and $\sigma = \lt_e$ is an involutive automorphism of $(A,*)$. The identity $x(x(yz)) = (x(xy))z$ is equivalent to
\begin{equation}
\label{eq:LC2aux1}
  x*(\sigma(x)*(\alpha(y)*z)) = \alpha(x*(\sigma(x)*y))*z.
\end{equation}
Setting $y = e$ in (\ref{eq:LC2aux1}) we get that $x*(\sigma(x)*z) = (x*\sigma(x))*z$
an with $z=e$ this means that $\alpha(x*\sigma(x)) = x * \sigma(x)$. 

If $\dim A = 8$ then the latter equality implies that $\sigma(x) \in \R x$ for all 
$x \in \Im(A,*)$. Thus $\sigma$ is either the identity on $A$ or the standard
involution. Since $\sigma$ is an automorphism we get that $\sigma = \I_A$. From
Equation~(\ref{eq:LC2aux1}) we get that $x*(x*(\alpha(y)*z)) = \alpha(x*(x*y))*z$. Using
the alternative law and evaluating at $z = e$ we get $x^{*2}*\alpha(y) = \alpha(x^{*2}*y)$
and $x^{*2}*(\alpha(y)*z) = (x^{*2}*\alpha(y))*z$. Since any element of $(A,*)$ can be
written as $x^{*2}$ for some adequate $x$, it follows that $(A,*)$ is associative, which
contradicts that $\dim A = 8$. 
Hence $\dim A\le 4$.

Now, as $(A,*)$ is associative, (\ref{eq:LC2aux1}) is equivalent to 
$x*\sigma(x)*\alpha(y) = \alpha(x*\sigma(x)*y)$. In case that $\dim A = 4$ then 
$\sigma(x) = a*x*a^{-1}$ for certain $a \in A$. The set $\{x*\sigma(x) \mid x \in A\}$
equals $\{x*a*x*a \mid x \in A\}*(a^{-1})^{*2} = A*(a^{-1})^{*2} = A$. 
Hence, $x*\alpha(y) = \alpha(x*y)$ for any $x,y \in A$. Substituting $e$ for $y$ in this
latter equation we get that $\alpha = \rt^*_{\alpha(e)} = \rt^*_e = \I_A$.

We are left with the case in which $\dim A = 2$. If $\sigma = \I_A$ then we can proceed as before to get that $\alpha = \I_A$. Otherwise $\sigma$ is the complex conjugation and the statement follows.
\end{proof}

\begin{prop}
  A  real division algebra $(A,xy)$ is of type LC4 or LG1 (resp.\ RC4 or RG1) if and only if there exist an associative product $x*y$ on $A$ and an involutive automorphism $\sigma$ of $(A,*)$ such that $xy = x*\sigma(y)$ (resp.\ $xy = \sigma(x)*y$).
\end{prop}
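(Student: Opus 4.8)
The plan is to prove the "if" direction by direct substitution and the "only if" direction by first putting $A$ in the normal form of Corollary~\ref{real} and then excluding the octonion case. Throughout I would treat the left-hand identities LC4/LG1 and recover the right-hand ones RC4/RG1 by passing to the opposite algebra: the opposite of an algebra of type LC4 (resp.\ LG1) is of type RC4 (resp.\ RG1), and a product $xy=x*\sigma(y)$ becomes $xy=\sigma(x)*y$ for the opposite (again associative) product, with $\sigma$ still an involutive automorphism. Moreover, since Figure~\ref{Fg:QTree} records the inclusion $\mathrm{LG1}\subseteq\mathrm{LC4}$, for the "only if" direction it suffices to treat type LC4, as every algebra of type LG1 is then automatically of type LC4.

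For the "if" direction, suppose $xy=x*\sigma(y)$ with $(A,*)$ associative and $\sigma$ an involutive automorphism of $(A,*)$. Using $\sigma^2=\I_A$, multiplicativity of $\sigma$, and associativity of $*$, a short computation rewrites both $x(y(yz))$ and $(x(yy))z$ as $x*\sigma(y)*y*\sigma(z)$, and rewrites both $x(y(zx))$ and $(x(yz))x$ as $x*\sigma(y)*z*\sigma(x)$. Hence $A$ satisfies LC4 and LG1 simultaneously, which is more than enough.

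For the "only if" direction, assume $A$ is a real division algebra of type LC4. By \cite{PV05} (recorded by the superscript $R$ in Figure~\ref{Fg:QTree}) the quasigroup $A\setminus\{0\}$ is a right loop, so $A$ has a right unit $e$ with $\rt_e=\I_A$ and $e$ idempotent. On operators the identity reads $\lt_x\lt_y^2=\lt_{x(yy)}$; since $yy\neq0$ for $y\neq0$, substituting $x=y/(yy)$ gives $x(yy)=y$, so $\lt_x\lt_y^2=\lt_y$ and, cancelling the invertible $\lt_y$, $\lt_y\inv=\lt_{y/(yy)}\in\lt_A$. Thus $A$ has inversion on the left, and Corollary~\ref{real} applied with the idempotent $e$ yields $xy=\alpha(x)*\sigma(y)$ with $(A,*)$ alternative, $\alpha=\rt_e=\I_A$, and $\sigma=\lt_e\in\Aut(A,*)$ with $\sigma^2=\I_A$. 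Hence $xy=x*\sigma(y)$, and the whole problem reduces to showing that $(A,*)$ is associative, i.e.\ that $(A,*)\not\cong\O$.

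This final reduction is the main obstacle. Substituting $xy=x*\sigma(y)$ into LC4 and using that $\sigma$ is an involutive automorphism would reduce the identity to
$$x*\bigl(\sigma(y)*(y*\sigma(z))\bigr)=\bigl(x*(\sigma(y)*y)\bigr)*\sigma(z);$$
writing $a=\sigma(y)$ and $w=\sigma(z)$ and setting $x=e$ gives
$$a*(\sigma(a)*w)=(a*\sigma(a))*w,$$
that is $[a,\sigma(a),w]=0$ for all $a,w$, where $[\,\cdot,\cdot,\cdot\,]$ denotes the associator of $(A,*)$. Suppose $(A,*)\cong\O$. Decomposing $a=a_++a_-$ along the eigenspaces $A_\pm=\ker(\sigma\mp\I_A)$ and using that the associator of an alternative algebra is alternating, this condition becomes $[A_+,A_-,A]=0$. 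If $\sigma\neq\I_A$ then $\dim A_+=\dim A_-=4$ and $A_+$ is a quaternion subalgebra, so in a Cayley--Dickson basis $\O=A_+\oplus A_+\ell$ a direct computation (for instance $[\bi,\ell,\bj]\neq0$) shows $[A_+,A_-,A]\neq0$, a contradiction. If $\sigma=\I_A$ then $A=(A,*)\cong\O$ and, by left alternativity, LC4 collapses to $[x,y*y,z]=0$ for all $x,y,z$, forcing every square into the middle nucleus $\N_m(\O)=\R1_\O$; this fails since $y*y\notin\R1_\O$ for generic $y$. Either way $(A,*)\not\cong\O$, so $(A,*)$ is associative, which completes the argument.
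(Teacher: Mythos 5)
Your proposal is correct, but it takes a genuinely different route from the paper. The paper's proof is a two-line reduction: it reads off the inclusion $\mathrm{LG1}\subseteq\mathrm{LC4}\subseteq\mathrm{LC2}$ from Figure~\ref{Fg:QTree}, invokes Proposition~\ref{prop:LC2} to get the two possible normal forms, and uses the right unity (the superscript $R$, from \cite{PV05}) to discard the second form; the ``if'' direction is left as an easy check. You bypass Proposition~\ref{prop:LC2} entirely: you extract inversion on the left directly from the operator form $\lt_x\lt_y^2=\lt_{x(yy)}$ of the LC4 identity, feed the right unit into Corollary~\ref{real} to land on $xy=x*\sigma(y)$ over an alternative $(A,*)$, and then exclude the octonions by hand via the associator condition $[a,\sigma(a),w]=0$, which after the eigenspace decomposition $a=a_++a_-$ and the alternating property of the associator becomes $[A_+,A_-,A]=0$ and visibly fails in any Cayley--Dickson presentation $\O=A_+\oplus A_+\ell$ (with the separate, also correct, treatment of $\sigma=\I_A$ via $y^{*2}\in\N_m(\O)=\R 1_\O$). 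Your argument is longer but self-contained modulo Corollary~\ref{real} and the right-loop fact from \cite{PV05}, and it exhibits a concrete structural obstruction in $\O$ rather than inheriting the exclusion from the LC2 analysis; the paper's argument buys brevity at the cost of depending on the full strength of Proposition~\ref{prop:LC2}. Your explicit verification of the ``if'' direction for both LC4 and LG1, and the reduction of LG1 to LC4 via the inclusion in Figure~\ref{Fg:QTree}, match what the paper leaves implicit.
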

\begin{proof}
Since the variety LC4 is contained inside the variety LC2, the multiplication in $A$
can be written according to either (1) or (2) in Proposition~\ref{prop:LC2}. 
Existence of right unity in $A$ excludes the second possibility, and it is easy to check
that the product $xy=x*\sigma(y)$ of Proposition~\ref{prop:LC2}(1) satisfies the defining
identity $x(y(yz))=(x(yy))z$.
\end{proof}

%%%%%%%%%%%%%%%%%%%%%%%%%%%%%%%%%%%%%%%%%%%%%
\subsection{Balanced quasigroup identities} \label{sec:multilinear}
%%%%%%%%%%%%%%%%%%%%%%%%%%%%%%%%%%%%%%%%%%%%%

In many cases Corollary \ref{cor:PI} can be applied to  real division algebras satisfying
a balanced quasigroup identity to show that they are isotopes of Hurwitz
algebras. However, it turns out that a direct approach produces more precise results in
this case. Belousov's theorem about balanced identities of quasigroups also applies
here. However Belousov's paper does not completely cover all the cases that we are
interested in.

We shall write $p\stackrel{A}{=}q$ to indicate that $p=q$
holds in $A$, \ie, $p(a_1,\dots,a_n) = q(a_1,\dots,a_n)$ for all $a_1,\dots,a_n\in A$.
The \emph{support} of $p(x_1,\ldots,x_n)$ is the set $\supp(p)=\{x_1,\ldots,x_n\}$ of
variables occuring in $p$.

\begin{lma}
\label{lem:xy-yx}
Let $(A,xy)$ be a division algebra, $e$ a non-zero idempotent in $A$ and $x*y = (x/e)(e\backslash y)$. If there exist $\alpha, \beta \in \gl(A)$ such that $\alpha(e) = \beta(e) = e$ and $xy = \alpha(y)\beta(x)$ for any $x,y \in A$ then $(A,*)$ is commutative.
\end{lma}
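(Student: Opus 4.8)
The plan is to rewrite everything in terms of the left and right multiplication operators $\lt_e$, $\rt_e$ of the original product, and then read off commutativity directly. First I would record that, since $x/e=\rt_e\inv(x)$ and $e\backslash y=\lt_e\inv(y)$, the new product is $x*y=\rt_e\inv(x)\,\lt_e\inv(y)$, where juxtaposition denotes the original product on $A$.

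Next I would extract $\a$ and $\b$ from the idempotent condition $\a(e)=\b(e)=e$. Evaluating $xy=\a(y)\b(x)$ at $x=e$ gives $ey=\a(y)\b(e)=\a(y)e$, hence $\lt_e=\rt_e\a$, so $\a=\rt_e\inv\lt_e$. Symmetrically, setting $y=e$ gives $xe=\a(e)\b(x)=e\,\b(x)$, whence $\rt_e=\lt_e\b$ and $\b=\lt_e\inv\rt_e$. In particular $\a\lt_e\inv=\rt_e\inv$ and $\b\rt_e\inv=\lt_e\inv$.

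Then I would substitute into the formula for $*$. Applying the hypothesis $ab=\a(b)\b(a)$ with $a=\rt_e\inv(x)$ and $b=\lt_e\inv(y)$ yields
$$ x*y=\rt_e\inv(x)\,\lt_e\inv(y)=\a(\lt_e\inv(y))\,\b(\rt_e\inv(x))=\rt_e\inv(y)\,\lt_e\inv(x), $$
using the two simplifications above. The right-hand side is precisely $(y/e)(e\backslash x)=y*x$, so $(A,*)$ is commutative.

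There is no serious obstacle here: once $\a$ and $\b$ have been expressed through $\lt_e$ and $\rt_e$ via the idempotent condition, commutativity drops out of a one-line computation. The only points requiring care are the bookkeeping with the quasigroup operations $/$ and $\backslash$, and the final observation that the simplified product $\rt_e\inv(y)\,\lt_e\inv(x)$ is \emph{exactly} the definition of $y*x$, rather than merely something resembling it.
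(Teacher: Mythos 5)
Your proof is correct and follows essentially the same route as the paper: both evaluate the hypothesis at $x=e$ and $y=e$ to identify $\a=\rt_e\inv\lt_e$ and $\b=\lt_e\inv\rt_e$, and then substitute back to obtain $(x/e)(e\backslash y)=(y/e)(e\backslash x)$. The only difference is cosmetic — you phrase the computation with multiplication operators where the paper uses the quasigroup operations $/$ and $\backslash$ directly.
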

\begin{proof}
With $x = e$ (resp.\ $y = e$) we obtain that $\alpha(y) = (ex)/e$ (resp.\ $\beta(x) = e \backslash (xe)$). Hence $(y/e) (e\backslash x) = (x/e)(e \backslash y)$, \ie, $(A,*)$ is commutative.
\end{proof}

\begin{prop}
\label{prop:M}
Let $(A,xy)$ be a division algebra with a non-zero idempotent $e$ and $x*y = (x/e)(e\backslash y)$. If $(A,xy)$ satisfies a balanced quasigroup identity then at least one of the following  statements holds for all $x,y,z \in A$:
\begin{itemize}
\item[(M0)] $(A,*)$ is commutative,
\item[(M1)] $(xy)z = \alpha_z(x) \beta(y)$,
\item[(M2)] $(xy)z = \alpha_z(y) \beta(x)$,
\item[(M3)] $(xy)z = \alpha(x) \beta_z(y)$ or
\item[(M4)] $(xy)z = \alpha(y) \beta_z(x)$
\end{itemize}
for some $\alpha,\beta \in \gl(A)$ and bilinear maps 
$A\times A\to \gl(A),\: (z,x) \mapsto \alpha_z(x), \beta_z(x)$ with 
$\alpha(e) = \beta(e) = \alpha_e(e) = \beta_e(e) = e$.
\end{prop}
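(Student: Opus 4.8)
The plan is to reformulate the four factorised alternatives as autotopy statements and then to extract the relevant autotopy directly from the combinatorial shape of the balanced identity. First I would record the structure of $(A,*)$: since $e$ is idempotent one has $e/e=e\backslash e=e$, so $x*y=(x/e)(e\backslash y)=\rt_e\inv(x)\,\lt_e\inv(y)$ is a loop with two-sided unit $e$, while conversely $xy=\rt_e(x)*\lt_e(y)$ exhibits the given product as a principal isotope of this loop. Writing $R=\rt_e$, $L=\lt_e$ (both fixing $e$), the target identities read, after applying $\rt_z$, as autotopy relations: $(\rt_z,\alpha_z,\beta)\in\Atp(A)$ for (M1), $(\rt_z,\alpha,\beta_z)\in\Atp(A)$ for (M3), and the two versions with the roles of $x,y$ interchanged for (M2) and (M4); the alternative (M0) is the degenerate case detected by Lemma~\ref{lem:xy-yx}. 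Thus the proposition asserts that a balanced identity always forces the bilinear map $(x,y)\mapsto(xy)z$ to factor through the multiplication of $A$, uniformly in $z$, in one of four ways, or else forces $(A,*)$ to be commutative. Bilinearity of all the extracted maps is automatic, since the multiplication and the translations are linear; the normalisation $\alpha(e)=\beta(e)=\alpha_e(e)=\beta_e(e)=e$ is arranged at the end by composing the maps with suitable powers of $R,L$ and evaluating at $e$, using $e\cdot e=e$.

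The heart of the argument is a reduction of the identity, resting on two elementary facts about balanced words: (a) since each variable occurs exactly once, fixing all but one variable makes any subword a linear bijection in the remaining variable, and hence any subword, as its variables range over $A\minus\{0\}$, is surjective onto $A$ (a division algebra has no zero divisors, so every intermediate translation is by a non-zero element and is invertible); and (b) the quasigroup divisions $\backslash,/$ allow one to cancel a matching outermost factor. I would first use (a)--(b) for a \emph{merging} step: whenever a set of variables occurs as one and the same subtree on both sides of $p=q$, replace that subtree by a single fresh variable; the resulting shorter identity is a consequence of the original, is again balanced, and so any of (M0)--(M4) it implies is already implied by the original. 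After exhausting such merges one passes to the top-level products $p=P_1P_2$, $q=Q_1Q_2$ and analyses how the variables distribute among the four blocks $S_{P_1}\cap S_{Q_1}$, $S_{P_1}\cap S_{Q_2}$, $S_{P_2}\cap S_{Q_1}$, $S_{P_2}\cap S_{Q_2}$; in the non-degenerate configurations one substitutes a single generic element of $A$ for each block-product, reducing to a balanced identity in at most three super-variables, while the degenerate (empty-block) configurations are precisely those yielding an associativity-type relation or a reversal at the top.

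It then remains to treat the finitely many balanced identities in three variables $x,y,z$, up to swapping the two sides and passing to $A\op$ (which interchanges the factorised forms pairwise). For each one I would read off the conclusion directly: an identity whose two top-level splits keep $z$ grouped with $x$ (respectively $y$) on a common side yields (M1) or (M3) (respectively (M2) or (M4)) after applying $\rt_z$ and invoking fact (a) to see that the complementary factor depends on a single variable only; an identity forcing the top two arguments to swap yields, through Lemma~\ref{lem:xy-yx}, the commutative alternative (M0). The required normalisations at $e$ are verified by the $e$-evaluations indicated above.

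The main obstacle is the combinatorial reduction of the second paragraph. The four intersection blocks need not occur contiguously inside $P_1,P_2,Q_1,Q_2$, so the merging and super-variable substitutions cannot be applied blindly: one must show that cancellation via $\backslash,/$ removes the interleaving obstructions, and one must enumerate the degenerate configurations with care, since it is exactly there that the dichotomy between the commutative alternative (M0) and the four factorised alternatives (M1)--(M4) is forced. Checking that every reduced three-variable identity lands in precisely one of the five listed forms, with the correct normalisation at $e$, is the most laborious part of the argument.
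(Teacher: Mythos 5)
Your overall strategy --- reduce the balanced identity combinatorially to a finite list of three-variable identities and then check each one --- is not the paper's, and more importantly its central step is left open and, as formulated, does not work. The ``super-variable'' substitution in your second paragraph is not defined: the four blocks $\supp(P_1)\cap \supp(Q_1),\dots$ are merely sets of variables and in general do not form subwords of either side (you note this interleaving problem yourself), so there is nothing to substitute a single element \emph{for}. Likewise the ``merging'' step only removes literally identical subtrees and does nothing for, say, $(x(yz))w=((xy)z)w$, where the top-level supports coincide but no common subtree exists. Since you explicitly defer ``the most laborious part'' --- showing that cancellation removes the interleaving and that every reduced identity lands in one of the five forms --- the proposal is incomplete exactly where the content of the proposition lies.

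The paper avoids all of this with two devices you do not use. First, it takes a balanced identity of \emph{minimal} degree satisfied by $A$; the configuration $\supp(p_1)=\supp(q_1)$ is then impossible outright, because evaluating the variables of $\supp(p_2)$ at $e$ (and then those of $\supp(p_1)$) splits $p=q$ into two shorter balanced identities, at least one non-trivial. Second, in the remaining configurations it never passes to a genuine three-variable quasigroup identity: it simply evaluates \emph{all} variables other than three well-chosen ones ($x\in\supp(p_{11})$ and $y\in\supp(p_{12})$ lying in different $\supp(q_i)$'s, and $z\in\supp(p_2)$) at the idempotent $e$. Since $ee=e$, every subword then collapses to a single linear map fixing $e$, and the identity becomes literally of the form $(\gamma_1(x)\gamma_2(y))\gamma_3(z)=\delta_z(x)\epsilon(y)$ or one of its three variants, which after reparametrisation is (M1)--(M4); the crossed case $\supp(p_1)\subseteq\supp(q_2)$ gives $xy=\alpha(y)\beta(x)$ and hence (M0) via Lemma~\ref{lem:xy-yx}, as you correctly anticipated. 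If you want to salvage your outline, replace the merging/super-variable machinery by minimality of the degree together with evaluation at $e$.
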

\begin{proof}
Let $p \stackrel{A}{=} q$ be a balanced quasigroup identity satisfied by $A$, of
minimal degree $n$. Assume that $(A,*)$ is not commutative; this implies $n\geq3$.
Write $p = p_1p_2$ and $q = q_1 q_2$, where each $p_1,p_2,q_1,q_2$ each have degree at
least one.
Suppose that $\supp(p_1)=\supp(q_1)$. Then $\supp(p_2)=\supp(q_2)$ and, since $A$ is a
division algebra, evaluation at $e$ of the variables in $\supp(p_2)$ gives
$p_1\stackrel{A}{=}q_1$. Similarly, $p_2\stackrel{A}{=}q_2$. At least of the identities
$p_1=q_1$ and $p_2=q_2$ is non-trivial, which contradicts the minimality of the degree $n$
of $p$ and $q$.

Suppose instead that $\supp(p_1) \subseteq \supp(q_2)$.
Choose variables $x\in\supp(p_1)\subseteq\supp(q_2)$ and
$y\in\supp(q_1)\subseteq\supp(p_2)$. Now there exist maps $\alpha,\beta \in \gl(A)$ with
$\alpha(e) = e = \beta(e)$ (obtained by evaluating each $z\in\supp(p)\setminus\{x,y\}$ at
$e$ in $p(x_1,\ldots,x_n)=q(x_1,\ldots,x_n)$ and rewriting the
expression) such that $xy = \alpha(y) \beta(x)$ for any $x,y \in A$. Lemma \ref{lem:xy-yx}
implies that (M0) holds in this case. Therefore, after possibly interchanging the roles of
$p$ and $q$, we may assume that $\supp(p_1)\not \subseteq \supp(q_1)$ and
$\supp(p_1) \not\subseteq \supp(q_2)$. In particular the cardinality of $\supp(p_1)$ is
at least 2. Writing $p_1 = p_{11}p_{12}$, we have
\begin{equation}
\label{eq:many_factors}
  (p_{11}p_{12}) p_2 \stackrel{A}{=}  q_1 q_2 \,.
\end{equation}
Our assumption about $\supp(p_1)$ implies that there is one variable $x$ in
$\supp(p_{11})$ and another variable $y$ in $\supp(p_{12})$ such that one of them belongs
to $\supp(q_1)$ and the other belongs to $\supp(q_2)$. We select $x, y$ and a third
variable $z \in \supp(p_2)$. Evaluation of all variables different from $x, y$ and
$z$ at $e$ in (\ref{eq:many_factors}) now easily leads to one of the cases (M1), (M2),
(M3) and (M4), depending on the identity $p=q$.
\end{proof}

We will say that a division algebra $A$ with a non-zero idempotent $e$ is of \emph{type} M1, M2, M3 or M4 in case that $A$ satisfies the corresponding statement in the previous proposition.

\begin{rmk}
Observe that division algebras satisfying a quasigroup identity where at least three of
the variables appear with degree one might also be of one of the types M1, M2, M3 or
M4. For instance, the identity $(x(wy))(zw) = (xw)((yz)w)$ can be written as
$(xy)z = (xw)(((w\backslash y)(z/w))w)$. With $w = e$ we obtain that any division algebra
satisfying this identity and having a non-zero idempotent $e$ is of type M3.
\end{rmk}

\begin{prop}
\label{prop:mqi_classification}
Let $(A,xy)$ be a division algebra, $e$ a non-zero idempotent in $A$ and $x*y = (x/e)(e\backslash y)$:
\begin{enumerate}
\item If $A$ is of type M1 or M2 then $(A,*)$ is commutative.
\item If $A$ is of type M3 then $(A,*)$ is associative and $\rt_e$ is an automorphism of $(A,*)$.
\item If $A$ is of type M4 the $(A,*)$ is associative and $\rt_e$ is an anti-automorphism of $(A,*)$.
\end{enumerate}
\end{prop}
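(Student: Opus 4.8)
The plan is to transport everything to the unital isotope $(A,*)$ and to exploit that the idempotent $e$ becomes its two-sided identity. Writing $\sigma=\rt_e$ and $\tau=\lt_e$ (both in $\gl(A)$ since $A$ is a division algebra and $e\neq0$), the definition $x*y=(x/e)(e\backslash y)=\rt_e\inv(x)\,\lt_e\inv(y)$ inverts to $xy=\sigma(x)*\tau(y)$, and one checks directly that $e*y=y=y*e$, so $e$ is the identity of $(A,*)$, with $\sigma(e)=\tau(e)=e$. Each of the four type conditions is a factorisation of the trilinear map $(x,y,z)\mapsto(xy)z$, so the idea is to rewrite it as a single three-variable identity purely in terms of $*$ and $\sigma,\tau$, and then to read off the conclusion by specialising variables to $e$.

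I would carry this out first for type M3, which serves as a template. Substituting $x=y=e$, then $y=z=e$, then $x=e$ into $(xy)z=\alpha(x)\beta_z(y)$ and using $\sigma(e)=\tau(e)=e$ pins down the auxiliary data: one finds $\alpha=\sigma$, $\beta_z(e)=z$, and $\tau\beta_z(y)=\sigma\tau(y)*\tau(z)$. Feeding these back and writing $(xy)z=\sigma\bigl(\sigma(x)*\tau(y)\bigr)*\tau(z)$, the substitution $a=\sigma(x)$, $b=\tau(y)$, $c=\tau(z)$ collapses the condition to the clean identity
\[
  \sigma(a*b)*c=\sigma(a)*(\sigma(b)*c)\qquad(a,b,c\in A).
\]
Setting $c=e$ yields $\sigma(a*b)=\sigma(a)*\sigma(b)$, so $\sigma=\rt_e$ is an automorphism of $(A,*)$; re-substituting this into the displayed identity and using that $\sigma$ is bijective gives $(u*v)*c=u*(v*c)$ for all $u,v,c$, i.e.\ associativity. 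This settles (2).

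Type M4 runs identically, except that the specialisations give $\alpha=\tau$ and the reduced identity becomes $\sigma(a*b)*c=\sigma(b)*(\sigma(a)*c)$; now $c=e$ produces $\sigma(a*b)=\sigma(b)*\sigma(a)$, so $\sigma$ is an anti-automorphism, and re-substitution again gives associativity, proving (3). For types M1 and M2 the same bookkeeping leads instead to
\[
  \sigma(a*b)*c=(\sigma(a)*c)*\sigma(b)\quad\text{resp.}\quad\sigma(a*b)*c=(\sigma(b)*c)*\sigma(a);
\]
in both cases $c=e$ shows that $\sigma$ is an automorphism (resp.\ anti-automorphism), and re-substitution reduces the identity to $(u*v)*c=(u*c)*v$, which says that the right multiplications of $(A,*)$ all commute. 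Putting $u=e$ then gives $v*c=c*v$, the commutativity claimed in (1).

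The only delicate part is the bookkeeping that identifies the auxiliary maps $\alpha,\beta,\alpha_z,\beta_z$ and verifies that each type collapses to exactly one of these three-variable normal forms; once the correct form is reached, the conclusions follow mechanically from the substitution $c=e$ (extracting the (anti)automorphism) and, for M1 and M2, the further substitution $u=e$. I expect the main obstacle to be purely organisational, namely keeping track of the compositions with $\sigma$, $\tau$ and their inverses when translating between the two products, rather than conceptual.
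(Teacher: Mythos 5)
Your argument is correct: I checked the reductions for all four types, and each displayed identity does follow from the normalisations $\alpha(e)=\beta(e)=\alpha_e(e)=\beta_e(e)=e$ of Proposition~\ref{prop:M} together with $xy=\sigma(x)*\tau(y)$. The route differs from the paper's in two places. For M3/M4 the paper derives associativity by noting that $(\rt_z\rt_e^{-1},\I_A,\beta_z\beta_e^{-1})\in\Atp(A)$, transporting this to an autotopy of the unital isotope $(A,*)$, and concluding that every $ez$ lies in the right nucleus $\N_r(A,*)$; it then proves the (anti)automorphism property of $\rt_e$ by a separate substitution. You instead collapse each type condition to a single three-variable identity, $\sigma(a*b)*c=\sigma(a)*(\sigma(b)*c)$ for M3 and $\sigma(a*b)*c=\sigma(b)*(\sigma(a)*c)$ for M4, and read off both conclusions by setting $c=e$ and re-substituting --- more elementary, with no appeal to the autotopy formalism, and it unifies the two assertions of each part into one computation. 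For M1/M2 the paper specialises one variable to $e$ to reach a two-variable factorisation $yz=\alpha'(z)\beta'(y)$ and quotes Lemma~\ref{lem:xy-yx}, whereas your identity $\sigma(a*b)*c=(\sigma(a)*c)*\sigma(b)$ (resp.\ with $a$ and $b$ swapped on the right) yields commutativity directly via $(u*v)*c=(u*c)*v$ and $u=e$, and as a by-product shows that $\rt_e$ is an (anti)automorphism in those cases as well, which the paper does not record. The only cost of your approach is the bookkeeping you acknowledge; once the reduced identities are in place, everything is mechanical.
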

\begin{proof}
If $A$ is of type M1 then $(xy)z = \alpha_z(x) \beta(y)$. With $x=e$ we obtain that $yz =
\alpha_z(e)\beta(e\backslash y) = \alpha'(z)\beta'(y)$ for some $\alpha',\beta' \in
\gl(A)$ with $\alpha'(e) = e = \beta'(e)$. By Lemma \ref{lem:xy-yx} this implies
commutativity of $(A,*)$. If $A$ is of type M2 then $(xy)z = \alpha_z(y) \beta(x)$
which, with $y = e$,  implies that $xz = \alpha_z(e)\beta(x/e)$, which also leads to
$(A,*)$ being commutative.

Observe that
\[
 \Atp(A,*) = \{ (\varphi_1,\rt_e\varphi_2\rt^{-1}_{e}, \lt_e \varphi_3 \lt^{-1}_{e}) \mid (\varphi_1,\varphi_2,\varphi_3) \in \Atp(A)\}.
\]
If $A$ is of type M3 or M4 then $(\rt_z\rt^{-1}_e, \I_A,\beta_z\beta^{-1}_{e})\in\Atp(A)$
for all $z\in A$, implying that
$(\rt_z\rt^{-1}_e, \I_A,\lt_e\beta_z\beta^{-1}_{e}\lt_e\inv) \in \Atp(A,*)$. Since $e$ is
the unit element of $(A,*)$ and $\rt_z\rt^{-1}_e=\rt_{ez}^*$,
the condition of autotopy implies that
$\lt_e\beta_z\beta^{-1}_{e}\lt_e\inv =\rt_z\rt^{-1}_e$, so $ez $ belongs to the right
associative nucleus of $(A,*)$ for all $z$. Hence $(A,*)$ is associative.

Let us now prove the assertions about $\rt_e$. If $A$ is of type M3 then $(xy)z =
\alpha(x) \beta_z(y)$. With $x = e$ we obtain that $\beta_z(y) = e
\backslash((ey)z)$. With $y = z = e$ we get $\alpha(x) = xe$, so  $(xy)z = (xe)
(e\backslash((ey)z))$. In particular, $((x/e)(e\backslash y))z = x(e\backslash
(yz))$. Inserting $z=e$ into this identity gives $(x*y)e = (xe)*(ye)$, \ie, $\rt_e$ is
an automorphism of $(A,*)$. If $A$ is of type M4 then $(xy)z = \alpha(y)
\beta_z(x)$. With $y = e$ we obtain $\beta_z(x) = e \backslash ((xe)z)$. The same formula
with $x = z = e$ gives $\alpha(y) = ey$. Thus, $(xy) z = (ey)(e \backslash
((xe)z))$. With $z = e$ we finally obtain that
$\rt_e(x*y)=((x/e)(e\backslash y)) e = y (e\backslash(xe))=\rt_e(y)*\rt_e(x)$, that is,
$\rt_e$ is an anti-automorphism of $(A,*)$.
\end{proof}

\begin{rmk}
If $A$ is a division algebra with a non-zero idempotent $e$, satisfying a non-trivial
balanced quasigroup identity,
then the opposite algebra $A\op$ also satisfies a non-trivial balanced quasigroup
identity, and $e$ is an idempotent of $A\op$.
Hence Proposition~\ref{prop:mqi_classification} applied to the opposite algebra may
provide additional information about $\lt_e$.
\end{rmk}

Combining Proposition~\ref{prop:M} and \ref{prop:mqi_classification}, we obtain the
following result on real division algebras satisfying balanced quasigroup identities.

\begin{thm}
\label{cor:neat}
 Let $(A,xy)$ be a  real division algebra. If $(A,xy)$ satisfies a
 non-trivial balanced
  quasigroup identity then $(A,xy)$ is isotopic to $\R, \C$ or $\H$. In
 the latter case, the product $xy$ on $A$ can be obtained from the product $x*y$ of $\H$
 by $xy = \sigma(x)*\tau(y)$ with each of $\sigma$ and $\tau$ being either an automorphism
 or an anti-automorphism of $\H$.
\end{thm}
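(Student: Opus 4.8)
The plan is to combine Propositions~\ref{prop:M} and \ref{prop:mqi_classification}, passing to the principal isotope $(A,*)$ and then using an opposite-algebra argument to pin down both twisting maps in the quaternionic case.

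First I would fix a non-zero idempotent $e\in A$ (which exists by Segre's theorem, as recorded in the remark after Corollary~\ref{real}) and set $x*y=(x/e)(e\backslash y)=\rt_e^{-1}(x)\cdot\lt_e^{-1}(y)$, where $\cdot$ denotes the product of $A$. Then $e$ is a two-sided unit of $(A,*)$, the algebra $A$ is isotopic to $(A,*)$, and a direct check gives $xy=\rt_e(x)*\lt_e(y)$; so writing $\sigma=\rt_e$ and $\tau=\lt_e$ we have $xy=\sigma(x)*\tau(y)$. Since $A$ satisfies a non-trivial balanced quasigroup identity, Proposition~\ref{prop:M} applies and $A$ is of one of the types M0, M1, M2, M3, M4.

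Next I would split into cases. If $A$ is of type M0, M1 or M2, then $(A,*)$ is commutative, by Proposition~\ref{prop:mqi_classification}(1) for M1 and M2 and by definition for M0. A commutative, unital, finite-dimensional real division algebra has dimension at most $2$ (Hopf), and is therefore isomorphic to $\R$ or $\C$; hence $A$ is isotopic to $\R$ or $\C$ and we are done in these cases. If instead $A$ is of type M3 or M4, Proposition~\ref{prop:mqi_classification}(2),(3) shows that $(A,*)$ is associative, so by Frobenius's theorem $(A,*)\cong\R$, $\C$ or $\H$, whence $A$ is isotopic to $\R$, $\C$ or $\H$; moreover $\sigma=\rt_e$ is an automorphism (type M3) resp.\ anti-automorphism (type M4) of $(A,*)$.

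It remains, in the case $(A,*)\cong\H$, to control $\tau=\lt_e$ as well, and here I would pass to the opposite algebra. By the remark preceding the theorem, $A\op$ again satisfies a non-trivial balanced quasigroup identity and has $e$ as an idempotent, and one checks that its associated isotope is $(A,*)\op$ and that $\rt_e^{\op}=\lt_e$. Since $(A,*)\op\cong\H$ is non-commutative, Proposition~\ref{prop:M} forces $A\op$ to be of type M3 or M4, the commutative types M0--M2 being excluded, so Proposition~\ref{prop:mqi_classification}(2),(3) makes $\rt_e^{\op}=\lt_e$ an automorphism or anti-automorphism of $(A,*)\op$; since an (anti-)automorphism of $(A,*)\op$ is an (anti-)automorphism of $(A,*)$, the map $\tau=\lt_e$ is an automorphism or anti-automorphism of $\H$. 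This yields $xy=\sigma(x)*\tau(y)$ with $\sigma,\tau$ each an automorphism or anti-automorphism of $\H$, as claimed. The only genuinely delicate point is the bookkeeping in this last step: one must verify that the principal isotope of $A\op$ is exactly $(A,*)\op$ and that the type of $A\op$ transfers correctly, so that $\rt_e$ and $\lt_e$ are constrained simultaneously; everything else reduces to the two cited propositions together with the classical classifications of commutative and of associative real division algebras.
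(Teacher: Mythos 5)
Your proposal is correct and follows essentially the same route as the paper, which obtains the theorem precisely by combining Proposition~\ref{prop:M} with Proposition~\ref{prop:mqi_classification} and the remark that $A\op$ again satisfies a balanced identity; you have merely filled in the routine details (Segre for the idempotent, Hopf for the commutative case, Frobenius for the associative case, and the bookkeeping identifying the isotope of $A\op$ with $(A,*)\op$), all of which check out.
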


\begin{exa}
  Let $(A,xy)$ be a  real division algebra that satisfies the identity
\begin{equation} \label{example30}
  ((x_1 x_2)x_3)x_4 = x_3((x_2 x_1)x_4).
\end{equation}
We can briefly explore the structure of this algebra as follows. $(A,xy)$ is of type M4 so $(A,*)$ is associative and $\rt_e$ is an anti-automorphism of $(A,*)$ where $x*y = (x/e)(e\backslash y)$ and $e$ is a non-zero idempotent of $(A,xy)$. The opposite algebra $(A,xy)\op$ if of type M3, so $\lt_e$ is an automorphism of $(A,*)$. The product on $A$ is recovered as $xy = \sigma(\bar{x})*\tau(y)$ for certain automorphisms $\sigma,\tau$ of $(A,*)$, where $x\mapsto \bar{x}$ denotes the standard involution of the Hurwitz algebra $(A,*)$.

The identity satisfied by $(A,xy)$ can be written as
$\sigma\tau(\bar{x}_3)*\sigma^3(\bar{x}_1)*\sigma^2\tau(x_2)*\tau(x_4) =
\sigma(\bar{x}_3)*\tau\sigma \tau(\bar{x}_1)*\tau\sigma^2(x_2)*\tau^2(x_4)$. This is
equivalent to $\tau = \I_A$ and $\sigma^2 = \I_A$.
In the four-dimensional case, there exists a unique conjugacy class in $\Aut(\H)$ of
automorphisms of order two, represented by the map $\s'_0$ sending $\bi$ and $\bj$ to $-\bi$
and $-\bj$ respectively while fixing $1$ and $\mathbf{k}=\bi\bj$. Now $\s_0=\s'_0\kappa$ is
the reflection in the hyperplane $\spann\{1,\bi,\bj\}$.

Hence, to summarize, there exist precisely five isomorphism classes of real
 division algebras satisfying the identity (\ref{example30}),
represented by the following algebras:
$$ \R \,, \quad \C \,, \quad \C_{\kappa,\I_\C} \,, \quad \H \,, \quad \H_{\s,\I_\H} \,.$$
\end{exa}

\begin{exa}
We define the nonassociative words
\begin{eqnarray*}
 && p_1(x_1,x_2) = x_1 x_2 \\
&& p_n(x_1,\dots, x_{2^n}) = p_{n-1}(x_1,\dots, x_{2^{n-1}})p_{n-1}(x_{2^{n-1}+1},\dots, x_{2^{n}}) \quad (n\geq 2)
\end{eqnarray*}
and study division algebras that for some $n \geq 1$ satisfy the identity
\begin{equation}
\label{eq:auto_duplicated}
 yp_n(x_1,\dots,x_{2^n}) = p_n(y,x_1,\dots,x_{2^n-1})x_{2^n}
\end{equation}
and posses non-zero idempotents. Examples of the identities under consideration are
\begin{eqnarray*}
 (n=1) && y(x_1 x_2) = (yx_1)x_2,\\
(n = 2) && y((x_1x_2)(x_3x_4)) = ((yx_1)(x_2x_3))x_4,\\
(n=3) && y(((x_1x_2)(x_3x_4))((x_5x_6)(x_7x_8))) = (((yx_1)(x_2x_3))((x_4x_5)(x_6x_7)))x_8.
\end{eqnarray*}
Let $(A,xy)$ be such a division algebra. Notice that $A\op$ also satisfies the same
identity as $A$.
Since $A$ and $A\op$ are of type M3, the algebra $(A,*)$ is associative,
$\sigma = \rt_e$ and $\tau = \lt_e$ are automorphisms of $(A,*)$ and
$xy = \sigma(x)*\tau(y)$. Evaluating
(\ref{eq:auto_duplicated}) at $x_1 = x_2 =\cdots =x_{2^n}=e$ we obtain that $ye = \rt^{n+1}_e(y)$
so $\s^n=\rt^n_e = \I_A$, and by considering $A\op$ instead of $A$ we get $\tau^n=\lt^n_e=\I_A$.
With $y = e = x_1 = x_3 = \cdots = x_{2^n}$ the identity (\ref{eq:auto_duplicated}) gives
\begin{eqnarray*}
 e((ex_2)/e) &=& ep_n(e,x_2,e,\dots,e) = p_n(e,e,x_2,e,\dots,e)e\\
 &=& (e(x_2 e))/e
\end{eqnarray*}
so $\lt_e\rt^{-1}_e\lt_e = \rt^{-1}_e\lt_e\rt_e$, \ie, $\lt_e \rt_e \lt^{-1}_e = \rt_e \lt_e \rt^{-1}_e$. The subgroup generated by $\sigma$ and $\tau$ in the group of automorphisms of $(A,*)$ is  a quotient of the group
\[
 \langle g, h \mid g^n = 1, h^n = 1, ghg^{-1} = hgh^{-1} \rangle.
\]
With $u = h$ and $v = h^{-1}g$ we obtain the presentation
\[
 \langle u,v \mid u^n = 1, v^{2^n -1} = 1, u^{-1}v u = v^2 \rangle,
\]
a semidirect product $\cc_{2^n -1} \times_\varphi \cc_n$ of two cyclic groups $\cc_{2^n-1}
= \langle v \rangle$, $\cc_n = \langle u \rangle$  of orders $2^n-1$ and $n$ respectively
and $\varphi \colon \cc_n \to \Aut(\cc_{2^n -1})$ the homomorphism of groups determined by
$\varphi(u)\colon v \mapsto v^2$. In particular, the subgroup of $\Aut(A,*)$ generated by $\sigma$ and
$\tau$ is finite.

\begin{prop}
A real division algebra $(A,xy)$ satisfies the identity (\ref{eq:auto_duplicated}) for $n \geq 1$ if and only  there exists an associative product $x*y$ on $A$ such that  $xy = \sigma(x)*\tau(y)$  with $\sigma, \tau \in  \Aut(A,x*y)$ and either
\begin{enumerate}
\item $\sigma = \tau$ and $\sigma^n = \I_A$,
   or \label{balodd}
 \item $n$ is even, and $(A,xy)$ has involutive inversion. \label{baleven}
\end{enumerate}
\end{prop}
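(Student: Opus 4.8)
The plan is to translate the identity into the associative isotope $(A,*)$ already introduced in the discussion preceding the proposition, where $xy=\sigma(x)*\tau(y)$ with $(A,*)$ associative, $\sigma=\rt_e$, $\tau=\lt_e\in\Aut(A,*)$, $\sigma^n=\tau^n=\I_A$ and $\tau\sigma\tau^{-1}=\sigma\tau\sigma^{-1}$, and where $G=\langle\sigma,\tau\rangle$ is a finite quotient of $\cc_{2^n-1}\times_\varphi\cc_n$. Expanding $p_n$ by means of $xy=\sigma(x)*\tau(y)$ and the associativity of $*$ gives
\[
p_n(x_1,\dots,x_{2^n})=w_1(x_1)*\cdots*w_{2^n}(x_{2^n}),
\]
where $w_j\in G$ is read off the path from the root to the $j$-th leaf of the balanced binary tree of depth $n$ (a left branch contributing $\sigma$, a right branch $\tau$); in particular $w_1=\sigma^n$ and $w_{2^n}=\tau^n$. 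Substituting this into both sides of (\ref{eq:auto_duplicated}), rewritten through $*$, and comparing the factor attached to each of the $2^n+1$ variables (each $w_j$ fixes the unit $e$ of $(A,*)$, so setting all but one variable equal to $e$ isolates that factor), the identity becomes equivalent to $\sigma^n=\tau^n=\I_A$ together with $\tau w_j=\sigma w_{j+1}$ for $1\le j\le 2^n-1$.

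For the \emph{if} direction I would verify these relations in each case. In case (\ref{balodd}) every branch carries the same map, so $w_j=\sigma^n=\I_A$ for all $j$ and the relations are immediate. In case (\ref{baleven}), the hypothesis of involutive inversion gives, via Proposition~\ref{prop:algebras_with_inversion} applied to the idempotent $e$, that $\sigma=\rt_e$ and $\tau=\lt_e$ satisfy $\sigma^2=\tau^2=\I_A$ and $\sigma\tau\sigma=\tau\sigma\tau$; hence $v=\sigma\tau$ has order $3$ and $\sigma v^a=v^{-a}\sigma$. Writing $\sigma=v^0\sigma$ and $\tau=v^2\sigma$, a product of $n$ such reflections telescopes (using that $n$ is even) to a power of $v$ whose exponent, because $2\equiv-1\pmod 3$, is congruent modulo $3$ to the integer whose binary digits label the path; that is, $w_j=v^{j-1}$. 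Then $\sigma w_{j+1}=\sigma v\,w_j=\sigma\sigma\tau\,w_j=\tau w_j$, and $\sigma^n=\tau^n=\I_A$ since $n$ is even, so the identity holds.

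For the \emph{only if} direction it remains to analyse $G$ as a finite subgroup of $\Aut(A,*)$, where $(A,*)$, being an associative real division algebra, is isomorphic to $\R$, $\C$ or $\H$. If $\sigma=\tau$ we are in case (\ref{balodd}). Otherwise $v=\tau^{-1}\sigma\neq e$, and since the relation $\tau\sigma\tau^{-1}=\sigma\tau\sigma^{-1}$ degenerates to $\sigma=\tau$ in any abelian group, $G$ must be non-abelian; as $\Aut(\R)$ and $\Aut(\C)$ are abelian this forces $A\cong\H$ and $G\subseteq\Aut(\H)\cong\so(\R^3)$. The conjugation relation of the preceding discussion makes $v$ conjugate to $v^2$ in $G$; since the rotation angle is a conjugacy invariant in $\so(\R^3)$, the angle $\theta$ of $v$ satisfies $\cos\theta=\cos2\theta$, and as $v\neq e$ this forces $3\theta\in2\pi\Z$, so $v$ has order $3$. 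Hence $3\mid 2^n-1$, which (again because $2\equiv-1\pmod 3$) holds exactly when $n$ is even. Moreover $v^2=v^{-1}$, so $\sigma$ and $\tau$ each conjugate $v$ to $v^{-1}$; a rotation inverting the order-$3$ rotation $v$ must reverse its axis and is therefore a half-turn, giving $\sigma^2=\tau^2=\I_A$. With the established relation now reading $\sigma\tau\sigma=\tau\sigma\tau$, Proposition~\ref{prop:algebras_with_inversion} shows that $A$ has involutive inversion, placing us in case (\ref{baleven}).

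The hard part is the rigidity argument in $\so(\R^3)$: the single fact that conjugation preserves rotation angles simultaneously forces $v$ to have order $3$ (hence $n$ even), forces $\sigma$ and $\tau$ to be half-turns, and supplies the $\mathrm{D}_6$-relation needed to invoke involutive inversion. By contrast the combinatorial identity $w_j=v^{j-1}$ underlying the \emph{if} direction is routine once the reflection calculus $\sigma v^a=v^{-a}\sigma$ is in place.
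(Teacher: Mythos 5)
Your proof is correct, but both halves diverge from the paper's argument in ways worth recording. For the \emph{only if} direction in the quaternion case, the paper invokes the Skolem--Noether theorem to write $\sigma,\tau$ as conjugations by elements $a,b\in\H$, applies Herstein's Lemma~10 of \cite{He53} to the relation $c^{-1}*d*c=d^{*2}$ (with $c=b$, $d=b^{-1}*a$) to force $d^{*3}=e$, deduces $\tau^2=\I_A$ from $c^{*2}*d=d*c^{*2}$ and linear independence of $e,c,d$, obtains $\sigma^2=\I_A$ by squaring the braid relation, and only at the very end excludes odd $n$ (since $\sigma^2=\sigma^n=\I_A$ with $n$ odd would give $\sigma=\tau$, a contradiction). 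You instead exploit $\Aut(\H)\cong\so(3)$: conjugacy invariance of the rotation angle turns $v\sim v^2$ into $v^3=\I_A$, so $3\mid 2^n-1$ yields the parity of $n$ immediately, and the axis-reversal argument gives $\sigma^2=\tau^2=\I_A$ with no division-ring machinery --- a more elementary and self-contained route, specific to $\so(3)$ but that is all that is needed, since the non-abelian case forces $A\cong\H$. For the \emph{if} direction in case~(2), the paper verifies $\tau\alpha_m=\sigma\alpha_{m+1}$ by a digit-by-digit case analysis on the binary expansion of $m$ (odd $m$; even $m$ with $b_1=0$; the case $b_0=b_1=1$ by induction on $n$), using that words of fixed parity in $\mathrm{D}_6$ take only three possible values; your closed form $w_j=v^{j-1}$, obtained from the dihedral relation $\sigma v^a=v^{-a}\sigma$ together with $2\equiv-1\pmod 3$, collapses all of this to the single line $\tau w_j=\sigma v\,w_j=\sigma w_{j+1}$ and is the cleaner computation. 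Both arguments rest on the same setup from the discussion preceding the proposition ($xy=\sigma(x)*\tau(y)$ with $(A,*)$ associative, $\sigma^n=\tau^n=\I_A$, $\tau\sigma\tau^{-1}=\sigma\tau\sigma^{-1}$, and the reduction of the identity to $\sigma^n=\tau^n=\I_A$ plus $\tau w_j=\sigma w_{j+1}$), so nothing essential is missing.
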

In particular, an algebra with involutive inversion satisfies (\ref{eq:auto_duplicated})
for any even $n$.

\begin{proof}
Assume that $(A,xy)$ is a real division algebra that satisfies (\ref{eq:auto_duplicated}) for some $n \geq 1$ and let $e$ be a nonzero idempotent. We know that $xy = \sigma(x)*\tau(y)$ with  $(A,x*y)$ an associative algebra,  $\sigma  = \rt_e$,  $\tau = \lt_e$, $\tau^n = \sigma^n = \I_A$ and $\sigma \tau \sigma^{-1} = \tau \sigma \tau^{-1}$.

If $n = 1$ or $\dim A = 1$ then $A$ is associative, and there is nothing to prove. 
If $\dim A = 2$ then  $(A,x*y) \cong \C$. In this case the
only nontrivial automorphism of $(A,x*y)$ has order two, so either $n$ is odd and $\sigma
= \tau = \I_A$ or $n$ is even and we have to include the possibility $x*y =
\bar{x}*\bar{y}$ where $x\mapsto \bar{x}$ denotes the complex conjugation. In both cases
the proposition holds.

It remains to consider the case $(A,x*y)\cong \H$. 
We know that $\s^n=\tau^n=\I_A$, so if $\s=\tau$ then \ref{balodd} holds.
So, we may assume that $\sigma \neq \tau$. By the Skolem-Noether
Theorem,  $\sigma\colon x \mapsto a*x*a^{-1}$ 
and $\tau \colon x \mapsto b*x*b^{-1}$ for certain $a,b \in A$ with
$a^{*n} = \pm e$, $b^{*n} = \pm e$ and $a*b*a^{-1} = \pm b*a*b^{-1}$. Possibly
replacing $a$ by $-a$ we may assume that $a*b*a^{-1} = b*a*b^{-1}$ also, $a^{*2n} = e$,
$b^{*2n} = e$. The elements $c = b$, $d = b^{-1}*a$ satisfy $c^{*2n} = e$, $d^{*2^{2n}-1}
= e$ and $c^{-1}*d*c = d^{*2}$. Lemma 10 in \cite{He53} shows that: i) $c*d = d*c$ or ii)
$d^{-1}*c = c*d$. 
In case i) $d = e = 1_A$, hence $a = b$ and $\sigma = \tau$, a contradiction.
In case ii) we get $d^{*2}=c\inv*d*c=d\inv$, implying $d^{*3} = e$ and thus
$c^{-1}*d*c=d^{-1}$. In particular $c^{*2}*d = d* c^{*2}$. We may assume that
$c*d \neq d*c$ (otherwise $d=e$ and $\sigma = \tau$), meaning that $e,c,d$ are linearly independent.
Therefore, $c^{*2}*d = d*c^{*2}$ implies $b^{*2}  \in \R e$, \ie, $\tau^2=\I_A$. 
Now, squaring each side of the equation $\s\tau\s\inv=\tau\s\tau\inv$ gives
  $\s^2=\I_A$, and hence $\s\tau\s=\tau\s\tau$,
\ie, $(A,xy)$ has involutive inversion. 
In case that $n$ is odd it follows that $\sigma = \tau = \I_A$, a contradiction.

Conversely, assume that $(A,x*y)$ is associative, $\sigma, \tau \in \Aut(A,x*y)$ satisfy
(1) or (2) in the statement and let us consider the product $xy = \sigma(x)*\tau(y)$. 
In terms of the product $*$, we have
$p_n(x_1,\dots,x_{2^n})=\a_1(x_1)*\cdots*\a_{2^n}(x_{2^n})$ where
$\a_m=\sigma^{1-b_{n-1}}\tau^{b_{n-1}}\cdots \sigma^{1-b_0}\tau^{b_0}$,
$m\in\{1,\ldots,2^n\}$ and $b_nb_{n-1}\cdots b_1b_0$ being the binary expansion of $m-1$,
\ie, $m-1=\sum_{i=0}^{n-1}b_i2^i$.
Equation~(\ref{eq:auto_duplicated}) is satisfied if and only if $\s^{n+1}=\s$,
$\tau^{n+1}=\tau$ and $\tau\a_m=\s\a_{m+1}$ for all $m\in\{1,\ldots,2^n-1\}$. 
With $m=\sum_{i=0}^{n-1}c_i2^i$, the last identity is equivalent to
\begin{equation}
\label{eq:case(2)}
 \tau \sigma^{1-b_{n-1}}\tau^{b_{n-1}} \cdots \sigma^{1-b_0}\tau^{b_0} = 
 \sigma \sigma^{1-c_{n-1}}\tau^{c_{n-1}}   \cdots \sigma^{1-c_0}\tau^{c_0} .
\end{equation}

For $\sigma = \tau$, Equation~(\ref{eq:case(2)}) reduces to $\s^{n+1}=\s^{n+1}$
for all $m$, so (\ref{eq:auto_duplicated}) is satisfied if and only if $\s^n=\I_A$.
Thus, it remains to consider the case with $n$ even and $(A,xy)$ having involutive
inversion, and we may assume that $\s\ne\tau$.
By Proposition~\ref{prop:identity}, the identity (\ref{eq:auto_duplicated}) holds for
$n=2$, so assume $n>2$. 

As $\s^2=\tau^2=\I_A$ and $n$ is even, $\s^n=\tau^n=\I_A$. We need to show that
(\ref{eq:case(2)}) holds for all $m\in\{1,\ldots,2^{n-1}-1\}$.
If $m$ is odd then $b_0 = 0$, $c_0 = 1$, while $b_i = c_i$ for all $i\ge1$.
The automorphism $\sigma^{1-b_{n-1}}\tau^{b_{n-1}} \cdots \sigma^{1-b_1}\tau^{b_1}$ involves
an odd number of occurrences of $\tau,\sigma$ so it must be either $\sigma, \tau$ or
$\sigma \tau \sigma$. Checking all these three cases we get that (\ref{eq:case(2)}) holds
if $m$ is odd. Let us assume that $m$ is even, so $b_0 = 1$ and $c_0 =0$. If $b_1 = 0$
then $c_1 = 1$ and $b_2 = c_2,\dots, b_{n-1} = c_{n-1}$. The automorphism
$\sigma^{1-b_{n-1}}\tau^{b_{n-1}} \cdots \sigma^{1-b_2}\tau^{b_2}$ involves an even number
of occurrences of $\sigma, \tau$ so it must be either $\I_A, \sigma \tau$ or
$\tau\sigma$. Again we can check all these three possibilities to obtain that
(\ref{eq:case(2)}) holds. 
We are left with the case of $b_0 = b_1 = 1$. Here, $c_0 = c_1 = 0$ and (\ref{eq:case(2)}) becomes
\begin{displaymath}
\tau \sigma^{1-b_{n-1}}\tau^{b_{n-1}}\cdots \sigma^{1-b_2}\tau^{b_2} = \sigma \sigma^{1-c_{n-1}}\tau^{c_{n-1}} \cdots \sigma^{1-c_2}\tau^{c_2} 
\end{displaymath}
which holds by induction on $n$.
\end{proof}
\end{exa}

%%%%%%%%%%%%%%%%%%%%%%%%%%%%%%%%%%
\section{Proof of Theorem \ref{thm:PI}} \label{sec:proof}
%%%%%%%%%%%%%%%%%%%%%%%%%%%%%%%%%
Recall the statement of Theorem \ref{thm:PI}:
\begin{quotation}
\emph{Let $(A,*)$ be a  unital real division algebra. If there exist $h \in \gl(A)$ and $\iota \in \{1,2,3\}$ such that $\{ h \lt^*_x h^{-1} \mid x \in A \}$ or $\{ h \rt^*_x h^{-1} \mid x \in A\}$ is contained in $\Tder(A,*)_\iota$ then $(A,*)$ is a Hurwitz algebra.}
\end{quotation}
Using $A\op$ instead of $A$ it is clear that we only have to consider the case $\{ h \lt^*_x h^{-1} \mid x \in A \} \subseteq \Tder(A,*)_\iota$. This condition implies that $A$ is irreducible under the action of $\Tder(A,*)_\iota$.

\medskip

\emph{Since we will mainly deal with the product $x*y$, to avoid annoying notation, we
  will simply write $xy$ instead of $x*y$, while $e$ remains the unit element.}

\medskip

For any  real division algebra $A$ the Lie algebra $\Tder(A)$ decomposes as $\Tder(A) = \cS \oplus \cZ$ with $\cS$ a compact Lie algebra and $\cZ$ the center \cite{JP08}.

As before, we use the notation $\Tder(A)_i = \pi_i(\Tder(A))$, similarly
$\cS_i = \pi_i(\cS)$ and $\cZ_i = \pi_i(\cZ)$. We write
$\rC(X) = \{ \gamma \in \End_\R(A) \mid [\gamma, d] = 0 \: \forall_{d \in X} \}$ for the
centralizer in $\End_\R(A)$ of any subset $X \subseteq \End_\R(A)$.

\begin{lma}[\hbox{\cite[Proposition 1 and Corollary 2]{JP08}}]
 For any  real division algebra $A$ and any $i \in \{1,2,3\}$ we have that
 \begin{enumerate}
 \item $\Tder(A)_i = \cS_i \oplus \cZ_i$,
 \item $\cS_i$ is semisimple,
 \item $\cZ_i$ is the center of $\Tder(A)_i$ and
 \item $\cZ_i$ consists of semisimple maps.
 \end{enumerate}
\end{lma}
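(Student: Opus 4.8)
The plan is to derive all four assertions from the decomposition $\Tder(A)=\cS\oplus\cZ$ recorded above, using throughout that each $\pi_i\colon\Tder(A)\to\End_\R(A)$ is a homomorphism of Lie algebras (the bracket on $\Tder(A)$ being taken componentwise). A first, purely formal step is to note that $\cS$ is in fact \emph{semisimple}: since $\cZ$ is central and the sum is direct, one computes $Z(\Tder(A))=Z(\cS)\oplus\cZ$, and comparing with $\cZ=Z(\Tder(A))$ gives $Z(\cS)=\cS\cap\cZ=0$; a compact Lie algebra with trivial centre is semisimple.

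Statements (1)--(3) then follow by general Lie theory. For (2), $\cS_i=\pi_i(\cS)$ is a homomorphic image of the semisimple algebra $\cS$, hence itself semisimple (the kernel is an ideal, so a sum of simple factors, and the quotient is the complementary sum of simple factors). Centrality of $\cZ$ gives $[\pi_i(z),\pi_i(x)]=\pi_i([z,x])=0$ for all $z\in\cZ$, $x\in\Tder(A)$, so $\cZ_i\subseteq Z(\Tder(A)_i)$; since $\Tder(A)_i=\cS_i+\cZ_i$ trivially, and any $w\in\cS_i\cap\cZ_i$ lies in $\cS_i\cap Z(\Tder(A)_i)\subseteq Z(\cS_i)=0$, the sum is direct, proving (1). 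For (3) it remains to show $Z(\Tder(A)_i)\subseteq\cZ_i$: writing such a central $w$ as $s_i+z_i$ with $s_i\in\cS_i$, $z_i\in\cZ_i$ forces $s_i=w-z_i\in\cS_i\cap Z(\Tder(A)_i)=0$, so $w=z_i\in\cZ_i$.

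The substance of the lemma is (4), which I expect to be the main obstacle, since semisimplicity of an operator is not preserved under the projections $\pi_i$ and cannot be obtained by formal manipulation. I would view $\Tder(A)$ as a subalgebra of $\End_\R(A^3)$ through $(d_1,d_2,d_3)\mapsto d_1\oplus d_2\oplus d_3$; as the $i$-th summand of $A^3$ is invariant, it suffices to prove that every $z\in\cZ$ acts \emph{semisimply} on $A^3$, for then its restriction $\pi_i(z)$ to the $i$-th block is semisimple as well. It is worth stressing that knowing $z$ merely commutes with $\cS$ does not suffice: by Schur's lemma $z$ restricts on each $\cS$-isotypic component $V^{\oplus m}$ to an element of a matrix algebra $\mathrm{M}_m(D)$ over $D=\End_\cS(V)\in\{\R,\C,\H\}$, and for $m>1$ such an element may well be nilpotent. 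The required input, which is the content drawn from \cite{JP08}, is that the algebraic group $\Atp(A)$ (the autotopy relation being polynomial in the entries of the $\f_i$) is \emph{reductive}, so that, after conjugation, $\Tder(A)\subseteq\End_\R(A^3)$ may be taken stable under the adjoint $X\mapsto X^\ast$ with respect to a suitable inner product. The associated Cartan involution $\theta\colon X\mapsto-X^\ast$ is a Lie algebra automorphism with $\theta^2=\I$ fixing the centre, whence $\cZ=(\cZ\cap\ker(\theta-\I))\oplus(\cZ\cap\ker(\theta+\I))$ splits into commuting skew-symmetric and symmetric parts; both kinds of operator are semisimple, and a sum of commuting semisimple operators is semisimple. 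Thus every $z\in\cZ$ is semisimple on $A^3$, and restricting to the blocks yields (4).
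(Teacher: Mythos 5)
The paper offers no proof of this lemma to compare yours against: it is quoted verbatim from \cite{JP08} (Proposition~1 and Corollary~2 there), and the only thing the paper supplies is the decomposition $\Tder(A)=\cS\oplus\cZ$ stated immediately before it, also with a citation. Judged on its own terms, your derivation of (1)--(3) from that decomposition is correct and complete: $Z(\Tder(A))=Z(\cS)\oplus\cZ$ forces $Z(\cS)=0$, a compact Lie algebra with trivial centre is semisimple, homomorphic images of semisimple algebras are semisimple, and your directness-of-sum and centre computations for $\cS_i$ and $\cZ_i$ are the standard formal manipulations, carried out correctly.

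The issue is (4), and you have located it precisely yourself: everything hinges on the claim that $\Atp(A)$ is reductive (equivalently, via Mostow's theorem, that $\Tder(A)$ may be conjugated to a self-adjoint subalgebra of $\End_\R(A^3)$), and this is asserted with a pointer back to \cite{JP08} rather than proved. Since the lemma you are reconstructing \emph{is} the cited result, your argument for (4) is a reduction to essentially the statement itself; and, as your own $\mathrm{M}_m(D)$ remark shows, the abstract decomposition $\Tder(A)=\cS\oplus\cZ$ with $\cS$ compact genuinely does not imply that $\cZ$ acts semisimply, so the missing input is not cosmetic --- it is where the division-algebra hypothesis must enter. The standard way to supply it: for $(\f_1,\f_2,\f_3)\in\Atp(A)$ one has $\f_1\lt_x\f_3\inv=\lt_{\f_2(x)}$, hence $\det(\f_1)\det(\f_3)\inv\det(\lt_x)=\det(\lt_{\f_2(x)})$; because $x\mapsto\det(\lt_x)$ is a homogeneous polynomial vanishing only at $0$, the level set $\{x\in A\mid |\det(\lt_x)|=1\}$ is a compact spanning subset of $A$, preserved by $\f_2$ whenever $\det\f_1=\det\f_3$. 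This (together with the analogous statements for $\f_3$ and $\f_1$) shows that $\Atp(A)\cap\SL(A)^3$ is compact, so the unipotent radical of $\Atp(A)$ is trivial and the group is reductive. Granting that, your Cartan-involution splitting of $\cZ$ into commuting symmetric and skew-symmetric parts, and the final restriction-to-invariant-blocks step, are both correct.
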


\begin{lma}
\label{lem:centralizer}
 Let $A$ be a  real division algebra. If $\dim A \geq 4$ then $\dim \{ \lt_a \mid [\lt_a,\lt_x] = 0 \: \forall_{x \in A}\} \leq 1$.
\end{lma}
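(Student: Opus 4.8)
The plan is to analyse $V:=\{\lt_a\mid [\lt_a,\lt_x]=0\ \forall x\in A\}=\lt_A\cap\rC(\lt_A)$ through the module structure of $A$ over its left multiplication algebra. First I would note that, $A$ being a division algebra, the map $a\mapsto\lt_a$ is injective, so $V$ is a subspace of $\End_\R(A)$ and $\dim V$ equals the dimension of $\{a\in A\mid \lt_a\in\rC(\lt_A)\}$; it is precisely this number that must be bounded. Let $M\subseteq\End_\R(A)$ be the associative subalgebra generated by $\lt_A$. Then $A$ is a faithful irreducible $M$-module: an $M$-submodule $N$ is a left ideal ($AN\subseteq N$), and if $0\ne a\in N$ then $Aa=\rt_a(A)=A\subseteq N$, so $N=A$. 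By Schur's lemma, $D:=\End_M(A)=\rC(\lt_A)$ is an associative real division algebra.

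Next I would exploit commutativity. Any two elements of $V$ commute, since each lies both in $\lt_A$ and in $\rC(\lt_A)$; hence the unital $\R$-subalgebra $F\subseteq D$ that they generate is commutative. Being a finite-dimensional subalgebra of the division algebra $D$, $F$ has no zero divisors and is therefore itself a field, so $F\cong\R$ or $F\cong\C$ and $\dim_\R F\le2$. As $V\subseteq F$, this already yields $\dim V\le2$, and it remains only to exclude the possibility $\dim V=2$ under the hypothesis $\dim A\ge4$.

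Suppose then that $\dim V=2$. Since $V\subseteq F$ and $\dim_\R F\le2$, we must have $V=F\cong\C$; in particular $\I_A\in\lt_A$ and there is some $J\in V$ with $J^2=-\I_A$. Because $J\in\rC(\lt_A)$ it commutes with every $\lt_x$, so $J$ equips $A$ with the structure of a nonzero complex vector space for which every left multiplication is $\C$-linear, and $F=\C\,\I_A\subseteq\lt_A$ consists precisely of the complex scalar operators. The step I expect to be the main obstacle is excluding this two-dimensional case, which I would resolve by a short spectral contradiction: for any $\lt_a\in\lt_A$ that is not a scalar, each operator $\mu\I_A-\lt_a$ with $\mu\in\C$ is a nonzero element of $\lt_A$, hence invertible because $A$ is a division algebra; but $\lt_a$ is a $\C$-linear endomorphism of a nonzero finite-dimensional complex space and therefore possesses a complex eigenvalue, a contradiction. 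Thus $\lt_A=\C\,\I_A$, whence $\dim_\R A=\dim_\R\lt_A=2$, contradicting $\dim A\ge4$. This forces $\dim V\le1$, as required.
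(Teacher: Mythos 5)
Your proof is correct, and its core is the same as the paper's: Schur's lemma makes $\rC(\lt_A)$ an associative division algebra, a two-dimensional piece of $\lt_A\cap\rC(\lt_A)$ equips $A$ with a complex structure commuting with $\lt_A$, and the eigenvalue argument (a complex eigenvalue of $\lt_a$ produces a singular nonzero element of $\lt_A$, impossible in a division algebra) forces $\dim A\le 2$. Where you genuinely diverge is in handling the possible absence of a unit. The paper first replaces $A$ by the unital isotope $x*y=(x/a)(a\backslash y)$, checks that the quantity to be bounded is unchanged, and only then runs the argument inside $C=\R\I_A\oplus\R\lt_a$, using that $\rC(\lt_A)\cong\C$ or $\H$ is quadratic to see that $C$ is a subalgebra. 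You instead observe that $V=\lt_A\cap\rC(\lt_A)$ generates a commutative subalgebra of the division algebra $\rC(\lt_A)$, hence a field $F\cong\R$ or $\C$; this bounds $\dim V$ by $2$ at once, and in the critical case $\dim V=2$ it forces $V=F\ni\I_A$, so $\lt_A$ acquires the identity operator with no isotopy step and no appeal to the quadratic structure of the centralizer. That is a small but real simplification; both versions then conclude identically.
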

\begin{proof}
Let us assume that $\dim (\lt_A \cap \rC(\lt_A)) \geq 2$. Given $a\in A\setminus\{0\}$ with $\lt_a \in \lt_A \cap \rC(\lt_A)$ we define $x*y = (x/a)(a\backslash y)$. The new algebra $(A,*)$ is a unital division algebra with left multiplication operators $\lt^*_x = \lt_{x/a}\lt_a^{-1}$ and $\dim \{ \lt^*_x \mid [\lt^*_x,\lt^*_y] = 0 \:\forall_{y \in A}\} \geq 2$. Therefore, without loss of generality we may assume that $A$ is unital.

Let $\lt_a \in \lt_A \cap \rC(\lt_A)$ with $\lt_a \not \in \R \I_A$ and
$C = \R \I_A\oplus \R \lt_a$. Since by Schur's Lemma $\rC(\lt_A)$ is a division algebra,
$\rC(\lt_A) \cong \C$ or $\H$. The set $C$ is in fact a subalgebra, isomorphic to the
complex numbers, of the quadratic algebra $\rC(\lt_A)$, $A$ is a left $C$-vector space and
the maps in $\lt_A$ are $C$-linear.

Given $x \in A$, let $\alpha \I_A + \beta \lt_a$ be an eigenvalue of $\lt_x$ in $C$ and $v$ a corresponding non-zero eigenvector. We have that $xv = (\alpha \I_A + \beta \lt_a)v = (\alpha e + \beta a) v$. Since $A$ is a division algebra  we can conclude that $x \in \R e \oplus \R a$ so $\dim A \leq 2$, a contradiction.
\end{proof}

\medskip

\emph{In what follows, $A$ is an algebra that satisfies the hypotheses of Theorem \ref{thm:PI} as established at the beginning of this section. The value of $\iota$ and the linear map $h$ are fixed.}

\medskip

For any vector space $X$ over $\R$, ${_\C}X$ will denote the complex vector space
$\C \otimes_{\R}X$. Recall \cite{BO81b} that in any dimension $\equiv 0,1, 2 \mod 4$ there
exists precisely one isomorphism class of irreducible $\su(2)$-modules.
That of dimension $2m +1$, $W(2m)$, is absolutely irreducible and
${_\C}W(2m) \cong V(2m)$. The one $V(2n-1)$ in dimension $4n$ -- the complex
$\Lsl(2,\C)$-module $V(2n-1)$ seen as a real $\su(2)$-module -- satisfies
${_\C}V(2n-1) \cong V(2n-1) \oplus V(2n-1)$, so $\End_{\su(2)}(V(2n-1))\cong \H$.

\begin{lma}
\label{lem:isotypic}
For any non-zero ideal $I \trianglelefteq \Tder(A)_\iota$ the $I$-module $A$ has only one isotypic component.
\end{lma}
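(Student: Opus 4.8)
The plan is to run Clifford theory in its Lie-algebra form, using two inputs already at hand: that $A$ is irreducible as a module over $L:=\Tder(A)_\iota$ (observed at the beginning of this section, since $\lt_A$ acts irreducibly on the division algebra $A$ and $h\lt_A h^{-1}\subseteq L$), and that $A$ is completely reducible when restricted to the ideal $I$. Granting the latter, decompose $A=\bigoplus_j A_j$ into its $I$-isotypic components; the goal is then to show that each $A_j$ is in fact stable under all of $L$, so that $L$-irreducibility leaves room for only one summand.

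First I would establish the complete reducibility of $A|_I$. Here one uses that $\Tder(A)_\iota=\cS_\iota\oplus\cZ_\iota$ is reductive, with $\cS_\iota$ semisimple and $\cZ_\iota$ central and acting by semisimple maps. Any ideal then splits as $I=(I\cap\cS_\iota)\oplus(I\cap\cZ_\iota)$: writing $x=s+z\in I$ with $s\in\cS_\iota$, $z\in\cZ_\iota$, the containment $[\cS_\iota,s]=[\cS_\iota,x]\subseteq I\cap\cS_\iota$ together with semisimplicity of $\cS_\iota$ (the adjoint action being faithful on each simple summand) forces $s\in I$, hence $z\in I$ too. Thus $I$ is reductive, its centre $I\cap\cZ_\iota$ consists of semisimple endomorphisms of $A$, and combining Weyl's theorem for $I\cap\cS_\iota$ with simultaneous diagonalisation of the commuting semisimple operators in $I\cap\cZ_\iota$ yields that $A$ is a completely reducible $I$-module.

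The main step is the $L$-invariance of each $A_j$. Let $p_k\colon A\to A_k$ be the canonical projection along the isotypic decomposition; since each $A_k$ is an $I$-submodule, every $p_k$ is $I$-linear. Fix $g\in L$ and an index $j$. For $x\in I$ and $v\in A_j$ the Leibniz-type identity $x(gv)=g(xv)+[x,g]v$, combined with $[x,g]\in I$ (as $I$ is an ideal) and $[x,g]v\in A_j$, gives upon applying $p_k$ with $k\ne j$ that $x\,p_k(gv)=p_k(g(xv))$. Hence $p_k\,g|_{A_j}\in\Hom_I(A_j,A_k)$, which vanishes because $A_j$ and $A_k$ are isotypic of distinct types. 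Therefore $gA_j\subseteq A_j$, so $A_j$ is an $L$-submodule. Since $A$ is $L$-irreducible and $A=\bigoplus_j A_j$ is a direct sum of $L$-submodules, exactly one summand is nonzero, i.e.\ $A$ has a single $I$-isotypic component. The only genuinely delicate point is the complete reducibility of $A|_I$; once that is secured, the Schur-vanishing argument is routine.
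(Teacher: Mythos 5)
Your proof is correct and follows essentially the same route as the paper, whose one-line argument ("since $\Tder(A)_\iota$ is reductive, any isotypic component of $A$ is stable under the action of $\Tder(A)_\iota$, so it must be the whole $A$") is exactly the Clifford-theoretic stability fact you prove in detail, combined with the $\Tder(A)_\iota$-irreducibility of $A$ noted at the start of the section. Your added verifications (that $I$ splits off the semisimple and central parts, hence $A|_I$ is completely reducible, and the Schur-vanishing of $p_k\,g|_{A_j}$) simply make explicit what the paper leaves implicit.
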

\begin{proof}
 Since $\Tder(A)_\iota$ is reductive, any isotypic component of $A$ is stable under the action of $\Tder(A)_\iota$, so it must be the whole $A$.
\end{proof}

\begin{lma}
\label{lem:dimension}
If $\dim A \geq 4$ then $\dim \cS_\iota \geq \dim A -1$. In particular, $\Tder(A)$ is not abelian.
\end{lma}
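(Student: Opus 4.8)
The plan is to exploit the decomposition $\Tder(A)_\iota=\cS_\iota\oplus\cZ_\iota$ from the cited lemma together with the standing hypothesis. Write $L=\{h\lt_x h\inv\mid x\in A\}\subseteq\Tder(A)_\iota$; since $A$ is unital the map $x\mapsto\lt_x$ is injective, so $\dim L=\dim A$, and $\I=h\lt_e h\inv\in L$. Moreover $L$ acts irreducibly on $A$ because $\lt_A$ does. Let $q\colon\Tder(A)_\iota=\cS_\iota\oplus\cZ_\iota\to\cZ_\iota$ be the projection along $\cS_\iota$. Then $\ker(q|_L)=L\cap\cS_\iota\subseteq\cS_\iota$, whence
\[
\dim\cS_\iota\ge\dim(L\cap\cS_\iota)=\dim L-\dim q(L)=\dim A-\dim q(L).
\]
Thus the whole statement reduces to proving the bound $\dim q(L)\le1$.

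To control $q(L)$ I would analyse the centre $\cZ_\iota$. Every element of $\cZ_\iota$ commutes with $L$, so conjugating by $h\inv$ places $h\inv\cZ_\iota h$ inside $\rC(\lt_A)$. A direct computation identifies this centraliser: if $w\lt_y=\lt_y w$ for all $y$, then evaluating $w(yv)=y\,w(v)$ at $v=e$ gives $w=\rt_{w(e)}$ with $w(e)\in\N_r(A)$, so $\rC(\lt_A)=\{\rt_a\mid a\in\N_r(A)\}$. Hence for each $x$ the central part $z_x=q(h\lt_x h\inv)$ satisfies $h\inv z_x h=\rt_{a_x}$ for a unique $a_x\in\N_r(A)$, with $a_e=e$; and since the $z_x$ are semisimple and mutually commuting (being in the abelian $\cZ_\iota$), the $a_x$ span a commutative subspace of the division algebra $\N_r(A)\cong\R,\C$ or $\H$ containing $e$, on which each $\rt_{a}$ is semisimple. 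This already bounds $\dim q(L)\le2$.

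The main obstacle is to improve this to $1$, i.e.\ to exclude the two-dimensional case in which $A$ is of complex or quaternionic type as a $\Tder(A)_\iota$-module (equivalently $\rC(\Tder(A)_\iota)\not\cong\R$). Here I would first note that $\cS_\iota\ne0$: an abelian algebra of commuting semisimple maps cannot act irreducibly on a real space of dimension $\ge4$. Then $\cS_\iota$ is a non-zero ideal, so by Lemma~\ref{lem:isotypic} the module $A$ is $\cS_\iota$-isotypic; Schur's lemma forces the multiplicity space to be irreducible over the abelian $\cZ_\iota$, confirming $\dim\cZ_\iota\le2$ but not yet $\le1$. To gain the last dimension one must rule out a two-dimensional central shadow, which would produce a complex structure $\rt_a$ ($a\in\N_r(A)$, $a^2\in\R e$) commuting with every left multiplication. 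Feeding this into Lemma~\ref{lem:centralizer}, which bounds $\dim(\lt_A\cap\rC(\lt_A))\le1$, ought to yield a contradiction; but converting the right-nucleus information $\rt_a\in\rC(\lt_A)$ into the left-multiplication form that Lemma~\ref{lem:centralizer} literally constrains is delicate, and this is the genuinely hardest point, where the detailed structure theory of \cite{JP08} has to be invoked.

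Granting $\dim q(L)\le1$, the inequality $\dim\cS_\iota\ge\dim A-1$ follows at once. The final assertion is then immediate: for $\dim A\ge4$ we obtain $\dim\cS_\iota\ge3>0$, so $\cS_\iota$ is a non-zero semisimple, hence non-abelian, Lie algebra. Since $\Tder(A)_\iota=\pi_\iota(\Tder(A))$ is a homomorphic image of $\Tder(A)$ containing $\cS_\iota$, an abelian $\Tder(A)$ would force $\cS_\iota$ to be abelian; therefore $\Tder(A)$ is not abelian.
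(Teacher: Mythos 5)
There is a genuine gap, and it comes from choosing the wrong projection. You project $L=h\lt_A h\inv$ onto $\cZ_\iota$ and estimate $\dim\cS_\iota\ge\dim(L\cap\cS_\iota)=\dim A-\dim q(L)$, which forces you to prove $\dim q(L)\le 1$. As you yourself concede, the centraliser computation only places $h\inv\cZ_\iota h$ inside $\{\rt_a\mid a\in\N_r(A)\}$ and yields $\dim q(L)\le 2$ when $\N_r(A)\cong\C$ or $\H$; the elements you need to control are \emph{right} multiplications by elements of the right nucleus, so Lemma~\ref{lem:centralizer} (which constrains $\lt_A\cap\rC(\lt_A)$, i.e.\ left multiplications commuting with $\lt_A$) never applies to them, and you explicitly leave the reduction from $2$ to $1$ open. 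Since the whole lemma rests on that bound, the proof is incomplete. Note also that your inequality is intrinsically weaker than it needs to be: $\dim(L\cap\cS_\iota)\le\dim\pi(L)$ for $\pi$ the projection onto $\cS_\iota$, so even the bound $\dim\cZ_\iota\le2$ that you do obtain only gives $\dim\cS_\iota\ge\dim A-2$.

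The paper's proof projects the other way, and this dissolves the difficulty. Let $\pi\colon\cS_\iota\oplus\cZ_\iota\to\cS_\iota$ and restrict it to $L$. Then $\ker(\pi|_L)=L\cap\cZ_\iota$, and since $\cZ_\iota$ is the centre of $\Tder(A)_\iota\supseteq L$, any $h\lt_a h\inv\in\ker(\pi|_L)$ satisfies $[h\lt_a h\inv,h\lt_x h\inv]=0$ for all $x\in A$, i.e.\ $\lt_a\in\lt_A\cap\rC(\lt_A)$. Now Lemma~\ref{lem:centralizer} applies literally --- the kernel consists of conjugated \emph{left} multiplications, not of right multiplications by nuclear elements --- and gives $\dim\ker(\pi|_L)\le1$, whence $\dim\cS_\iota\ge\dim\pi(L)=\dim L-\dim\ker(\pi|_L)\ge\dim A-1$. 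Your concluding deduction that $\Tder(A)$ is not abelian is fine once this inequality is in hand; rerouting the first half of your argument through the projection onto $\cS_\iota$ removes the unproved step entirely.
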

\begin{proof}
Let $\pi$ denote the restriction of the projection $\cS_\iota \oplus \cZ_\iota \to
\cS_\iota$ to $h \lt_A h^{-1}$. The elements in the kernel of $\pi$ are of the form $h
\lt_a h^{-1}$ with $[h \lt_a h^{-1}, h \lt_x h^{-1}] = 0$ for any $x \in A$. By Lemma
\ref{lem:centralizer} we obtain that $\dim(\ker \pi) \leq 1$ and
$\dim \cS_\iota \geq \dim (h \lt_A h^{-1}) - \dim (\ker \pi) \geq \dim A - 1$.
\end{proof}

\begin{lma}
 We have that
\[
\rC(\Tder(A)_\iota) \subseteq \{ h \rt_a h^{-1} \mid a \in \N_r(A) \}.
\]
\end{lma}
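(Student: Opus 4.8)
The plan is to reduce the statement to a fact about the right nucleus, using the unit element of $A$. First I would take an arbitrary $\gamma\in\rC(\Tder(A)_\iota)$ and exploit the standing hypothesis that $h\lt_x h\inv\in\Tder(A)_\iota$ for all $x\in A$: since $\gamma$ centralises all of $\Tder(A)_\iota$, it centralises in particular each $h\lt_x h\inv$, and conjugating by $h$ this says that $\delta:=h\inv\gamma h$ commutes with every $\lt_x$, i.e.\ $\delta\in\rC(\lt_A)$. Thus the problem is reduced to identifying $\rC(\lt_A)$.

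The key observation is that, because $A$ is unital, every element of $\rC(\lt_A)$ is a right multiplication operator. Indeed, for $\delta\in\rC(\lt_A)$ put $a=\delta(e)$; then for all $x\in A$,
\[
\delta(x)=\delta(\lt_x e)=\lt_x\delta(e)=xa=\rt_a(x),
\]
so $\delta=\rt_a$. Here I have used $\delta\lt_x=\lt_x\delta$ together with $\lt_x(e)=xe=x$.

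Finally I would translate the nucleus condition into operator language. An element $a$ lies in $\N_r(A)$ if and only if $(xy)a=x(ya)$ for all $x,y\in A$, which after writing $(xy)a=\rt_a\lt_x(y)$ and $x(ya)=\lt_x\rt_a(y)$ is precisely the requirement $\rt_a\lt_x=\lt_x\rt_a$ for all $x$, that is, $\rt_a\in\rC(\lt_A)$. Since $\delta=\rt_a$ was shown to lie in $\rC(\lt_A)$, this forces $a\in\N_r(A)$, and hence $\gamma=h\rt_a h\inv$ with $a\in\N_r(A)$, as required.

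I do not anticipate a serious obstacle here: the real content is the dictionary $\rC(\lt_A)=\{\rt_a\mid a\in\N_r(A)\}$ valid for unital $A$, combined with the transport of the centraliser condition through conjugation by $h$. The only point to be careful about is that we are proving an \emph{inclusion}, so it suffices to use that $\gamma$ commutes with the single family $h\lt_A h\inv\subseteq\Tder(A)_\iota$, rather than with all of $\Tder(A)_\iota$; no further structural results on $\Tder(A)$ are needed.
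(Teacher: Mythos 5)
Your proof is correct and follows essentially the same route as the paper: reduce modulo conjugation by $h$ to the centraliser of $\lt_A$, then use the unit element to identify that centraliser with $\{\rt_a \mid a\in\N_r(A)\}$. You have merely spelled out the details that the paper compresses into ``the existence of unit element easily implies\dots''.
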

\begin{proof}
 Clearly $\rC(\Tder(A)_\iota) \subseteq  h \{ \gamma \in \End_\R(A) \mid [\gamma,\lt_x] = 0 \: \forall_{x\in A}\} h^{-1}$. The existence of unit element in $A$ easily implies that any $\gamma$ that commutes with all the maps in $\lt_A$ must be of the form $\rt_a$ with $a \in \N_r(A)$.
\end{proof}

In case that $\dim A = 1,2$ the existence of unit element implies that $A$ is a Hurwitz
algebra, so we may assume that $\dim A \geq 4$.

\begin{prop}
If $\dim A = 4$ then $A$ is isomorphic to the quaternions.
\end{prop}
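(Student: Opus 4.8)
The plan is to determine $\Tder(A)_\iota$ and the position of $h\lt_A h^{-1}$ inside it precisely enough to recognise the left regular representation of $\H$, and then to write down an explicit isomorphism. Since $\dim A=4\ge 4$, Lemma~\ref{lem:dimension} gives $\dim\cS_\iota\ge 3$, so the semisimple summand $\cS_\iota$ of $\Tder(A)_\iota=\cS_\iota\oplus\cZ_\iota$ is a non-zero compact semisimple Lie algebra faithfully represented on the four-dimensional Euclidean space $A$. Being compact, $\cS_\iota$ preserves a positive definite form and hence lies in $\mathfrak{so}(A)\cong\mathfrak{so}(4)=\su(2)\oplus\su(2)$, a six-dimensional algebra whose semisimple subalgebras of dimension at least three are (up to conjugacy) the two $\su(2)$ factors, the diagonal $\su(2)_\Delta$ of inner derivations, and $\mathfrak{so}(4)$ itself. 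The diagonal acts on $A\cong\H$ as $W(0)\oplus W(2)$ (trivially on $\R1_\H$ and as $\mathfrak{so}(3)$ on $\Im\H$), which is not isotypic, so Lemma~\ref{lem:isotypic} applied to $I=\su(2)_\Delta$ rules it out. In each surviving case $A$ is irreducible under $\cS_\iota$: the quaternionic module $V(1)$ for a single $\su(2)$, the vector module for $\mathfrak{so}(4)$.

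Next I would locate $h\lt_A h^{-1}$. In each surviving case the centraliser $\rC(\cS_\iota)$ is contained in $\R\I_A\oplus\su(2)\subseteq\mathfrak{so}(4)\oplus\R\I_A$, and since $\cZ_\iota\subseteq\rC(\cS_\iota)$ this forces $\Tder(A)_\iota\subseteq\mathfrak{so}(4)\oplus\R\I_A$. As $h\lt_e h^{-1}=\I_A$ lies in $\Tder(A)_\iota$ but not in $\mathfrak{so}(4)$, the four-dimensional space $h\lt_A h^{-1}$ splits as $\R\I_A\oplus M$ with $M=h\lt_A h^{-1}\cap\mathfrak{so}(4)$ three-dimensional. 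Since $A$ is a division algebra, every non-zero element of $h\lt_A h^{-1}$, and in particular of $M$, is invertible; a map in $\mathfrak{so}(4)$ is singular precisely when its Pfaffian vanishes, and the Pfaffian is a quadratic form of signature $(3,3)$ on $\mathfrak{so}(4)=\su(2)\oplus\su(2)$, so $M$ must be an anisotropic, hence definite, three-plane.

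The technical heart of the argument, and the step I expect to be the main obstacle, is to upgrade ``$M$ is a definite three-plane'' to ``$M$ is one of the two $\su(2)$ summands'', equivalently that $h\lt_A h^{-1}$ is a conjugate of the left multiplication algebra $\{\lt^{\H}_q\mid q\in\H\}$ of a quaternion structure on $A$. Here one must combine the position of $M$ relative to $\cS_\iota$ with the constraints that $\cZ_\iota$ is commutative, consists of semisimple maps, and is contained in $\{h\rt_a h^{-1}\mid a\in\N_r(A)\}$; the delicate part is to control the right nucleus $\N_r(A)$ so as to prevent $\cZ_\iota$ from enlarging $M$ beyond a single $\su(2)$, and to treat the cases $\cS_\iota=\su(2)$ and $\cS_\iota=\mathfrak{so}(4)$ uniformly.

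Granting this, the conclusion is immediate. Fixing a quaternion structure on $A$ that identifies $\R\I_A\oplus M$ with the algebra $\{\lt^{\H}_q\mid q\in\H\}$ of left multiplications $\lt^{\H}_q\colon w\mapsto q\cdot w$ of $\H$, there is a unique linear bijection $\psi\colon A\to\H$ with $h\lt_x h^{-1}=\lt^{\H}_{\psi(x)}$ for all $x\in A$; applying both sides to $hy$ gives $h(xy)=\psi(x)\cdot(hy)$, and setting $y=e$ yields $\psi(x)=h(x)\cdot h(e)^{-1}$. Hence $g\colon A\to\H$, $g(x)=h(e)^{-1}\cdot h(x)$, satisfies $g(xy)=h(e)^{-1}\cdot h(x)\cdot h(e)^{-1}\cdot h(y)=g(x)\cdot g(y)$ and $g(e)=1_\H$, so $g$ is an algebra isomorphism $A\xrightarrow{\ \sim\ }\H$, as claimed.
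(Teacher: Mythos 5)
Your argument is sound up to the point that you yourself flag as ``the technical heart'', and it there stops: the reduction via Lemma~\ref{lem:dimension}, the elimination of the diagonal $\su(2)$ by Lemma~\ref{lem:isotypic}, the containment $h\lt_Ah\inv=\R\I_A\oplus M$ with $M\subseteq\mathfrak{so}(4)$, the Pfaffian argument showing that $M$ is a definite three-plane, and the final passage from ``$h\lt_Ah\inv$ is the left regular representation of a quaternion structure'' to an explicit isomorphism are all correct. But the step you defer --- upgrading ``definite three-plane'' to ``one of the two $\su(2)$ summands'' --- is the entire content of the proposition, not a technicality, and it is genuinely missing. A positive definite three-plane in $\su(2)_+\oplus\su(2)_-$ for the form $|a|^2-|b|^2$ is exactly the graph of a linear contraction $\phi\colon\su(2)_+\to\su(2)_-$; for $\phi\neq 0$ the space $\R\I_A\oplus M$ is in general not closed under composition, so no quaternion structure is yet visible, and nothing in the data you have assembled excludes such $\phi$. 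In particular the Schur-type route you gesture at (controlling $\N_r(A)$ via $\cZ_\iota$ and the centraliser of $\Tder(A)_\iota$) does not obviously help: when $\phi$ is invertible the commutant of $\R\I_A\oplus M$ inside $\R\I_A\oplus\mathfrak{so}(4)$ collapses to $\R\I_A$, which is consistent with $\N_r(A)=\R e$ and yields no contradiction. So the proposal is an outline with an acknowledged hole at its centre rather than a proof.

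For comparison, the paper closes exactly this gap by importing the dichotomy of \cite{JP08}: either $A$ is isotopic to $\H$ (hence, being unital, isomorphic to it by \cite{albert42a}), or $\cS\cong\su(2)$ and $\dim\Tder(A)=\dim\cS+2$. In the second branch $\dim\Tder(A)_\iota=4$ forces $\Tder(A)_\iota=h\lt_Ah\inv$ \emph{exactly}, whence $A\cong V(1)$ as an $\su(2)$-module and $\rt_{\N_r(A)}=\End_{\lt_A}(A)\cong\H$ by Schur's lemma, so $A=\N_r(A)$ is associative and therefore isomorphic to $\H$. The precise dimension count from \cite{JP08}, which pins $h\lt_Ah\inv$ down as all of $\Tder(A)_\iota$ rather than a four-dimensional subspace in general position, is exactly the input your version lacks; without it, or an equivalent argument ruling out the nonzero contractions $\phi$, the proof is incomplete.
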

\begin{proof}
By \cite[comment after Theorem 10]{JP08} either $A$ is isotopic to $\H$, and therefore it
is isomorphic to $\H$ \cite[Theorem~12]{albert42a}, or $\cS \cong \su(2)$ and
$\dim \Tder(A)  = \dim \cS + 2$. In the latter case $\dim \Tder(A)_\iota = 4$ so
$\Tder(A)_\iota= h \lt_A h^{-1}$. In particular, $\lt_A$ is a Lie algebra isomorphic to
$\su(2)$. Since the action of $\lt_A$ on $A$ is faithful, $A \cong V(1)$, the unique
(up to isomorphism) irreducible $\su(2)$-module of dimension four.  This proves that
$\{ \rt_a \mid a \in \N_r(A)\} = \End_{\lt_A}(A) \cong \End_{\su(2)}(V(1)) \cong\H$.
Therefore, $A = \N_r(A) \cong \H$.
\end{proof}

In the following we will assume that $\dim A = 8$. If $A$ is not isomorphic to the
octonions $\O$ then, by \cite[p. 2206]{JP08}, the simple ideals that can occur in the
decomposition of $\cS$ as a direct sum of simple ideals are Lie algebras of the types
\[
 G_2,\quad \su(3),\quad B_2, \quad \text{and} \quad \su(2).
\]
Observe that the kernels of the projections $\pi_1, \pi_2$ and $\pi_3$ have dimensions
$\leq 4$, since they are isomorphic to $\N_m(A), \N_r(A)$ and $\N_l(A)$ respectively. All
restrictions of these projections to ideals of type $G_2, \su(3)$ or $B_2$ are therefore
injective.

\begin{lma}
The Lie algebra $\Tder(A)$ does not contain ideals isomorphic to compact Lie algebras of type $G_2$ or $\su(3)$.
\end{lma}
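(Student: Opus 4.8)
The plan is to suppose, for contradiction, that $\Tder(A)$ contains an ideal $I$ isomorphic to $G_2$ or $\su(3)$, and to treat the two types along the same initial lines before splitting. Since $\dim I\ge 8$ exceeds the dimension of any of the nuclei $\N_m(A),\N_r(A),\N_l(A)$, the restriction $\pi_\iota|_I$ is injective (as already observed before the lemma), so $I_\iota:=\pi_\iota(I)$ is a non-zero ideal of $\Tder(A)_\iota$ with $I_\iota\cong I$. By Lemma~\ref{lem:isotypic} the $I_\iota$-module $A$ has a single isotypic component, and since $I_\iota\ne0$ acts non-trivially, $A$ cannot be a sum of trivial modules; this is what pins down the $I_\iota$-module structure of the $8$-dimensional space $A$ in each case.

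For $I\cong G_2$ I would finish by a pure dimension count. The smallest non-trivial irreducible representation of $G_2$ has dimension $7$, and $G_2$ has no irreducible representation of dimension $8$; as an isotypic module is a direct sum of copies of one irreducible, no non-trivial isotypic $G_2$-module can have dimension $8$. Hence $I_\iota$ would have to act trivially, forcing $I_\iota=0$, a contradiction. This disposes of the $G_2$ case entirely.

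I expect the main obstacle to be $I\cong\su(3)$. Here the non-trivial real irreducible representations of $\su(3)$ have dimensions $6,8,12,\dots$, so the only $8$-dimensional isotypic possibility is a single copy of the adjoint representation; thus $A\cong\su(3)$ with $I_\iota$ acting by $\mathrm{ad}$. This representation is absolutely irreducible, so $\rC(I_\iota)=\End_{I_\iota}(A)=\R\I_A$. I would then clean up the decomposition $\Tder(A)_\iota=\cS_\iota\oplus\cZ_\iota$: the simple ideal $I_\iota$ sits inside $\cS_\iota$, any complementary simple factor commutes with $I_\iota$ and hence lies in $\rC(I_\iota)=\R\I_A$ and must vanish, giving $\cS_\iota=I_\iota$; and $\cZ_\iota$, being central, also lies in $\rC(I_\iota)=\R\I_A$. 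Consequently $\Tder(A)_\iota\subseteq I_\iota\oplus\R\I_A$, and every $T\in\Tder(A)_\iota$ has a unique expression $T=T'+c(T)\I_A$ with $T'\in I_\iota$ and $c$ an $\R$-linear functional on $\Tder(A)_\iota$.

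The contradiction then rests on one elementary observation about the adjoint representation: for every $X\in\su(3)$ one has $\mathrm{ad}(X)(X)=[X,X]=0$, so every operator in $I_\iota$ is singular, i.e.\ has $0$ in its spectrum. Therefore $T=T'+c(T)\I_A$ can be invertible only when $c(T)\ne0$. Applying this to $T=h\lt_x h^{-1}\in\Tder(A)_\iota$, which is invertible for every $x\ne0$ because $(A,*)$ is a division algebra, I would conclude that the linear map $x\mapsto c\!\left(h\lt_x h^{-1}\right)$ from the $8$-dimensional space $A$ to $\R$ vanishes nowhere outside $0$. This is impossible for a linear functional on a space of dimension $\ge2$, since its kernel is non-trivial. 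The contradiction completes the $\su(3)$ case, and with the $G_2$ case it proves the lemma. The delicate points to get right are the reduction $\Tder(A)_\iota\subseteq I_\iota\oplus\R\I_A$ (which is where absolute irreducibility of the adjoint is used) and the verification that $x\mapsto c(h\lt_x h^{-1})$ is genuinely linear; everything else is bookkeeping.
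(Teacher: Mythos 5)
Your proposal is correct and follows essentially the same route as the paper: Lemma~\ref{lem:isotypic} plus the real representation theory of compact $G_2$ and $\su(3)$ pins down the $I_\iota$-module structure of the eight-dimensional space $A$ ($G_2$ is excluded because $8$ cannot be the dimension of a non-trivial isotypic module, and $\su(3)$ forces $A$ to be the adjoint module with $\Tder(A)_\iota\subseteq I_\iota\oplus\R\I_A$), after which the singularity of adjoint operators clashes with the invertibility of the maps $h\lt_xh^{-1}$. Your linear-functional phrasing of the last step is just a repackaging of the paper's dimension count $\dim\bigl(h\lt_Ah^{-1}\cap I_\iota\bigr)=7$.
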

\begin{proof}
Assume, on the contrary, that $\Tder(A)$ contains an ideal isomorphic to a Lie algebra of type $G_2$. The projection of that ideal is an ideal $I_\iota$ of $\cS_\iota$  of type $G_2$. Thus $A$ is an $I_\iota$-module isomorphic to the direct sum of two non-isomorphic modules (a trivial one-dimensional module and an absolutely irreducible seven-dimensional one), which is impossible by Lemma \ref{lem:isotypic}.

If  $\Tder(A)$ contains an ideal isomorphic to a Lie algebra of type $\su(3)$ then
$\cS_\iota$ contains an ideal $I_\iota$ isomorphic to $\su(3)$. Lemma \ref{lem:isotypic}
implies that $A$ is an absolutely irreducible eight-dimensional module of $I_\iota$. Thus,
$\Tder(A)_\iota = I_\iota \oplus \R \I_A$ and $\dim h \lt_A h^{-1} \cap I_\iota= 7$.
However, after extending scalars ${_\C}A$ is isomorphic to the adjoint module of
${_\C}I_\iota \cong \Lsl(3,\C)$, so the kernel of any map in $I_\iota$ should be non-zero,
which is not the case for non-zero maps in $h \lt_A h^{-1}$, a contradiction.
\end{proof}

\begin{lma}
If 
$\Tder(A)$ does not contain an ideal isomorphic to a compact Lie
algebra of type $B_2$, then $A$ is isomorphic to the octonions.
\end{lma}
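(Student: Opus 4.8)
The plan is to argue by contradiction: assuming $A\not\cong\O$, I would use the hypothesis together with the two preceding results to force $\cS$ to be a sum of copies of $\su(2)$, and then show that no such Lie algebra can act on the $8$-dimensional space $A$ in the way demanded by Lemmas~\ref{lem:isotypic} and~\ref{lem:dimension}. Concretely, suppose $A\not\cong\O$. By the list of admissible simple ideals recalled above, the two previous lemmas (excluding $G_2$ and $\su(3)$) and the present hypothesis (excluding $B_2$), every simple ideal of $\cS$ is isomorphic to $\su(2)$. Since $\pi_\iota$ sends each simple ideal either isomorphically or to $0$, the Lie algebra $\cS_\iota=\pi_\iota(\cS)$ is a direct sum $\bigoplus_{j=1}^m I_j$ of ideals $I_j\cong\su(2)$, and Lemma~\ref{lem:dimension} gives $3m=\dim\cS_\iota\ge\dim A-1=7$, so $m\ge3$. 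Each $I_j$ is an ideal of $\Tder(A)_\iota=\cS_\iota\oplus\cZ_\iota$ (the centre $\cZ_\iota$ acts trivially by bracket) and lies in $\End_\R(A)$, hence acts faithfully; by Lemma~\ref{lem:isotypic} the $I_j$-module $A$ is isotypic. Running through the real irreducible $\su(2)$-modules whose dimension divides $8$, the only faithful isotypic possibilities in dimension $8$ are $A\cong 2V(1)$ (with $\End_{I_j}(A)\cong M_2(\H)$) and $A\cong V(3)$ (with $\End_{I_j}(A)\cong\H$).

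First I would exclude $A\cong V(3)$. If some $I_j$ gave $A\cong V(3)$, then $\bigoplus_{k\ne j}I_k$ is a compact semisimple subalgebra commuting with $I_j$, hence a subalgebra of $\End_{I_j}(A)\cong\H$. As a Lie algebra $\H=\R\cdot 1\oplus\Im\H$ with $\Im\H\cong\su(2)$, so any semisimple subalgebra lies in $\Im\H$, forcing $\dim\cS_\iota\le 3+3=6<7$, a contradiction. Hence $A\cong 2V(1)$ for every $j$.

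Now I would take three distinct factors $I_1,I_2,I_3$ (available since $m\ge3$); these commute pairwise. Restricting to $B=I_1\oplus I_2\cong\mathfrak{so}(4)$, the requirement that $A$ be $V(1)$-isotypic over each of $I_1,I_2$ forces ${}_\C A\cong 2\,(V(1)\otimes V(1))$ (external tensor product), and its unique real form is $2$ copies of the $4$-dimensional real vector representation $U$ of $\mathfrak{so}(4)$, because $V(1)\otimes V(1)$ is of real type (reflecting $\H\otimes_\R\H\cong M_4(\R)$). Consequently $\End_B(A)\cong M_2(\End_{\mathfrak{so}(4)}(U))\cong M_2(\R)$. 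But $I_3$ commutes with $B$, so $I_3\subseteq\End_B(A)\cong M_2(\R)$, yielding a faithful homomorphism $\su(2)\to M_2(\R)$ of Lie algebras. This is impossible, since $\su(2)$ has no faithful $2$-dimensional real representation (equivalently, the only semisimple subalgebra of $M_2(\R)$ is $\mathfrak{sl}_2(\R)\not\cong\su(2)$). The contradiction shows that $A\cong\O$.

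The step I expect to be the main obstacle is the identification $\End_B(A)\cong M_2(\R)$: one must determine the real form of the commutant and check that it is $M_2(\R)$ rather than $\H$, since only the former rules out a commuting compact $\su(2)$. This is precisely the point at which the real (rather than complex) representation theory is decisive, and it rests on $V(1)\otimes V(1)$ being of real type; the complexified statement alone would not produce the contradiction.
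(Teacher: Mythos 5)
Your argument is correct, and up to the point where you conclude that $A\cong V(1)\oplus V(1)$ as a module over each surviving simple ideal it runs parallel to the paper's proof (the paper additionally quotes a fact from \cite{JP08} to get \emph{exactly} three ideals, which your version shows is not needed: three commuting copies suffice, and your exclusion of the $V(3)$ case via the commutant being too small is the same argument the paper makes with Schur's lemma). The concluding contradiction, however, is reached by a genuinely different route. The paper considers all three ideals at once, identifies ${}_\C A$ with $V(1)\otimes V(1)\otimes V(1)$, and then returns to the algebra structure: a dimension count shows that $h\lt_A h\inv$ meets each ideal nontrivially, and the eigenvalue trick of Lemma~\ref{lem:centralizer} together with the division property forces $a_2\in\R e+\R a_1$ for suitably chosen $a_1,a_2$, incompatible with $h\lt_{a_2}h\inv$ lying in the second ideal. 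You instead stay entirely inside real representation theory: the commutant of two of the ideals is $M_2(\R)$ because $V(1)\otimes V(1)$ is of real type, and $\su(2)$ has no faithful two-dimensional real representation. I checked the key steps: ${}_\C A\cong 2\,\bigl(V(1)\otimes V(1)\bigr)$ is indeed forced by the two isotypic restrictions, the real form is $2U$ with $\End_{I_1\oplus I_2}(U)\cong\R$, and a simple subalgebra of $\mathfrak{gl}_2(\R)$ can only be $\mathfrak{sl}_2(\R)\not\cong\su(2)$. Your route has the advantage of never re-invoking the multiplication of $A$, and it exposes that the paper's intermediate configuration is already impossible for purely module-theoretic reasons (the same real-type bookkeeping shows $V(1)^{\otimes3}$ is of quaternionic type and so admits no eight-dimensional real form); the paper's version instead recycles machinery — the $C$-linearity/eigenvector argument and the hypothesis $h\lt_A h\inv\subseteq\Tder(A)_\iota$ — already set up for the rest of Section~\ref{sec:proof}. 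Both are complete proofs.
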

\begin{proof}
The hypothesis implies that  $\cS_i$ decomposes as a direct sum of ideals isomorphic to
$\su(2)$. Since $\dim \cS_i \geq 7$ and $\cS_i$ cannot contain a direct sum of four ideals
isomorphic so $\su(2)$ \cite[p. 2205]{JP08}, it follows that
$\cS_\iota = I^{(1} \oplus I^{(2} \oplus I^{(3}$ with $I^{(i}\cong \su(2)$, $i = 1,2,3$.
By Lemma \ref{lem:isotypic}, $A$ must be either irreducible or a direct sum of two
irreducible $I^{(1}$-modules isomorphic to $V(1)$.
In the first case, $I^{(2}\oplus I^{(3} \subseteq \rC(I^{(1}) \cong \R, \C, \H$
gives a contradiction. Therefore, $A \cong V(1) \oplus V(1)$ as an $I^{(i}$-module ($i
=1,2,3$). This proves that  ${_\C}A \cong V(1) \otimes V(1) \otimes V(1)$ as a
${_\C}\cS_\iota$-module. In particular  $A$ is an absolutely irreducible
$\cS_\iota$-module, $\cZ_\iota = \R \I_A$ and $\dim \cS_\iota \cap h \lt_A h^{-1} = 7$. By
dimension counting $h \lt_A h^{-1} \cap I^{(i}\neq 0$ for  $i=1,2,3$.

Choose non-zero elements $a_1,a_2\in A$ such that
$h \lt_{a_i}h^{-1} \in h \lt_A h^{-1} \cap I^{(i}$.
Since  $I^{(1} \subseteq \rC(I^{(2} \oplus I^{(3}) \cong \H$, $\lt_{a_1}$ generates a
subalgebra $C = \R \I_A \oplus \R \lt_{a_1}$ that is isomorphic to $\C$, and $\lt_{a_2}$
is $C$-linear. Let $\alpha \I_A + \beta \lt_{a_1}\in C$ be an eigenvalue of $\lt_{a_2}$,
and $v \in A$ a corresponding eigenvector. Now $a_2 v = (\alpha e + \beta a_1)v$,
 but this implies that $a_2 = \alpha e + \beta a_1$, which is not possible.
\end{proof}

It only remains to eliminate the possibility that $\Tder(A)$ contains ideals isomorphic
to a compact Lie algebra of type $B_2$. Assume on the contrary that  $\cS_\iota$ contains
such an ideal $I_\iota$. Then $A$ is either irreducible as an $I_\iota$-module (but not
absolutely irreducible) or isomorphic to a direct sum of a three-dimensional trivial
$I_\iota$-module and a five-dimensional absolutely irreducible one \cite[Subsection
4.1]{JP08}. Lemma \ref{lem:isotypic} rules out the latter possibility.  Let $I$ be the
ideal of $\cS$ that projects isomorphically onto $I_\iota$ by $\pi_\iota$. Two of the
$I$-modules $A_1$, $A_2$, $A_3$ are irreducible and the other decomposes as the direct sum
of a three-dimensional trivial $I$-module, which we will denote by $T$, and a
five-dimensional absolutely irreducible one, which we will denote by $T^\perp$
\cite[Subsection 4.1]{JP08}. After extending scalars to $\C$, the two eight-dimensional
irreducible modules are isomorphic to $V(\lambda_2)$, and $_\C{T}^\perp$ is isomorphic to
$V(\lambda_1)$, where $\lambda_1, \lambda_2$ are the fundamental weights relative to some
Cartan subalgebra of $_\C{I}$ \cite{Hu78}. 
Also observe that the kernels of the
projections $\pi_1, \pi_2, \pi_3$ are ideals of dimension $\leq 4$, so they commute with
$I$.

\begin{lma}
\label{lem:nuclei}
If $\Tder(A)$ contains an ideal $I$ isomorphic to a compact Lie algebra of type $B_2$ then
$\N_l(A) = \N_m(A) = \N_r(A) = \R e$ and the restrictions of the projections $\pi_1,\pi_2$
and $\pi_3$ to $\cS$ are injective.
\end{lma}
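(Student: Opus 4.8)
The plan is to split the statement into its two halves and show that the injectivity of the $\pi_j|_\cS$ is a free consequence of the triviality of the nuclei. Recall that $\ker\pi_1=\{(0,\rt_a,-\lt_a)\mid a\in\N_m(A)\}$, $\ker\pi_2=\{(\rt_a,0,\rt_a)\mid a\in\N_r(A)\}$, $\ker\pi_3=\{(\lt_a,\lt_a,0)\mid a\in\N_l(A)\}$, and that $e$ lies in all three nuclei. If $\N_l(A)=\N_m(A)=\N_r(A)=\R e$, then, since $\lt_e=\rt_e=\I$, these kernels are the one--dimensional spaces $\R(0,\I,-\I)$, $\R(\I,0,\I)$ and $\R(\I,\I,0)$. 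Each is visibly central in $\Tder(A)$, hence lies in $\cZ$, so $\cS\cap\ker\pi_j\subseteq\cS\cap\cZ=0$ and $\pi_j|_\cS$ is injective for $j=1,2,3$. Thus everything reduces to proving $\N_l(A)=\N_m(A)=\N_r(A)=\R e$.

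Each nucleus is an associative subalgebra of the real division algebra $A$, hence isomorphic to $\R$, $\C$ or $\H$, and contains $e$; so I must exclude the cases $\C$ and $\H$. The defining relations give $\lt_a\lt_b=\lt_{ab}$ whenever $a\in\N_l(A)$ or $b\in\N_m(A)$, and $\rt_a\rt_b=\rt_{ba}$ whenever $a\in\N_r(A)$ or $b\in\N_m(A)$; consequently, for each nucleus the relevant multiplication operators form a unital subalgebra of $\End_\R(A)$ isomorphic to the nucleus (or its opposite, which for $\R,\C,\H$ is the same thing). Writing $I_k=\pi_k(I)$ and using that $\dim\ker\pi_j\le4<10=\dim I$ with $I$ simple, I get $I\cap\ker\pi_j=0$, so the two ideals commute and applying the two projections injective on $\ker\pi_j$ embeds the nucleus, as a unital subalgebra, into the commutants $\rC(I_k)=\End_{I_k}(A_k)$ for the two indices $k$ on which $\ker\pi_j$ projects nontrivially.

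Next I would compute these commutants. For $k\ne j_0$ the module $A_k$ is irreducible with ${}_\C A_k\cong V(\lambda_2)\oplus V(\lambda_2)$, hence of quaternionic type and $\rC(I_k)\cong\H$; for $k=j_0$ one has $A_{j_0}=T\oplus T^\perp$ with $T$ three--dimensional trivial and $T^\perp$ five--dimensional absolutely irreducible (${}_\C T^\perp\cong V(\lambda_1)$ of real type), so $\rC(I_{j_0})\cong\mathrm{M}_3(\R)\oplus\R$. Since there is no nonzero unital homomorphism $\C\to\R$ or $\H\to\R$, and $\R^3$ admits no $\C$-- or $\H$--module structure, neither $\C$ nor $\H$ embeds unitally into $\mathrm{M}_3(\R)\oplus\R$. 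Hence a nucleus different from $\R e$ forces both of its projection indices to avoid $j_0$: explicitly $\N_m(A)\ne\R e\Rightarrow j_0=1$, $\N_r(A)\ne\R e\Rightarrow j_0=2$, $\N_l(A)\ne\R e\Rightarrow j_0=3$. As $j_0$ is a single fixed index, \emph{at most one} nucleus can be nontrivial.

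The hard part is excluding this last possibility, that exactly one nucleus is $\C$ or $\H$ with $j_0$ pinned accordingly. Here I would exploit the multiplication $A_2\otimes A_3\to A_1$, which is a $\Tder(A)$--module homomorphism: for $0\ne y\in T\subseteq A_{j_0}$ the map $\rt_y$ (or $\lt_y$) intertwines the two irreducible quaternionic modules, is linear over the nucleus, and is bijective by the division property. A nucleus $\cong\H$ moreover produces a simple ideal $J=\ker\pi_j\cap\cS\cong\su(2)$ (a two--dimensional central ideal if the nucleus is $\C$); since $\iota\ne j_0$ the projection $\pi_\iota$ is injective on $J$, so $\cS_\iota\supseteq I_\iota\oplus J_\iota\cong B_2\oplus\su(2)$ and ${}_\C A\cong V(\lambda_2)\boxtimes V(1)$ as a module over it. I would then show that this joint isotypic content is incompatible with an everywhere--nondegenerate $I$--equivariant product $A_2\otimes A_3\to A_1$, contradicting that $A$ is a division algebra. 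This final comparison is exactly where the representation--theoretic classification of the admissible $\cS$ from \cite{JP08} (together with the weight data of \cite{Hu78}) must be invoked; it is the step I expect to be the genuine obstacle, the preceding reductions being essentially formal. Once $\C$ and $\H$ are ruled out, all three nuclei equal $\R e$ and the injectivity of the $\pi_j|_\cS$ follows as in the first paragraph.
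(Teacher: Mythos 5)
Your first two reductions are sound: the injectivity of $\pi_j|_{\cS}$ does follow once the nuclei are shown to equal $\R e$, and your commutant argument --- a nontrivial nucleus embeds unitally into $\rC(I_k)$ for both indices $k$ onto which the corresponding kernel ideal projects, while $\rC(I_{j_0})\cong\R^{3\times3}\oplus\R$ contains no unital copy of $\C$ or $\H$ --- correctly pins down $j_0$ and shows that at most one nucleus can be nontrivial. The genuine gap is the step you yourself flag as ``the genuine obstacle'': you never exclude the remaining case in which exactly one nucleus is isomorphic to $\C$ or $\H$, and that exclusion is the entire content of the lemma. Worse, the method you propose for it (comparing isotypic contents of $A_2\otimes A_3$ and $A_1$ over $I\oplus J$) only yields a contradiction in the middle-nucleus case, which is precisely where the paper uses it: there ${}_\C A_1$ contains three copies of the trivial $I\oplus J$-module while ${}_\C A_2\otimes{}_\C A_3\cong(V(\lambda_2)\otimes V(1))^{\otimes2}$ contains only one, so the product cannot be surjective. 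If instead, say, $\N_l(A)\cong\H$ (so $j_0=3$), then ${}_\C A_1\cong V(\lambda_2)\otimes V(1)$ occurs in ${}_\C A_2\otimes{}_\C A_3$ with multiplicity four, and for $0\ne a$ in the trivial summand $T\subseteq A_3$ the map $\rt_a\colon A_2\to A_1$ is already an $I\oplus J$-equivariant bijection; no incompatibility with surjectivity or nondegeneracy is visible from the isotypic data alone.

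What your outline is missing is the idea the paper uses for the left and right nuclei. For $w\in\N_l(A)$ the operator $\lt_w$ commutes with every $\rt_c$, $c\in A$. On the other hand, if $A_3=T\oplus T^\perp$, then for $a,b\in T$ the ternary-derivation relation $d_1(xa)=d_2(x)a+xd_3(a)=d_2(x)a$ shows that $\rt_b\rt_a^{-1}\in\rC(I_1)\cong\H$, and these operators span a subspace of dimension at least three containing $\I_A$, hence generate $\rC(I_1)$ as an associative algebra. Since $\lt_w$ lies in $\rC(I_1)$ and commutes with all of them, it is central in $\H$, i.e.\ a scalar, so $\N_l(A)=\R e$; the symmetric argument gives $\N_r(A)=\R e$. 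Only then does the paper obtain $\cZ_\iota=\R\I_A$ (via $\dim\cZ_\iota\le\dim\N_r(A)$), which it needs in order to rule out $\dim\N_m(A)=2$ before running the isotypic count for $\N_m(A)\cong\H$. Without a substitute for this commutant-generation step, your argument does not close.
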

\begin{proof}
Any $(d'_1, d'_2, 0) \in \Tder(A)$ commutes with $I$ and it is of the form $(L_w,L_w,0)$
with $w \in \N_l(A)$. If $\lt_w \not\in \R \I_A$ then the $I$-modules $A_1$ and
$A_2$ must be irreducible (otherwise $\lt_w$ would act as a scalar multiple of the
identity on the absolutely irreducible $I$-module $T^\perp$, meaning that $w \in \R e$, a
contradiction) and $A_3 = T \oplus T^\perp$. Given $(d_1,d_2,d_3) \in I$ and $a \in T$,
$d_1(xa) = d_2(x)a + xd_3(a) = d_2(x)a$ implies that $\rt_b\rt_a^{-1}$ commutes with
$d_1$.  The dimension of $\spann_{\R}\{ \rt_b\rt_a^{-1} \mid a,b \in T\}$ is at least
$3$, so the associative subalgebra of $\End_\R(A)$ generated by this set is
$\rC(I_1)\cong\H$. The map $\lt_w$ belongs to $\rC(I_1)$ and it commutes with
$\rt_b\rt_a^{-1}$ for any $a,b$. Hence $\lt_w \in \R \I_A$, contradicting our
assumption. This proves that $\N_l(A)= \R e$ and $\ker \pi_3 = \R (\I_A,\I_A,0)$.
A similar argument proves that $\N_r(A) = \R e$ and
$\ker \pi_2 = \R (\I_A,0,\I_A)$. Since $\dim \rC(\Tder(A)_\iota) \leq \dim \N_r(A) = 1$ we
have $\cZ_\iota = \R\I$.

Any $(0,d'_2,d'_3) \in \Tder(A)$ is of the form $(0,\rt_w,-\lt_w)$ with
$w \in\N_m(A)$. In case that $\lt_w \not\in \R \I_A$, as $I$-modules, $A_2$ and $A_3$ must
be irreducible and $A_1 = T \oplus T^\perp$. This forces $\iota = 2$ or $\iota = 3$. 
If $\dim \N_m(A) = 2$ then $\{ (0, \rt_a, -\lt_a) \mid a \in \N_m(A)\}$ would be an
abelian ideal of dimension $\geq 2$ and thus $\dim \cZ_\iota \geq 2$, which is false.

If we instead assume that $\N_m(A) \cong \H$, then dimension counting gives
$\rC(I_2) = \rt_{\N_m(A)}$ and $\rC(I_3) = \lt_{\N_m(A)}$.
We now have enough information to determine the decomposition of $\Tder(A)$ as a direct
sum of ideals. Given $(d'_1,d'_2,d'_3) \in \Tder(A)$  with $[(d'_1,d'_2,d'_3),I]=(0,0,0)$, its image
$d'_3$ under $\pi_3$ belongs to $\rC(I_3)$ so it is of the form $-\lt_w$ for some $w \in \N_m(A)$.
Hence $(d'_1,d'_2 - \rt_w, 0) \in \Tder(A)$. If we denote by $T_m$ the set
$\{(0,\rt_w,-\lt_w) \mid w \in \N_m(A), \tr(\rt_w) = 0 = \tr(\lt_w) \}$, which is a Lie
ideal of $\Tder(A)$ isomorphic to $\su(2)$, then
\begin{displaymath}
\Tder(A) = I \oplus T_m \oplus \spann_{\R}\{ (\I_A,\I_A,0),(\I_A,0,\I_A)\}.
\end{displaymath}
The structure of the $_\C{I} \oplus {_\C{T}_m}$-modules $_\C{A}_1$, $_\C{A}_2$ and
$_\C{A}_3$ is given by
\begin{eqnarray*}
_\C{A}_1 &\cong & 3V(0) \otimes V(0) \oplus V(\lambda_1) \otimes V(0),\\
_\C{A}_2 &\cong & V(\lambda_2) \otimes V(1) \quad \text{and}\\
_\C{A}_3 &\cong & V(\lambda_2) \otimes V(1)
\end{eqnarray*}
where the first and second slots of the tensor products correspond to modules of $_\C{I}$
and $_\C{T}_m$ respectively. Observe that $A_1$ is a trivial $T_m$-module while each of
$A_2$ and $A_3$ decomposes as the direct sum of two irreducible (but not absolutely
irreducible) four-dimensional modules.

The decompositions above have been derived under the assumption that $\N_m(A)$ is isomorphic to $\H$.
However, will shall see that these decompositions cannot occur. Since
\begin{displaymath}
(V(\lambda_2) \otimes V(1)) \otimes (V(\lambda_2) \otimes V(1)) \cong (V(2 \lambda_2) \oplus V(\lambda_1) \oplus V(0)) \otimes (V(2) \oplus V(0))
\end{displaymath}
has only a copy of the trivial module $V(0)\otimes V(0)$ and the product on $_\C{A}$ is a
homomorphism $_\C{A}_2 \otimes _\C{A}_3 \to {_\C{A}_1}$ of $_\C{\Tder(A)}$-modules, the
product cannot be surjective ($_\C{A}_1$ has three copies of $V(0) \otimes V(0)$). This
contradicts the fact that $AA = A$. Therefore, $\N_m(A) = \R e$, as desired.
\end{proof}

\begin{lma}
\label{lem:types}
If $\Tder(A)$ contains an ideal $I$ isomorphic to a compact Lie algebra of type $B_2$ then there exists another ideal $J$ isomorphic to $\su(2)$ such that
\begin{displaymath}
\Tder(A) = I \oplus J \oplus \spann_{\R}\{ (\I_A,\I_A,0),(\I_A, 0, \I_A)\}.
\end{displaymath}
Moreover, two of the $_\C{I} \oplus {_\C{J}}$-modules $_\C{A}_1$, $_\C{A}_2$ and
$_\C{A}_3$ are isomorphic to $V(\lambda_2) \otimes V(1)$ while the other is isomorphic to
$V(\lambda_1) \otimes V(0) \,\oplus\, V(0) \otimes V(2)$.
The dimension of $\Hom_{I\oplus J}(A_2 \otimes A_3, A_1)$ is $2$.
\end{lma}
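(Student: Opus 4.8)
The plan is to determine the Lie algebra $\Tder(A)$ completely and then extract the module data by complexifying. First I would pin down the centre: the triples $(\I_A,\I_A,0)$ and $(\I_A,0,\I_A)$ belong to $\Tder(A)$ and are central, so they span a two-dimensional subspace of $\cZ$. Since $\pi_\iota$ is injective on $\cS$ (Lemma~\ref{lem:nuclei}), the one-dimensional space $\ker\pi_\iota$ meets $\cS$ trivially and hence lies in $\cZ$; together with $\cZ_\iota=\R\I_A$ this gives $\dim\cZ=2$, so $\cZ=\spann\{(\I_A,\I_A,0),(\I_A,0,\I_A)\}$.

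Next I would locate $J$. Write $\cS=I\oplus\cS'$ as a sum of simple ideals. The crucial point is to exclude $\cS'=0$. If $\cS=I$, then $\Tder(A)_\iota=I_\iota\oplus\R\I_A$, so $\rC(\Tder(A)_\iota)=\rC(I_\iota)=\End_{I_\iota}(A_\iota)$. As $A_\iota$ is irreducible but not absolutely irreducible over $I_\iota$, this endomorphism algebra is isomorphic to $\C$ or $\H$, of real dimension at least $2$. On the other hand, the lemma stating $\rC(\Tder(A)_\iota)\subseteq\{h\rt_a h^{-1}\mid a\in\N_r(A)\}$, combined with $\N_r(A)=\R e$ and $\rt_e=\I_A$, forces $\rC(\Tder(A)_\iota)=\R\I_A$, a contradiction. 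Hence $\cS'\neq0$. Since $\cS'_\iota=\pi_\iota(\cS')$ commutes with $I_\iota$, the injection $\pi_\iota|_\cS$ embeds $\cS'$ into the Lie algebra $\End_{I_\iota}(A_\iota)$ as a nonzero semisimple subalgebra; as the Lie algebra of $\C$ is abelian, this forces $\End_{I_\iota}(A_\iota)\cong\H$ and $\cS'\cong J\cong\su(2)$, the derived subalgebra of $\H$. Thus $\cS=I\oplus J$ and $\Tder(A)=I\oplus J\oplus\spann\{(\I_A,\I_A,0),(\I_A,0,\I_A)\}$.

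For the module structure I would again use that $\pi_1,\pi_2,\pi_3$ restrict to injections on $\cS$, so $J$ acts faithfully on each $A_i$. On the two indices where $A_i$ is $I$-irreducible of quaternionic type, $J$ meets $\End_I(A_i)\cong\H$ in a copy of $\su(2)$, and combining the $I$- and $J$-actions yields ${}_\C A_i\cong V(\lambda_2)\otimes V(1)$. On the remaining index, $A_i=T\oplus T^\perp$ with $T$ trivial and $T^\perp$ absolutely irreducible over $I$; here $J$ acts through $\End_I(T^\perp)=\R$ on $T^\perp$, hence trivially since $\su(2)$ is perfect, and faithfulness forces $J$ to act on the three-dimensional space $T$ through its adjoint representation $W(2)$, giving ${}_\C A_i\cong V(\lambda_1)\otimes V(0)\,\oplus\,V(0)\otimes V(2)$.

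Finally, by flat base change $\dim_\R\Hom_{I\oplus J}(A_2\otimes A_3,A_1)=\dim_\C\Hom_{{}_\C(I\oplus J)}({}_\C A_2\otimes{}_\C A_3,{}_\C A_1)$, which I would evaluate with the Clebsch--Gordan rules $V(\lambda_2)\otimes V(\lambda_2)=V(2\lambda_2)\oplus V(\lambda_1)\oplus V(0)$ and $V(\lambda_2)\otimes V(\lambda_1)=V(\lambda_1+\lambda_2)\oplus V(\lambda_2)$ for $I$, together with $V(1)\otimes V(1)=V(2)\oplus V(0)$ and $V(1)\otimes V(2)=V(3)\oplus V(1)$ for $J$. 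Checking the two possibilities --- $A_1$ the reducible module, or $A_1$ one of the two copies of $V(\lambda_2)\otimes V(1)$ --- one finds the relevant multiplicity equals $2$ in each case. The main obstacle is the exclusion of $\cS'=0$ in the second paragraph: it hinges on computing the centraliser $\rC(\Tder(A)_\iota)$ correctly and matching it against the quaternionic endomorphism algebra of the irreducible $I_\iota$-module $A_\iota$; everything after that is a bookkeeping exercise in the representation theory of $B_2$ and $\su(2)$.
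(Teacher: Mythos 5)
Your proof is correct and follows essentially the same route as the paper's: the exclusion of $\cS=I$ by playing $\dim\rC(\Tder(A)_\iota)\le\dim\N_r(A)=1$ against $\rC(I_\iota)\cong\H$, the identification of $J\cong\su(2)$ inside $\rC(I_\iota)$ via the injectivity of $\pi_\iota|_{\cS}$, the determination of the three modules, and the Clebsch--Gordan count are exactly the paper's argument. The extra details you supply (the dimension count for the centre, and the perfectness argument forcing $J$ to act trivially on $T^\perp$ and hence as the adjoint representation on $T$) only make explicit steps the paper leaves implicit.
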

\begin{proof}
If $\Tder(A)_\iota = I_\iota \oplus \cZ_\iota$ then
$\dim \cZ_\iota \leq \dim \rC(\Tder(A)_\iota) \leq \dim \N_r(A) = 1$ so
$\Tder(A)_\iota = I_\iota \oplus \R \I_A$ and $\rC(\Tder(A)_\iota) = \rC(I_\iota)\cong\H$,
a contradiction. Thus there exists another semisimple ideal $J$ such that
\[
\Tder(A) = I \oplus J \oplus \spann_{\R}\{ (\I_A,\I_A,0),(\I_A, 0, \I_A)\}.
 \]
Since the projection $\pi_\iota$ is injective and $\pi_\iota(J) \subseteq \rC(I_\iota)$,
we have $J \cong \su(2)$. The extensions of the two irreducible $I$-modules in
$\{A_1,A_2, A_3\}$ have to be $_\C{I} \oplus {_\C{J}}$-modules isomorphic to
$V(\lambda_2)\otimes V(1)$. The other $I$-module is isomorphic to a direct sum of a
trivial three-dimensional $I$-module $T$ and an irreducible five-dimensional one
$T^\perp$. Since $J$ centralizes the action of $I$, both modules $T$ and $T^\perp$ are
stable under the action of $J$ and at least one of them has to be non-trivial, as the
projections of $J$ onto its components do not vanish. The only possibility is
${_\C{T}}\cong V(0) \otimes V(2)$ and ${_\C{T}^\perp} \cong V(\lambda_1) \otimes V(0)$.

Now the statement $\dim(\Hom_{I\oplus J}(A_2 \otimes A_3, A_1))=2$ is a consequence of the
following formulae for the decomposition of tensor products
\begin{eqnarray*}
V(\lambda_2) \otimes V(\lambda_2) &\cong & V(2\lambda_2) \oplus V(\lambda_1) \oplus V(0)\\
V(\lambda_1) \otimes V(\lambda_2) &\cong & V(\lambda_1 + \lambda_2) \oplus V(\lambda_2)\\
V(\lambda_2) \otimes V(0) & \cong& V(\lambda_2)
\end{eqnarray*}
and
\begin{eqnarray*}
V(1) \otimes V(1) & \cong& V(2) \oplus V(0)\\
V(2) \otimes V(1) &\cong& V(3) \oplus V(1)\\
\quad V(1) \otimes V(0) &\cong& V(1) \,.
\end{eqnarray*}
\end{proof}

Now that we understand the structure of $\Tder(A)$ and the $\Tder(A)$-modules $A_1$, $A_2$
and $A_3$, we may construct some models for them based on the octonions $\O$ to conclude
that $A$ has to be an isotope of $\O$. Since unital isotopes of the octonions are
isomorphic to the octonions then Theorem \ref{thm:PI} will follow.

Recall that $\Tder(\O)$ is isomorphic to the direct sum of a compact Lie algebra of type
$D_4$ and a two-dimensional center $\spann_{\R}\{ (\I_\O, \I_\O,0),(\I_\O,0,\I_\O)\}$.
The Principle of Local Triality (see \eg{} \cite[Section~3.5]{sv00}) implies that for any
$i\in \{1,2,3\}$ and any map $d_j$ that is skew-symmetric relative to
the bilinear form associated to the multiplicative quadratic form of $\O$, there exist
unique skew-symmetric
maps $d_j, d_k$ such that $(d_1,d_2,d_3) \in \Tder(\O)$ and $\{i,j,k\} = \{1,2,3\}$. Let
us denote the product of $\O$ by $x*y$ and consider a quaternion subalgebra 
$\H = \spann_{\R}\{ e, \bi, \bj , \bi*\bj\}$ with 
$\bi^{*2} = \bj^{*2} = (\bi*\bj)^{*2} = -e$. 
Fix $T=\spann_{\R}\{ e, \bi ,\bj \}$ and let $T^\perp$ be the orthogonal complement of $T$ in
$\O$.

By Lemma \ref{lem:types}, we have the following three cases:
\begin{itemize}
\item[Case 1.] $_\C{A}_1 \cong V(\lambda_1)\otimes V(0) \oplus V(0)\otimes V(2)$:
The subalgebras
\begin{eqnarray*}
\mathfrak{B}_2 &=& \{(d_1,d_2,d_3) \in \Tder(\O) \mid \tr(d_1) =  \tr(d_2)= 0 \textrm{ and } d_1\vert_{T} = 0\}\\
\mathfrak{su}_2 &=& \spann_{\R}\{ (\lt^*_\bi + \rt^*_\bi,\lt^*_\bi,\rt^*_\bi), (\lt^*_\bj+\rt^*_\bj,\lt^*_\bj,\rt^*_\bj), \\
&& \quad\quad ([\lt^*_\bi+\rt^*_\bi,\lt^*_\bj+\rt^*_\bj],[\lt^*_\bi,\lt^*_\bj],[\rt^*_\bi,\rt^*_\bj])\}
\end{eqnarray*}
are simple Lie algebras, $\mathfrak{B}_2$ is compact of type $B_2$, $\mathfrak{su}_2$ is
isomorphic to $\su(2)$, and they commute each other.
The projections $\Tder(\O) \to \End_\R(\O)$ $(d_1,d_2,d_3)\mapsto d_i$ ($i = 1,2,3$) provide
three representations, $\O_1, \O_2$ and $\O_3$, of $\Tder(\O)$. 
As ${_\C}\mathfrak{B}_2 \oplus {{_\C}\mathfrak{su}}_2$-modules,
\begin{eqnarray*}
{_\C}\O_1 &=& {{_\C}T} \oplus {{_\C}T^\perp} \cong V(0)\otimes V(1) \oplus V(\lambda_1) \otimes V(0),\\
{_\C}\O_2 &\cong& {_\C}\O_3 \cong  V(\lambda_2) \otimes V(1) \,.
\end{eqnarray*}
We may identify $A$ with $\O$, $I$ with $\mathfrak{B}_2$ and $J$ with $\mathfrak{su}_2$. With this identification the products  $x*y$ and $xy$ are homomorphisms $\O_2 \otimes \O_3 \to \O_1$ of $\mathfrak{B}_2 \oplus \mathfrak{su}_2$-modules. The maps
\begin{eqnarray*}
   \O_2 \otimes \O_3 &\to& \O_1 = T \oplus T^\perp\\
x \otimes y &\mapsto& \alpha \pi_T(x*y) + \beta \pi_{T^\perp}(x*y)
\end{eqnarray*}
with $\alpha,\beta \in \R$ and $\pi_T, \pi_{T^\perp}$ the projections onto $T$ and
$T^\perp$ parallel to $T^\perp$ and $T$ respectively, give all such homomorphisms, so
$xy = \alpha \pi_T(x*y) + \beta \pi_{T^\perp}(x*y)$ for some $\alpha,\beta \in \R$.
This implies that $A$ is an isotope of the octonions.
\item[Case 2.] $_\C{A}_2 \cong V(\lambda_1)\otimes V(0) \oplus V(0)\otimes V(2)$: The subalgebras that we consider in this case are
\begin{eqnarray*}
\mathfrak{B}_2 &=& \{(d,d',d) \in \Tder(\O) \mid \tr(d) =  0 \textrm{ and } 
d'\vert_{T} = 0\} \,,\\
\mathfrak{su}_2 &=& \spann_{\R}\{ (\lt^*_\bi,\lt^*_\bi+\rt^*_\bi,-\lt^*_\bi), (\lt^*_\bj,\lt^*_\bj+\rt^*_\bj,-\lt^*_\bj), \\
&& \quad\quad ([\lt^*_\bi,\lt^*_\bj],[\lt^*_\bi +
\rt^*_\bi,\lt^*_\bj+\rt^*_\bj],[\lt^*_\bi,\lt^*_\bj])\} \,.
\end{eqnarray*}
An argument similar to that in Case 1 shows that, after identifications, $xy = (\alpha \pi_T(x) + \beta \pi_{T^\perp}(x))*y$. Therefore, $A$ is an isotope of the octonions.
\item[Case 3.] $_\C{A}_3 \cong V(\lambda_1)\otimes V(0) \oplus V(0)\otimes V(2)$: The subalgebras that we consider in this case are
\begin{eqnarray*}
\mathfrak{B}_2 &=& \{(d,d,d') \in \Tder(\O) \mid \tr(d) =  0 \textrm{ and } 
d'\vert_{T} = 0\} \,,\\
\mathfrak{su}_2 &=& \spann_{\R}\{ (\rt^*_\bi,-\rt^*_\bi,\lt^*_\bi+\rt^*_\bi), (\rt^*_\bj,-\rt^*_\bj,\lt^*_\bj+\rt^*_\bj), \\
&& \quad\quad ([\rt^*_\bi,\rt^*_\bj],[\rt^*_\bi,\rt^*_\bj],[\lt^*_\bi +
\rt^*_\bi,\lt^*_j+\rt^*_\bj])\} \,.
\end{eqnarray*}
and the arguments are similar to those in Case 2.
\end{itemize}

%%%%%%%%%%%%%%%%%%%%%%%%%%%%%%%%
\appendix
%%%%%%%%%%%%%%%%%%%%%%%%%%%%%%%%
\section{Normal forms} \label{normalforms}
%%%%%%%%%%%%%%%%%%%%%%%%%%%%%%%%

Here we list normal forms for the $\so(\R^3)$-sets $\mathbb{P}(\R^3)\times\R^3$,
$\R^3\times\R^3$, $\mathbb{P}(\R^3)\times\mathbb{P}(\R^3)\times\R^3$ and
$\mathbb{P}(\R^3)\times\R^3\times\R^3$, with respective group actions, defined by
(\ref{so3act1}) and (\ref{so3act2}), coming from $\mathscr{B}_{00}$, $\mathscr{B}_{01}$,
$\mathscr{B}_{10}$ and $\mathscr{B}_{11}$.
Let $N$ and $P$ be the sets of non-negative and positive real numbers, respectively, and
set $\mathcal{P}_1=\smatr{1\\\R\\\R}\cup\smatr{0\\1\\\R}\cup\smatr{0\\0\\1}$,
$\mathcal{P}_2=\smatr{P\\P\\\R}\cup\smatr{P\\0\\N}\cup\smatr{0\\N\\N}$.

%%%%%%%%%%%%%%%%%%%%%%%%%%%%%%%%%%%%%%%%%%%%%%%%%%%%%%%
\subsection{The case $(c,b,D_d,\b)\in\mathscr{B}_{00}$}
%%%%%%%%%%%%%%%%%%%%%%%%%%%%%%%%%%%%%%%%%%%%%%%%%%%%%%%

For $d\in\hat{\mathcal{T}}_1$:
\begin{align*}
  \mathcal{N}_{00}^1={}&\smatr{1&N\\0&N\\0&0}\subseteq\mathbb{P}(\R^3)\times\R^3\,.
\end{align*}
For $d\in\hat{\mathcal{T}}_2$:
\begin{align*}
  \mathcal{N}_{00}^2={}&
  \smatr{1&P\\ 0&\R\\ P&\R}\cup \smatr{1&0\\ 0&\R\\ P&N}\cup
  \smatr{0&N\\ 0&0\\ 1&N}\cup \smatr{1&|\\ 0&\mathcal{P}_2\\ 0&|}
  \subseteq\mathbb{P}(\R^3)\times\R^3\,.
\end{align*}
For $d\in\hat{\mathcal{T}}_3$:
\begin{align*}
  \mathcal{N}_{00}^3={}&\smatr{1&P\\ P&\R\\ 0&\R}\cup \smatr{1&0\\ P&N\\0&\R}\cup
  \smatr{1&N\\ 0&N\\ 0&0}\cup \smatr{0&|\\1&\mathcal{P}_2\\0&|}
  % \smatr{P&\R\\ 0&\R\\ 1&P}\cup \smatr{P&N\\ 0&\R\\ 1&0}\cup
  % \smatr{1&N\\ 0&N\\ 0&0}\cup \smatr{0&\R\\ 1&P\\ 0&P}\cup
  % \smatr{0&N\\ 1&P\\ 0&0}\cup \smatr{0&N\\ 1&0\\ 0&N}
  \subseteq\mathbb{P}(\R^3)\times\R^3\,.
\end{align*}
For $d\in\hat{\mathcal{T}}_4$:
\begin{align*}
  \mathcal{N}_{00}^4={}&
  \smatr{1&\R\\ P&\R\\ P&\R}\cup \smatr{0&\R\\ 1&P\\ P&\R}\cup
  \smatr{0&\R\\ 1&0\\ P&N}\cup \smatr{1&P\\ 0&\R\\ P&\R}\cup
  \smatr{1&0\\ 0&\R\\ P&N}\cup \smatr{1&P\\ P&\R\\ 0&\R}\cup
  \smatr{1&0\\ P&N\\ 0&\R}\cup \\ &\smatr{1&|\\ 0&\mathcal{P}_2\\ 0&|} \cup
  \smatr{0&|\\ 1&\mathcal{P}_2\\ 0&|}\cup
  \smatr{0&|\\ 0&\mathcal{P}_2\\ 1&|}\subseteq\mathbb{P}(\R^3)\times\R^3\,.
\end{align*}

%%%%%%%%%%%%%%%%%%%%%%%%%%%%%%%%%%%%%%%%%%%%%%%%%%%%%%%
\subsection{The case $(c,b,D_d,\b)\in\mathscr{B}_{01}$}
%%%%%%%%%%%%%%%%%%%%%%%%%%%%%%%%%%%%%%%%%%%%%%%%%%%%%%%

For $d\in\hat{\mathcal{T}}_1$:
\begin{align*}
  \mathcal{N}_{01}^1={}&
  \smatr{P&\R\\ 0&N\\ 0&0}\cup \smatr{0&N\\ 0&0\\ 0&0}\subseteq\R^3\times\R^3\,.
\end{align*}
For $d\in\hat{\mathcal{T}}_2$:
\begin{align*}
  \mathcal{N}_{01}^2={}&
\smatr{P&\R\\ 0&\R\\ P&\R}\cup \smatr{0&N\\ 0&0\\ P&\R}\cup
  \smatr{P&\R\\ 0&P\\ 0&\R}\cup \smatr{P&\R\\ 0&0\\ 0&N}\cup
  \smatr{0&N\\ 0&0\\ 0&N}\subseteq\R^3\times\R^3\,.
\end{align*}
For $d\in\hat{\mathcal{T}}_3$:
\begin{align*}
  \mathcal{N}_{01}^3={}&
\smatr{P&\R\\ P&\R\\ 0&\R}\cup \smatr{P&\R\\ 0&N\\ 0&0}\cup
  \smatr{0&P\\ P&\R\\ 0&\R}\cup \smatr{0&0\\ P&\R\\ 0&N}\cup
  \smatr{0&N\\ 0&N\\ 0&0}\subseteq\R^3\times\R^3\,.
\end{align*}
For $d\in\hat{\mathcal{T}}_4$:
\begin{align*}
  \mathcal{N}_{01}^4={}&
\smatr{P&\R\\ P&\R\\ \R&\R}\cup \smatr{P&\R\\ 0&\R\\ P&\R}\cup
  \smatr{0&\R\\ P&\R\\ P&\R}\cup \smatr{P&\R\\ 0&P\\ 0&\R}\cup
  \smatr{P&\R\\ 0&0\\ 0&N}\cup \smatr{0&P\\ P&\R\\ 0&\R}\cup
  \smatr{0&0\\ P&\R\\ 0&N}\cup \\
  &\smatr{0&P\\ 0&\R\\ P&\R}\cup
  \smatr{0&0\\ 0&N\\ P&\R}\cup \smatr{0&|\\ 0&\mathcal{P}_2\\0&|}\subseteq\R^3\times\R^3\,.
\end{align*}

%%%%%%%%%%%%%%%%%%%%%%%%%%%%%%%%%%%%%%%%%%%%%%%%%%%%%%%%
\subsection{The case $(u,c,b,D_d,\b)\in\mathscr{B}_{10}$}
%%%%%%%%%%%%%%%%%%%%%%%%%%%%%%%%%%%%%%%%%%%%%%%%%%%%%%%%

For $d\in\hat{\mathcal{T}}_1$:
\begin{align*}
  \mathcal{N}_{10}^1={}&
\smatr{1&P&P\\ 0&1&\R\\ 0&0&\R}\cup \smatr{1&P&0\\ 0&1&N\\ 0&0&\R}\cup
  \smatr{1&0&|\\ 0&1&\mathcal{P}_2\\ 0&0&|}\cup \smatr{1&1&N\\ 0&0&N\\ 0&0&0}
  \subseteq\mathbb{P}(\R^3)\times\mathbb{P}(\R^3)\times\R^3\,.
\end{align*}
For $d\in\hat{\mathcal{T}}_2$:
\begin{align*}
  \mathcal{N}_{10}^2={}&
  \smatr{1&P&\R\\ 0&1&\R\\ P&\R&\R}\cup \smatr{1&0&\R\\ 0&1&\R\\ P&P&\R}\cup
  \smatr{1&0&P\\ 0&1&\R\\ P&0&\R}\cup \smatr{1&0&0\\ 0&1&\R\\ P&0&N}\cup
  \smatr{1&1&P\\ 0&0&\R\\ P&\R&\R}\cup \smatr{1&1&0\\ 0&0&\R\\ P&\R&N}\cup \\
  &\smatr{1&0&P\\ 0&0&\R\\ P&1&\R}\cup \smatr{1&0&0\\ 0&0&\R\\ P&1&N}\cup \\
  % &\smatr{0&1&P\\ 0&0&\R\\ 1&P&\R}\cup \smatr{0&1&0\\ 0&0&\R\\ 1&P&N}\cup
  % \smatr{0&0&N\\ 0&0&0\\ 1&1&N}\cup \smatr{0&1&|\\0&0&\mathcal{P}_2\\ 1&0&|}\cup
  &\left(\left\{\smatr{0\\0\\1}\right\}\times\mathcal{N}_{00}^2 \right) \cup
  \left(\left\{\smatr{0\\0\\1}\right\}\times\mathcal{N}_{00}^4 \right)
  \subseteq\mathbb{P}(\R^3)\times\mathbb{P}(\R^3)\times\R^3\,.
\end{align*}

For $d\in\hat{\mathcal{T}}_3$:
\begin{align*}
  \mathcal{N}_{10}^3={}&
\smatr{1&\R&\R\\ P&1&\R\\ 0&P&\R}\cup \smatr{1&P&\R\\ P&0&\R\\ 0&1&\R}\cup
  \smatr{1&0&P\\ P&0&\R\\ 0&1&\R}\cup \smatr{1&0&0\\ P&0&N\\ 0&1&\R}\cup
  \smatr{1&\R&P\\ P&1&\R\\ 0&0&\R}\cup \smatr{1&\R&0\\ P&1&N\\ 0&0&\R}\cup \\
  &\smatr{1&1&P\\ P&0&\R\\ 0&0&\R}\cup \smatr{1&1&0\\ P&0&N\\ 0&0&\R}\cup \\
  &\left(\left\{\smatr{1\\0\\0}\right\}\times \mathcal{N}_{00}^3\right) \cup
  \left(\left\{\smatr{0\\1\\0}\right\}\times \mathcal{N}_{00}^4\right)
  \subseteq\mathbb{P}(\R^3)\times\mathbb{P}(\R^3)\times\R^3\,.
\end{align*}
For $d\in\hat{\mathcal{T}}_4$:
\begin{align*}
  \mathcal{N}_{10}^4={}&
\smatr{1&|&\R\\ P&\mathcal{P}_1&\R\\ P&|&\R}\cup
  \smatr{0&1&\R\\ 1&P&\R\\ P&\R&\R}\cup \smatr{0&1&\R\\ 1&0&\R\\ P&P&\R}\cup
  \smatr{0&1&\R\\ 1&0&P\\ P&0&\R}\cup \smatr{0&1&\R\\ 1&0&0\\ P&0&N}\cup
  \smatr{0&0&\R\\ 1&1&P\\ P&\R&\R}\cup \\
  &\smatr{0&0&\R\\ 1&1&0\\ P&\R&N}\cup
  \smatr{0&0&\R\\ 1&0&P\\ P&1&\R}\cup \smatr{0&0&\R\\ 1&0&0\\ P&1&N}\cup \\
  &\smatr{1&P&\R\\ 0&1&\R\\ P&\R&\R}\cup \smatr{1&0&\R\\ 0&1&\R\\ P&P&\R}\cup
  \smatr{1&0&P\\ 0&1&\R\\ P&0&\R}\cup \smatr{1&0&0\\ 0&1&\R\\ P&0&N}\cup
  \smatr{1&1&P\\ 0&0&\R\\ P&\R&\R}\cup \smatr{1&1&0\\ 0&0&\R\\ P&\R&N}\cup \\
  &\smatr{1&0&P\\ 0&0&\R\\ P&1&\R}\cup \smatr{1&0&0\\ 0&0&\R\\ P&1&N}\cup \\
  &\smatr{1&P&\R\\ P&\R&\R\\ 0&1&\R}\cup \smatr{1&0&\R\\ P&P&\R\\ 0&1&\R}\cup
  \smatr{1&0&P\\ P&0&\R\\ 0&1&\R}\cup \smatr{1&0&0\\ P&0&N\\ 0&1&\R}\cup
  \smatr{1&1&P\\ P&P&\R\\ 0&0&\R}\cup \smatr{1&1&0\\ P&\R&N\\ 0&0&\R}\cup \\
  &\smatr{1&0&P\\ P&1&\R\\ 0&0&\R}\cup \smatr{1&0&0\\ P&1&N\\ 0&0&\R}\cup \\
  &\left(\left\{\smatr{1\\0\\0},\smatr{0\\1\\0},\smatr{0\\0\\1}\right\}
    \times \mathcal{N}_{00}^4\right)
  \subseteq\mathbb{P}(\R^3)\times\mathbb{P}(\R^3)\times\R^3\,.
\end{align*}

%%%%%%%%%%%%%%%%%%%%%%%%%%%%%%%%%%%%%%%%%%%%%%%%%%%%%%%%%
\subsection{The case $(u,c,b,D_d,\b)\in\mathscr{B}_{11}$}
%%%%%%%%%%%%%%%%%%%%%%%%%%%%%%%%%%%%%%%%%%%%%%%%%%%%%%%%%

For $d\in\hat{\mathcal{T}}_1$:
\begin{align*}
  \mathcal{N}_{11}^1={}&
\smatr{1&P&\R\\ 0&P&\R\\ 0&0&\R}\cup \smatr{1&0&P\\ 0&P&\R\\ 0&0&\R}\cup
  \smatr{1&0&0\\ 0&P&\R\\ 0&0&N}\cup \smatr{1&P&\R\\ 0&0&N\\ 0&0&0}\cup
  \smatr{1&0&N\\ 0&0&N\\ 0&0&0} \subseteq \mathbb{P}(\R^3)\times\R^3\times\R^3\,.
\end{align*}
For $d\in\hat{\mathcal{T}}_2$:
\begin{align*}
  \mathcal{N}_{11}^2={}&
\smatr{1&P&\R\\ 0&\R&\R\\ P&\R&\R}\cup \smatr{1&0&\R\\0&\R&\R\\P&P&\R}\cup
 \smatr{1&0&P\\ 0&\R&\R\\ P&0&\R}\cup \smatr{1&0&0\\ 0&\R&\R\\ P&0&N}\cup \\
 &\smatr{0&P&\R\\ 0&0&\R\\ 1&P&\R}\cup \smatr{0&P&\R\\ 0&0&P\\ 1&0&\R}\cup
 \smatr{0&P&\R\\ 0&0&0\\ 1&0&N}\cup \smatr{0&0&N\\ 0&0&0\\ 1&P&\R}\cup \\
 &\left(\left\{\smatr{1\\0\\0}\right\}\times \mathcal{N}_{10}^4\right)
   \subseteq\mathbb{P}(\R^3)\times\R^3\times\R^3\,.
\end{align*}
For $d\in\hat{\mathcal{T}}_3$:
\begin{align*}
  \mathcal{N}_{11}^3={}&
\smatr{1&P&\R\\ P&\R&\R\\ 0&\R&\R}\cup \smatr{1&0&\R\\P&P&\R\\0&\R&\R}\cup
  \smatr{1&0&P\\ P&0&\R\\ 0&\R&\R}\cup \smatr{1&0&0\\ P&0&N\\ 0&\R&\R}\cup \\
  &\smatr{1&P&\R\\ 0&P&\R\\ 0&0&\R}\cup \smatr{1&P&\R\\ 0&0&N\\ 0&0&0}\cup
  \smatr{1&0&P\\ 0&P&\R\\ 0&0&\R}\cup \smatr{1&0&0\\ 0&P&\R\\ 0&0&N}\cup \\
  &\left(\left\{\smatr{0\\1\\0}\right\}\times\mathcal{N}_{10}^4\right)
  \subseteq \mathbb{P}(\R^3)\times\R^3\times\R^3\,.
\end{align*}
For $d\in\hat{\mathcal{T}}_4$:
\begin{align*}
  \mathcal{N}_{11}^4={}&
\smatr{1&\R&\R\\ P&\R&\R\\ P&\R&\R}\cup
  \smatr{0&\R&\R\\ 1&P&\R\\ P&\R&\R}\cup \smatr{0&\R&\R\\ 1&0&\R\\ P&P&\R}\cup
  \smatr{0&\R&\R\\ 1&0&P\\ P&0&\R}\cup \smatr{0&\R&\R\\ 1&0&0\\ P&0&N}\cup \\
  &\smatr{1&P&\R\\ 0&\R&\R\\ P&\R&\R}\cup \smatr{1&0&\R\\ 0&\R&\R\\ P&P&\R}\cup
  \smatr{1&0&P\\ 0&\R&\R\\ P&0&\R}\cup \smatr{1&0&0\\ 0&\R&\R\\ P&0&N}\cup \\
  &\smatr{1&P&\R\\ P&\R&\R\\ 0&\R&\R}\cup \smatr{1&0&\R\\ P&P&\R\\ 0&\R&\R}\cup
  \smatr{1&0&P\\ P&0&\R\\ 0&\R&\R}\cup \smatr{1&0&0\\ P&0&N\\ 0&\R&\R}\cup \\
  &\left(\left\{\smatr{1\\0\\0},\smatr{0\\1\\0},\smatr{0\\0\\1}\right\}\times
    \mathcal{N}_{10}^4\right) \subseteq\mathbb{P}(\R^3)\times\R^3\times\R^3\,.
\end{align*}

\bibliographystyle{plain}
\bibliography{invqgi}

\def\cprime{$'$} \def\Dbar{\leavevmode\lower.6ex\hbox to 0pt{\hskip-.23ex
  \accent"16\hss}D}
\begin{thebibliography}{10}

\bibitem{albert42a}
A.~A. Albert.
\newblock Non-associative algebras. {I}. {F}undamental concepts and isotopy.
\newblock {\em Ann. of Math. (2)}, 43:685--707, 1942.

\bibitem{Be66}
V.~D. Belousov.
\newblock Balanced identities in quasigroups.
\newblock {\em Mat. Sb. (N.S.)}, 70 (112):55--97, 1966.

\bibitem{bbo82}
G.~M. Benkart, D.~J. Britten, and J.~M. Osborn.
\newblock Real flexible division algebras.
\newblock {\em Canadian Journal of Mathematics}, 34:550--588, 1982.

\bibitem{BO81a}
Georgia~M. Benkart and J.~Marshall Osborn.
\newblock The derivation algebra of a real division algebra.
\newblock {\em Amer. J. Math.}, 103(6):1135--1150, 1981.

\bibitem{BO81b}
Georgia~M. Benkart and J.Marshall Osborn.
\newblock An investigation of real division algebras using derivations.
\newblock {\em Pac. J. Math.}, 96:265--300, 1981.

\bibitem{bg67}
Robert~B. Brown and Alfred Gray.
\newblock Vector cross products.
\newblock {\em Comment. Math. Helv.}, 42:222--236, 1967.

\bibitem{ava}
J.~A. Cuenca~Mira, E.~Darp\"o, and E.~Dieterich.
\newblock Classification of the finite dimensional absolute valued algebras
  having a non-zero central idempotent or a one-sided unity.
\newblock {\em Bulletin des Sciences Math\'ematiques}, 134:247--277, 2010.

\bibitem{cdkr99}
J.~A. {Cuenca~Mira}, R.~De~Los Santos~Villodres, A.~Kaidi, and A.~Rochdi.
\newblock Real quadratic flexible division algebras.
\newblock {\em Linear Algebra and its Applications}, 290:1--22, 1999.

\bibitem{cu}
Jos{\'e}~Antonio Cuenca~Mira.
\newblock On division algebras satisfying {M}oufang identities.
\newblock {\em Comm. Algebra}, 30(11):5199--5206, 2002.

\bibitem{coll}
E.~Darp{\"o}.
\newblock On the classification of the real flexible division algebras.
\newblock {\em Colloq. Math.}, 105(1):1--17, 2006.

\bibitem{nform}
E.~Darp\"o.
\newblock Normal forms for the $\mathcal{G}_2$-action on the real symmetric
  $7\times7$-matrices by conjugation.
\newblock {\em Journal of Algebra}, 312(2):668--688, 2007.

\bibitem{hurwalg}
E.~Darp\"o.
\newblock Isotopes of {H}urwitz algebras.
\newblock \texttt{arXiv:1012.1849}, 2010.

\bibitem{zur}
E.~Dieterich.
\newblock Zur {K}lassifikation vierdimensionaler reeller {D}ivisions\-algebren.
\newblock {\em Math. Nachr.}, 194:13--22, 1998.

\bibitem{Fa71}
E.~Falconer.
\newblock Isotopes of some special quasigroup varieties.
\newblock {\em Acta Math. Acad. Sci. Hungar.}, 22:73--79, 1971/1972.

\bibitem{gantmacher59i}
F.~R. Gantmacher.
\newblock {\em The theory of matrices. {V}ol. 1}.
\newblock Translated by K. A. Hirsch. Chelsea Publishing Co., New York, 1959.

\bibitem{He53}
I.~N. Herstein.
\newblock Finite multiplicative subgroups in division rings.
\newblock {\em Pacific J. Math.}, 3:121--126, 1953.

\bibitem{Hu78}
James~E. Humphreys.
\newblock {\em Introduction to {L}ie algebras and representation theory},
  volume~9 of {\em Graduate Texts in Mathematics}.
\newblock Springer-Verlag, New York, 1978.
\newblock Second printing, revised.

\bibitem{basicI}
N.~Jacobson.
\newblock {\em Basic Algebra I}.
\newblock W.H. Freeman and Company, second edition, 1985.

\bibitem{JP08}
Clara Jim{\'e}nez-Gestal and Jos{\'e}~M. P{\'e}rez-Izquierdo.
\newblock Ternary derivations of finite-dimensional real division algebras.
\newblock {\em Linear Algebra Appl.}, 428(8-9):2192--2219, 2008.

\bibitem{KrTa91}
A.~Krape{\v{z}} and M.~A. Taylor.
\newblock Quasigroups satisfying balanced but not {B}elousov equations are
  group isotopes.
\newblock {\em Aequationes Math.}, 42(1):37--46, 1991.

\bibitem{Ku96_2}
Kenneth Kunen.
\newblock Moufang quasigroups.
\newblock {\em J. Algebra}, 183(1):231--234, 1996.

\bibitem{DbZh}
Dragomir~{\v{Z}}. {\Dbar}okovi{\'c} and Kaiming Zhao.
\newblock Real division algebras with large automorphism group.
\newblock {\em J. Algebra}, 282(2):758--796, 2004.

\bibitem{osborn62}
J.~M. Osborn.
\newblock Quadratic division algebras.
\newblock {\em Trans. Amer. Math. Soc.}, 105:202--221, 1962.

\bibitem{PV05}
J.~D. Phillips and Petr Vojt{\v{e}}chovsk{\'y}.
\newblock The varieties of quasigroups of {B}ol-{M}oufang type: an equational
  reasoning approach.
\newblock {\em J. Algebra}, 293(1):17--33, 2005.

\bibitem{schafer95}
R.~D. Schafer.
\newblock {\em An introduction to nonassociative algebras}.
\newblock Dover Publications Inc., New York, 1995.
\newblock Corrected reprint of the 1966 original.

\bibitem{segre}
B.~Segre.
\newblock La teoria delle algebre ed alcune questioni di realt\`a.
\newblock {\em Univ. Roma. Ist. Naz. Alta Mat. Rend. Mat. e Appl. (5)},
  13:157--188, 1954.

\bibitem{sv00}
T.~A. Springer and F.~D. Veldkamp.
\newblock {\em Octonions, {J}ordan algebras and exceptional groups}.
\newblock Springer Monographs in Mathematics. Springer-Verlag, Berlin, 2000.

\bibitem{Ta78}
M.~A. Taylor.
\newblock A generalization of a theorem of {B}elousov.
\newblock {\em Bull. London Math. Soc.}, 10(3):285--286, 1978.

\bibitem{zsss}
K.~A. Zhevlakov, A.~M. Slin{\cprime}ko, I.~P. Shestakov, and A.~I. Shirshov.
\newblock {\em Rings that are nearly associative}, volume 104 of {\em Pure and
  Applied Mathematics}.
\newblock Academic Press Inc. [Harcourt Brace Jovanovich Publishers], New York,
  1982.
\newblock Translated from the Russian by Harry F. Smith.

\bibitem{zorn31}
M.~Zorn.
\newblock Theorie der alternativen {R}inge.
\newblock {\em Abh. Math. Sem. Hamburg}, 8:123--147, 1931.

\end{thebibliography}

\end{document}